  \newtheorem{thm}{Theorem}[section]
  \newtheorem{prop}[thm]{Proposition}
  \newtheorem{lemma}[thm]{Lemma}
  \newtheorem{cor}[thm]{Corollary}
  \newtheorem{question}[thm]{Question}
  \theoremstyle{definition}
  \newtheorem{example}[thm]{Example}
  \newtheorem{defin}[thm]{Definition}
  \newtheorem{remark}[thm]{Remark}
  \renewcommand{\AA}{\mathbf{A}}
  \newcommand{\RR}{\mathbf{R}}
  \newcommand{\CC}{\mathbf{C}}
  \newcommand{\NN}{\mathbf{N}}
  \newcommand{\ZZ}{\mathbf{Z}}
  \newcommand{\XX}{\mathbf{X}}
  \newcommand{\PP}{\mathbf{P}}
  \newcommand{\Xx}{\mathscr{X}}
  \newcommand{\Mm}{\mathscr{M}}
  \newcommand{\cO}{\mathscr{O}}
  \newcommand{\Cc}{\mathscr{C}}
  \newcommand{\et}{{\rm \acute{e}t}}
  \newcommand{\ket}{\mathrm{k \acute{e}t}}
  \DeclareMathOperator{\Spec}{Spec}
  \DeclareMathOperator{\Spf}{Spf}
  \DeclareMathOperator{\cd}{cd}
  \newcommand{\Sone}{{\mathbf{S}^1}}
  \newcommand{\Spaces}{\mathbf{Spaces}}
  \newcommand{\SpSone}{\Spaces_{/\Sone}}
  \newcommand{\gp}{\mathrm{gp}}
  \renewcommand{\log}{\mathrm{log}} 
  \newcommand{\an}{\mathrm{an}}
  \newcommand{\triv}{\mathrm{triv}}
  \newcommand{\Hom}{\mathrm{Hom}}
  \newcommand{\Div}{\mathrm{Div}}
  \DeclareMathOperator{\hocolim}{hocolim}
  \DeclareMathOperator{\holim}{holim}
  \DeclareMathOperator{\cosk}{cosk}
  \DeclareMathOperator*{\colim}{colim}
  \newcommand{\CCt}{{\CC(\!(t)\!)}}
  \newcommand{\CCs}{{\CC[\![t]\!]}}
  \newcommand{\CCq}{{\CC(\!(q)\!)}}
  \newcommand{\CCsq}{{\CC[\![q]\!]}}
  \newcommand{\Perf}{\mathrm{Perf}}
  \renewcommand{\to}{%
    \mathchoice
      {\longrightarrow} 
      {\rightarrow} 
      {\rightarrow} 
      {\rightarrow} 
  }
  \newcommand*\isomlong{%
    \xlongrightarrow{\raisebox{-0.2 em}{\smash{\ensuremath{\sim}}}}%
  }
  \title[Betti realization of varieties defined by formal Laurent series]{Betti realization of varieties\\ defined by formal Laurent series}
  \date{\today}
  \author[P.\ Achinger]{Piotr Achinger}
  \address{Instytut Matematyczny PAN, Śniadeckich 8, Warsaw, Poland}
  \email{pachinger@impan.pl}
  \author[M.\ Talpo]{Mattia Talpo}
  \address{Dipartimento di Matematica, Universit\`a di Pisa, Largo Bruno Pontecorvo 5, 56127 Pisa, Italy}  \email{mattia.talpo@unipi.it}
  \keywords{Betti realization, topology of degenerations, log geometry, Kato--Nakayama space, \'etale homotopy, rigid analytic space}
  \subjclass[2010]{14D06, 14F35, 14F45}
\begin{document}

\begin{abstract}
  We give two constructions of functorial topological realizations for schemes of finite type over the field $\CCt$ of formal Laurent series with complex coefficients, with values in the homotopy category of spaces over the circle. The problem of constructing such a realization was stated by D.\ Treumann, motivated by certain questions in mirror symmetry. The first construction uses spreading out and the usual Betti realization over $\CC$. The second uses generalized semistable models and log Betti realization defined by Kato and Nakayama, and applies to smooth rigid analytic spaces as well. We provide comparison theorems between the two constructions and relate them to the \'etale homotopy type and de Rham cohomology. As an illustration of the second construction, we treat two examples, the Tate curve and the non-archimedean Hopf surface.
\end{abstract}

\maketitle

\section{Introduction}

The study of a complex algebraic variety $X$ is facilitated by the existence of the associated complex analytic space $X_\an$. Its homotopy type, sometimes called the Betti realization, encodes important topological invariants such as
\[ 
  H_i(X_\an, \ZZ), \quad H^i(X_\an, \ZZ), \quad \pi_i(X_\an), \quad \ldots
\]
For a general (locally noetherian) scheme $X$, one usually uses the \'etale topology instead, and the homotopy type thus obtained (the \'etale pro-homotopy type of Artin--Mazur \cite{ArtinMazur}) encodes the invariants
\[ 
  H^i(X_\et, \ZZ/N\ZZ), \quad \pi_i^\et(X), \quad \ldots
\]
In the case of a normal complex algebraic variety, the \'etale homotopy type is the pro-finite completion of the classical homotopy type, and might contain less information.

In this paper, we deal with schemes and rigid analytic spaces over the field $\CCt$ of formal Laurent series over $\CC$, or more generally over any complete discretely valued field $K$ whose residue field $k$ is equipped with an embedding $\iota \colon k\hookrightarrow \CC$ (e.g. $K=\mathbb{Q}(\!(t)\!)$). Our main result is the construction of a functorial topological realization of all schemes locally of finite type over $K$, allowing one to define integral invariants as above together with the monodromy action of $\pi_1(\Sone)$. This answers the question of David Treumann \cite{TreumannMO,TreumannNotes}, motivated by a problem related to the complex topological $K$-theory of mirror pairs. 

\subsection{Motivating example}
To give the reader a feeling for the problem and its possible solutions, let us consider the elliptic curve $E$ over $\CCt$ given by the equation
\begin{equation} \label{eqn:ell-curve} 
  y^2 = x^3 + Ax+ B, \quad A, B\in \CCt, \quad \Delta = 4A^3 + 27B^2 \neq 0.
\end{equation}
We want to attach to $E$ a two-dimensional real torus, canonically defined up to homotopy. (Giving a torus is equivalent to giving its first homology group, so the problem is equivalent to defining a rank two lattice $H_1(E, \ZZ)$.)
If $A$ and $B$ both have positive radii of convergence (for example, if they are algebraic over $\CC(t)$), then \eqref{eqn:ell-curve} gives a holomorphic family of elliptic curves $E_t$ over a punctured disc $\Delta^* = \{0<|t|<\varepsilon\}$. In this case, we can define the homotopy type of $E$ to be any fiber of this family (say, for $t = \varepsilon/2$), or better the restriction of the family to the circle $|t|=\varepsilon/2$ treated as a space over the circle (to keep track of the monodromy). This can be seen to be independent up to homotopy of the choice of an equation \eqref{eqn:ell-curve} defining~$E$.

In the general case, let $R = \CC[t, A, B][\Delta^{-1}] \subseteq \CCt$ be the ring generated by the coefficients, with the discriminant inverted. Then \eqref{eqn:ell-curve} gives a family $\mathscr{E}$ of elliptic curves over $S = \Spec R$. Now $S$ is a scheme of finite type over $\CC$, and looking at the associated analytic spaces we obtain a~fibration in tori $\mathscr{E}_{\rm an} \to S_{\rm an}$. A little bit of thought shows that there is still a preferred homotopy class of maps $s\colon \Sone\to S_{\rm an}$ (``approximating'' the given map $\Spec \CCt\to S$), and we can define the homotopy type of $E$ to be the base change of $\mathscr{E}_{\rm an}$ along $s$. 

What helped us in the above approach is the fact that $E$, being a scheme of finite type over $\CCt$, admits a model over some finitely generated $\CC[t]$-subalgebra. Since such ``spreading out'' is not available in the context of rigid analytic spaces, we need a different approach (which turns out to be useful in the case of schemes as well). So, let us consider the rigid analytic space $\mathcal{E}$ associated to $E$ (the reader unfamiliar with non-archimedean geometry can keep considering $E$). The role of spreading out will be played by a choice of a semistable formal model $\mathfrak{E}$ over $\CCs$. The machinery of logarithmic geometry of Fontaine, Illusie, and Kato, with important contributions of Nakayama and Ogus, allows one to treat the special fiber $\mathfrak{E}_0$ at $t=0$, endowed with the induced logarithmic structure, as a smooth member of the family. A surprising ``real oriented blowup'' construction due to Kato and Nakayama functorially attaches to $\mathfrak{E}_0$ (or any fs log scheme over $\CC$) a topological space $\mathfrak{E}_{0, \rm log}$, which in this case turns out to be a fibration in $2$-tori over the circle $\Sone$ (see Figure~\ref{fig:dwork}). One should think of the base $\Sone$ as the ``circle of radius $0$'' in the complex plane. 

By the birational geometry of surfaces, any two such models are related by a zigzag of blowups of points in the special fiber. A direct calculation of the fibers of the map $\mathfrak{E}'_{0,\rm log}\to \mathfrak{E}_{0,\rm log}$ induced by such a blowup shows that it is a homotopy equivalence, giving the independence up to homotopy of $\mathfrak{E}_{0, \rm log}$ of the formal model $\mathfrak{E}$.

\begin{figure}[h]
  \centering
  \includegraphics[width=.8\textwidth]{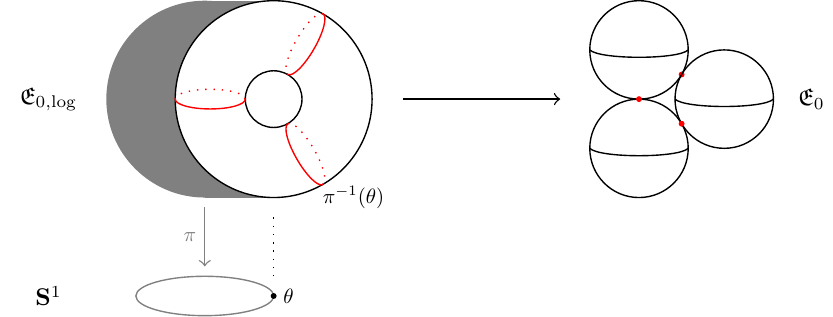} 
  \caption{The Kato--Nakayama space of the special fiber of a semistable model $\mathfrak{E}$ is a fibration in tori over $\Sone$. The nodes in the special fiber $\mathfrak{E}_0$ become loops on the torus.}  
  \label{fig:dwork}
\end{figure}

In order to generalize this example to all schemes and smooth rigid analytic spaces, we will need to study models of schemes over $\CCt$ over finitely generated $\CC$-subalgebras, as well as good formal models of rigid analytic spaces over $\CCt$ and the Kato--Nakayama spaces of their special fibers. The independence of the model in the latter case requires more sophisticated birational geometry supplied by the Weak Factorization Theorem.

\subsection{The construction for schemes}
The first goal of this paper is to show that the above constructions can be generalized to define a well-behaved functor defined for all schemes locally of finite type over the field $K$. We can summarize our findings as follows.

\begin{thm}
Let $K$ be a complete discretely valued field whose residue field $\cO_K/\mathfrak{m}=k$ is endowed with an embedding $\iota\colon k\hookrightarrow \CC$, and denote by $(-)_0$ the base change along $\iota$. There exists a functor $\Psi$ from the category of schemes locally of finite type over $K$ to the $\infty$-category of spaces over $\Sone$, enjoying the following properties:

\medskip 

\begin{enumerate}
  \item (Finiteness) If $X$ is separated and of finite type, then $\Psi(X)$ has the homotopy type of a~fibration in finite CW-complexes over $\Sone$.

  \medskip

  \item (Sheaf property) The functor $\Psi$ satisfies homotopical descent with respect to the $h$-topology: if $Y_\bullet\to X$ is an $h$-hypercovering, then the induced map
  \[ 
    \hocolim \Psi(Y_\bullet) \to \Psi(X)
  \]
  is an equivalence.

  \medskip

  \item (Good reduction) If $X$ is the general fiber of a smooth and proper scheme $\mathscr{X}$ over the valuation ring $\cO_K$, then there is a natural identification 
  \[
    \Psi(X) \simeq \mathscr{X}_{0, \an} \times \Sone.
  \]

  \medskip
  
  \item (Semistable reduction, with boundary) Let $\mathscr{Y}$ be a proper, flat, and regular scheme over $\cO_K$ and let $\mathscr{D}\subseteq \mathscr{Y}$ be a divisor with normal crossings containing the reduced special fiber $(\mathscr{Y}_{k})_{\rm red}$. Let $X = \mathscr{Y}\setminus \mathscr{D}$. Then there is a natural identification
  \[ 
    \Psi(X) \simeq \mathscr{Y}_{0, \rm log}
  \]
  where $\mathscr{Y}_{0, \rm log}$ is the Kato--Nakayama space of the base changed central fiber $\mathscr{Y}_0$, with its natural log structure induced by the pair $(\mathscr{Y}, \mathscr{D})$, treated as a space over $0_{\rm log} = \Sone$.

  \medskip
   
  \item (\'Etale comparison) Suppose that the residue field $k$ of $K$ is algebraically closed. Then there is a natural identification
  \[ 
    \widehat{\Psi}(X)  \simeq \widehat{\Pi}^\et(X)
  \]
  between the profinite completion of $\Psi(X)$ and of the \'etale homotopy type of $X$. 
  
  \medskip
  
  \item (De Rham comparison) Fix an isomorphism $K\simeq k(\!(t)\!)$. For $X$ of finite type, there is a functorially defined finite free graded $\cO_K$-module $H^*(X)$, endowed with a logarithmic connection $\nabla$ and a Griffiths-transverse Hodge filtration, and canonical isomorphisms
  \[ 
    H^*(X)\otimes_{\cO_K} K \simeq H^*_{\rm dR}(X/K), \quad H^*(X)\otimes_{\cO_K} \CC \simeq H^*(\widetilde\Psi(X), \CC),
  \]
  where the first isomorphism respects the connections and Hodge filtrations, and the second one identifies the monodromy operator with $\exp (-2\pi i \,{\rm res}_0 \nabla)$. Here $\widetilde\Psi(X)$ denotes the fiber of $\Psi(X)$ over a point of $\Sone$. 
 \end{enumerate} 
\end{thm}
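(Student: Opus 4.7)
The plan is to construct $\Psi$ via the ``spreading out'' approach illustrated in the motivating example, and then to verify the six properties. For $X$ locally of finite type over $K$, one chooses a finitely generated $\CC$-subalgebra $R\subset \cO_K$ containing a uniformizer $t$ and the coefficients of defining equations, and spreads $X$ out to a model $\mathscr{X}\to S^*:=\Spec R[t^{-1}]$. The analytification $S^*_{\an}$ admits a preferred homotopy class of maps $s\colon \Sone \to S^*_{\an}$ approximating the formal arc $\Spec K\to S^*$ (a ``small circle around $t=0$''), and one sets $\Psi(X):=s^*(\mathscr{X}_{\an})$ as a space over $\Sone$. The central technical lemma is the independence of this construction, up to canonical equivalence, of the choice of $(R,\mathscr{X},s)$: any two choices can be dominated by a common refinement, and the corresponding approximating circles become homotopic after shrinking, so the base-changed fibrations are equivalent.

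Properties \textbf{(1)}--\textbf{(3)} then follow rather formally. For \textbf{(1)}, a separated finite-type $X$ spreads to a quasi-projective $\mathscr{X}$ whose analytification has finite CW-type fibers over $S^*_{\an}$; restriction along the compact circle $s$ preserves this. For \textbf{(2)}, an $h$-hypercover of $X$ spreads out, after enlarging $R$, to an $h$-hypercover of $\mathscr{X}$; descent then reduces to $h$-descent for the classical Betti realization over $\CC$. For \textbf{(3)}, a smooth proper model over $\cO_K$ spreads to a smooth proper family $\mathscr{X}\to \Spec R$; by Ehresmann's theorem its analytification is a $C^\infty$ fiber bundle in a neighborhood of $\{t=0\}$, and trivializing along a small disk transverse to the approximating circle yields the asserted product $\mathscr{X}_{0,\an}\times \Sone$.

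Properties \textbf{(4)}--\textbf{(6)} are the substantive ones and require comparison with independently defined realizations. For \textbf{(4)}, one spreads the pair $(\mathscr{Y},\mathscr{D})$ out to an nc pair over $\Spec R$, constructs a natural map from the Kato--Nakayama space $\mathscr{Y}_{0,\log}$ to the real-oriented blowup of $\mathscr{X}_{\an}$ near $\{t=0\}$, and checks that the restriction to $\Sone$ is an equivalence by a local toric-chart computation; weak factorization then ensures the answer is independent of the chosen model. For \textbf{(5)}, the fiberwise comparison of \'etale and classical homotopy types (Artin--Mazur--Friedlander) extends to the family $\mathscr{X}\to S^*$, and continuity of the \'etale homotopy type under the filtered colimit of admissible $R$'s yields the identification with $\widehat{\Pi}^{\et}(X)$. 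For \textbf{(6)}, one takes $H^*(X)$ to be the relative log de Rham cohomology of a semistable model $\mathscr{Y}$ with its natural log structure; the de Rham identification is Grothendieck's algebraic de Rham comparison, while the Betti comparison and the monodromy--residue formula $T=\exp(-2\pi i\,\mathrm{res}_0 \nabla)$ come from Steenbrink's limiting Hodge structure and the nearby-cycles Riemann--Hilbert correspondence.

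The principal obstacle is property \textbf{(4)}. It asserts that the spreading-out construction, built from an algebraic model over $\CC[t]$, is canonically equivalent to the a priori unrelated Kato--Nakayama space of a formal semistable model; this requires both a careful geometric comparison via the real-oriented blowup and a birational argument (weak factorization, or its logarithmic refinement) to kill the dependence on the model. This equivalence is also the bridge that makes property \textbf{(6)} work, translating Steenbrink's limit Hodge structure into invariants of $\widetilde\Psi(X)$.
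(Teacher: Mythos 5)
Your construction and your treatment of (1), (3), (4) and (6) follow essentially the paper's route: spread out over smooth $\CC[t]$-subalgebras of $\CCs$ (the paper obtains these from N\'eron desingularization and needs, in addition, a Thom--Mather stratification argument plus a ``stratified N\'eron blowup'' to make $\mathscr{X}_\an\to S^*_\an$ a locally trivial fibration near the circle --- this is what makes ``restrict to a small circle'' homotopy-invariant and functorial), and then, for (4), compare the circle fiber with the fiber over $0_{\log}=\Sone$ inside the Kato--Nakayama space of a spread-out good model, using the Nakayama--Ogus fibration theorem and the fact that the trivial locus sits inside the Kato--Nakayama space as the interior of a manifold with boundary. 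Two remarks. First, you do not need weak factorization for (4): since $\Psi$ is already defined model-independently by spreading out, a single comparison $\Psi(\mathscr{Y}\setminus\mathscr{D})\simeq\mathscr{Y}_{0,\log}$ automatically shows the right-hand side depends only on the generic fiber; the paper reserves weak factorization for the rigid analytic theory, where spreading out is unavailable. Second, in (2) an $h$-hypercovering is an infinite simplicial diagram and does \emph{not} spread out to a single $R$; the paper must build compatible models of the finite truncations level by level and reduce, via a coskeletal approximation argument following Blanc, to bounded hypercoverings and then to \v{C}ech nerves before invoking Dugger--Isaksen and Blanc. Your phrase ``an $h$-hypercover of $X$ spreads out, after enlarging $R$'' elides this genuine difficulty.

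The substantive gap is in (5). Continuity of \'etale cohomology along $\CCt=\colim R'[t^{-1}]$ together with Artin--Mazur at each finite level identifies $\widehat\Pi^\et(X)$ with the limit of the $\widehat\Pi((\mathscr{X}_{R'})_{\an})$; but the remaining step --- that this limit agrees with $\widehat\Pi(\Psi(X))$, i.e.\ that the filtered system of (co)homologies of the \emph{total spaces} $(\mathscr{X}_{R'})_{\an}$ converges to that of the restriction to the circle, including non-abelian $H^1$ --- is precisely the hard content. It is a profinite homotopy-theoretic form of the comparison between algebraic and topological nearby cycles, and ``continuity\dots yields the identification'' does not supply it. The paper avoids this entirely by reducing, via $h$-descent for both sides, to good models and comparing $\Psi(U)=(\mathscr{X},\Mm_{\mathscr{D}})_{0,\log}$ with $U_\et$ through the Kummer \'etale sites of the log special and generic fibers, using the comparison of $(X,\Mm_X)_{\log}$ with $(X,\Mm_X)_{\ket}$, log proper base change, and Fujiwara--Gabber log purity. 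To keep your route you would need to import an SGA~7-style nearby-cycle comparison and upgrade it to profinite homotopy types; as written this step is missing.
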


The functor is defined using a spreading out procedure as in the motivating example, which \emph{a~priori} depends on a choice of an embedding $K\hookrightarrow \CCt$ lifting the given $\iota$. Property (1) is then easy to obtain. Generalizing results of Dugger--Isaksen and Blanc about homotopical descent for topological spaces, we show the $h$-descent property (2). This in turn, by Nagata compactification and resolution of singularities, allows us to reduce the computation of $\Psi(X)$ to the case when $X$ is as in (4); results of Nakayama and Ogus imply property (4). \emph{A posteriori}, we see that the functor $(\mathscr{Y}, \mathscr{D}) \mapsto \mathscr{Y}_{0, \rm log}$ satisfies a version of $h$-descent, and that the functor $\Psi$ depends only on the choice of $\iota$ (recall \cite{SerreConj} that for a scheme of finite type over a field $k\subseteq \CC$, the homotopy type of $X(\CC)$ may depend on the choice of the embedding of $k$ into $\CC$). We prove (5) and (6) using the log geometry description of (4). 

Summarizing, the functor is easy to define using spreading out, but easier to compute and compare with other objects using log geometry.

\subsection{Rigid analytic spaces}

Our second goal is to extend the above construction to smooth rigid analytic spaces over $K$. Our construction uses formal models as in the motivating example, and therefore the approach to rigid geometry due to Raynaud \cite{Raynaud,FormalRigidI} is the most appropriate.

\begin{thm}
Let $K$ be as above. There exists a functor $\Psi_{\rm rig}$ from the category smooth rigid analytic spaces over $K$ to the $\infty$-category of spaces over $\Sone$, enjoying the following properties:

  \medskip
  
\begin{enumerate}
  \item (Finiteness) If $\mathcal{X}$ is separated and quasi-compact, or more generally is of the form $\mathcal{Y}\setminus \mathcal{Z}$ where $\mathcal{Y}$ is quasi-compact and separated and $\mathcal{Z}\subseteq\mathcal{Y}$ is a closed analytic subset, then $\Psi_{\rm rig}(\mathcal{X})$ has the homotopy type of a finite CW-complex.

  \medskip
  
  \item (Sheaf property) The functor $\Psi_{\rm rig}$ satisfies homotopical descent with respect to admissible coverings.

  \medskip
  
  \item (Good reduction) If $\mathcal{X}$ is the general fiber of a smooth formal scheme $\mathfrak{X}$ over the valuation ring $\cO_K$, then there is a natural identification 
  \[
    \Psi_{\rm rig}(\mathcal{X}) \simeq \mathfrak{X}_{0,\an} \times \Sone.
  \]

  \medskip
  
  \item (Semistable reduction) Let $\mathfrak{X}$ be a flat and regular formal scheme, separated and of finite type over $\cO_K$, such that the reduced special fiber $(\mathfrak{X}_{k})_{\rm red}$ is a divisor with normal crossings. Let $\mathcal{X} = \mathfrak{X}_{\rm rig}$ be its generic fiber, which is a smooth quasi-compact and separated rigid analytic space. Then there is a natural identification
  \[ 
    \Psi_{\rm rig}(\mathcal{X}) \simeq \mathfrak{X}_{0, \rm log}
  \]
  where $\mathfrak{X}_{0, \rm log}$ is the Kato--Nakayama space of the base changed special fiber $\mathfrak{X}_0$ with its natural log structure induced by the pair $(\mathfrak{X}, \mathfrak{X}_{k})$, treated as a space over $0_{\rm log} = \Sone$.

  \medskip
  
  \item (Comparison for schemes) If $X$ is a smooth scheme over $K$, with associated rigid analytic space $X_{\rm an}$, then there is a natural identification
  \[ 
    \Psi(X) \simeq \Psi_{\rm rig}(X_{\rm an}).
  \]
\end{enumerate}
\end{thm}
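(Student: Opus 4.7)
The plan is to define $\Psi_{\rm rig}(\mathcal{X})$ via a semistable formal model, directly mirroring the treatment of the Tate curve in the introduction. For $\mathcal{X}$ quasi-compact, separated, and smooth over $K$, I first invoke semistable reduction for smooth rigid analytic spaces (available in equal characteristic zero via a combination of Temkin's stable modification and functorial resolution of singularities for formal schemes) to produce a flat regular formal model $\mathfrak{X}$ over $\cO_K$ whose reduced special fiber is a divisor with normal crossings, and set
\[
  \Psi_{\rm rig}(\mathcal{X}) := \mathfrak{X}_{0,\log},
\]
viewed as a space over $0_{\log}=\Sone$ via the structure morphism $\mathfrak{X}_0\to 0_{\log}$. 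Property~(4) then holds by definition, and property~(3) follows from the standard computation of Kato--Nakayama spaces: if $\mathfrak{X}$ is smooth over $\cO_K$, the log structure on $\mathfrak{X}_0$ is pulled back from $0_{\log}$, and therefore $\mathfrak{X}_{0,\log}\simeq\mathfrak{X}_{0,\an}\times\Sone$.

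The crux of the argument is to show that $\mathfrak{X}_{0,\log}$ is independent, up to canonical homotopy equivalence, of the chosen semistable formal model. By Raynaud's theorem, any two formal models of $\mathcal{X}$ are dominated by a common admissible blowup, and after applying functorial resolution one may assume this dominating model is again semistable. It thus suffices to show that any morphism $\mathfrak{X}'\to\mathfrak{X}$ between two such models induces a homotopy equivalence $\mathfrak{X}'_{0,\log}\to\mathfrak{X}_{0,\log}$. Using the logarithmic/formal version of the Weak Factorization Theorem of Abramovich--Karu--Matsuki--W{\l}odarczyk, this reduces to the case of a single blowup along a smooth center compatible with the snc stratification. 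For such a blowup the fibers of the induced map on Kato--Nakayama spaces admit an explicit description via the Nakayama--Ogus formalism, and a direct toric calculation shows that each fiber is contractible. This step generalizes the blowup-of-a-point argument sketched for the Tate curve and is the principal obstacle of the theorem.

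Beyond the quasi-compact separated case, I extend $\Psi_{\rm rig}$ by choosing an admissible covering $\{\mathcal{U}_i\}$ and setting
\[
  \Psi_{\rm rig}(\mathcal{X}) := \hocolim_{[n]\in\Delta^{\rm op}} \Psi_{\rm rig}(\mathcal{U}_\bullet),
\]
where $\mathcal{U}_\bullet$ is the associated \v{C}ech nerve; each finite intersection is again quasi-compact, separated, and smooth. Independence of the covering is a standard refinement/cofinality argument, and this directly yields the sheaf property~(2). Finiteness~(1) in the quasi-compact separated case follows from the well-known finiteness of Kato--Nakayama spaces of proper fs log schemes over $\CC$; for the complement case $\mathcal{X}=\mathcal{Y}\setminus\mathcal{Z}$, I apply resolution to a formal model of $\mathcal{Y}$ so that the closure of $\mathcal{Z}$ sits inside an snc divisor together with the reduced special fiber, reducing to the semistable-with-boundary setup of Theorem~1(4).

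Finally, for the comparison~(5), given a smooth scheme $X$ over $K$, I reduce to the quasi-compact separated case and choose a compactification $\scY\supseteq X$ as in Theorem~1(4), with $\scD=\scY\setminus X$ an snc divisor containing the reduced special fiber. The formal completion $\widehat{\scY}$ along its special fiber, together with the horizontal part of $\scD$, serves simultaneously as the algebraic input computing $\Psi(X)$ via Theorem~1(4) and as a semistable formal model (with boundary) of $X_{\rm an}$; both constructions identify $\Psi(X)$ and $\Psi_{\rm rig}(X_{\rm an})$ with the same Kato--Nakayama space $\scY_{0,\log}$. Functoriality of the resulting identification follows from that of the Kato--Nakayama construction, and the non-quasi-compact case follows by admissible/$h$-descent on both sides.
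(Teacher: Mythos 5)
Your overall strategy --- formal models, Raynaud's theorem, weak factorization reducing to simple blowups, an explicit computation showing the fibers of the induced map on Kato--Nakayama spaces are contractible, and admissible descent to leave the quasi-compact case --- is the same as the paper's (which uses the formal-scheme version of weak factorization due to Abramovich--Temkin, the localization formalism of \S\ref{ss:localiz} to get an honest functor rather than an assignment well-defined up to homotopy, and Smale's theorem, for which you also need \emph{local} contractibility of the fibers, not just contractibility). However, there is a genuine gap in your proof of the comparison (5). For a smooth scheme $X$ of finite type over $K$ that is not proper, the analytification $X_{\rm an}$ is \emph{not} quasi-compact (e.g.\ $\mathbf{A}^1_{K,\an}$), so you cannot ``reduce to the quasi-compact separated case'': $\Psi_{\rm rig}(X_{\rm an})$ is only defined via admissible descent from quasi-compact opens. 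Your compactification $(\scY,\scD)$ produces a good formal model \emph{of the pair} $(\scY_{\an},\scD_{K,\an})$, whose associated space is $\scY_{0,\log}$; but what must be shown is that this agrees with $\Psi_{\rm rig}$ of the open complement $X_{\rm an}=\scY_{\an}\setminus\scD_{K,\an}$, computed by descent. This is precisely the Purity Theorem (Theorem~\ref{thm:purity}) of the paper, whose proof is not a formal descent argument: it requires exhibiting a cofinal family of quasi-compact opens $\mathcal{X}_i\subseteq X_{\rm an}$ (cut out via specialization maps to the special fibers of a cofiltered system of models) together with the nontrivial fact (Lemma~\ref{lemma:kn-vert}, resting on Nakayama--Ogus) that the inclusion of the vertical locus induces a homotopy equivalence on Kato--Nakayama spaces. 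Your closing claim that ``the non-quasi-compact case follows by admissible/$h$-descent on both sides'' cannot be repaired as stated: $h$-descent for $\Psi_{\rm rig}$ is an open question (Question~\ref{question:descent} in the paper), and admissible descent alone does not relate the cover of the non-quasi-compact space $X_{\rm an}$ to the single compact space $\scY_{0,\log}$ without the purity input.

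A secondary, smaller point: your independence-of-model argument should be organized so that it produces a functor into the $\infty$-category $\SpSone$, not merely an object well-defined up to homotopy equivalence. The paper does this by showing that admissible blowups of good formal models admit a calculus of right fractions and that $\Psi_{\log}$ inverts simple admissible blowups, so that it factors uniquely through the ($\infty$-categorical) localization $\widehat{\mathbf{GM}}_{\cO_K}[W^{-1}]\simeq\mathbf{RigPairs}^{\rm qcs}_K$; the zigzag bookkeeping of Definition~\ref{def:weak.fact} and Lemma~\ref{lemma:weak.fact} is what turns weak factorization into the statement that \emph{all} admissible blowups are inverted. Without some version of this, ``independence of the covering is a standard refinement/cofinality argument'' in your descent step does not quite parse, since you have not yet defined $\Psi_{\rm rig}$ as a functor on morphisms.
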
 

Here, the functor is actually \emph{defined} using (4). One needs to show directly that this is independent of the choice of a model $\mathfrak{X}$ (this step was easy with spreading out). We deal with this using a version of the Weak Factorization Theorem due to Abramovich and Temkin \cite{weakfactorization}, which turns the statement into a computation of the fibers of the map $\mathfrak{X}'_{0, \rm log}\to \mathfrak{X}_{0, \rm log}$ induced by a~simple admissible blowing up $\mathfrak{X}'\to \mathfrak{X}$. Assertion (5) is not directly obvious, as the analytification of a~non-proper algebraic variety is not quasi-compact; to this end, we prove a general ``Purity Theorem'' (Theorem~\ref{thm:purity}) comparing the homotopy types associated to a quasi-compact rigid snc pair $(\mathcal{Y}, \mathcal{D})$ and the non-quasi compact complement $\mathcal{X} = \mathcal{Y}\setminus \mathcal{D}$.

We illustrate the construction with two classical examples: the Tate curve and the non-archimedean Hopf surface.

We do not know if $\Psi_{\rm rig}$ satisfies $h$-descent (Question~\ref{question:descent}). Using resolution of singularities, this property would allow one to extend $\Psi_{\rm rig}$ to all rigid analytic spaces.

\subsection{Comparison with other work}

A stable version of the functor $\Psi$ has been constructed previously by Ayoub \cite{ayoub_2010}. In \cite{stewart-vologodsky} Stewart and Vologodsky use the Kato--Nakayama space of the central fiber of a semistable model to define a mixed Hodge structure for a smooth projective variety over $\CCt$ (see also \S\ref{sec:dr}). The cohomology groups $H^i(\mathfrak{X}, \mathbf{Z})$ for a semistable formal scheme $\mathfrak{X}$ over $\CCs$ have been independently considered by Berkovich \cite{berkovich} using similar methods.

\subsection{Outline of the paper}

In Section~\ref{sec:spreading}, we prove the existence of topologically nice models of schemes of finite type over $\CCs$ defined over suitable finitely generated smooth $\CC[t]$-subalgebras (Corollary~\ref{cor:models}), which allows us to first construct the functor $\Psi$ for separated schemes of finite type over $\CCt$ (\S\ref{sec:construction}). We compare the construction to the natural one over the field of convergent power series $\CC(\!\{t\}\!)$ (\S\ref{sec:conv}). In \S\ref{sec:descent}, we show that the functor $\Psi$ defined thus far satisfies homotopical descent in the $h$-topology, and in particular it naturally extends to all schemes locally of finite type. 

Section~\ref{sec:log.geometry} deals with log geometry and topological properties of Kato--Nakayama spaces. We then construct a functor $\Psi_{\rm log}$ and compare it with the functor $\Psi$ defined previously. In the final \S\ref{sec:dr} we compare the de Rham cohomology of $X/K$ with the singular cohomology of $\Psi(X)$. 

In Section~\ref{sec:etale}, we prove that $\Psi(X)$ agrees with the \'etale homotopy type up to profinite completion (Theorem~\ref{thm:etale-comparison}). To this end, we need to prove an auxiliary result (Theorem~\ref{thm:kummer-csst}), a variant of the results of \cite{knvsroot,loghomotopy}, comparing the Kummer \'etale homotopy type with the homotopy type of the Kato--Nakayama space.

The final Section~\ref{s:rigid} deals with rigid analytic spaces. After some preliminaries on localizations of categories (\S\ref{ss:localiz}) and rigid spaces via formal schemes (\S\ref{ss:raynaud}), we set the stage with suitable categories of good formal models in \S\ref{ss:good-models-rigid}. In \S\ref{sec:key.calculation}, we perform the key calculation of fibers of the maps on Kato--Nakayama spaces induced by ``simple blowups,'' the types of blowups appearing in the Weak Factorization Theorem. In \S\ref{sec:construction-rigid}, we combine this with a theorem of Smale to construct the realization functor $\Psi_{\rm rig}$ for quasi-compact and separated rigid analytic spaces. In the subsequent \S\ref{sec:descent-rigid}, we first show that this functor satisfies homotopical descent with respect to the admissible topology, and hence it extends to all smooth rigid analytic spaces, and then show the purity result (Theorem~\ref{thm:purity}), allowing us to compare $\Psi_{\rm rig}(X_\an)$ with $\Psi(X)$. The final \S\ref{ss:examples}, we study some examples and state two open questions.

\medskip
\noindent {\bf Acknowledgements.} We are grateful to David Treumann for originally posting a question on MathOverflow \cite{TreumannMO} that motivated us to work on this project. We enjoyed useful conversations with Ben Antieau, Bhargav Bhatt, Elden Elmanto, H\'el\`ene Esnault, Tyler Foster, Denis Nardin, Arthur Ogus, Martin Olsson, David Rydh, Karol Szumiło, Vadim Vologodsky, and Olivier Wittenberg. We are especially grateful to David Carchedi and Mauro Porta for their generous help with infinity categories. We thank Joe Berner for bringing \cite{berkovich} to our attention.

\medskip

M.\ T.\ was supported by a PIMS postdoctoral fellowship and by EPSRC grant EP/R013349/1.

P.\ A.\ was supported by NCN SONATA grant number {\tt 2017/26/D/ST-1/00913} and by the ERC Starting Grant 802787 KAPIBARA.

This material is based upon work supported by the National Science Foundation under Grant No.\ 1440140, while P.\ A.\ was in residence at the Mathematical Sciences Research Institute in Berkeley, California, during the semester of Spring 2019.

The paper was partially prepared during the Simons Semester \emph{Varieties: Arithmetic and Transformations} which is supported by the grant 346300 for IMPAN from the Simons Foundation and the matching 2015-2019 Polish MNiSW fund.

Part of this work was conducted during the semester \emph{Periods in Number Theory, Algebraic Geometry and Physics} at the Hausdorff Institute for Mathematics in Bonn. P.\ A.\ would like to thank the institute for hospitality.

\medskip
\noindent {\bf Note on the use of $\infty$-categories.} For the basics about $\infty$-categories and $\infty$-topoi we refer the reader to the canonical \cite{HTT} (see \cite{groth} for a shorter introduction). The advantage of our functor taking values in the $\infty$-category $\SpSone$ of spaces over the circle and not simply the slice category of the usual homotopy category of spaces is that it allows us to state and prove that this functor is a sheaf (more precisely, a hypercomplete cosheaf), which in turn makes descent arguments possible.  The reader who is uncomfortable with the language can at first ignore all the occurrences of ``$\infty$'' in the text, and just keep in mind that our constructions take values in some homotopy category of spaces.

\medskip
\noindent {\bf Notations.} For a scheme $X$ (or formal scheme, or rigid analytic space) and an open subscheme $U\subseteq X$ with complement $D=X\setminus U$, we denote by $\mathscr{M}_{D}$ the ``compactifying'' log structure on $X$ given by the subsheaf of $\cO_X$ of regular functions that are invertible in $U$, equipped with the inclusion into $\cO_X$. For a log scheme (or formal scheme, or rigid analytic space) $(X,\Mm_X)$ we denote by $(X,\Mm_X)_\triv$ the locus where the log structure is trivial, i.e. where ${\Mm}_{X,x}=\cO_{X,x}^\times$.

We use different scripts to indicate types of geometric objects: $X, Y, \ldots$ for schemes, $\mathfrak{X}, \mathfrak{Y}, \ldots$ for formal schemes, $\mathcal{X}, \mathcal{Y}, \ldots$ for rigid analytic spaces, and $\mathscr{X}, \mathscr{Y}, \ldots$ for models of schemes obtained by a ``spreading out'' procedure.

If $\cO_K$ is a discrete valuation ring whose residue field $k$ is endowed with an embedding ${\iota\colon k \hookrightarrow \CC}$, we often use subscript $(-)_0$ to denote base change of a (formal) scheme over $\cO_K$ along the composition $\cO_K \to k\to \CC$. We denote by $\mathfrak{m}$ the maximal ideal of $\cO_K$, and by $(-)_k$ the  base change along $\cO_K\to k$.

We use the symbols $\mathbf{Sch}$, $\mathbf{Sm}$, $\mathbf{FSch}$ for categories of schemes, smooth schemes, and formal schemes, respectively. The superscripts ${\rm lft, sft, ft, adm, qc, qcs, qcqs}$ mean respectively: locally finite type, separated and finite type, finite type, admissible, quasi-compact, quasi-compact and separated, quasi-compact and quasi-separated.

\tableofcontents

\section{Spreading out}\label{sec:spreading}

The goal of this section is to construct a functor $\Psi$ associating a topological space fibered over $\Sone$ to every scheme locally of finite type over the field $\CCt$. The target of this functor will be the $\infty$-category of spaces over $\Sone$, i.e.\ the topological nerve of the slice category $\mathbf{Top}_{/\Sone}$. In the next section we will give an alternative construction via log geometry, and define $\Psi$ over an arbitrary complete discretely valued field $K$, equipped with an embedding $\iota \colon k\hookrightarrow \CC$ of the residue field $k$ into the field of complex numbers. 

The idea of the construction is the following: a scheme of finite type $X$ over $\CCt$ admits a~model $\Xx/S$ over the spectrum of some smooth finitely generated $\CC[t]$-subalgebra of $\CCs$. The base $S$ comes with a distinguished point $0\colon \Spec \CC\to \Spec \CCs\to S$, and we base change $\Xx/S$ to a small circle $\Sone\to S_\an$ wrapping around the divisor $\{t=0\}\subseteq S$ in a neighborhood of $0\in S$, obtaining $\Psi(X)$. Some work needs to be done to upgrade this to an honest functor to the $\infty$-category of spaces.

\subsection{Models and stratifications}
\label{sec:models}

The following is a special case of N\'eron desingularization (see \cite{Neron}, \cite[\S 4]{ArtinAlgApprox}, or \cite[Tag 0BJ1]{stacks-project}).

\begin{prop}
    Let $f\colon \Lambda\to \cO$ be a homomorphism of discrete valuation rings containing $\mathbf{Q}$. Suppose that $f$ maps a uniformizer in $\Lambda$ to a uniformizer in $\cO$. Then $\cO$ is a filtered colimit of smooth $\Lambda$-algebras of finite type $R$ such that each $R\to \cO$ is injective.
\end{prop}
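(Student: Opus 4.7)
The statement is a direct application of N\'eron desingularization for discrete valuation rings \cite[Tag~0BJ1]{stacks-project}, so the plan is to verify the hypotheses, apply the theorem, and then upgrade the colimit presentation to one with injective structural maps.

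\emph{Verifying regularity.} I would first check that $f\colon \Lambda \to \cO$ is a regular morphism of Noetherian rings, i.e., flat with geometrically regular fibres. Flatness holds because $\cO$ is a torsion-free $\Lambda$-module: the image $f(\pi_\Lambda)$ of a uniformizer of $\Lambda$ is a uniformizer of $\cO$ and in particular nonzero. Since $f(\pi_\Lambda)$ agrees with $\pi_\cO$ up to a unit, the generic fibre of $\Spec \cO \to \Spec \Lambda$ is $\cO[f(\pi_\Lambda)^{-1}] = \mathrm{Frac}(\cO)$, and the special fibre is $\cO/f(\pi_\Lambda)\cO = k_\cO$. Both are fields; moreover $\mathbf{Q} \subseteq \Lambda \subseteq \cO$ forces the residue fields $k_\Lambda$ and $k_\cO$ to be of characteristic zero as well, so every field extension in sight is automatically separable and hence geometrically regular.

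\emph{Applying the theorem.} N\'eron desingularization then presents $\cO$ as a filtered colimit $\colim_i R_i$ of smooth finite-type $\Lambda$-algebras $R_i$ with compatible maps $R_i \to \cO$. To force injectivity I would use the standard refinement: every finite-type $\Lambda$-subalgebra $A \subset \cO$ is contained in a smooth finite-type $\Lambda$-subalgebra of $\cO$. (Apply N\'eron to the inclusion $A \hookrightarrow \cO$, which is still a regular map between Noetherian DVRs/algebras by the same fibre computation carried out for $\Lambda \to \cO$.) The collection of smooth finite-type $\Lambda$-subalgebras of $\cO$ is then filtered under inclusion --- any two are contained in a common one, obtained by desingularizing the compositum --- and its colimit equals $\cO$ since already the filtered union of all finite-type $\Lambda$-subalgebras of $\cO$ exhausts $\cO$. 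Each structural map $R \hookrightarrow \cO$ in this smaller system is injective tautologically.

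\emph{Main obstacle.} The substance of the argument is entirely packed into N\'eron desingularization, which I treat as a black box; the remaining work is bookkeeping that becomes trivial in characteristic zero thanks to the automatic separability of field extensions.
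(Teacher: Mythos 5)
Your overall strategy matches the paper's, which offers no proof at all beyond citing N\'eron desingularization; your verification that $\Lambda\to\cO$ is a regular morphism (flat because $\cO$ is torsion-free over a DVR, with characteristic-zero and hence geometrically regular fibres) is correct and is exactly the hypothesis check the citation presupposes.

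There is, however, a genuine gap in your upgrade to injectivity. The statement you need --- every finite-type $\Lambda$-subalgebra $A\subseteq\cO$ is contained in a smooth finite-type $\Lambda$-subalgebra of $\cO$ --- does not follow from the bare colimit presentation, and your parenthetical justification fails: the inclusion $A\hookrightarrow\cO$ is in general \emph{not} a regular morphism, indeed not even flat. For instance, take $A=\CC[x,y]\subseteq\CC[\![t]\!]$ with $x\mapsto t$ and $y\mapsto g(t)$ of positive valuation and transcendental over $\CC(t)$: the fibre over the maximal ideal $(x,y)$ is $\Spec\CC$, so flatness would force $\dim\CC[\![t]\!]=\dim A_{(x,y)}+0=2$, a contradiction. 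Nor does the general (Popescu) form of N\'eron desingularization rescue the argument: it factors $A\to\cO$ through a smooth finite-type $\Lambda$-algebra $D$, but $D\to\cO$ need not be injective, so the image of $D$ in $\cO$ need not be smooth. The injectivity asserted in the proposition is instead built into the DVR-specific form of N\'eron's theorem that the paper cites: the proof by N\'eron blowups (dilatations) produces an increasing chain of finite-type $\Lambda$-subalgebras $A\subseteq A_1\subseteq\cdots\subseteq A_n\subseteq\cO$ with $A_n$ smooth over $\Lambda$ at the prime $A_n\cap\mathfrak{m}_{\cO}$, and localizing $A_n$ at a suitable element mapping to a unit of $\cO$ yields a smooth finite-type subalgebra containing $A$. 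Once you quote that form of the theorem, your remaining bookkeeping (filteredness via composita, exhaustion of $\cO$) goes through verbatim.
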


Applying this to $\Lambda= \CC[t]_{(0)}$ and $\cO = \CCs$, we deduce that $\CCs$ is a filtered colimit of smooth $\CC[t]$-algebras $R$ of finite type, each injecting into $\CCs$. In geometric terms, each $S = \Spec R$ is a complex variety endowed with a smooth map to $\mathbf{A}^1_\CC$ and a formal curve $h\colon \Spec \CCs \to X$ which is ``completely non-algebraic'' in the sense that no element of $B$ vanishes along the image of $h$, and such that $h^*(t) = t\in \CCs$ (see Figure~\ref{fig:SpecR}). We will denote by $0\in S$ the image of $\Spec \CC\to \Spec \CCs\to S$. The same reasoning can be used for $\cO = \CC\{ t\}$, the ring of power series with positive radius of convergence.

\begin{figure}[h]
  \centering
  \includegraphics[width=.5\textwidth]{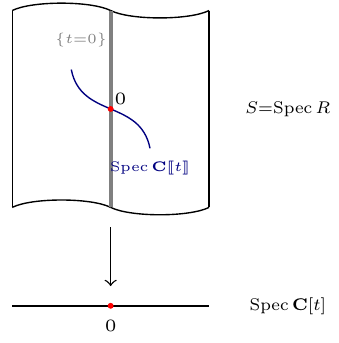} 
  \caption{$S=\Spec R$} 
  \label{fig:SpecR}
\end{figure}

The category of schemes of finite type over $\CCs$ is equivalent to the $2$-categorical colimit of the categories of schemes of finite type over such $R$. By a \emph{model} of a scheme of finite type $X/\CCs$ we shall (until the end of this section) mean a choice of a finite type scheme $\Xx/S=\Spec R$ for some $R$ and an isomorphism $\Xx\otimes_R \CCs \simeq X$.  

It will be useful to choose the models $\Xx/S$ as simple as possible. The first result we shall need to this end is the following.

\begin{thm}[\cite{SMT}] \label{thm:stratif}
  Let $f\colon \Xx\to S$ be a separated morphism between schemes of finite type over $\CC$. Then there exists a finite stratification $|S| = \bigsqcup S_i$ of $|S|$ by connected locally closed subsets $S_i$ such that the restrictions
  \[ 
    f_\an\colon (\Xx_i)_\an \to (S_i)_\an \quad \text{where} \quad \Xx_i =f^{-1}(S_i)
  \]
  are topologically locally trivial fibrations (homeomorphic to a product locally on the base).
\end{thm}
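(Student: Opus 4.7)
My plan is to deduce the statement from the standard Whitney stratification theory together with Thom's first isotopy lemma. The approach has three stages: compactify, stratify, and trivialize.

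First, using Nagata compactification, I factor $f$ as $\Xx \xhookrightarrow{j} \bar{\Xx} \xrightarrow{\bar{f}} S$ with $j$ a dense open immersion and $\bar{f}$ proper. Set $D = \bar{\Xx} \setminus \Xx$, a closed subscheme. Because complex algebraic varieties admit algebraic Whitney stratifications (after Hironaka / Verdier), I then choose a finite algebraic stratification of $\bar{\Xx}_\an$ compatible with $D_\an$ (so that $D_\an$ is a union of strata), together with a finite algebraic stratification $\{S_i\}$ of $|S|$, in such a way that $\bar{f}_\an$ is a \emph{stratified map}: each stratum $Y_j$ of $\bar{\Xx}_\an$ maps submersively onto exactly one stratum $S_{i(j)}$ of $S_\an$, and the preimage of each $S_i$ is a union of strata of $\bar{\Xx}_\an$. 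The existence of such compatible stratifications is the core input from stratification theory; in the complex algebraic setting it is ensured by repeatedly refining along the loci where Whitney's conditions fail for the map, which are themselves constructible and hence shrink in dimension after finitely many steps.

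Next, I apply Thom's first isotopy lemma to the proper stratified map $\bar{f}_\an$: over each stratum $S_i$, the restriction $\bar{f}_\an \colon \bar{f}_\an^{-1}(S_i) \to S_i$ is a locally trivial fibration of stratified spaces. Since $D_\an$ is a union of strata, the trivializations carry the $D_\an$-part to itself, and therefore their restrictions to the open complement $\Xx_i = f_\an^{-1}(S_i) = \bar{f}_\an^{-1}(S_i) \setminus D_\an$ are again local trivializations. This yields local triviality of $f_\an \colon (\Xx_i)_\an \to (S_i)_\an$, as required. Finally, if any stratum $S_i$ happens to be disconnected, I refine the stratification by taking connected components; since each algebraic stratum is a smooth complex variety with finitely many connected components, the resulting refined stratification is still finite, locally closed, and connected, and trivializations restrict accordingly.

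The main obstacle I expect is the stratification step: arranging simultaneously that the stratification of $\bar{\Xx}_\an$ is Whitney, that $D_\an$ is a union of strata, and that $\bar{f}_\an$ is a stratified submersion over a compatible stratification of $S_\an$. This is exactly where the cited reference does the real work, and I would invoke it as a black box; the rest (Nagata compactification, taking connected components, and checking that removing $D_\an$ preserves local triviality) is essentially formal.
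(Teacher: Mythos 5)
Your proposal is correct and follows essentially the same route as the paper's proof: Nagata compactification, a Whitney stratification compatible with the boundary $Z=\overline{\Xx}\setminus\Xx$ making $\overline f$ a stratified map (the paper cites \cite[1.7]{SMT} for exactly this), Thom's first isotopy lemma for the proper map $\overline f_\an$, and the observation that stratum-preserving trivializations restrict to the open complement. The only cosmetic difference is your final refinement to connected components, which the paper absorbs into the cited stratification statement.
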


\begin{proof}
By Nagata's theorem, there exists a factorization
\[ 
  \xymatrix{
    \Xx \ar[r]^j \ar[dr]_f & \overline \Xx \ar[d]^{\overline f} \\
    & S
  }
\] 
where $\overline f$ is proper and $j$ is a dense open immersion. Let $Z = \overline \Xx \setminus \Xx$ (with reduced subscheme structure). 

By \cite[1.7]{SMT} (applied to $\overline f$ and $\mathscr{C} = \{Z\}$) there exist finite Whitney stratifications $|S| = \bigsqcup S_i$ and $|\overline \Xx| = \bigsqcup \Xx'_j$ into smooth locally closed connected subsets such that $\overline f$ becomes a stratified map and $Z$ is a union of some of the strata. By definition, $\overline f$ being stratified means that each $\overline\Xx_i := \overline f^{-1}(S_i)$ is a union of some $\Xx'_j$, and that the restrictions $f\colon \Xx'_j\to S_i$ are smooth. 

Since $\overline f$ is proper, Thom's first isotopy lemma \cite[1.5]{SMT} implies that $\overline f_\an\colon ( \overline \Xx_i)_\an \to (S_i)_\an$ are locally trivial fibrations in a stratum-preserving way. Since $\Xx_i$ is the union of some $\Xx'_j$, we conclude that $f_\an\colon (\Xx_i)_\an \to (S_i)_\an$ is a locally trivial fibration as well.
\end{proof}

\begin{remark} \label{rmk:finite-dgm}
  The assertion of Theorem~\ref{thm:stratif} holds more generally for any finite diagram 
  \[
    \mathscr{X}\colon J\to \mathbf{Sch}^{\rm sft}_S
  \]
  of separated schemes of finite type over $S$, where by ``locally trivial fibration'' we mean that locally around each $x\in (S_i)_\an$ there exist $(S_i)_\an$-isomorphisms $\mathscr{X}(j)_{i,\an}
   \simeq (\mathscr{X}(j)_{i,\an})_x \times (S_i)_\an$ such that for every $f\colon j\to j'$ in $J$, the following square commutes
  \[ 
    \xymatrix{
      \mathscr{X}(j)_{i,\an} \ar[r]^-\sim \ar[d]_{\mathscr{X}(f)} & (\mathscr{X}(j)_{i,\an})_x \times (S_i)_\an \ar[d]^{(\mathscr{X}(f)_\an)_x\times {\rm id}} \\
      \mathscr{X}(j')_{i,\an} \ar[r]^-\sim & (\mathscr{X}(j')_{i,\an})_x \times (S_i)_\an.
    }
  \]
  For the proof, we first note that the proof of Theorem~\ref{thm:stratif} easily extends to the case where $\mathscr{X}$ is endowed with a finite stratification. Then we can apply this result to the product $\prod_{j\in J} \mathscr{X}(j)$ with the stratification coming from the graphs of the morphisms $\mathscr{X}(j')\to \mathscr{X}(j)$.
\end{remark}

The following ``stratified version'' of N\'eron desingularization allows one to simplify the stratifications obtained above to $S= \{t\neq 0\} \sqcup \{t=0\}$.

\begin{lemma} \label{lemma:stratifiedneron}
Let $S=\Spec R$ for a smooth $\CC[t]$-subalgebra $R\subseteq \CCs$, and let $|S| = \bigsqcup S_i$ be a finite stratification of $|S|$ by locally closed subsets. Then there exists a smooth $\CC[t]$-subalgebra $R'\subseteq \CCs$ of finite type containing $R$ and such that the image of $S'=\Spec R'\to S$ intersects at most two strata $S_i$ (namely, the open stratum and the one containing $0\in S$).
\end{lemma}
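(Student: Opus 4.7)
I will construct $R'$ in two stages: a localization to kill bad strata whose closure avoids $0$, followed by a sequence of blowups of the lift of $0$ that preserves both smoothness over $\CC[t]$ and the embedding into $\CCs$. For the first stage, pick for each bad stratum $S_i$ with $0 \notin \overline{S_i}$ an element $f_i \in I(\overline{S_i})$ with $f_i(0) \neq 0$; since $\CCs$ is local with maximal ideal $(t)$, such $f_i$ is a unit in $\CCs$, so $R[\prod f_i^{-1}] \subseteq \CCs$ remains a smooth finite-type $\CC[t]$-subalgebra and its spectrum maps onto $\bigcap D(f_i) \subseteq S$, avoiding every such $\overline{S_i}$. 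After this reduction we may assume $\overline{S_i} \ni 0$ for every bad $S_i$.

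For the second stage, set $W_0 = \Spec R$ and $p_0 = 0$, and iterate: given $W_k$ smooth affine over $\CC[t]$ with a lift $\widetilde h_k \colon \Spec \CCs \to W_k$ whose closed-point image $p_k$ lies on the strict transform $\widetilde Z_{i,k}$ of some bad $\overline{S_i}$, blow up $W_k$ at $p_k$ and restrict to the affine chart corresponding to the $t$-direction in the projective exceptional divisor --- if local coordinates at $p_k$ are $(t, x_1, \ldots, x_d)$, this chart is $\Spec \CC[t, y_1, \ldots, y_d]$ with $x_i = t y_i$. This chart is smooth over $\CC[t]$ (the other standard charts are not), and it contains the lift of $h$, because $h^* t = t$ is a uniformizer of $\CCs$ so the tangent direction of $\widetilde h_k$ at $p_k$ has nonzero $t$-component. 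The invariant $n_i^{(k)} := v_t\bigl(\widetilde h_k^*\, I(\widetilde Z_{i,k})\bigr)$ is a finite nonnegative integer, since $h$ does not factor through $\overline{S_i}$, and the standard blowup identity $\pi^{-1} I(\widetilde Z_{i,k}) = I(\widetilde Z_{i,k+1}) \cdot I(E)^{\mathrm{mult}_{p_k}(\widetilde Z_{i,k})}$ combined with $\widetilde h_{k+1}^* I(E) = (t)$ yields $n_i^{(k+1)} = n_i^{(k)} - \mathrm{mult}_{p_k}(\widetilde Z_{i,k}) \leq n_i^{(k)} - 1$ whenever $p_k \in \widetilde Z_{i,k}$. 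Hence after at most $\sum_i n_i^{(0)}$ steps, $p_N$ lies on no $\widetilde Z_{i,N}$.

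To conclude, pick $f \in \Gamma(W_N, \cO_{W_N})$ vanishing on $\bigcup_i \widetilde Z_{i,N}$ but not at $p_N$, and set $R' := \Gamma(W_N, \cO_{W_N})[f^{-1}]$. Then $R'$ is smooth of finite type over $\CC[t]$, contains $R$ (all arrows in the tower are birational and dominant), and embeds into $\CCs$ via $\widetilde h_N$ (which sends the generic point of $\Spec \CCs$ to that of $W_N$). The image of $\Spec R' \to \Spec R$ lies in $U_0 \cup S_0$: over the isomorphism locus $\Spec R \setminus \{0\}$ the strict transforms of the $\overline{S_i}$ have been removed by inverting $f$, so no bad $S_i \subseteq \overline{S_i} \setminus \{0\}$ is hit, while the exceptional locus maps entirely to $\{0\} \subseteq S_0$.

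The main obstacle is the multiplicity-drop computation when the strata $\overline{S_i}$ are singular or share tangent directions at $p_k$; it is cleanest to first apply embedded resolution to the pair $(\Spec R, \bigcup_i \overline{S_i})$ --- using only centers smooth over $\CC[t]$ so that smoothness of the $W_k$ is maintained --- and then invoke the standard tangency-reduction for divisors with normal crossings.
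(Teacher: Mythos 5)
Your strategy coincides with the paper's: iterate the blowup of the lifted origin restricted to the $t$-chart (the \emph{dilatation}, or N\'eron blowup, in the paper's terminology), note that the arc $\Spec\CCs$ lifts into that chart because $\nu(x_i)\geq\nu(t)=1$, and terminate because a $t$-adic valuation attached to each bad stratum strictly decreases; your preliminary and final localizations correspond to the paper's ``replace $S$ with an open neighborhood of $0$.'' The gap is in the inductive step. The identity $\pi^{-1}I(\widetilde Z_{i,k})=I(\widetilde Z_{i,k+1})\cdot I(E)^{\mathrm{mult}_{p_k}(\widetilde Z_{i,k})}$ is a statement about Cartier divisors; for a stratum closure of codimension $\geq 2$, or one that is singular at $p_k$, it fails. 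For instance, for the cuspidal curve $Z=\{y^2=x^3,\ z=0\}\subseteq\AA^3$ blown up at the origin, in the $x$-chart one finds $\pi^{-1}I=(x)\cdot\bigl(x(y_1^2-x),z_1\bigr)$, and the second factor is strictly smaller than the strict-transform ideal $(x-y_1^2,z_1)$; no exponent of $I(E)$ makes the identity true. So the equality $n_i^{(k+1)}=n_i^{(k)}-\mathrm{mult}_{p_k}(\widetilde Z_{i,k})$ is unjustified as written. Your proposed repair does not work either: embedded resolution of $(\Spec R,\bigcup_i\overline{S_i})$ cannot be performed with centers smooth over $\CC[t]$ --- a closed point of the special fiber is never smooth over $\CC[t]$, and blowing up such a center destroys smoothness over $\CC[t]$, which is precisely why you restrict to the $t$-chart in stage two --- and no birational modification turns a codimension-$\geq 2$ stratum into a divisor, so tangency reduction for normal crossings divisors is not applicable.

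The inequality you actually need, $n_i^{(k+1)}\leq n_i^{(k)}-1$ whenever $p_k\in\widetilde Z_{i,k}$, is nonetheless true and easy to salvage: if $g\in I(\widetilde Z_{i,k})$ realizes $n_i^{(k)}$, then $\pi^*g$ vanishes on $E$ and on $\widetilde Z_{i,k+1}$, so $\pi^*g=t\,g_1$ in the $t$-chart, and since $t$ is a nonzerodivisor on the reduced strict transform (no component of which lies in $E$), the function $g_1$ again lies in $I(\widetilde Z_{i,k+1})$, whence $v_t(\widetilde h_{k+1}^*g_1)=n_i^{(k)}-1$. The paper avoids the issue by a cheaper reduction: after one dilatation the set $\{t=0\}$ is contained in a single stratum, so every remaining bad stratum satisfies $S_i\subseteq\{t\neq0\}$, and one replaces $\overline{S_i}$ by the hypersurface $\{f=0\}$ for a single nonzero $f\in I(\overline{S_i})$ with $f\notin(t)$; in formal coordinates $x_i=ty_i$ one then writes $f=t^kf'$ with $k>0$ and $f'\notin(t)$, so $\nu(f')=\nu(f)-k<\nu(f)$. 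With either repair your argument closes.
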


\begin{proof}
Recall that the dilatation (a.k.a.\ N\'eron blowup) of $S$ at a closed subscheme $Z\subseteq \{t = 0\}$ is the affine open subset of the blowup ${\rm Bl}_Z S$ where $t$ generates $I_Z\cdot \cO_{{\rm Bl}_Z S}$, or in other words the complement of the strict transform of $\{t=0\}$ in  ${\rm Bl}_Z S$ (see e.g.\ \cite[Tag~0BJ1]{stacks-project}). In the proof, we can and will often replace $S$ with an open neighborhood of $0\in S$ or with its dilatation at $0$, endowed with the preimage stratification. Note that if $R = \CC[t, x_1, \ldots, x_n] \subseteq \CCs$, then the dilatation at $0=(t, x_1, \ldots, x_n)\in S$ is given by $\Spec R'$ where $R' = \CC[t, x_1/t, \ldots, x_n/t]$. Consequently, $\nu(x_i/t)\geq 0$ where $\nu$ is the $t$-adic valuation, so  $R'\subseteq \CCs$ as well.

After a dilatation at $0\in S$, we may assume that the set $\{t=0\}$ is contained in a~stratum. 

Working one stratum at a time, it is now enough to show the following: if $Z\subseteq \{t\neq 0\}$ is a~proper closed subset, then after replacing $S$ with its dilatation at $0\in S$ finitely many times, the closure of $Z$ does not contain $0\in S$. Let $f$ be a nonzero element of the ideal of $\overline Z$ in $R=\Gamma(S, \cO_S)$ which is not divisible by $t$. We can replace $Z$ with $\{f = 0, t\neq 0\}$, so that $\overline Z = \{f = 0\}$. Let $\nu(f)$ be the valuation of $f$ in $\CCs$. It is enough to show that if $\nu(f)>0$, then after dilatation $S'\to S$ at $0\in S$, the closure $\overline Z'$ of $Z$ in $S'$ is given by an equation $f'=0$ where $\nu(f')<\nu(f)$.

Choose formal local coordinates $t, x_1, \ldots, x_n$ at $0\in S$ such that the $x_i$ lie in the kernel of $\CC[\![t, x_1, \ldots, x_n]\!]=\widehat{\cO}_{S, 0}\to \CCs$. We have an element $f\in \CC[\![t, x_1, \ldots, x_n]\!]$ such that $f\in (t, x_1, \ldots, x_n)$ but $f\notin (x_1, \ldots, x_n)$ and $f\notin (t)$, and $\nu(f)$ is the $t$-adic valuation of $f(t, 0, \ldots, 0)$. The formal completion $\widehat{\cO}_{S',0'}$ of the dilatation $S'$ at the point $0'$ has coordinates $t, y_1, \ldots, y_n$ where $x_i = ty_i$. Since $f$ is in the maximal ideal, the image of $f$ in $\CC[\![t, y_1, \ldots, y_n]\!] = \widehat{\cO}_{S',0'}$ is divisible by $t$, $f = t^k\cdot f'$ where $k>0$ and $f'\notin (t)$. Then the equation of the closure of $\{f=0, t\neq 0\}$ in $S'$ locally at $0'$ is given by $\{f'=0\}$. Therefore
\[ 
  \nu(f') = \nu(f) - k < \nu(f). \qedhere
\]
\end{proof}

Combining Theorem~\ref{thm:stratif} and Lemma~\ref{lemma:stratifiedneron}, we obtain:

\begin{cor} \label{cor:models}
  Let $X$ be a separated scheme of finite type over $\CCs$. Then there exist models $\Xx/S=\Spec R$ over a cofinal system of smooth $\CC[t]$-subalgebras of finite type $R$ of $\CCs$ such that each of
  \[ 
    (\Xx^*)_\an \to (S^*)_\an \quad \text{and} \quad (\Xx_0)_\an \to (S_0)_\an
  \]
  is a locally trivial fibration, where $(-)^*$ (resp.\ $(-)_0$) denotes base change to $\{t\neq 0\}$ (resp.\ $\{t=0\}$). \qed
\end{cor}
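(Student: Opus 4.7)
The plan is to combine Theorem~\ref{thm:stratif} with Lemma~\ref{lemma:stratifiedneron} in order to upgrade an arbitrary model of $X$ to one whose analytification is locally trivial separately over $\{t\ne 0\}$ and over $\{t=0\}$. I would begin with any model $\mathscr Y/T=\Spec A$ of $X$, which exists by the N\'eron desingularization discussion preceding Theorem~\ref{thm:stratif}. Applying Theorem~\ref{thm:stratif} to the separated morphism $\mathscr Y\to T$ yields a finite stratification $|T|=\bigsqcup T_i$ with each $(\mathscr Y_i)_\an\to(T_i)_\an$ a locally trivial fibration. I would then invoke Lemma~\ref{lemma:stratifiedneron} to enlarge $A$ to a smooth $\CC[t]$-subalgebra $R\subseteq \CCs$ of finite type such that the image of $S=\Spec R\to T$ is contained in the union of the open stratum $T_{\rm gen}$ and the stratum $T_*$ containing the image $0\in T$ of the closed point of $\Spec \CCs$.

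Setting $\mathscr X:=\mathscr Y\times_T S$ then produces a model of $X$ over $R$, and because topological local triviality of fibrations is stable under base change, it remains to verify that the preimages of $T_{\rm gen}$ and $T_*$ in $S$ are exactly $\{t\ne 0\}$ and $\{t=0\}$ respectively. For this it suffices that $\{t=0\}\subseteq T$ be contained in a single stratum: that stratum is then forced to equal $T_*$ (as it contains $0$), which in turn pushes $T_{\rm gen}$ into $\{t\ne 0\}$, so that the two identifications are automatic.

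I expect this last refinement to be the only delicate step, since the Whitney stratification coming from Theorem~\ref{thm:stratif} need not a priori respect the hypersurface $\{t=0\}\subseteq T$. However, this is precisely the issue handled by the dilatation step at the beginning of the proof of Lemma~\ref{lemma:stratifiedneron}: finitely many iterated N\'eron blowups at $0\in T$, each of which corresponds to enlarging $A$ inside $\CCs$, allow one to arrange that $\{t=0\}$ is contained in a single stratum of the pulled back stratification. Combining this preparation with the remainder of Lemma~\ref{lemma:stratifiedneron} then delivers the required $R$.

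Finally, since the procedure above outputs an $R$ containing any prescribed $A$, the collection of smooth finite-type $\CC[t]$-subalgebras of $\CCs$ that carry a model of $X$ satisfying the conclusion is cofinal in the full directed system of all such subalgebras, which is exactly the statement of the corollary.
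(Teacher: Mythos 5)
Your overall strategy --- stratify a model via Theorem~\ref{thm:stratif} and then cut the base down via Lemma~\ref{lemma:stratifiedneron} --- is exactly what the paper intends, and you correctly single out the delicate point, namely compatibility of the stratification with the hypersurface $\{t=0\}\subseteq T$. But the way you resolve it does not work. The condition you declare sufficient, that $\{t=0\}$ be contained in a single stratum $T_*$, only yields the two inclusions $f^{-1}(T_{\rm gen})\subseteq\{t\neq 0\}$ and $f^{-1}(T_*)\supseteq\{t=0\}$ (writing $f\colon S\to T$); it does \emph{not} yield the reverse inclusions, because $T_*$ may stick out of $\{t=0\}$. In that case points of $S$ with $t\neq 0$ can land in $T_*\cap\{t\neq 0\}$, so $(\Xx^*)_\an\to(S^*)_\an$ is pulled back from a union of two strata rather than from $T_{\rm gen}$ alone, and there is no reason for it to be locally trivial. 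Concretely: for the double cover $\mathscr Y=\{y^2=x\}$ over $T=\Spec\CC[t,x]$ (with $x\mapsto\xi(t)$ a transcendental series vanishing at $t=0$), Theorem~\ref{thm:stratif} may legitimately return the stratification $\{x\neq 0\}\sqcup\{x=0\}$; the stratum through $0$ is $\{x=0\}$, which meets $\{t\neq 0\}$, the lemma has nothing left to remove, and the fiber count of $\mathscr Y$ jumps along $\{x=0,t\neq 0\}$, which remains in the image of $S^*$ after any dilatations and shrinkings. Note also that the dilatation step of Lemma~\ref{lemma:stratifiedneron} achieves ``$\{t=0\}$ lies in one stratum'' only on the dilated base with the \emph{preimage} stratification, whose strata are no longer connected or smooth, so one cannot even salvage your claim by a dimension count forcing $T_*=\{t=0\}$.

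The repair is short and is presumably what the authors take for granted: before invoking Lemma~\ref{lemma:stratifiedneron}, refine the stratification from Theorem~\ref{thm:stratif} by intersecting each stratum with $\{t=0\}$ and with $\{t\neq 0\}$ (and taking connected components). Local triviality over a stratum persists upon restricting the base to any subset, so the refined stratification still satisfies the conclusion of Theorem~\ref{thm:stratif}, and now the stratum containing $0$ is contained in $\{t=0\}$ while the open stratum is contained in $\{t\neq 0\}$. With this refinement the conclusion of Lemma~\ref{lemma:stratifiedneron} genuinely forces $S^*$ to map into the open stratum and $S_0$ into the stratum through $0$, and the rest of your argument --- base change of locally trivial fibrations and the cofinality observation at the end --- goes through as written.
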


By Remark~\ref{rmk:finite-dgm}, the analogous statement holds for any finite diagram of separated schemes of finite type over $\CCs$. 

\subsection{Construction of the functor (I): Separated case}\label{sec:construction}

We shall now define the desired functor $\Psi$ on the full subcategory $\mathbf{Sch}^{\rm sft}_{\CCt} \subseteq \mathbf{Sch}^{\rm lft}_{\CCt}$ consisting of {separated} schemes of finite type over $\CCt$.

We denote by $\Cc$ the partially ordered set consisting of pairs $(S, B)$ where $S= \Spec R$ is the spectrum of a smooth $\CC[t]$-subalgebra of $\CCs$, and where $B \subseteq S_\an$ is an open ball (in some local coordinates) around $0\in S$ such that the map $\pi =  t/|t|\colon B^*\to \Sone$ has contractible fibers. Inequality $(S, B)\leq (S', B')$ means by definition that there exists a map $\pi\colon S'\to S$ under $\Spec \CCs$ (i.e., $S'=\Spec R'$ where $R\subseteq R'$) and $\pi(B') \subseteq B$.

The results of the previous section imply that the poset $\Cc$ is filtering, and that we have an equivalence of categories
\begin{equation} \label{eqn:2colim}
   \colim_{(S, B)\in \Cc}\, \mathbf{Sch}^{{\rm sft}, \star}_S \xrightarrow{\sim} \mathbf{Sch}^{\rm sft}_{\CCs},
\end{equation}
where $\mathbf{Sch}^{{\rm sft}, \star}_S$ is the full subcategory of $\mathbf{Sch}^{{\rm sft}}_S$ consisting of schemes $\Xx/S$ for which the assertion of Corollary~\ref{cor:models} holds. Moreover, base change to $B^*$ defines a functor
\[ 
  (-)_\an\times_{S_\an} B^* \colon \mathbf{Sch}^{{\rm sft}, \star}_S \to {\bf Fib}(B^*)
\]
to the topological category of locally trivial fibrations in CW complexes over $B^*$. 

Pulling back along $\pi\colon B^*\to \Sone$ defines a functor of topological categories
\[ 
  \pi^*\colon {\bf Fib}(\Sone)  \to {\bf Fib}(B^*).
\]
Since $\pi$ is a homotopy equivalence, the induced map of topological nerves
\begin{equation} \label{eqn:fibsone}
  \pi^* \colon N_{\rm top} ({\bf Fib}(\Sone))  \xrightarrow{\sim} N_{\rm top}({\bf Fib}(B^*))
\end{equation}
is an equivalence.

These constructions are functorial with respect to $(S, B)\in \Cc$, and hence define three functors $\Cc\to {\rm\bf Cat}_\infty$ together with two natural transformations
\[
  \xymatrix{
    & & {\bf Sch}^{{\rm sft}, \star}_S \ar[d]^{(-)_\an\times_{S_\an} B^*} \\
  (S, B)\in \Cc \ar@{}[rr]|-\mapsto & & N_{\rm top}({\bf Fib}(B^*)) \\
  & & N_{\rm top}({\bf Fib}(\Sone)) \ar[u]_{\pi^*}
  }
\]
(here the bottom is a constant functor). We claim that $\pi^*  \colon N_{\rm top} ({\bf Fib}(\Sone))  \to N_{\rm top}({\bf Fib}(B^*))$ is in fact an equivalence of functors $\Cc\to {\rm\bf Cat}_\infty$. Indeed, by \cite[5.1.2.1]{HTT}, it is enough to check this on objects, where it follows from \eqref{eqn:fibsone}. Let $\pi_*$ denote an inverse of this equivalence.

Passing to the colimit over $\Cc$ and using the equivalence \eqref{eqn:2colim}, we obtain the desired functor $\Psi$ as the composition
\[ 
  \Psi \colon \mathbf{Sch}^{\rm sft}_{\CCt} \to  \mathbf{Sch}^{\rm sft}_{\CCs} \to \colim_{(S, B)\in \Cc} N_{\rm top}({\bf Fib}(B^*)) \xrightarrow{\pi_*} N_{\rm top}({\bf Fib}(\Sone))
\]
where $\mathbf{Sch}^{\rm sft}_{\CCt} \to  \mathbf{Sch}^{\rm sft}_{\CCs}$ is the functor sending  $X/\CCt$ to $X/\CCs$.

It follows from this definition that for any finite diagram $\{X_i\}_{i\in I}$ in $\mathbf{Sch}^{\rm sft}_{\CCt}$, the value $\Psi(\{X_i\})$ can be calculated by choosing a model $\{\Xx_i\}_{i \in I}$ in ${\bf Sch}^{{\rm sft}, \star}_S$ for some $S$, finding a small ball $B\subseteq S_\an$ containing $0\in S$, and a section $s\colon \Sone\to B^*$ of $\pi\colon B^*\to \Sone$, and then pulling back $\{(\Xx_i)_\an\times_{S_\an} B^*\}_{i\in I}$ along~$s$. 

\subsection{Convergent case}
\label{sec:conv}

Let $\CC(\!\{t\}\!)$ denote the fraction field of the ring $\CC\{t\}$ of power series with positive radius of convergence. If $X$ is a separated scheme of finite type over $\CC(\!\{t\}\!)$, we may spread out $X$ to an analytic space $\Xx/\Delta^*_\varepsilon$ over a punctured disc $\Delta^*_\varepsilon = \{0<|z|<\varepsilon \} \subseteq \CC$, which will be a topological fibration if $\varepsilon$ is small enough. This way, one obtains a functor 
\[ 
  \Psi_{\rm conv}\colon \mathbf{Sch}^{\rm sft}_{\CC(\!\{t\}\!)}   \to N_{\rm top}(\mathbf{Fib}(\Sone)).
\]
We will show below that it agrees with the functor $\Psi$ constructed previously, or more precisely that $\Psi_{\rm conv}(X) \simeq \Psi(X\otimes_{\CC(\!\{t\}\!)} \CCt)$, functorially in $X$.

Let us make the construction of $\Psi_{\rm conv}$ precise. Unfortunately, we do not know of a good notion of a ``scheme of finite type over a punctured disc'' for which there would be an equivalence
\[ 
  \mathbf{Sch}^{\rm sft}_{\CC(\!\{t\}\!)} \isomlong
  \colim_{\varepsilon\to 0} \mathbf{Sch}^{\rm sft}_{\Delta_\varepsilon}.
\]
We therefore use a similar idea to the one used in the definition of $\Psi$. The assertions of \S\ref{sec:models} hold for $\CC\{t\}$ in place of $\CCs$, as they only rely on N\'eron desingularization. We therefore have an equivalence
\[ 
  \colim_{S\in \Cc_0}\, \mathbf{Sch}^{{\rm sft}, \star}_S \isomlong \mathbf{Sch}^{\rm sft}_{\CC\{t\}},
\]
where now $\Cc_0$ is the poset of the spectra $S$ of a smooth $\CC[t]$-subalgebras of $\CC\{t\}$. 

By definition, such an $S$ comes with a morphism $u\colon \Spec \CC\{t\}\to S$, i.e.\ a convergent curve germ. There exists an $\varepsilon > 0$ depending on $S$ such that $u$ defines a holomorphic map 
\[
  u_\an\colon \Delta_\varepsilon \to S_\an.
\]
We choose $\varepsilon(S)$ to be the supremum of such values $\varepsilon$, so that $\varepsilon(S')\leq \varepsilon(S)$ whenever $S'\to S$.  

The pull-back along $u_\an$ defines a functor
\[ 
  u^* \colon \mathbf{Sch}^{{\rm sft}, \star}_S \to \mathbf{Fib}(\Delta_{\varepsilon(S)}^*).
\]
Reasoning as in \S \ref{sec:construction}, we define three functors $\Cc_0\to {\rm\bf Cat}_\infty$ together with two natural transformations
\[
  \xymatrix{
    & & {\bf Sch}^{{\rm sft}, \star}_S \ar[d]^{u^*} \\
  S\in \Cc_0 \ar@{}[rr]|-\mapsto & & N_{\rm top}({\bf Fib}(\Delta^*_{\varepsilon(S)})) \\
  & & N_{\rm top}({\bf Fib}(\Sone)) \ar[u]_{\pi^*}.
  }
\]
As before, we invert the second arrow and pass to the colimit over $\Cc_0$, obtaining the desired
\[ 
  \Psi_{\rm conv} \colon \mathbf{Sch}^{\rm sft}_{\CC(\!\{t\}\!)} \to N_{\rm top}({\bf Fib}(\Sone))
\]
by  composing with the inclusion  $\mathbf{Sch}^{\rm sft}_{\CC(\!\{t\}\!)}\to \mathbf{Sch}^{\rm sft}_{\CC\{t\}}$.

\begin{prop}
  We have a commutative diagram
  \[ 
    \xymatrix{
    \mathbf{Sch}^{\rm sft}_{\CCt} \ar[dr]^\Psi \\ 
    \mathbf{Sch}^{\rm sft}_{\CC(\!\{t\}\!)} \ar[u]^{(-)\otimes_{\CC(\!\{t\}\!)} \CCt} \ar[r]_-{\Psi_{\rm conv}} & N_{\rm top}({\bf Fib}(\Sone)).
    }
  \]
\end{prop}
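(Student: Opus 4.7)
The strategy is to compute both functors on a common model. Any $X\in \mathbf{Sch}^{\rm sft}_{\CCtconv}$ admits a model $\Xx/S$ with $S\in \Cc_0$, that is, $S = \Spec R$ for a smooth $\CC[t]$-subalgebra $R\subseteq \CC\{t\}$. Since $\CC\{t\}\subseteq \CCs$, this same $\Xx/S$ also serves as a model for $X\otimes_{\CCtconv}\CCt$, provided we promote $S$ to an object $(S,B)\in \Cc$ by choosing an open ball $B\subseteq S_\an$ around $0$ on which $\pi_B = t/|t|\colon B^*\to \Sone$ has contractible fibers. The two constructions then differ only in how the circle is mapped into $S_\an$: $\Psi_{\rm conv}(X)$ uses pullback along $u_\an\colon \Delta^*_{\varepsilon(S)}\to S_\an$, whereas $\Psi(X\otimes\CCt)$ uses pullback along $B^*\hookrightarrow S_\an$.

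The key geometric observation is that $u^*(t) = t$, which forces the identity $\pi_B\circ u_\an = \pi_\Delta$ of maps $\Delta^*_{\varepsilon(S)}\to \Sone$. After shrinking $\varepsilon(S)$ if necessary, we may assume $u_\an(\Delta_{\varepsilon(S)})\subseteq B$, and the resulting inclusion $\Delta^*_{\varepsilon(S)}\hookrightarrow B^*$ is a map of spaces with contractible fibers over $\Sone$, hence a homotopy equivalence over $\Sone$. Consequently, for any model $\Xx$ the restriction
\[
  \Xx_\an \times_{S_\an} \Delta^*_{\varepsilon(S)} \to \Xx_\an \times_{S_\an} B^*
\]
becomes an equivalence in $N_{\rm top}(\mathbf{Fib}(\Sone))$ after applying the inverses $\pi_{\Delta,*}$ and $\pi_{B,*}$. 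This yields the desired identification $\Psi_{\rm conv}(X)\simeq \Psi(X\otimes\CCt)$ on objects.

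To upgrade this to a commutative diagram of functors, I would introduce an intermediate poset
\[
  \widetilde{\Cc}_0 = \{(S,B) : S\in \Cc_0,\ (S,B)\in \Cc,\ u_\an(\Delta_{\varepsilon(S)})\subseteq B\}
\]
equipped with forgetful projections $\widetilde{\Cc}_0\to \Cc_0$ and $\widetilde{\Cc}_0\to \Cc$. Both projections are cofinal: given any $S\in \Cc_0$ one may shrink $\varepsilon(S)$ to fit inside a prescribed $B$, and conversely given $(S,B)\in \Cc$ with $S\in \Cc_0$ one may shrink $B$, appealing to the dilatation constructions of \S\ref{sec:models}. Over $\widetilde{\Cc}_0$ one has a triangle of functors to $\mathbf{Cat}_\infty$, with vertices $(S,B)\mapsto \mathbf{Sch}^{{\rm sft},\star}_S$, $N_{\rm top}(\mathbf{Fib}(\Delta^*_{\varepsilon(S)}))$, and $N_{\rm top}(\mathbf{Fib}(B^*))$, and edges $u_\an^*$, $(-)\times_{S_\an}B^*$, and pullback along the inclusion $\Delta^*\hookrightarrow B^*$. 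The commutativity $\pi_B\circ u_\an = \pi_\Delta$ means this triangle is compatible with both $\pi^*$-maps from the constant functor with value $N_{\rm top}(\mathbf{Fib}(\Sone))$. Inverting the $\pi^*$-equivalences (levelwise by \cite[5.1.2.1]{HTT}, as in \S\ref{sec:construction}) and passing to the colimit produces the required commutative triangle.

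The main obstacle is the $\infty$-categorical bookkeeping needed to ensure that the above triangle of functors is homotopy coherent, so that inverting two of its edges is compatible with the natural transformation coming from $\Delta^*\hookrightarrow B^*$. This is closely parallel to the construction of $\Psi$ in \S\ref{sec:construction}, with the added feature of two ``sources'' ($\Delta^*_{\varepsilon(S)}$ and $B^*$) both mapping to $\Sone$ via the same function $t/|t|$; once this coherence is verified, the identification on objects propagates through the colimit to give the natural equivalence $\Psi_{\rm conv}\simeq \Psi\circ ((-)\otimes_{\CCtconv}\CCt)$.
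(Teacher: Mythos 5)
Your argument is essentially the paper's: the paper likewise introduces an auxiliary poset (of triples $(S,B,\varepsilon)$ with $S\in\Cc_0$, $0<\varepsilon\leq\varepsilon(S)$ and $u_\an(\Delta_\varepsilon)\subseteq B$), exploits the identity $\pi_B\circ u_\an=\pi_\Delta$ coming from $u^*(t)=t$ to get a commuting diagram of functors to ${\rm\bf Cat}_\infty$, and passes to the colimit after inverting the $\pi^*$-equivalences. The one inaccuracy is your claim that $\widetilde{\Cc}_0\to\Cc$ is cofinal: a smooth $\CC[t]$-subalgebra $R\subseteq\CCs$ not contained in $\CC\{t\}$ admits no enlargement inside $\CC\{t\}$, so your justification only gives cofinality into the sub-poset of pairs $(S,B)$ with $S\in\Cc_0$. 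This is harmless --- the colimit over that sub-poset of the categories $\mathbf{Sch}^{{\rm sft},\star}_S$ already computes $\mathbf{Sch}^{\rm sft}_{\CC\{t\}}$, and since the target diagram $N_{\rm top}({\bf Fib}(\Sone))$ is constant, the induced functor agrees with the restriction of $\Psi$ along base change; the paper sidesteps the point by first running the spreading-out construction verbatim with $\CC\{t\}$ in place of $\CCs$ to produce an intermediate functor $\Psi'$ on $\mathbf{Sch}^{\rm sft}_{\CC(\!\{t\}\!)}$, and only then comparing $\Psi'$ with $\Psi_{\rm conv}$ via the auxiliary poset.
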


\begin{proof}
The construction in \S\ref{sec:construction} goes through without change for $\CC\{t\}$ in place of $\CCs$, yielding a functor $\Psi'$ fitting inside a commutative triangle
\[ 
    \xymatrix{
    \mathbf{Sch}^{\rm sft}_{\CCt} \ar[dr]^\Psi \\ 
    \mathbf{Sch}^{\rm sft}_{\CC(\!\{t\}\!)} \ar[u]^{(-)\otimes_{\CC(\!\{t\}\!)} \CCt} \ar[r]_-{\Psi'} & N_{\rm top}({\bf Fib}(\Sone)).
    }
\]
It remains to compare $\Psi'$ with $\Psi_{\rm conv}$. Let $\Cc'$ be the poset of triples $(S, B, \varepsilon)$ with $(S, B)$ as in \S\ref{sec:construction} but with $S\in \Cc_0$, where $0<\varepsilon\leq\varepsilon(S)$ and $B$ contains the image of $\Delta_\varepsilon$. Forgetting $B$ and $\varepsilon$ defines a cofinal map of posets $\Cc'\to \Cc_0$. We have the following commutative diagram of functors from $\Cc'$ to ${\rm\bf Cat}_\infty$:
\[
  \xymatrix@C=0.3em{
    & & & & {\bf Sch}^{{\rm sft}, \star}_S \ar[dll]_{(-)_\an\times_{S_\an} B^*} \ar[d] \ar[drr]^{u^*} \\
  (S, B, \varepsilon)\in \Cc' \ar@{}[rr]|-\mapsto & & N_{\rm top}({\bf Fib}(B^*)) \ar[rr] & &  N_{\rm top}({\bf Fib}(\Delta^*_\varepsilon)) & & N_{\rm top}({\bf Fib}(\Delta^*_{\varepsilon(S)})) \ar[ll] \\
  & & & & N_{\rm top}({\bf Fib}(\Sone)) \ar[ull]^{\pi^*} \ar[urr]_{\pi^*} \ar[u].
  }
\]
Here the left horizontal arrow is the restriction along the induced map $\Delta^*_\varepsilon\to B^*$, which exists thanks to the definition of $\Cc'$. Passing to the limit, and using the fact that $\Cc'$ is cofinal in the poset of all pairs $(S, B)$ used to define $\Psi'$, we obtain the desired natural isomorphism between $\Psi'\to \Psi_{\rm conv}$.
\end{proof}

\subsection{Construction of the functor (II): Descent}\label{sec:descent}

So far, we have only defined the functor $\Psi$ on the category of separated schemes of finite type over $\CCt$. We shall now extend it to all schemes locally of finite type. To this end, it is enough to show that $\Psi$ is a ``homotopy cosheaf''. We refer the reader to \cite[Section 6.5.3]{HTT} for basics on hypercoverings.

\begin{defin} \label{def:hypercosheaf}
Let $\Cc$ be a site and $\mathscr{D}$ be an $\infty$-category. We say that a functor $F\colon \Cc\to \mathscr{D}$ is a~\emph{hypercosheaf} if for every hypercovering $Y_\bullet\to X$ in $\Cc$ the natural map $\hocolim F(Y_\bullet)\to F(X)$ is an equivalence in $\mathscr{D}$.
\end{defin}

We denote by $\mathbf{Hyp}^{\rm co}_\infty(\Cc,\mathscr{D})$ the $\infty$-category of $\mathscr{D}$-valued hypercosheaves $F\colon  \Cc\to \mathscr{D}$. 

In order to extend $\Psi$ we will prove that it is a hypercosheaf on ${\bf Sch}^{\rm sft}_{\CCt}$, and then we will make use of the following general lemma to argue that it will uniquely extend to a hypercosheaf on the whole ${\bf Sch}^{\rm lft}_{\CCt}$.

\begin{prop}\label{prop:extend-cosheaf}
Let $\Cc$ and $\Cc'$ be sites, and $F\colon \Cc\to \Cc'$ be a continuous functor inducing an equivalence of topoi ${\bf Sh}(\Cc')\to{\bf Sh}(\Cc)$. Then for every $\infty$-category $\mathscr{D}$ we have a natural equivalence of $\infty$-topoi $\mathbf{Hyp}^{\rm co}_\infty(\Cc',\mathscr{D})\to \mathbf{Hyp}^{\rm co}_\infty(\Cc, \mathscr{D})$.
\end{prop}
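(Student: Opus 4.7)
The strategy is to dualize to hypersheaves and then reduce to the general principle that hyperdescent depends only on the underlying $\infty$-topos. First, replacing $\mathscr{D}$ by its opposite $\mathscr{D}^{\rm op}$ interchanges the roles of colimits and limits and thus turns hypercosheaves into hypersheaves, so it suffices to establish the analogous equivalence
\[
  F^* \colon \mathbf{Hyp}_\infty(\Cc', \mathscr{D}) \xrightarrow{\sim} \mathbf{Hyp}_\infty(\Cc, \mathscr{D}).
\]

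The candidate functor $F^*$ is defined by precomposition with $F$; to see that it lands in hypersheaves I would check that any hypercover $Y_\bullet \to X$ in $\Cc$ is sent by $F$ to a hypercover $F(Y_\bullet) \to F(X)$ in $\Cc'$. After Yoneda, the simplicial object $F(Y_\bullet) \to F(X)$ in $\mathbf{Sh}(\Cc')$ corresponds under the equivalence $\mathbf{Sh}(\Cc') \simeq \mathbf{Sh}(\Cc)$ to the sheafified hypercover of $X$, and the hypercover condition (surjectivity onto iterated coskeleta) is purely topos-theoretic, so it is preserved.

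For the essential step that $F^*$ is an equivalence, the plan is to appeal to the universal characterization of hypersheaves: when $\mathscr{D}$ is presentable,
\[
  \mathbf{Hyp}_\infty(\Cc, \mathscr{D}) \simeq \mathrm{Fun}^{\rm R, acc}\bigl(\mathbf{Sh}_{\infty,\wedge}(\Cc)^{\rm op}, \mathscr{D}\bigr),
\]
and analogously for $\Cc'$ (cf.\ HTT \S 6.5.3). The equivalence of $1$-topoi $\mathbf{Sh}(\Cc') \simeq \mathbf{Sh}(\Cc)$ lifts to an equivalence of hypercomplete $\infty$-topoi $\mathbf{Sh}_{\infty,\wedge}(\Cc') \simeq \mathbf{Sh}_{\infty,\wedge}(\Cc)$, since in a hypercomplete $\infty$-topos each object is recovered as the limit of its Postnikov tower, whose layers are Eilenberg--MacLane objects controlled by the sheaves of homotopy groups living in the underlying $1$-topos (HTT 6.5.2). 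Composing with this equivalence provides the inverse to $F^*$. For a general $\infty$-category $\mathscr{D}$ without presentability, one reduces to the presentable case by embedding $\mathscr{D}$ into a presentable $\infty$-category such as its Yoneda-enlargement $\widehat{\mathscr{D}} = \mathrm{PSh}_\infty(\mathscr{D})$, using that the hyperdescent property is reflected along this fully faithful embedding.

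The main obstacle is the naturality claim that an equivalence of $1$-topoi lifts to one of the associated hypercomplete $\infty$-topoi. While this is morally clear from Postnikov convergence together with the fact that the $0$-truncation of $\mathbf{Sh}_{\infty,\wedge}(\Cc)$ recovers $\mathbf{Sh}(\Cc)$, making it rigorous requires some care regarding the compatibility of the inverse equivalences at each truncation level and their assembly into an equivalence of $\infty$-topoi.
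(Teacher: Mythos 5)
Your overall architecture matches the paper's: reduce the $\mathscr{D}$-valued statement to the universal case of hypercomplete sheaves of spaces, and then argue that the hypercomplete $\infty$-topos of a site depends only on its underlying $1$-topos of sheaves of sets. (The paper performs the first reduction by identifying $\mathbf{Hyp}^{\rm co}_\infty(-,\mathscr{D})$ with colimit-preserving functors out of $\mathbf{Hyp}_\infty(-)$, \`a la DAG~V~1.1.12, rather than by passing to $\mathscr{D}^{\rm op}$ and invoking the functor-category description for presentable targets, but these reductions are essentially interchangeable.) The problem is in your justification of the crucial second step.

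You assert that in a hypercomplete $\infty$-topos every object is the limit of its Postnikov tower, and you propose to lift the equivalence $\mathbf{Sh}(\Cc')\simeq\mathbf{Sh}(\Cc)$ level by level along these towers. This is false in general: Postnikov completeness implies hypercompleteness, but the converse fails (there exist hypercomplete $\infty$-topoi in which Postnikov towers do not converge; see the discussion surrounding \cite[\S 6.5.2, \S 7.2]{HTT}). So the step you yourself flag as ``the main obstacle'' is not merely delicate --- as written it rests on a wrong statement, and the tower argument cannot be completed without additional finite-dimensionality hypotheses that are not available here. The correct and essentially immediate fix is the one the paper uses: by \cite[Proposition 6.5.2.14]{HTT}, the hypercomplete $\infty$-topos $\mathbf{Sh}_\infty(\Cc)^{\wedge}$ is presented by the Jardine local model structure on simplicial objects of the $1$-topos $\mathbf{Sh}(\Cc)$; since that model category is manifestly built only from $\mathbf{Sh}(\Cc)$, an equivalence of $1$-topoi induces an equivalence of the hypercompletions with no convergence issues. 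Your remaining steps (that $F$ carries hypercovers to hypercovers up to sheafification, and the reduction from general $\mathscr{D}$ to presentable $\mathscr{D}$ via the Yoneda embedding, which preserves and reflects limits) are fine, though they become unnecessary once one adopts the paper's formulation of the first reduction.
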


\begin{proof}

Note first of all that it suffices to prove the analogous statement for the $\infty$-topos $\textbf{Hyp}_\infty(-)$ of hypercomplete sheaves (i.e. hypersheaves, the obvious variant of Definition \ref{def:hypercosheaf}) of spaces, because the $\infty$-topos $\mathbf{Hyp}^{\rm co}_\infty(-, \mathscr{D})$ can be identified with the $\infty$-category of colimit-preserving functors $\mathbf{Hyp}_\infty(-)\to \mathscr{D}$ (by the same reasoning of \cite[Proposition 1.1.12]{dag5}).

The statement for $\textbf{Hyp}_\infty(-)$ follows immediately from the fact that the Jardine model structure on simplicial sheaves \cite{jardine} presents the $\infty$-topos of hypercomplete sheaves \cite[Proposition 6.5.2.14]{HTT}.
\end{proof}

Recall that the $h$-topology \cite[\S 10]{SuslinVoevodsky} is the topology generated by universal topological epimorphisms, or equivalently by proper surjections and Zariski coverings.

\begin{prop}\label{prop:descent}
The functor $\Psi\colon {\bf Sch}^{\rm sft}_{\CCt}\to \SpSone$ is a hypercosheaf for the $h$-topology.
In other words, if $Y_\bullet\to X$ is an $h$-hypecovering in ${\bf Sch}^{\rm sft}_{\CCt}$, then the induced map
  \[ 
    \hocolim \Psi(Y_\bullet) \to \Psi(X)
  \]
  is an equivalence.
\end{prop}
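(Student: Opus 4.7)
The plan is to reduce $h$-hyperdescent for $\Psi$ to $h$-hyperdescent for the classical Betti realization of $\CC$-schemes of finite type. Recall from \S\ref{sec:construction} that $\Psi(X)$ is computed by spreading $X$ out to a model $\mathscr{X}/S$ with $(S,B)\in\Cc$, forming $\mathscr{X}_\an\times_{S_\an} B^*$, and pulling back along a section $s\colon \Sone\to B^*$ of $\pi$. The last two operations are base changes along maps of topological spaces, and thus preserve homotopy colimits. Consequently, it suffices to establish $h$-hyperdescent for the functor $\mathscr{X}\mapsto \mathscr{X}_\an\times_{S_\an} B^*$ on $\mathbf{Sch}^{{\rm sft}, \star}_S$, valued in spaces over $B^*$.

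Given an $h$-hypercover $Y_\bullet\to X$ in $\mathbf{Sch}^{\rm sft}_{\CCt}$, the first step is to spread it out. Using the filteredness of $\Cc$ and the equivalence \eqref{eqn:2colim}, I would choose $(S,B)\in\Cc$ and a simplicial diagram $\mathscr{Y}_\bullet\to \mathscr{X}$ in $\mathbf{Sch}^{{\rm sft},\star}_S$ whose base change to $\CCt$ recovers $Y_\bullet\to X$. Because the $h$-topology is generated by Zariski covers and proper surjections, both of which spread out and are preserved by base change $S'\to S$, one can arrange $\mathscr{Y}_\bullet\to \mathscr{X}$ to be an $h$-hypercover after possibly enlarging $S$, proceeding coskeletally one level at a time since only finitely many morphisms are involved at each stage.

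The second step is to apply $h$-hyperdescent for the Betti realization $\mathscr{X}\mapsto \mathscr{X}_\an$ on $\mathbf{Sch}^{\rm sft}_\CC$, with values in the $\infty$-category of spaces. This is the promised generalization of Dugger--Isaksen and Blanc alluded to in the introduction; it follows by combining proper descent (after Deligne) with Zariski descent, which together exhaust the $h$-topology, realized in the Jardine model structure on simplicial (pre)sheaves. A routine relative upgrade gives the analogous statement for $\mathscr{X}\mapsto \mathscr{X}_\an\times_{S_\an} B^*$ over $B^*$, and pulling back along $s$ yields $\hocolim\Psi(Y_\bullet)\xrightarrow{\sim}\Psi(X)$.

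The main obstacle is the second step: $h$-hyperdescent for the Betti realization at the $\infty$-categorical level. While the analogous statement is classical at the cohomological level (e.g.\ for singular cohomology with $\ZZ$-coefficients), upgrading it to descent of spaces, and further to the relative setting over $B^*$, requires a careful assembly of proper descent and Zariski descent for hypercovers at the level of simplicial sheaves of spaces, rather than merely their abelian invariants. Once this is established, the spreading-out step and the final pullback along $s$ are essentially bookkeeping within the filtered-colimit framework of \S\ref{sec:construction}.
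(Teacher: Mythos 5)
Your overall strategy---spread out, then invoke Dugger--Isaksen for Zariski covers and Blanc for proper surjections---is the same as the paper's, but there is a concrete gap in your first step. An $h$-hypercovering $Y_\bullet\to X$ is in general an \emph{unbounded} simplicial diagram, and the equivalence \eqref{eqn:2colim} is a filtered colimit of categories: it only allows you to descend a diagram with finitely many objects and morphisms to a single stage $(S,B)\in\Cc$. You acknowledge that one must enlarge $S$ "one level at a time," but that produces an infinite tower $S_0\leftarrow S_1\leftarrow S_2\leftarrow\cdots$ with no terminal finitely generated base, so the sentence "I would choose $(S,B)\in\Cc$ and a simplicial diagram $\mathscr{Y}_\bullet\to\mathscr{X}$ in $\mathbf{Sch}^{{\rm sft},\star}_S$" cannot be carried out as stated. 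The paper's proof devotes its entire Step (1) to exactly this issue: it first reduces to \emph{bounded} hypercoverings using coskeleta and a connectivity estimate (\cite[Lemma 3.25]{Blanc}), and to make even that reduction work it must "straighten" $\Psi(Y_\bullet)$ into an honest augmented simplicial space over $\Sone$ by building compatible models of the truncations $\mathscr{Y}_{\bullet\leq n}$ over the tower $(S_n,B_n)$, pulling each back to $\Sone$ (using that $B_n^*\to\Sone$ is an equivalence and that the finite diagrams are locally constant in the sense of Remark~\ref{rmk:finite-dgm}), and gluing. This is not bookkeeping; it is the technical heart of the argument.

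A second, smaller imprecision: combining Zariski descent and proper descent yields $h$-descent only at the level of \v{C}ech nerves of single coverings, since "the $h$-topology is generated by Zariski covers and proper surjections" is a statement about coverings, not hypercoverings. The passage from \v{C}ech descent to hyperdescent is again supplied by the boundedness reduction (the paper's Steps (1) and (2), following \cite[Proposition 3.24]{Blanc}). You correctly flag the $\infty$-categorical upgrade of classical cohomological descent as the main obstacle, and indeed the references you would need are the same ones the paper uses; but as written your proof would only establish the proposition for \v{C}ech nerves of single $h$-coverings that happen to admit a model over one $S$, which falls short of the full statement.
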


\begin{proof}
The proof will be in a few steps. We will first reduce the  statement to the case of the \v{C}ech nerve of a single $h$-covering $Y\to X$, and then combine results of Dugger--Isaksen and Blanc on hyperdescent for topological spaces in the classical and proper topologies to conclude.

\textbf{Step (1)}: it suffices to  prove the statement for bounded hypercoverings $Y_\bullet\to X$.

Recall that a hypercovering  is bounded if it is of the form $\cosk_n^X(Y_\bullet)\to X$ for some $n\geq 0$ (i.e.  if the unit map $Y_\bullet\to \cosk_n^X(Y_\bullet)$ is an isomorphism --- the minimum $n$ for which this happens is called the \emph{dimension} of $Y_\bullet$). Fix $k\geq 0$, and consider  the coskeleton $\cosk_{k+1}^X(Y_\bullet)$. There is a~diagram
\[
\xymatrix{
\hocolim \Psi(Y_\bullet)\ar[r] \ar[rd] & \hocolim\Psi(\cosk_{k+1}^{X}(Y_\bullet))\ar[d]\\
  & \Psi(X).
}
\]
Note that since $\Psi$ is a functor only in the $\infty$-categorical sense, $\Psi(Y_\bullet)$ is not an honest simplicial space. If it were, and if $\Psi(\cosk_{k+1}^{X}(Y_\bullet)) = \cosk_{k+1}^{\Psi(X)} (\Psi(Y_\bullet))$, then we could argue as follows: since $\Psi(Y_\bullet)\to\Psi(\cosk_{k+1}^{X}(Y_\bullet))$ induces an isomorphism on the $(k+1)$-skeleton,  by \cite[Lemma 3.25]{Blanc} the horizontal map induces an isomorphism on $\pi_k$ at any basepoint. If we assume the statement for bounded hypercoverings, then $\hocolim\Psi(\cosk_{k+1}^{X}(Y_\bullet))\to \Psi(X)$ is an equivalence, and it follows that $\hocolim \Psi(Y_\bullet)\to \Psi(X)$ induces an isomorphism on $\pi_k$ at any basepoint. Therefore this map will be an equivalence, since $k$ is arbitrary.

The remainder of the proof of Step (1) will consist of ``straightening'' the functor $\Psi$ for simplicial objects, i.e.\ building an augmented simplicial space $\overline Y_\bullet \to \overline X$ modelling $\Psi(Y_\bullet)\to \Psi(X)$, and such that there is a natural identification $\cosk_{k+1}^{\overline X}(\overline Y_\bullet)  \simeq \Psi(\cosk_{k+1}^{X}(Y_\bullet))$. The problem here being that since $Y_\bullet$ is an infinite diagram of spaces, it might not admit a model over some $S = \Spec R$ satisfying suitable conditions. In any case, for finite truncations of the simplicial scheme $Y_\bullet$, there exist such models $\mathscr{Y}_{\bullet\leq n} \to \mathscr{X} \times_S S_n$ over $(S_n, B_n)$, where $\mathscr{X}/(S, B)$ is some fixed model of $X$, which are moreover locally constant as finite diagrams of spaces over $B_n^*$ in the sense of Remark~\ref{rmk:finite-dgm}. We can build them iteratively, i.e.\ such that there exist maps $(S_{n+1}, B_{n+1}) \to (S_n, B_n)$ and compatible isomorphisms
\[ 
  (\mathscr{Y}_{\bullet \leq n+1})_{\leq n} \simeq (\mathscr{Y}_{\bullet\leq n}) \times_{S_n} S_{n+1}.
\] 
Since $B_n^* \to \Sone$ is a homotopy equivalence and $(\mathscr{Y}_{\bullet\leq n})_\an \times_{(S_n)_\an} B_n^*$ is a locally constant finite diagram of spaces, it is the pull-back of a truncated simplicial space $\overline Y_{\bullet\leq n}$ fibered over $\Sone$. By construction, we have homeomorphisms $(\overline Y_{\bullet\leq n+1})_{\leq n} \simeq \overline Y_{\bullet\leq n}$  of truncated simplicial spaces fibered over $\Sone$. We therefore obtain an augmented simplicial space $\overline Y_\bullet \to \overline X$ fibered over $\Sone$ which is the required model. The compatibility with coskeleta follows from the fact that $(-)_\an \times_{S_\an} B^*$ commutes with fiber products.

\textbf{Step (2)}: it suffices to prove the statement for the \v{C}ech nerve $C(Y/X)_\bullet\to X$ of a single $h$-covering $Y\to X$.

This can be proven exactly as the analogous fact in the proof of \cite[Proposition 3.24]{Blanc}, once we observe that the simplicial object $\Psi(\cosk_{k+1}^{X}(Y_\bullet))$ can be lifted to a~simplicial object of dimension $k+1$ in the category $\bf Fib(\Sone)$. This follows from the same arguments used in the construction of $\overline Y_\bullet \to \overline X$ in the previous step. 

\textbf{Step (3)}: we now prove the statement for the \v{C}ech nerve $C(Y/X)_\bullet\to X$ of a single $h$-covering $Y\to X$.

First of all we argue that the result is true if $Y\to X$ is either a Zariski covering or a proper surjection. In the first case, this follows from the fact that $\Psi(C(Y/X)_\bullet)\to \Psi(X)$ is a hypercovering of topological spaces for the classical topology, and then what we are after is precisely \cite[Theorem~1.3]{DuggerIsaksen}. Note that by the same argument as above, $\Psi(C(Y/X)_\bullet)$ can in fact be lifted to a~simplicial object of dimension $0$ in $\bf Fib(\Sone)$, as the \v{C}ech nerve of a model $\mathscr{Y}\to \mathscr{X}$ for $Y\to X$ over some $S=\Spec R$, pulled back to $B^*\subseteq B\subseteq S_\an$ as usual. The argument in the second case is exactly the same, using \cite[Proposition~3.24]{Blanc} (note that the proper hypercovering that we obtain from a model $\mathscr{Y}\to \mathscr{X}$ satisfies the required ``niceness'' assumptions).

Now for the general case, observe that the $h$-topology of ${\bf Sch}^{\rm sft}_{\CCt}$ is generated by Zariski coverings and proper surjections. From this it follows that a functor $F\colon {\bf Sch}^{\rm sft}_{\CCt}\to \mathscr{D}$ to some $\infty$-category $\mathscr{D}$ is a cosheaf for the $h$-topology if and only if it is both a cosheaf for the Zariski topology and for the proper topology. Because of what we just proved, $\Psi\colon {\bf Sch}^{\rm sft}_{\CCt}\to \SpSone$ is a cosheaf for the $h$-topology. This exactly means that for an $h$-covering $Y\to X$, the map $\hocolim \Psi(C(Y/X)_\bullet)\to \Psi(X)$ is an equivalence.
\end{proof}

\begin{cor}
There is a unique extension of the functor $\Psi\colon \mathbf{Sch}_{\CCt}^{\rm sft}\to \SpSone$ to a hypercosheaf $\Psi\colon  \mathbf{Sch}^{\rm lft}_{\CCt}\to \SpSone$ with respect to the $h$-topology.
\end{cor}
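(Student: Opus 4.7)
The plan is to deduce the corollary directly from Proposition~\ref{prop:extend-cosheaf} applied to the inclusion functor
\[
  F\colon \mathbf{Sch}^{\rm sft}_{\CCt}\hookrightarrow \mathbf{Sch}^{\rm lft}_{\CCt},
\]
where both sides are equipped with the $h$-topology. Combined with the hypercosheaf property established in Proposition~\ref{prop:descent}, this will give the extension, and its uniqueness will be automatic.

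First I would check that $F$ is continuous for the $h$-topology: any $h$-covering $\{Y_i\to X\}$ with $X$ separated and of finite type remains an $h$-covering after restriction, and can furthermore be refined by an $h$-covering whose members are themselves separated and of finite type (by taking affine Zariski covers of each $Y_i$), so $F$ preserves covering sieves. Next I would verify that $F$ induces an equivalence of ordinary topoi $\mathbf{Sh}(\mathbf{Sch}^{\rm lft}_{\CCt})\to \mathbf{Sh}(\mathbf{Sch}^{\rm sft}_{\CCt})$. This is the heart of the argument, though it is essentially formal: every locally finite type scheme $X$ admits a Zariski cover, hence an $h$-cover, by affine (a fortiori separated and of finite type) schemes, so $\mathbf{Sch}^{\rm sft}_{\CCt}$ is a dense subsite of $\mathbf{Sch}^{\rm lft}_{\CCt}$ in the sense of the Comparison Lemma; restriction along $F$ is then an equivalence of topoi, with inverse given by right Kan extension.

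With these two points in place, Proposition~\ref{prop:extend-cosheaf} yields an equivalence
\[
  \mathbf{Hyp}^{\rm co}_\infty(\mathbf{Sch}^{\rm lft}_{\CCt},\SpSone)\xrightarrow{\sim}\mathbf{Hyp}^{\rm co}_\infty(\mathbf{Sch}^{\rm sft}_{\CCt},\SpSone).
\]
By Proposition~\ref{prop:descent}, the functor $\Psi$ already constructed lies in the right-hand side; its essentially unique preimage under this equivalence is the desired hypercosheaf extension to $\mathbf{Sch}^{\rm lft}_{\CCt}$, agreeing with $\Psi$ on separated schemes of finite type.

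The step I expect to be the main (though mild) obstacle is the verification that $\mathbf{Sch}^{\rm sft}_{\CCt}$ is truly dense in $\mathbf{Sch}^{\rm lft}_{\CCt}$ for the $h$-topology in a way that makes Proposition~\ref{prop:extend-cosheaf} literally applicable — in particular, confirming that the continuous inclusion $F$ induces an equivalence of $1$-topoi, rather than only of hypercompleted $\infty$-topoi, so that one can legitimately invoke the proposition. Once this is settled, concretely the extended $\Psi(X)$ for $X\in\mathbf{Sch}^{\rm lft}_{\CCt}$ is computed by choosing any $h$-hypercovering $Y_\bullet\to X$ by separated schemes of finite type (e.g.\ the \v{C}ech nerve of an affine Zariski cover) and setting $\Psi(X)\simeq \hocolim\Psi(Y_\bullet)$, independence of the hypercover being ensured by the hypercosheaf property.
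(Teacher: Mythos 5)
Your proposal is correct and follows exactly the paper's route: the paper's proof is the one-line "This follows immediately from Propositions \ref{prop:extend-cosheaf} and \ref{prop:descent}," and you have simply spelled out the (correct) verification that the inclusion $\mathbf{Sch}^{\rm sft}_{\CCt}\hookrightarrow\mathbf{Sch}^{\rm lft}_{\CCt}$ is a continuous dense subsite inducing an equivalence of $h$-topoi, which is the implicit content of applying Proposition~\ref{prop:extend-cosheaf}.
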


\begin{proof}
This follows immediately from Propositions \ref{prop:extend-cosheaf} and \ref{prop:descent}.
\end{proof}

We can easily show that in fact the functor descends to the Morel--Voevodsky $\mathbf{A}^1$-homotopy category. We will not use this observation in the sequel.

\begin{cor}
Let ${\bf MV}_S$ denote the $\AA^1$-homotopy category of Morel--Voevodsky over $S$ \cite{morel-voevodsky}. 
Then the functor $\Psi$ induces a functor ${\bf MV}_{\CCt}\to \SpSone$, which is compatible with the usual Betti realization ${\bf MV}_{\CC}\to \Spaces$, i.e. the diagram
\[
\xymatrix{
{\bf MV}_{\CC}\ar[r]\ar[d]_{(-) \times_\CC \CCt} & \Spaces \ar[d]^{(-)\times \Sone}\\
{\bf MV}_{\CCt}\ar[r] & \SpSone
}
\]
commutes.
\end{cor}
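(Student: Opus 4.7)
The plan is to exhibit $\Psi$, restricted to $\mathbf{Sm}_{\CCt}$, as a Nisnevich hypercosheaf that is $\mathbf{A}^1$-invariant, so that the universal property of $\mathbf{MV}_{\CCt}$ produces the dotted functor. Nisnevich hyperdescent is free: the Nisnevich topology is coarser than the $h$-topology, so Proposition~\ref{prop:descent} gives it immediately. For $\mathbf{A}^1$-invariance, let $X/\CCt$ be of finite type and pick a model $\mathscr{X}/(S,B)\in \mathbf{Sch}^{{\rm sft},\star}_S$. Then $\mathscr{X}\times_\CC \mathbf{A}^1_\CC\to S$ is a model of $X\times_{\CCt}\mathbf{A}^1_{\CCt}$ which still lies in $\mathbf{Sch}^{{\rm sft},\star}_S$, since the product of topological locally trivial fibrations is again one (apply Corollary~\ref{cor:models} or Remark~\ref{rmk:finite-dgm} to the two-element diagram $\{\mathscr{X},\mathscr{X}\times\mathbf{A}^1\}$). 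The base change to $B^*$ equals $(\mathscr{X}_\an\times_{S_\an}B^*)\times \mathbf{A}^1_{\CC,\an}$, and pulling back along a section $s\colon\Sone\to B^*$ of $\pi$ yields $\Psi(X)\times\CC$. Projection onto $\Psi(X)$ is a homotopy equivalence of spaces over $\Sone$ since $\CC$ is contractible.

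With these two properties in hand, the universal property of the Morel--Voevodsky $\mathbf{A}^1$-homotopy category as the $\mathbf{A}^1$-localization of Nisnevich sheaves on $\mathbf{Sm}_{\CCt}$ provides the induced functor $\mathbf{MV}_{\CCt}\to\SpSone$. (At the homotopy category level only, which is what the statement demands, no further coherence is required.)

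For the compatibility square, the key observation is that for a smooth scheme $X/\CC$, the base change $X\times_\CC\CCt$ admits the ``constant'' model $X\times_\CC S\to S$ over \emph{every} $(S,B)\in\Cc$, which is obviously in $\mathbf{Sch}^{{\rm sft},\star}_S$ because the relevant fibrations are trivial. The analytification base-changed to $B^*$ is just $X_\an\times B^*$, and pulling back along a section $s\colon\Sone\to B^*$ gives $X_\an\times \Sone$, functorially in $X$. This produces a natural identification $\Psi(X\times_\CC\CCt)\simeq X_\an\times\Sone$, which is precisely the composition of the two other arrows in the square and hence establishes its commutativity.

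No step is a serious obstacle: everything reduces to the observation that the relevant pullbacks preserve the subcategory $\mathbf{Sch}^{{\rm sft},\star}_S$ and that $\CC$ is contractible. The only delicate point would be coherence if one wanted the factorization at the $\infty$-categorical level, but since the corollary is stated for the homotopy categories, the objectwise computations above are sufficient.
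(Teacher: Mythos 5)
Your proof is correct and follows essentially the same route as the paper's (which simply asserts in three lines that $\Psi$ satisfies Nisnevich descent by the results of that section, carries $V\times\AA^1\to V$ to equivalences, and sends $V\times_\CC\CCt$ to $V_\an\times\Sone$ for $V$ smooth over $\CC$). You have merely filled in the model-theoretic details behind each of these assertions, all of which check out.
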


\begin{proof}
By the results of this section the functor $\Psi$ satisfies Nisnevich descent. Moreover, it is clear that it carries projections $V\times \AA^1\to V$ to equivalences, and that if $V$ is a smooth scheme over $\CC$, then $\Psi(V\times_\CC \CCt)\simeq ( V_\an\times \Sone\xrightarrow{\pi_2} \Sone)$.
\end{proof}

\subsection{\texorpdfstring{Construction of the functor (III): Arbitrary $K$}{Construction of the functor (III): Arbitrary K}} \label{sec:arbitrary.K}

We can apply the construction outlined here over any complete discretely valued field $K$ equipped with an embedding $\iota \colon k\hookrightarrow \CC$, after choosing a compatible continuous embedding $K\hookrightarrow \CCt$. We will prove later in \S\ref{sec:log.geometry} that the construction is independent of this choice.

\begin{lemma}
Let $K$ be a complete discretely valued field with an embedding $\iota \colon k\hookrightarrow \CC$ of its residue field into the field of complex numbers. Then there exists a compatible continuous embedding $\widetilde\iota\colon K\hookrightarrow \CCt$.
\end{lemma}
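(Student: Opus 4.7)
The plan is to invoke the Cohen structure theorem for complete local rings. Since the residue field $k$ embeds into $\CC$, it has characteristic zero, so $\cO_K$ is a complete discrete valuation ring of equal characteristic zero. In this setting, Cohen's theorem produces a coefficient field, namely a ring homomorphism $\sigma\colon k \to \cO_K$ splitting the residue map $\cO_K \to k$. (In characteristic zero, the existence of such a section is essentially elementary, since $k$ is formally smooth over $\mathbf{Q}$.)

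Next, I would fix a uniformizer $\pi\in\cO_K$ and consider the $k$-algebra map
\[
  \phi\colon k[\![t]\!] \to \cO_K, \qquad t\mapsto \pi,
\]
extending $\sigma$. On the associated graded rings this becomes the tautological isomorphism $k[t]\isomlong \bigoplus_{n\geq 0}\mathfrak{m}^n/\mathfrak{m}^{n+1}$, and $\mathfrak{m}$-adic completeness of both sides upgrades this to a homeomorphic isomorphism of topological $k$-algebras. Passing to fraction fields, $\phi$ extends to a topological isomorphism of valued fields $k(\!(t)\!)\simeq K$.

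To conclude, apply $\iota$ coefficient-wise. This yields a ring homomorphism $k[\![t]\!]\hookrightarrow \CCs$ that is injective (as $\iota$ is) and continuous (it preserves the $t$-adic valuation, since the valuation of a power series in either ring is just the index of its first nonzero coefficient). It extends to an injective continuous map $k(\!(t)\!)\hookrightarrow \CCt$, and composing with $\phi^{-1}$ gives the desired $\widetilde\iota\colon K\hookrightarrow \CCt$. By construction $\widetilde\iota$ is continuous, carries $\cO_K$ into $\CCs$, and reduces to $\iota$ modulo the maximal ideal, establishing compatibility. There is no serious obstacle: the only nontrivial ingredient is the existence of a coefficient field, which Cohen's theorem supplies precisely because we are in equal characteristic zero.
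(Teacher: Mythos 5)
Your proof is correct and follows essentially the same route as the paper: invoke Cohen's structure theorem to identify $\cO_K$ with $k[\![t]\!]$ (the paper simply cites \cite[Tag 0323]{stacks-project} where you unpack the coefficient-field argument), then apply $\iota$ coefficient-wise to obtain the continuous embedding $K\simeq k(\!(t)\!)\hookrightarrow\CCt$.
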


\begin{proof}
By Cohen's structure theorem \cite[\href{https://stacks.math.columbia.edu/tag/0323}{Tag 0323}]{stacks-project} we have a non-canonical isomorphism $\cO_K\simeq k[\![t]\!]$. In turn, this induces an isomorphism $K\simeq k(\!(t)\!)$ and a continuous embedding $
K\simeq k(\!(t)\!)\hookrightarrow \CCt
$
as desired.
\end{proof}

Over any such field $K$ we can now define $\Psi\colon {\bf Sch}^{\rm lft}_K\to \SpSone$ as the composite
\[
\Psi\colon {\bf Sch}^{\rm lft}_K\xlongrightarrow{\widetilde\iota^*} {\bf Sch}^{\rm lft}_\CCt\to \SpSone
\]
where $\widetilde\iota^*$ is the base change functor induced by the chosen embedding $\widetilde\iota\colon K\hookrightarrow \CCt$, and the second arrow is the functor $\Psi$ for $\CCt$ constructed previously.

\section{Log geometry}\label{sec:log.geometry}

\subsection{Review of Kato--Nakayama spaces}
\label{sec:knreview}

In this section, we briefly recall the construction and properties of Kato--Nakayama spaces that will be used in the rest of the paper. For a good introduction to log geometry we refer the reader to the survey \cite{abramovich} or the book \cite{ogus}. For more details on Kato--Nakayama spaces, see \cite{KN} or \cite[Section V.1]{ogus}.

Let $(X,\Mm_X)$ be a fine log complex analytic space (that later on will always be the analytification of a log scheme locally of finite type over $\CC$). The Kato--Nakayama space is a topological space $(X,\Mm_X)_\log$, equipped with a proper map $\tau\colon (X,\Mm_X)_\log\to X$, whose topology reflects the log geometry of $(X, \Mm_X)$. 

As a set, $(X,\Mm_X)_\log$ is defined as the set of pairs $(x,\phi)$, where $x\in X$ and $\phi$ is a homomorphism of abelian groups $\phi\colon \Mm_{X,x}^\gp\to \Sone$, such that $\phi(f)={f(x)}/{|f(x)|}\in \Sone$ for every invertible section $f\in \cO_{X,x}^\times$. The projection $\tau\colon (X,\Mm_X)_\log\to X$ is the projection to the first coordinate $(x,\phi)\mapsto x$.
For every open subset $U\subseteq X$ and section $m\in \Mm_X(U)$, we obtain a function $f_{U,m}\colon U_\log\to \Sone$, defined by $(x,\phi)\mapsto \phi(m_x)\in \Sone$. Here $U_\log$ denotes the subset of $(X,\Mm_X)_\log$ of points $(x,\phi)$ with $x\in U$, and can be identified with the Kato--Nakayama space of the log analytic space $(U,\Mm_X|_U)$. The topology on $(X,\Mm_X)_\log$ is defined as the coarsest topology that makes the projection $\tau\colon (X,\Mm_X)_\log\to X$ and all the functions $f_{U,m}\colon U_\log\to S^1$ continuous.

The map $\tau\colon (X,\Mm_X)_\log\to X$ is a proper continuous map, and can be seen as a relative compactification of the open embedding $(X,\Mm_X)_\triv\subseteq X$, since this factors through an open embedding $(X,\Mm_X)_\triv\subseteq (X,\Mm_X)_\log$. The fiber over a point $x\in X$ is a torsor under the space of homomorphisms of abelian groups $\Hom(\Mm_{X,x}^\gp,\Sone)$, and if the log analytic space $X$ is fs (or more generally if $\Mm_{X,x}^\gp$ is torsion-free), this is non-canonically isomorphic to a real torus $(\Sone)^r$ for some $r$.

The formation of $(X,\Mm_X)_\log$ is functorial and compatible with base change with respect to strict morphisms.

\begin{example}\label{example:kn}\mbox{}
\begin{itemize}
\item[a)] Let $(X,\Mm_X)$ be $\AA^1$, with its toric log structure. Then $(X,\Mm_X)_\log=\overline{\CC}:=\RR_{\geq 0}\times \Sone$, and the projection $\tau\colon (X,\Mm_X)_\log\to X$ is given by the map $\overline{\CC}=\RR_{\geq 0}\times \Sone\to \CC$ sending $(r,a)$ to $r\cdot a$ (where we see $\Sone\subseteq \CC$ as the complex numbers of norm $1$). \item[b)] Generalizing the previous example, let $P$ be a fine monoid, and let $(X,\Mm_X)$ be the affine toric scheme $\Spec(P\to \CC[P])$ with its toric log structure. Then $(X,\Mm_X)_\log=\Hom(P, \overline{\CC})$, with its natural topology induced by the topology on $\overline{\CC}$. 
\item[c)] If $X$ is smooth, and $\Mm_D$ is the compactifying log structure coming from a simple normal crossings divisor $D\subseteq X$, then $(X,\Mm_D)_\log$ can be identified with the real oriented blowup of $X$ along $D$, and is a ``smooth manifold with corners''.
\end{itemize}
\end{example}

The last example suggests that $(X,\Mm_X)_\log$ should be thought of as the complement of an ``open tubular neighbourhood of the log structure.''

\begin{prop} \label{prop:triv-log-equiv}
Let $X$ be a smooth complex analytic space and $D\subseteq X$ be a simple  normal crossings divisor. Then the inclusion $X\setminus D \to (X, \Mm_D)_{\rm log}$ is a homotopy equivalence. 
\end{prop}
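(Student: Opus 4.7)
The plan is to check the claim locally using the explicit description of $(X,\Mm_D)_\log$ as a real oriented blowup, and then to promote the local deformation retractions to a global one. Since $D$ is snc, cover $X$ by coordinate polydiscs $U\cong \Delta^n$ in which $D\cap U = \{z_1\cdots z_k=0\}$ for some $0\le k\le n$. Combining Example~\ref{example:kn}~b) applied to $\AA^k$ with its toric log structure with the functoriality of Kato--Nakayama spaces with respect to strict morphisms, one obtains an identification
\[
  (U,\Mm_D|_U)_\log \;\cong\; \bigl([0,1)\times \Sone\bigr)^k \times \Delta^{n-k},
\]
with projection to $U$ given in polar coordinates on the first $k$ factors, so that the inclusion $U\setminus D\hookrightarrow (U,\Mm_D|_U)_\log$ becomes the open subset where all radii $r_1,\dots,r_k$ are positive. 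A straightforward deformation retraction pushing each $r_i$ away from $0$ (e.g.\ $r_i\mapsto (1-s)r_i+s/2$) exhibits this local inclusion as a homotopy equivalence, with $\prod_{i=1}^k\{r_i=1/2\}\times\Delta^{n-k}$ as a common deformation retract.

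For the globalization, the plan is to invoke Example~\ref{example:kn}~c): the space $(X,\Mm_D)_\log$ carries a natural structure of a smooth manifold with corners whose interior is $X\setminus D$, the codimension-$k$ boundary strata being the $\tau$-preimages of the codimension-$k$ strata of $D$. The classical result that any paracompact manifold with corners deformation retracts onto its interior---proved via a collar neighborhood theorem, or by patching together local inward-pointing radial vector fields using a partition of unity and integrating the resulting flow---then produces the required homotopy equivalence.

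The main obstacle is precisely this globalization step, since the local deformation retractions above are defined in terms of local equations for the components of $D$ and do not directly glue. A concrete alternative to the black-box manifold-with-corners argument is to choose Hermitian metrics on the line bundles $\cO_X(D_i)$ for each irreducible component $D_i$ of $D$, obtain smooth non-negative functions $\rho_i\colon X\to\RR_{\geq 0}$ vanishing precisely along $D_i$, lift them via $\tau$ to continuous functions on $(X,\Mm_D)_\log$, and use their level sets together with a partition of unity subordinate to a covering by coordinate charts adapted to $D$ to assemble a global flow pushing $\tau^{-1}(D)$ into $X\setminus D$. In either approach the real work lies in organizing the behavior at multiple intersections of components, which is exactly where the snc hypothesis enters.
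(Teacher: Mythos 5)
Your argument is correct and ultimately rests on the same fact as the paper's proof: that $(X,\Mm_D)_{\log}$ is a (topological) manifold with boundary/corners whose interior is exactly $X\setminus D$, so that the inclusion of the interior is a homotopy equivalence; the paper simply cites \cite[Theorem V.1.3.1]{ogus} for this and skips the local polar-coordinate computation and the discussion of how to globalize the retraction.
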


\begin{proof}
This follows from the fact that  $(X,\Mm_D)_\log$ is a topological manifold with boundary, whose interior is exactly $X\setminus D$ \cite[Theorem V.1.3.1]{ogus}.
\end{proof}

For future reference, we also record some topological properties of the Kato--Nakayama space of a log analytic space in the following proposition.

\begin{prop}\label{prop:top.properties}
Let $(X,\Mm_X), (Y,\Mm_Y)$ be separated, fs, separable log analytic spaces, and $f\colon (X,\Mm_X)\to (Y,\Mm_Y)$ be a morphism. Then:
\begin{itemize}
\item[a)] the Kato--Nakayama space $(X,\Mm_X)_\log$ is locally compact, locally contractible separable metric space, and 
\item[b)] for every $y\in (Y,\Mm_Y)_\log$, the fiber $f_\log^{-1}(y)\subseteq (X,\Mm_X)_\log$ is locally contractible.
\end{itemize}
\end{prop}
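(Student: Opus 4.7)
The plan is to reduce both statements to local assertions about (sub)analytic subsets of toric Kato--Nakayama spaces and then invoke the triangulation theorem of Łojasiewicz--Hironaka.

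For part (a), local compactness and separable metrizability are ``soft'' properties. Since $X$ is a separable complex analytic space, it is separable, locally compact, metrizable Hausdorff. The map $\tau\colon (X,\Mm_X)_\log\to X$ is proper with fibers that are closed subsets of compact real tori $\Hom(\Mm_{X,x}^\gp,\Sone)$, so $(X,\Mm_X)_\log$ is locally compact Hausdorff, second countable, and regular; metrizability and separability then follow from Urysohn's theorem. The substantive point is local contractibility. Working near a point $x\in X$, we use the fs hypothesis to choose a chart $P\to \Mm_X$ by a sharp fs monoid $P$ on some open $U\ni x$, inducing a strict morphism $U\to V:=\Spec(P\to \CC[P])$ to the log smooth toric scheme $V$. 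By Example~\ref{example:kn}(b), $V_\log=\Hom(P,\overline{\CC})$, which via a presentation $P\subseteq \NN^r$ embeds as a closed semi-algebraic subset of $\overline{\CC}^r$; moreover it is a real analytic manifold with corners because $V$ is log smooth. Strictness of $U\to V$ means $U_\log = U\times_V V_\log$ is cut out inside $V_\log$ by pulling back holomorphic equations for $U\subseteq V$, so $U_\log$ is a closed subanalytic subset of $V_\log$. The Łojasiewicz--Hironaka triangulation theorem for subanalytic sets then implies that $U_\log$ is locally triangulable, hence locally contractible.

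For part (b), fix $y\in (Y,\Mm_Y)_\log$ lying over $y_0\in Y$, and pick $x\in f^{-1}(y_0)$. Locally near $x$ and $y_0$ choose a chart for $f$: a homomorphism of sharp fs monoids $Q\to P$ together with compatible strict morphisms to $\Spec\CC[Q]$ and $\Spec\CC[P]$. In these coordinates $f_\log$ is locally a subanalytic map, and the condition for $(x,\phi)\in X_\log$ to lie in $f_\log^{-1}(y)$ translates into prescribed values of $\phi$ on the image of $Q^\gp$ in $P^\gp$ together with matching radial components. These are closed subanalytic conditions, so $f_\log^{-1}(y)\cap U_\log$ is a closed subanalytic subset of $U_\log$, and Łojasiewicz--Hironaka again yields local contractibility.

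The main obstacle is verifying the subanalyticity of the relevant local models: one needs to check that in the toric chart the tautological functions $f_{V,m}\colon V_\log\to \Sone$ attached to $m\in P$ are real analytic (in fact semi-algebraic) in natural coordinates on $\overline{\CC}^r$, and that strict base change preserves the subanalytic category. Once this is in place, the triangulation theorem for subanalytic sets handles both assertions uniformly. A reader wishing to avoid subanalytic geometry can alternatively argue by hand in the log smooth case (where $V_\log$ is already a manifold with corners, so the claim is immediate) and then invoke a local embedding of an arbitrary fs log structure into a log smooth one via a chart.
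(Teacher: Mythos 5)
Your argument is essentially the paper's: the proof given there simply records that under these hypotheses the Kato--Nakayama spaces are Hausdorff and locally triangulable (citing Nakayama--Ogus and the appendix of \cite{knvsroot}) and that $f_\log$ is semi-analytic with semi-analytic fibers, which is exactly the chart-plus-Łojasiewicz--Hironaka reduction you spell out. Two small slips to repair in your write-up: the closed embedding $\Hom(P,\overline{\CC})\hookrightarrow \overline{\CC}^r$ comes from a choice of generators, i.e.\ a surjection $\NN^r\twoheadrightarrow P$, not an inclusion $P\subseteq \NN^r$; and the chart map $U\to V$ need not be a closed immersion (nor is $V_\log$ a manifold with corners for general sharp fs $P$), so $U_\log=U\times_V V_\log$ should be exhibited as a closed subanalytic subset of $U\times V_\log$ via the graph of $U\to V$, rather than as a subset of $V_\log$ itself.
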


\begin{proof}
For point a), all the claims follows from the fact that with our assumptions, the spaces $(X,\Mm_X)_\log$ and $(Y,\Mm_Y)_\log$ are locally triangulable and Hausdorff. See \cite[Proposition 5.3]{nakayama-ogus} or \cite[Proposition A.13]{knvsroot} for details. As for part b), one can use similar arguments by noting that the map $f_\log$ is semi-analytic, and hence its fibers are semi-analytic.
\end{proof}

To conclude, let us consider the effect of the functor $(-)_\log$ on log smooth degenerations $f\colon (X,\Mm_X)\to (S,\Mm_S)$. In this setup the induced map $f_\log\colon (X,\Mm_X)_\log\to (S,\Mm_S)_\log$ (and especially the fibers over the critical values of $f$) gives a geometric model for nearby/vanishing cycles and monodromy of the degeneration. 

This interpretation hinges on the following theorem of Nakayama and Ogus \cite{nakayama-ogus}. Recall that a homomorphism of monoids $P\to Q$ is \emph{exact} if the natural map $P\to Q\times_{Q^\gp} P^\gp$ is an isomorphism, and a morphism of  log schemes $f\colon (X,\Mm_X)\to (S,\Mm_S)$  is \emph{exact} if for every point $x\in X$  the homomorphism $(\overline{\Mm_S})_{f(x)}\to ( \overline{\Mm_X})_{x}$ is  exact.

\begin{thm}[{\cite[Theorems 3.5 and 5.1]{nakayama-ogus}}]\label{thm:nakayama-ogus}
Let $f\colon (X,\Mm_X)\to (S,\Mm_S)$ be an exact and log smooth morphism of fine log analytic spaces. Then the induced map of Kato--Nakayama spaces $f_\log\colon (X,\Mm_X)_\log\to (S,\Mm_S)_\log$ is a topological submersion, i.e.\ locally on $(X,\Mm_X)_\log$ and $(S,\Mm_S)_\log$ it is isomorphic to a projection $Z\times (S,\Mm_S)_\log\to (S,\Mm_S)_\log$ for some space $Z$. 

If moreover $f$ is proper, then $f_\log$ is a topological fiber bundle, i.e.\ locally on $(S,\Mm_S)_\log$ it can be identified with the projection $Z\times (S,\Mm_S)_\log\to (S,\Mm_S)_\log$ for some space $Z$.
\end{thm}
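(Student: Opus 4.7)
The plan is to reduce the statement to a universal local toric model via a log smooth chart, and then analyze that model directly in polar coordinates. Since $(-)_\log$ is compatible with strict base change and the statement is local on both $(X,\Mm_X)_\log$ and $(S,\Mm_S)_\log$, one may pick a point $\tilde x \in (X,\Mm_X)_\log$ lying over $\tilde y\in (S,\Mm_S)_\log$ and work in analytic neighborhoods of their images $x\in X$ and $y=f(x)\in S$. By log smoothness, after shrinking, there exists a chart for $f$ given by an injective homomorphism of fine monoids $P\hookrightarrow Q$ with $Q^\gp/P^\gp$ torsion-free, together with a strict smooth morphism from $X$ to an analytic neighborhood of $S\times_{\Spec\CC[P]}\Spec\CC[Q]$. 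Exactness of $f$ translates into exactness of $P\to Q$. Strict smooth morphisms between fine log analytic spaces induce topological submersions on KN spaces (locally they are products with some $\CC^n$), so by the behavior of $(-)_\log$ under base change (Example~\ref{example:kn}) the statement reduces to proving that
\[
\Hom(Q,\overline\CC) \to \Hom(P,\overline\CC)
\]
is a topological submersion, where $\overline\CC=\RR_{\geq 0}\times\Sone$.

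Using the polar decomposition, this map factors as the product of a monoid part $\Hom(Q,\RR_{\geq 0})\to\Hom(P,\RR_{\geq 0})$ and a group part $\Hom(Q^\gp,\Sone)\to\Hom(P^\gp,\Sone)$. The group part is Pontryagin dual to the injection $P^\gp\hookrightarrow Q^\gp$ with torsion-free cokernel, hence a surjective homomorphism of compact tori whose kernel $\Hom(Q^\gp/P^\gp,\Sone)$ splits off as a direct factor, giving a trivial torus bundle. The monoid part is where exactness is essential: exactness of $P\to Q$ is precisely the condition ensuring that every homomorphism $P\to\RR_{\geq 0}$ lifts to $Q$ and that the fiber over a given $\phi\colon P\to\RR_{\geq 0}$ is a rational polyhedral cone of extensions varying continuously in $\phi$. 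A combinatorial argument, based on a piecewise-linear triangulation of $\Hom(P,\RR_{\geq 0})$ compatible with this family of polyhedral fibers, would then produce continuous local sections and hence exhibit this map as a topological submersion.

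For the second assertion, properness of $f$ implies properness of $f_\log$ (since the KN projection $\tau$ is proper and properness is preserved under strict base change), and a proper topological submersion is automatically a fiber bundle: for $\tilde y\in (S,\Mm_S)_\log$, compactness of $f_\log^{-1}(\tilde y)$ allows a finite cover by submersion charts $Z_i\times V_i$ with $\tilde y\in V_i$; intersecting the $V_i$ to a neighborhood $V$ of $\tilde y$ and gluing the $Z_i$ along overlaps (using local compactness and paracompactness provided by Proposition~\ref{prop:top.properties}) yields a product structure $f_\log^{-1}(V)\simeq Z\times V$. The main obstacle in the entire proof is the analysis of the $\RR_{\geq 0}$-part of the toric model: one must convert the purely algebraic exactness condition on $P\to Q$ into a continuous, and compatible-across-cones, choice of section for a piecewise-linear surjection of rational polyhedral cones. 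This is where the polytopal combinatorics of monoids enters in a nontrivial way, and a careful verification that the local trivializations can be chosen compatibly over overlapping strata of $\Hom(P,\overline\CC)$ seems to be the technical heart of the argument.
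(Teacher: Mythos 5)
First, a point of order: the paper does not prove this statement at all --- it is imported verbatim from Nakayama--Ogus \cite{nakayama-ogus} (their Theorems 3.5 and 5.1), so the only proof to measure your argument against is theirs. Your overall architecture --- reduce by a chart and strict base change to the toric model $\Hom(Q,\overline{\CC})\to\Hom(P,\overline{\CC})$, split off the circle part from the $\RR_{\geq 0}$ part, and identify the $\RR_{\geq 0}$ part as the technical heart --- does match the shape of their argument, and your treatment of the group part is essentially correct (though for a log smooth chart one can only arrange the torsion of $Q^{\gp}/P^{\gp}$ to be finite, not zero; this is harmless, since any surjection of compact abelian Lie groups is a fiber bundle).

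The genuine gap is that the step you correctly single out as the heart is then dispatched with claims that are not true. The fiber of $\Hom(Q,\RR_{\geq 0})\to\Hom(P,\RR_{\geq 0})$ over a boundary point is in general not a rational polyhedral cone: already for the semistable chart $\NN\to\NN^2$, $1\mapsto(1,1)$, the map is $(x,y)\mapsto xy$ on $\RR_{\geq 0}^2$, whose fiber over $0$ is the non-convex union of the two coordinate rays while the nearby fibers are hyperbola branches. More seriously, producing continuous local sections does not exhibit a map as a topological submersion --- a submersion requires a local product decomposition, and $(x,y)\mapsto x^2+y^2$ on $\RR^2$ has global continuous sections without being a submersion at the origin. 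What must actually be built is a local trivialization matching the degenerate polyhedral fibers with the smooth nearby ones; in the rank-one example this is the explicit homeomorphism $(x,y)\mapsto(x-y,xy)$ from $\RR_{\geq 0}^2$ onto $\RR\times\RR_{\geq 0}$, but in higher rank no such formula is available, and constructing it from local exactness is precisely the ``relative rounding'' theorem that occupies most of \cite{nakayama-ogus}. Your sketch does not engage with this. A secondary gap: in the proper case, covering the compact fiber by finitely many submersion charts and ``gluing the $Z_i$ along overlaps'' does not yield a product structure over a neighborhood of $\tilde y$, because the charts need not be compatible on overlaps (local triviality is not a property one can glue naively); passing from proper topological submersion to fiber bundle requires a union/isotopy-extension argument in the topological category, which is the actual content of their Theorem 5.1.
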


We note that a log smooth morphism $f\colon (X,\Mm_X)\to (S,\Mm_S)$ as above, where the stalks of $\overline\Mm_S$ are either $\NN$ or $0$ (e.g.\ the standard log point), is automatically exact.

\subsection{Good models}

In this section we define good  models over $\Spec \cO_K$ of smooth schemes over a complete discretely valued field $K$. We denote by $\mathfrak{m}\in \Spec \cO_K$ the closed point, and consider also the standard log structure $ \Mm_{\mathfrak{m}}$ on $\Spec \cO_K$, given by the divisor $\{\mathfrak{m}\}\subseteq \Spec \cO_K$.

\begin{defin}\label{def:good.model}
  A \emph{good model} is a pair $\XX = (\mathscr X, \mathscr D)$ consisting of a proper flat regular $\cO_K$-scheme $\mathscr X$ and a divisor $\mathscr D\subseteq \mathscr X$ with simple normal crossings such that $(\mathscr X_k)_{\rm red}\subseteq \mathscr D$.
  
  A morphism of good models $\pi\colon \XX'=(\mathscr X', \mathscr D')\to (\mathscr X, \mathscr D)=\XX$ is a map $\pi\colon \mathscr X'\to \mathscr X$ of $\cO_K$-schemes such that $\pi^{-1}(\mathscr D)\subseteq \mathscr D'$, or equivalently $\pi(U')\subseteq U$ where $U=\mathscr X\setminus \mathscr D$, $U'=\mathscr X'\setminus \mathscr D'$. This defines the category $\mathbf{GM}_{\cO_K}$ of good models over $\cO_K$.
  
  If $\XX=(\mathscr X,\mathscr D)$ is a good model, its \emph{associated log scheme} is the log scheme $(\mathscr X,  \Mm_{\mathscr D})$. 
\end{defin}

\begin{remark}
Every separated smooth scheme of finite type over $K$ admits a good model over $\cO_K$, by appropriate versions of Nagata compactification \cite{conrad-nagata} and Hironaka's resolution of singularities \cite{temkin}.
\end{remark}

\begin{prop}\label{prop:gmlogsmooth}
Let $\XX=(\mathscr X,\mathscr D)\in \mathbf{GM}_{\cO_K}$.
\begin{enumerate}[(a)]
  \item The log scheme $(\mathscr X, \Mm_{\mathscr D})$ is log smooth and exact over $(\Spec \cO_K, \Mm_\mathfrak{m})$. 
  \item A morphism $f \colon  \XX'\to \XX$ in $\mathbf{GM}_{\cO_K}$ extends uniquely to a morphism 
  \[ 
    f\colon  (\mathscr X', \Mm_{\mathscr D'})\to (\mathscr X, \Mm_{\mathscr D}).
  \]
  \item This defines a fully faithful functor 
  \[ 
      \XX \mapsto (\mathscr X, \Mm_{\mathscr D}) 
      \quad \colon  \quad
      \mathbf{GM}_{\cO_K} \to \mathbf{LS}_{/\cO_K}
  \]
  where $\mathbf{LS}_{/\cO_K}$ denotes the category of fs log schemes equipped with a log smooth (and exact) map to $(\Spec \cO_K, \Mm_\mathfrak{m})$.
\end{enumerate}
\end{prop}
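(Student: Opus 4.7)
The plan is as follows. For (a), work \'etale-locally at a point $x \in \mathscr X$. If $x$ does not lie in the special fiber $\mathscr X_k$, the pullback of $\Mm_\mathfrak{m}$ to a neighborhood of $x$ is trivial, so the morphism is strict and it suffices to observe that $(\mathscr X, \mathscr M_{\mathscr D}) \to \Spec \cO_K$ is smooth in the usual sense there (as $\mathscr X$ is regular and $\mathscr D$ is snc on a flat $\cO_K$-scheme). So suppose $x \in \mathscr X_k$. By regularity of $\mathscr X$ and the snc condition on $\mathscr D$, choose a regular system of parameters $z_1,\ldots,z_n \in \cO_{\mathscr X, x}$ so that \'etale-locally $\mathscr D$ is cut out by $z_1 \cdots z_r = 0$. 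The hypothesis $(\mathscr X_k)_{\rm red} \subseteq \mathscr D$ forces every irreducible component of $\mathscr X_k$ through $x$ to appear among the $\{z_i = 0\}_{i \leq r}$, and hence the uniformizer $\pi$ of $\cO_K$ pulls back to $u\cdot z_1^{a_1} \cdots z_r^{a_r}$ with $u \in \cO_{\mathscr X, x}^\times$ and at least one $a_i > 0$. The monoid chart $\NN \to \NN^r$, $1 \mapsto (a_1,\ldots,a_r)$, then induces an \'etale morphism
\[
  \mathscr X \to \Spec \cO_K \times_{\Spec \ZZ[\NN]} \Spec \ZZ[\NN^r]
\]
\'etale-locally at $x$. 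Kato's criterion for log smoothness (cf.\ \cite[Theorem IV.3.1.8]{ogus}) then applies, because the residue field $k$ embeds into $\CC$ and so has characteristic zero, making the torsion part of $\mathrm{coker}(\ZZ \to \ZZ^r)$ of order invertible on $\mathscr X$. Exactness of the chart $\NN \to \NN^r$, hence of $f$, reduces to the elementary observation that if some $a_i > 0$, then $(na_1,\ldots,na_r) \in \NN^r$ forces $n \geq 0$, i.e.\ $\NN = \ZZ \cap \NN^r$ inside $\ZZ^r$ via the given map.

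For (b), I would use the concrete description of the compactifying log structure: $\Mm_{\mathscr D} = j_*\cO_{U}^\times \cap \cO_{\mathscr X}$, where $j\colon U = \mathscr X \setminus \mathscr D \hookrightarrow \mathscr X$, and analogously for $\Mm_{\mathscr D'}$. The condition $f^{-1}(\mathscr D) \subseteq \mathscr D'$ is by definition $f(U') \subseteq U$. Thus if $m$ is a section of $\Mm_{\mathscr D}$ on an open $V \subseteq \mathscr X$, that is, $m \in \cO_{\mathscr X}(V)$ with $m|_{V \cap U} \in \cO^\times$, then $f^\#(m) \in \cO_{\mathscr X'}(f^{-1}(V))$ is invertible on $f^{-1}(V \cap U) \supseteq f^{-1}(V) \cap U'$, hence lies in $\Mm_{\mathscr D'}(f^{-1}(V))$. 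This furnishes the canonical log enhancement of $f$. Uniqueness is automatic: since $\Mm_{\mathscr D} \subseteq \cO_{\mathscr X}$ and $\Mm_{\mathscr D'} \subseteq \cO_{\mathscr X'}$, any log lift of $f$ is forced to agree with $f^\#$ on sections.

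Part (c) is then formal: (a) shows that the assignment $\XX \mapsto (\mathscr X, \Mm_{\mathscr D})$ lands in $\mathbf{LS}_{/\cO_K}$, and functoriality together with the bijection on morphisms established in (b) is precisely full faithfulness. The only step requiring any substance is the local toric description in (a) and the verification of Kato's criterion there; everything else follows directly from unwinding the definition of the compactifying log structure.
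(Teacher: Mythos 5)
Your proposal follows essentially the same route as the paper: a local chart $\NN\to\NN^r$ at points of the special fiber, Kato's chart criterion for log smoothness, exactness of that monoid map, and the universal property of the compactifying log structure for (b) and (c). Three statements need repair, though none affects the overall strategy. First, in the case $x\notin\mathscr X_k$ you claim the morphism is strict because $f^*\Mm_{\mathfrak m}$ is trivial near $x$. This is false whenever $x$ lies on a horizontal component of $\mathscr D$: there $\Mm_{\mathscr D,x}$ is nontrivial while the pulled-back log structure is trivial, so the morphism is not strict and ``smooth in the usual sense'' is not the relevant condition. What is actually needed (and what the paper asserts) is that an snc pair over a base with trivial log structure is log smooth, which follows from the same chart argument with the chart $0\to\NN^{r}$.

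Second, the induced map $\mathscr X\to\Spec\cO_K\times_{\Spec\ZZ[\NN]}\Spec\ZZ[\NN^r]$ is smooth of relative dimension $n-r$, not \'etale, since you used only $r$ of the $n$ local parameters; the paper gets an \'etale map by enlarging the chart to $\NN\to\NN^r\to\NN^n$. This is harmless --- a strict smooth morphism followed by a log smooth one is log smooth, and the sufficiency direction of the chart criterion tolerates ``smooth'' --- but the word ``\'etale'' is wrong as written. Third, for fullness in (c) you need the converse of (b): every morphism of log schemes $(\mathscr X',\Mm_{\mathscr D'})\to(\mathscr X,\Mm_{\mathscr D})$ has underlying scheme map with $f(U')\subseteq U$, hence comes from a morphism of good models. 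Your (b) only produces the injection $\Hom_{\mathbf{GM}}(\XX',\XX)\to\Hom((\mathscr X',\Mm_{\mathscr D'}),(\mathscr X,\Mm_{\mathscr D}))$; the missing surjectivity is again one line (if $f(x')\in\mathscr D$ for some $x'\in U'$, a local equation of $\mathscr D$ would have to map into $\Mm_{\mathscr D'}|_{U'}=\cO^\times_{U'}$ while being a non-unit at $x'$), and is exactly the content of the reference to Ogus, Proposition III.1.6.2, that the paper invokes.
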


\begin{proof}
To prove (a), choose a point $x\in \mathscr X$, and let $s$ be the image in $\Spec \cO_K$. Then if $s$ is the generic point $\eta$, Zariski locally around $x$ the map $(\mathscr X,\mathscr D)\to \Spec \cO_K$ is described by $(\mathscr X_\eta,\mathscr D_\eta)\to \eta$, which is log smooth, since $\mathscr D_\eta\subseteq \mathscr X_\eta$ is simple normal crossings and $\eta$ has the trivial log structure. Otherwise, the image of $x$ is the closed point $\mathfrak{m}\in \Spec \cO_K$, and locally around $x$ we can find a commutative diagram
\begin{equation}\label{diagram}
\begin{aligned}
\xymatrix{
\mathscr X\ar[r]\ar[d] & \AA^n\ar[d] \\
\Spec \cO_K\ar[r] & \AA^1
}
\end{aligned}
\end{equation}
with strict horizontal arrows, where: $\AA^n\to \AA^1$ is determined by a non-trivial homomorphism of monoids $\NN\to \NN^r\to \NN^n$ (where the second map is inclusion as $\NN^r\times \{0\}\subseteq \NN^r\times \NN^{n-r}$), the map $S=\Spec \cO_K \to \AA^1$ is given by the choice of a uniformizer, and the map $\mathscr X\to \AA^n$ is determined by local parameters at $x$ (the first $r$ of which correspond to branches of $\mathscr D$).

Let $\mathscr X'=\Spec \cO_K\times_{\AA^1} \AA^n$ be the fibered product of the diagram, that can be explicitly described as $\Spec \cO_K [x_1,\hdots, x_n]/(u-\prod_{i=1}^r x_i^{a_i})$, where $u$ is a uniformizer of $\cO_K$ and $(a_1,\hdots, a_r)$ is the image of $1$ via the map $\NN\to \NN^r$. We have to check that the induced morphism $\mathscr X\to \mathscr X'$ is smooth at $x$. Note that it will actually be \'etale, since the relative dimension is $0$.

It suffices to check that the map is formally smooth at $x$, and we can do that by looking at the induced morphism $\Spec \widehat{\cO}_{\mathscr X,x}\to \mathscr X'$. 
This map is described by a homomorphism
\[
 \cO_K[x_1,\hdots, x_n]/(u-\prod_{i=1}^r x_i^{a_i})\to \widehat{\cO}_{\mathscr X,x}.
\]
sending the $x_i$ to local parameters at $x$. It follows from the infinitesimal lifting criterion that this morphism is formally smooth.
This proves that $(\mathscr X,\Mm_{\mathscr D})\to (\Spec \cO_K,\Mm_\mathfrak{m})$ is log smooth.

As for exactness, if $x$ maps to the generic point $\eta\in \Spec \cO_K$ there is nothing to prove (the morphism of monoids $\{0\}\to P$ is exact if $P$ is integral). Otherwise, the morphism $(\overline{\Mm_\mathfrak{m}})_{\mathfrak{m}}\to (\overline{\Mm_{\mathscr D}})_{x}$ is a non-trivial homomorphism of monoids $\NN\to \NN^r$, and it is immediate to check that all such morphisms are exact.

Part (b) is obvious, and part (c) follows from the general fact that if a scheme $X$ has the compactifying log structure with respect to an open subscheme $U\subseteq  X$ with complement $D$, then for every log scheme $(Y,\Mm_Y)$, morphisms of log schemes $( X,\Mm_{ D})\to (Y,\Mm_Y)$ correspond bijectively to morphisms of schemes $f\colon X\to Y$ such that $f(U)\subseteq (Y,\Mm_Y)_\triv$ \cite[Proposition III.1.6.2]{ogus}. 
\end{proof}

The previous proof suggests the following definition of a good model over a smooth scheme $S$ over $\Spec \cO_K$.

\begin{defin}\label{def:good.models.S}
Let $S$ be a smooth scheme over $\Spec \cO_K$, and denote by $S_k\subseteq S$ the fiber over $\Spec k$.

A \emph{good model over} $S$ is a pair $\XX = (\mathscr X, \mathscr D)$ consisting of a proper flat regular $S$-scheme $\mathscr X$ and a divisor $\mathscr D\subseteq \mathscr X$ with simple normal crossings, such that \'etale locally around points of the fiber $\mathscr X_k$ of $\mathscr X$ over $\Spec k$, the map $ \mathscr X\to S$ admits a chart, i.e. a diagram (\ref{diagram}) as in the previous proof, where $S\to \AA^1$ is given by a local equation of the subscheme $S_k\subseteq S$ and the map to the fibered product $\mathscr X\to S\times_{\AA^1} \AA^n$ is \'etale.

A morphism of good models $\pi\colon \XX'=(\mathscr X', \mathscr D')\to (\mathscr X, \mathscr D)=\XX$ over $S$ is a map $\pi\colon \mathscr X'\to \mathscr X$ of $S$-schemes such that $\pi^{-1}(\mathscr D)\subseteq \mathscr D'$, or equivalently $\pi(U')\subseteq U$ where $U=\mathscr X\setminus \mathscr D$, $U'=\mathscr X'\setminus \mathscr D'$. This defines the category $\mathbf{GM}_{S}$ of good models over $S$.
 
If $\XX=(\mathscr X,\mathscr D)$ is a good model, its \emph{associated log scheme} is the log scheme $(\mathscr X,  \Mm_{\mathscr D})$ over $(S,\Mm_{S_k})$.
\end{defin}

The analogue of Proposition \ref{prop:gmlogsmooth} holds for good models over a smooth $\cO_K$-scheme $S$ as well, with a similar proof.

From Proposition \ref{prop:gmlogsmooth} and Theorem \ref{thm:nakayama-ogus}, we obtain the following.

\begin{cor}
Let $\XX=(\mathscr X, \mathscr D)$ be a good model over $\cO_K$, and consider the induced morphism of log schemes $(\mathscr X,\Mm_{\mathscr D})_k\to \Spec (\NN\to k)$ from the central fiber to the standard log point over $k$. Consider moreover the base change $(\mathscr X,\Mm_{\mathscr D})_{0}\to 0=\Spec(\NN\to \CC)$ of this morphism along the embedding $\iota\colon k\hookrightarrow \CC$. Then the induced map 
\[
  (\mathscr X,\Mm_{\mathscr D})_{0,{\log}}\to 0_\log=\Spec(\NN\rightarrow \CC)_\log=\Sone
\]
is a fiber bundle. \qed
\end{cor}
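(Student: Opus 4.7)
The plan is to deduce the statement directly by combining Proposition~\ref{prop:gmlogsmooth} with the topological fiber bundle half of Theorem~\ref{thm:nakayama-ogus}. Specifically, I would argue that the morphism on Kato--Nakayama spaces in question arises from a proper, log smooth, and exact morphism of fine log analytic spaces, so that the second assertion of Theorem~\ref{thm:nakayama-ogus} applies verbatim.

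First, I would record what Proposition~\ref{prop:gmlogsmooth}(a) yields for a good model $\XX = (\mathscr X, \mathscr D)$ over $\cO_K$: the associated log scheme $(\mathscr X, \Mm_{\mathscr D})$ is log smooth and exact over $(\Spec \cO_K, \Mm_\mathfrak{m})$, and $\mathscr X$ is proper over $\cO_K$ by the very definition of a good model. Then I would check that these three properties survive the two base changes considered in the statement. The base change to the standard log point $\Spec(\NN \to k) \to (\Spec \cO_K, \Mm_\mathfrak{m})$ is strict, and log smoothness, properness, and exactness of a morphism are all preserved under strict base change in fs log schemes (for exactness, this is immediate from the definition since the stalks of the characteristic monoid on the base are unchanged). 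The further base change along the strict morphism of log points induced by $\iota \colon k \hookrightarrow \CC$, namely $0 = \Spec(\NN \to \CC) \to \Spec(\NN \to k)$, preserves these properties for the same reason. Consequently, $(\mathscr X, \Mm_{\mathscr D})_0 \to 0$ is a proper, log smooth, and exact morphism of fs log schemes of finite type over $\CC$, and after analytification this stays proper, log smooth, and exact as a morphism of fine log complex analytic spaces.

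Next, I would invoke Theorem~\ref{thm:nakayama-ogus}: since $(\mathscr X, \Mm_{\mathscr D})_0 \to 0$ is proper, log smooth, and exact, the induced map on Kato--Nakayama spaces
\[
  (\mathscr X, \Mm_{\mathscr D})_{0,\log} \to 0_\log
\]
is a topological fiber bundle. Finally, identifying the base $0_\log = \Sone$ reduces to Example~\ref{example:kn}(b) with $P = \NN$ (or directly to the definition), giving $0_\log = \Hom(\NN, \overline{\CC})_{|\cdot|=1} = \Sone$; this identification is already recorded in the statement.

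The only potential subtlety is verifying that the two base changes really preserve log smoothness, properness, and exactness simultaneously. Properness is classical, log smoothness is preserved under arbitrary base change, and exactness is preserved under strict base change --- both of the base changes here are strict, so there is no obstacle. Thus the argument is essentially a direct concatenation of Proposition~\ref{prop:gmlogsmooth}(a) and the proper case of Theorem~\ref{thm:nakayama-ogus}, with a brief verification that exactness passes through.
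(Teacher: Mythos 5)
Your proposal is correct and follows exactly the paper's argument: the corollary is stated there as an immediate consequence of Proposition~\ref{prop:gmlogsmooth} (log smoothness and exactness of the good model over $(\Spec \cO_K, \Mm_\mathfrak{m})$) combined with the proper case of Theorem~\ref{thm:nakayama-ogus}. Your extra verification that properness, log smoothness, and exactness survive the two strict base changes is a correct filling-in of details the paper leaves implicit (and indeed the paper notes that exactness is automatic over a base whose characteristic stalks are $\NN$ or $0$).
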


\subsection{Construction of the functor (IV): Comparison}
\label{ss:comparison}

Denote by $\Psi_{\rm log}$ the following composition
\[ 
  \Psi_{\rm log} \colon \mathbf{GM}_{\cO_K}\to \mathbf{LS}_{/\cO_K} \to \mathbf{LS}_{/0} \to {\bf Fib}(\Sone),
\]
\[
  (\mathscr X, \mathscr D) \mapsto (\mathscr X, \Mm_{\mathscr D})_{0, \rm log} / \Sone
\]
where, as in the previous corollary, $(\mathscr X, \Mm_{ \mathscr D})_{0}$ denotes the base change of the central fiber $(\mathscr X,\Mm_{\mathscr D})_k$ along the embedding $\iota\colon k\hookrightarrow \CC$.

\begin{prop} \label{prop:comparison}
  Suppose that $K=\CCt$. Then the following triangle of functors naturally commutes
  \[ 
    \xymatrix{
      \mathbf{GM}_{\CC[\![t]\!]} \ar[dr]^{\Psi_{\rm log}} \ar[d]_{(-)_{\rm triv}}  \\
      \mathbf{Sch}^{\rm sft}_\CCt \ar[r]_-{\Psi} & \SpSone.
    }
  \]
\end{prop}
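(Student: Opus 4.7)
The plan is to produce both $\Psi(\mathscr X \setminus \mathscr D)$ and $\Psi_{\rm log}(\XX) = (\mathscr X, \Mm_{\mathscr D})_{0, \log}$ as two restrictions of a single topological fiber bundle obtained by spreading out combined with the theorem of Nakayama--Ogus.

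\textbf{Step 1 (Ambient bundle via spreading out).} Using N\'eron desingularization as in \S\ref{sec:models}, applied with attention to the good-model structure, I would spread $(\mathscr X, \mathscr D)$ out to a good model $(\mathscr X_S, \mathscr D_S)$ over $S = \Spec R$ in the sense of Definition~\ref{def:good.models.S}, for some smooth finite type $\CC[t]$-subalgebra $R \subseteq \CCs$ and $(S,B) \in \Cc$ satisfying the conclusion of Corollary~\ref{cor:models}. By the $S$-version of Proposition~\ref{prop:gmlogsmooth}, the associated log scheme $(\mathscr X_S, \Mm_{\mathscr D_S})$ is log smooth and exact over $(S, \Mm_{S_0})$; since $\mathscr X_S \to S$ is proper, Theorem~\ref{thm:nakayama-ogus} yields a topological fiber bundle $\pi_{\log} \colon (\mathscr X_S, \Mm_{\mathscr D_S})_{\log} \to (S, \Mm_{S_0})_{\log}$. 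I would then restrict this bundle to $B_{\log}$, the real oriented blowup of $B$ along $B \cap S_0$; the argument map $t/|t|\colon B^* \to \Sone$ extends canonically to a map $B_{\log} \to \Sone$ whose fibers are contractible, exhibiting both $B^* \hookrightarrow B_{\log}$ and the exceptional circle $\Sone = 0_{\log} \hookrightarrow B_{\log}$ as sections of a trivial fibration.

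\textbf{Step 2 (Identifying the two restrictions).} Over $B^* \subseteq B_{\log}$ the log structure of $(\mathscr X_S, \Mm_{\mathscr D_S})$ comes solely from the horizontal components of $\mathscr D_S$, so applying Proposition~\ref{prop:triv-log-equiv} fiberwise yields a fiberwise homotopy equivalence $(\mathscr X_S \setminus \mathscr D_S)_{\an} \times_{S_{\an}} B^* \hookrightarrow (\mathscr X_S, \Mm_{\mathscr D_S})_{\log}|_{B^*}$ between locally trivial fibrations over $B^*$, hence an equivalence over $B^*$. By the construction in \S\ref{sec:construction}, pulling back the left-hand side along a section $s\colon \Sone \to B^*$ of $t/|t|$ computes $\Psi(\mathscr X \setminus \mathscr D)$. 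On the other side, the strictness of the base change $0_{\log} \hookrightarrow (S,\Mm_{S_0})_{\log}$ combined with the compatibility of $(-)_{\log}$ with strict base change identifies the restriction of $\pi_{\log}$ to $0_{\log}$ with $(\mathscr X_{S,0}, \Mm_{\mathscr D_S}|_{\mathscr X_{S,0}})_{\log} = (\mathscr X, \Mm_{\mathscr D})_{0,\log} = \Psi_{\rm log}(\XX)$, using that the central fiber of $\mathscr X_S$ over $0 \in S$ coincides with the base-changed central fiber of $\mathscr X$ over $\cO_K$. Since both $s\colon \Sone \to B^* \hookrightarrow B_{\log}$ and $\Sone = 0_{\log} \hookrightarrow B_{\log}$ are sections of the contractible-fiber retraction $B_{\log} \to \Sone$, they are homotopic as maps into $B_{\log}$; pulling $\pi_{\log}|_{B_{\log}}$ back along them yields equivalent spaces over $\Sone$, giving $\Psi(\mathscr X \setminus \mathscr D) \simeq \Psi_{\rm log}(\XX)$.

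\textbf{Step 3 (Naturality).} To promote this pointwise equivalence to a natural equivalence of $\infty$-functors $\mathbf{GM}_{\CCs} \to \SpSone$, I would repeat the construction for finite diagrams in $\mathbf{GM}_{\CCs}$, using the diagrammatic version of Remark~\ref{rmk:finite-dgm} to spread an entire diagram out simultaneously over a common $(S,B)$. This produces a single diagram of fiber bundles over $B_{\log}$ whose two restrictions simultaneously realize $\Psi$ and $\Psi_{\rm log}$ on the diagram; passing to the filtered colimit over $\Cc$ as in \S\ref{sec:construction} assembles these into the required natural transformation. The main obstacle is this last organizational step: the pointwise identification of Step~2 follows almost immediately from Nakayama--Ogus, but making all the choices (the spread-out good model, the section $s$, and the homotopy connecting $s$ to the inclusion of $0_{\log}$) coherently compatible across an arbitrary $\infty$-categorical diagram requires carefully mimicking the functorial setup used to define $\Psi$ itself.
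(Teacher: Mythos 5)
Your proposal is correct and follows essentially the same route as the paper: spread the good model out over a pair $(S,B)\in\Cc$, compare the restriction of the Kato--Nakayama space of the spread-out model over $B_{\rm log}$ along the zigzag $B^*\hookrightarrow B_{\rm log}\hookleftarrow 0_{\rm log}$ (equivalently, your homotopy between the two sections of $B_{\rm log}\to\Sone$), identify the $B^*$-side with $(\mathscr X\setminus\mathscr D)_\an\times_{S_\an}B^*$ via Proposition~\ref{prop:triv-log-equiv}, and assemble naturality by the colimit over $\Cc$ exactly as in \S\ref{sec:construction}.
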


\begin{proof}
Similarly to $\mathbf{Sch}^{\rm sft}_\CCt$, we can realize $\mathbf{GM}_{{\CC[\![t]\!]}}$ as the colimit of categories $\mathbf{GM}_S$ of good models defined over $S=\Spec R$ for $R$ a smooth $\CC[t]$-subalgebra of $\CCs$, in the sense of Definition \ref{def:good.models.S}. 

If $B$ is an open subset of $S_\an$, we endow $B$ with the induced log structure, so that we have the topological space $B_{\rm log}$ (in fact, $B_{\rm log} = S_{\rm log}\times_{S_\an} B$). Let $\Cc$ again denote the category of pairs $(S, B)$ as in \S\ref{sec:construction}. Then the maps in the diagram
\[ 
  \xymatrix{
    B^* \ar[dr]_{\pi} \ar[r]^j & B_{\rm log} & 0_{\rm log} \ar[l]_i \ar@{=}[dl] \\
    & \Sone 
  }
\]
are all equivalences.

We have the following commutative diagram of functors from $\Cc$ to ${\rm\bf Cat}_\infty$:
\[
  \xymatrix{
    & & \mathbf{GM}_S \ar[r]^-{(-)_{\rm triv}} \ar[d]_{(-)_{\rm log}\times_{S_{\rm log}} B_{\rm log}} & {\bf Sch}^{{\rm sft}, \star}_S \ar[d]^{(-)_\an\times_{S_\an} B^*} \\
  (S, B)\in \Cc \ar@{}[rr]|-\mapsto & & N_{\rm top}({\bf Fib}(B_{\rm log})) \ar[d]_{i^*} \ar[r]^{j^*} &  N_{\rm top}({\bf Fib}(B^*)) \\
  & & N_{\rm top}({\bf Fib}(0_\log)) \ar@{=}[r] & N_{\rm top}({\bf Fib}(\Sone)) \ar[u]_{\pi^*}.
  }
\]
Here, the nontrivial statement is that the upper square commutes. In fact, the square comes with a natural transformation between the two compositions, the base change to $B^*$ of the functorial inclusion
\[ 
 (\mathscr X,\Mm_{\mathscr D})_\triv= \mathscr X\setminus \mathscr D \hookrightarrow (\mathscr X, \Mm_{\mathscr D})_{\rm log},
\]
which is an equivalence thanks to Proposition~\ref{prop:triv-log-equiv}.

Passing to the colimit over $(S, B)\in \Cc$ and inverting the equivalences coming from the bottom square, we obtain the desired diagram.
\end{proof}

This comparison result allows us to deduce nice consequences for both versions of the construction.

\begin{prop} Let $K$ be a complete discretely valued field whose residue field $k$ is equipped with an embedding $\iota\colon k\hookrightarrow \CC$.
\begin{enumerate}[(a)]
\item The object $\Psi_\log(\XX)$ only depends on  the open subscheme $\mathscr X\setminus \mathscr D$ (which is a $K$-scheme), and it is well-defined up to homeomorphism. 
\item The resulting functor $\Psi_\log\colon {\bf Sm}^{\rm sft}_K\to \SpSone$ from the category of separated, smooth, finite type schemes over $K$ extends uniquely to a functor on ${\bf Sch}^{\rm lft}_K$, which is a hypercosheaf (see \S\ref{sec:descent}) for the $h$-topology.
\item For $X\in {\bf Sch}^{\rm lft}_K$, the object $\Psi(X)$ defined in \S\ref{sec:arbitrary.K} only depends on the embedding $\iota\colon k\hookrightarrow \CC$, and not on the auxiliary chosen continuous embedding $K\hookrightarrow \CCt$.
\end{enumerate}
\end{prop}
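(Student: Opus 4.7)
The plan is to deduce all three claims from Proposition~\ref{prop:comparison} combined with the base-change compatibility of the Kato--Nakayama construction under strict morphisms. The algebraic input is the following. Given any compatible continuous embedding $\widetilde\iota\colon K\hookrightarrow \CCt$ (which exists by the lemma in \S\ref{sec:arbitrary.K}), the induced morphism $\cO_K\to \CCs$ sends uniformizer to uniformizer and is therefore strict for the standard log structures, and the composition $\cO_K\to \CCs\to \CC$ coincides with $\cO_K\to k\xrightarrow{\iota}\CC$. Writing $\XX_{\CCs}\in \mathbf{GM}_{\CCs}$ for the base change of $\XX\in \mathbf{GM}_{\cO_K}$, associativity of base change then yields a canonical identification
\[
\Psi_{\log}(\XX) \;\cong\; \Psi_{\log}(\XX_{\CCs})
\]
of spaces over $\Sone$, obtained by identifying the two ways of base-changing the log scheme $(\mathscr X,\Mm_{\mathscr D})$ to $\CC$.

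For part (a), let $U=\mathscr X\setminus \mathscr D$, viewed as a smooth separated $K$-scheme. Applying Proposition~\ref{prop:comparison} to $\XX_{\CCs}$ gives an equivalence $\Psi_{\log}(\XX_{\CCs})\simeq \Psi(\widetilde\iota^{*}U)$ in $\SpSone$. Combined with the identification above, this exhibits $\Psi_{\log}(\XX)$ as canonically equivalent to $\Psi(\widetilde\iota^{*}U)$, which depends only on $U$ (and on $\widetilde\iota$, whose role is eliminated in (c)), not on the auxiliary model $\XX$.

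For part (b), every smooth separated finite type $U/K$ admits a good model by Nagata compactification and Hironaka resolution, so by (a) we may set $\Psi_{\log}(U) := \Psi_{\log}(\XX)$ for any good model $\XX$ of $U$, well-defined on objects of $\mathbf{Sm}^{\rm sft}_K$. Morphisms of $U$'s need not extend to morphisms of good models, but the equivalence of (a) transports the functoriality of $\Psi\circ \widetilde\iota^{*}$ to $\Psi_{\log}$, producing a functor $\Psi_{\log}\colon \mathbf{Sm}^{\rm sft}_K\to \SpSone$ which agrees with $\Psi\circ \widetilde\iota^{*}$ up to canonical natural equivalence. The $h$-hypercosheaf property is then inherited from Proposition~\ref{prop:descent}, and Proposition~\ref{prop:extend-cosheaf} yields the unique extension to a hypercosheaf on $\mathbf{Sch}^{\rm lft}_K$.

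For part (c), the argument of (a)--(b) yields, for any smooth separated $X/K$ with chosen good model $\XX$, a chain
\[
\Psi(X) \;=\; \Psi(\widetilde\iota^{*}X) \;\simeq\; \Psi_{\log}(\XX_{\CCs}) \;\cong\; \Psi_{\log}(\XX)
\]
in $\SpSone$. The right-hand side is constructed solely from $\iota\colon k\hookrightarrow \CC$ via the base change along $\cO_K\to k\xrightarrow{\iota}\CC$, with no reference to $\widetilde\iota$, so $\widetilde\iota$-independence holds on $\mathbf{Sm}^{\rm sft}_K$; hypercosheaf uniqueness propagates this to all of $\mathbf{Sch}^{\rm lft}_K$. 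The one nontrivial ingredient throughout is the base-change compatibility between the log-geometric and spreading-out constructions, provided by Proposition~\ref{prop:comparison}; the remaining manipulations are formal properties of hypercosheaves and strict base change.
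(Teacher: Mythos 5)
Your argument follows essentially the same route as the paper's: all three parts are reduced, via Proposition~\ref{prop:comparison} (applied after base change along $\widetilde\iota$, using that the central fiber of $\XX_{\CCs}$ agrees with $\XX_0$ by transitivity of base change), to the corresponding properties of the spreading-out functor $\Psi$ --- model-independence of $\Psi$ for (a), Propositions~\ref{prop:descent} and \ref{prop:extend-cosheaf} for (b), and the fact that the Kato--Nakayama side is built from $\iota$ alone for (c). The only point you leave out is the second assertion of (a), namely that $\Psi_\log(\XX)$ is well-defined up to \emph{homeomorphism} over $\Sone$ rather than merely up to equivalence in $\SpSone$: your chain of identifications passes through the inclusion $\mathscr X\setminus\mathscr D\hookrightarrow(\mathscr X,\Mm_{\mathscr D})_{\log}$ of Proposition~\ref{prop:triv-log-equiv}, which is only a homotopy equivalence and so controls nothing beyond the homotopy type. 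The paper gets the stronger statement by invoking, in addition to the comparison, the fact that the spreading-out construction of \S\ref{sec:spreading} is itself well-defined up to homeomorphism (compare the discussion of the analogous open question for $\Psi_{\rm rig}$ in \S\ref{ss:examples}); you should add a sentence addressing this refinement.
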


\begin{proof}
The first claim follows directly from Proposition \ref{prop:comparison} and the fact that the construction of \S\ref{sec:spreading} by spreading out is well-defined up to homeomorphism. Similarly, the second assertion follows from  Proposition \ref{prop:comparison} and the results of \S\ref{sec:descent}. Item (c) follows from the fact that $\Psi(X)\simeq \Psi_\log(X)$ does not depend on the auxiliary embedding for $X$ smooth and separated (as shown by the construction via Kato--Nakayama spaces).
\end{proof}

\subsection{De Rham cohomology} 
\label{sec:dr}

We explain now how $\Psi(X)$ contains information about the monodromy of $X/K$. We denote by $\widetilde{\Psi}(X)$ the fiber of the fibration $\Psi(X)$. Note that  the singular cohomology $H^*(\widetilde\Psi(X), \CC)$ carries a natural monodromy operator.

Our comparison depends on some additional data. Specifically, on top of the embedding $\iota$, we need to choose a section $k\to \cO_K$ of the projection to the residue field $\cO_K\to k$, and a basis of $\mathfrak{m}/\mathfrak{m}^2$, where $\mathfrak{m}\subseteq \cO_K$ denotes the maximal ideal. If we fix an isomorphism $K\simeq k(\!(t)\!)$ then in particular we obtain a section and a basis as above, but what we really need for the comparison is just these two pieces of data. 

Observe that the choice of basis of of $\mathfrak{m}/\mathfrak{m}^2$ induces a canonical splitting of the log point $\Spec k$ (i.e.\ an isomorphism with the standard log point $\Spec (\NN\to k)$) as follows. Every choice of a uniformizer $t\in \cO_K$ induces a chart for the log structure on $\cO_K$ and hence a splitting for the log structure on $\Spec k$. Two uniformizers $t, t'\in \mathfrak{m}$ induce the same splitting on $\Spec k$ if and only if $t' = ut$ for some $u\in 1+ \mathfrak{m}$. The quotient of the set of uniformizers of $\cO_K$ by the natural action of the group of $1$-units $1+\mathfrak{m}$ is naturally identified with $(\mathfrak{m}\setminus \mathfrak{m}^2)/\mathfrak{m}^2$, i.e.\ with possible basis elements.  

In particular, the section $k\to \cO_K$ allows us to regard everything in sight as a $k$-algebra. We denote by $D_{\cO_K}^{\rm log}$ the ring of logarithmic differential operators on $\cO_K$ relative to $k$, i.e.\ the free non-commutative ring $\cO_K\langle \partial \rangle$ with the relations
\[ 
  [\partial, f] = t \frac{\partial f}{\partial t}
\]
for $f\in \cO_K$. A $D_{\cO_K}^{\rm log}$-module which is finitely generated over $\cO_K$ is simply a finitely generated (but not neccesarily free) module with a logarithmic connection $\nabla \colon M\to M\otimes \Omega^1_{\cO_K/k}(\rm log \; \mathfrak{m})$.

\begin{thm}
  There is a functor $X \mapsto H(X)$ attaching to a scheme of finite type over $K$ an object of the derived category of $D_{\cO_K}^{\rm log}$-modules, whose cohomology modules $H^n(X)$ are finitely generated and free over $\cO_K$, together with functorial identifications
  \[ 
    H^n(X)\otimes_{\cO_K} K \simeq H^n_{\rm dR}(X/K),
    \quad
    H^n(X)\otimes_{\cO_K} \mathbf{C} \simeq H^n(\widetilde\Psi(X), \CC),
  \]
   where the first isomorphism is compatible with $\nabla$, and the second one identifies $\exp (-2\pi i\, {\rm res}_0(\nabla))$ with the monodromy operator. This functor satisfies descent for the $h$-topology.
\end{thm}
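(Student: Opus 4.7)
The plan is to build $H(X)$ as the logarithmic Gauss--Manin complex of a good model of $X$, deduce both comparisons from established theorems in log de Rham cohomology and Kato--Nakayama theory, and then extend to all finite type $X$ via $h$-descent.

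Concretely, for $X$ smooth and separated with a good model $\XX = (\mathscr{Y}, \mathscr{D})$ over $\cO_K$ (which exists by Nagata compactification and resolution), I would set
\[
  H(X) \;=\; R\Gamma\bigl(\mathscr{Y},\, \Omega^{\bullet}_{(\mathscr{Y}, \Mm_{\mathscr{D}})/(\Spec \cO_K, \Mm_{\mathfrak{m}})}\bigr).
\]
By Proposition~\ref{prop:gmlogsmooth}, the morphism $(\mathscr{Y}, \Mm_{\mathscr{D}}) \to (\Spec \cO_K, \Mm_{\mathfrak{m}})$ is proper, exact, and log smooth, so this complex carries a canonical logarithmic Gauss--Manin connection $\nabla$, making it a complex of $D_{\cO_K}^{\mathrm{log}}$-modules. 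Finiteness and freeness of $H^n(X)$ over $\cO_K$ then follow from the two standard facts for proper log smooth morphisms: the Hodge sheaves $R^qf_*\Omega^p_{\log}$ are locally free of finite rank and commute with base change (Steenbrink, Illusie, Kato, Nakayama--Ogus), and the log Hodge-to-de~Rham spectral sequence degenerates at $E_1$ (Deligne--Illusie, Kato, Illusie, Fujisawa).

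For the $K$-comparison, restriction to the generic fibre reduces $H(X)$ to $R\Gamma(\mathscr{Y}_\eta, \Omega^\bullet_{\mathscr{Y}_\eta/K}(\log \mathscr{D}_\eta))$, which computes $H^n_{\mathrm{dR}}(X/K)$ by Deligne's theorem on open smooth varieties; compatibility with $\nabla$ is functoriality of Gauss--Manin. For the Betti comparison, base change along the composite $\cO_K \to k \xrightarrow{\iota} \CC$ (using the chosen section $k \to \cO_K$) gives the log de Rham cohomology of the base-changed central fibre $(\mathscr{Y}_0, \Mm_{\mathscr{D}_0})$ over $0 = \Spec(\NN \to \CC)$. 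The Kato--Nakayama comparison theorem (\cite{KN}, as refined by Ogus and Illusie) identifies this with the $\CC$-cohomology of the fibre of $(\mathscr{Y}_0, \Mm_{\mathscr{D}_0})_{\log} \to 0_{\log} = \Sone$, and the latter is $\widetilde{\Psi}(X)$ by the semistable reduction property established in \S\ref{ss:comparison}. The identification of the geometric monodromy with $\exp(-2\pi i \,{\rm res}_0 \nabla)$ is Steenbrink's formula in the semistable case, generalized to the log smooth setting by Illusie--Kato--Nakayama and by Fujisawa; the chosen basis of $\mathfrak{m}/\mathfrak{m}^2$ canonically trivializes the log point, which is exactly what makes this identification natural.

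To pass to arbitrary finite type $X$, I would prove independence of the chosen good model by the same weak factorization strategy used to handle $\Psi_{\rm rig}$ in \S\ref{sec:key.calculation}: a blowup of $\mathscr{Y}$ along a smooth centre contained in $\mathscr{D}$ induces a quasi-isomorphism on the relative log de Rham complex, and weak factorization shows any two good models are connected by such blowups. Then $h$-descent follows from the known $h$-descent of algebraic de Rham cohomology (proper descent via simplicial resolutions together with Zariski descent), combined with the $h$-descent of $\Psi$ established in Proposition~\ref{prop:descent}, the constancy of the comparison isomorphisms in the simplicial direction, and Proposition~\ref{prop:extend-cosheaf} to uniquely extend from the smooth separated case to all of $\mathbf{Sch}^{\rm lft}_K$.

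The main obstacle I anticipate is matching the two monodromy actions on the nose. On the topological side, the monodromy comes from the $\Sone$-torsor structure on Kato--Nakayama fibres and the chosen section of $\pi$; on the analytic side, it is the exponential of the residue of the logarithmic connection at $\mathfrak{m}$. Tracking signs and the role of the chosen splitting of the log point carefully through the Kato--Nakayama comparison requires some bookkeeping beyond the purely cohomological comparison, and it is here that Steenbrink's original calculation for semistable degenerations must be invoked (and verified to extend through the horizontal boundary divisor present in our setup).
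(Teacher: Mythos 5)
Your construction and both comparison steps coincide with the paper's: the paper also sets $H(\XX)=\pi_+\cO_{\mathscr X}$, i.e.\ $R\pi_*\Omega^\bullet_{(\mathscr X,\Mm_{\mathscr D})/(\Spec\cO_K,\Mm_{\mathfrak m})}$ with its logarithmic Gauss--Manin $D^{\rm log}_{\cO_K}$-structure, identifies the generic fibre with $H^*_{\rm dR}(\mathscr X\setminus\mathscr D/K)$, and invokes the same Illusie--Kato--Nakayama/Achinger--Ogus theorem to identify the special fibre with $H^*(\widetilde\Psi(X),\CC)$ together with the monodromy formula $\exp(-2\pi i\,{\rm res}_0\nabla)$; your extra Hodge-theoretic justification of freeness (degeneration of the log Hodge--de Rham spectral sequence plus local freeness of the Hodge sheaves) is more detail than the paper records but is consistent with it. Where you genuinely diverge is in how you glue the local construction into a functor on all of $\mathbf{Sch}^{\rm ft}_K$. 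You propose weak factorization over $\cO_K$ to show model-independence (blowups in centres with normal crossings with $\mathscr D$ inducing quasi-isomorphisms on the log de Rham complex) and then a somewhat informally stated compatibility of the comparison isomorphisms along simplicial resolutions to get $h$-descent. The paper instead deploys a single cheap device, Lemma~\ref{lemma:perf-cx}: a map of perfect complexes over $\cO_K$ is a quasi-isomorphism as soon as it is one after $\otimes_{\cO_K}K$ and $\otimes^{\mathbf L}_{\cO_K}k$. Since both base changes are intrinsic to $U=\mathscr X\setminus\mathscr D$ (de Rham cohomology of $U/K$, respectively $C^*(\widetilde\Psi(U),\CC)$) and each is already known to satisfy $h$-descent (Huber--J\"order, respectively Proposition~\ref{prop:descent}), this lemma settles both model-independence and $h$-descent at once, with no appeal to weak factorization for schemes (which the paper reserves for the rigid-analytic setting, where spreading out is unavailable). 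Your route should work, but it imports heavier birational machinery and leaves the reduction from ``descent after $\otimes K$ and $\otimes\CC$'' to ``descent integrally'' implicit, which is exactly the point the perfect-complex lemma is there to make precise.
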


For a smooth projective variety over $K=\CCt$ this is closely related to results of Stewart--Vologodsky \cite[Section 2.2]{stewart-vologodsky}.

\begin{proof}
We construct the functor $H$ out of the category $\mathbf{GM}_{\cO_K}$ of good models and prove it satisfies $h$-descent. For a good model $\XX=(\mathscr X,\mathscr D)$, $\pi\colon \mathscr{X}\to \Spec \cO_K$, we set
\[ 
  H(\XX) = \pi_+ \cO_{\mathscr{X}} \in \mathrm{D}^b(D_{\cO_K}^{\rm log})
\]
which is a complex of $D_{\cO_K}^{\rm log}$-modules whose underlying complex of $\cO_K$-modules is $R\pi_* \Omega^{\bullet}_{\XX/\cO_K}$ (and is in fact a perfect complex), cf.\ \cite[\S 3.1]{Koppensteiner}. The $D_{\cO_K}^{\rm log}$-module structure on the cohomology corresponds to the logarithmic Gauss--Manin connection. Moreover, $H(\XX) \otimes_{\cO_K} K$ is the logarithmic de Rham cohomology of $(\mathscr{X}_K, \mathscr{D}_K)$, which coincides with the de Rham cohomology of $\mathscr{X}\setminus\mathscr{D}$.

The special fiber $H(\XX)\otimes_{\cO_K, \iota} \CC$ is a logarithmic $D$-module over the complex log point, i.e.\ an object of $\mathrm{D}^b_{\rm f.d.}(\CC[\partial])$, the full subcategory of $\mathrm{D}^b(\CC[\partial])$ spanned by complexes whose cohomology has finite dimension over $\CC$. The cohomology groups are the log de Rham cohomology of $(\mathscr{X}, \mathscr{D})_0$ relative to the log point, and the action of $\partial$ is the residue of the logarithmic Gauss--Manin connection. By \cite[Theorem~5.1.4]{achinger-ogus} (see also \cite[Theorem~6.2]{illusie-kato-nakayama}), we have an identification
\[ 
  H(\XX)\otimes_{\cO_K, \iota} \CC \simeq H^*((\mathscr{X}, \mathscr{D})_{0, \rm log} \times_{\Sone} \mathbf{R}(1), \CC), 
\]
identifying the monodromy on the right hand side with $\exp (-2\pi i \partial)$ on the left.

To show that $H$ satisfies $h$-descent, it is enough to forget the $D^{\rm log}_{\cO_K}$-module structure and consider $H(\XX)$ as a perfect complex over $\cO_K$. Further, by Lemma~\ref{lemma:perf-cx} below, it is enough to show that both $H(\XX)\otimes K$ and $H(\XX)\otimes \CC$ satisfy $h$-descent. The first functor is $R\Gamma_{\rm dR}(\mathscr{X}\setminus \mathscr{D})$, which satisfies $h$-descent (cf.\ e.g.\ \cite[\S 7]{HuberJoerder}). The latter is identified with $C^*(\widetilde \Psi(X), \CC)$, and satisfies $h$-descent because $\widetilde \Psi(X)$ does and $C^*(-, \CC)$ maps homotopy colimits of simplicial objects to homotopy limits.
\end{proof}

\begin{lemma} \label{lemma:perf-cx}
Let $E,F\in \Perf(\cO_K)$ and $\varphi\colon E\to F$ be a morphism such that both $\varphi\otimes_{\cO_K} K$ and $\varphi\otimes^{\mathbf{L}}_{\cO_K}k$ are quasi-isomorphisms. Then $\varphi$ is a quasi-isomorphism. \qed
\end{lemma}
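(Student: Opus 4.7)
My plan is to reduce to showing that the cone is acyclic, and then to use Nakayama's lemma on the cohomology modules.

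Let $C = \mathrm{cone}(\varphi) \in \Perf(\cO_K)$; it suffices to show $C \simeq 0$. By the hypothesis, $C \otimes_{\cO_K}^{\mathbf{L}} k \simeq 0$. The short exact sequence $0 \to \cO_K \xrightarrow{\pi} \cO_K \to k \to 0$ (where $\pi$ is a uniformizer) gives a distinguished triangle
\[
 C \xrightarrow{\pi} C \to C \otimes^{\mathbf{L}}_{\cO_K} k,
\]
so the vanishing of the third term is equivalent to $\pi \colon C \to C$ being a quasi-isomorphism. In particular, multiplication by $\pi$ induces an isomorphism on each cohomology module $H^i(C)$.

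Since $C$ is perfect over a noetherian ring, each $H^i(C)$ is a finitely generated $\cO_K$-module, and vanishes outside a bounded range. Because $\pi H^i(C) = H^i(C)$ and $\cO_K$ is a local ring with maximal ideal $(\pi)$, Nakayama's lemma gives $H^i(C) = 0$ for all $i$. Hence $C \simeq 0$ and $\varphi$ is a quasi-isomorphism.

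Note that this argument never used the hypothesis on $\varphi \otimes_{\cO_K} K$; alternatively, one can give a slightly different proof that uses both hypotheses, by first deducing from $C \otimes_{\cO_K} K = 0$ that each $H^i(C)$ is torsion, so is a finite sum of cyclic modules $\cO_K/\pi^{n_j}$, and then concluding from $H^i(C)/\pi H^i(C) = 0$ (a consequence of $C\otimes^{\mathbf{L}}_{\cO_K} k \simeq 0$) that $H^i(C) = 0$. There is no substantive obstacle in either approach; the only point requiring minor care is identifying the triangle expressing $C \otimes^{\mathbf{L}}_{\cO_K} k$ as the cone of multiplication by $\pi$, which is standard.
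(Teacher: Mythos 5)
Your proof is correct, and it is precisely the ``standard'' argument that the paper explicitly omits (the lemma is stated with the proof left out): form the cone, identify $C\otimes^{\mathbf{L}}_{\cO_K}k$ with the cone of multiplication by a uniformizer, and apply Nakayama to the finitely generated cohomology modules of the perfect complex $C$. Your side remark is also accurate: for perfect complexes over the local ring $\cO_K$ the derived fiber at the closed point already detects vanishing, so the hypothesis on $\varphi\otimes_{\cO_K}K$ is not needed.
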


The proof is standard and we omit it.

\section{Comparison with the \'etale homotopy type}
\label{sec:etale}

In their seminal book \cite{ArtinMazur}, Artin and Mazur construct a homotopy type $\Pi^\et(X)$ (more precisely, a pro-object in the homotopy category of simplicial sets) associated to a locally noetherian scheme $X$, called its \emph{\'etale homotopy type}. They moreover show the following comparison theorem: if $X$ is a conncted pointed scheme of finite type over $\CC$, then $\Pi^\et(X)$ receives a natural map from (the singular complex of) $X(\CC)$ which induces an equivalence on pro-finite completions. 

The goal of this section is to compare the Betti homotopy type $\Psi(X)$ of a scheme over $K$ (where $K$ is as usual equipped with an embedding $\iota\colon k\hookrightarrow \CC$ of its residue field into the complex numbers) to the \'etale homotopy type. We assume that the residue field $k$ is algebraically closed, so that ${\rm Gal}(\overline K/K) \simeq \ZZ(1)$. We use the symbol $\widehat\Pi$ to denote pro-finite completions of homotopy types. 

\begin{thm} \label{thm:etale-comparison}
  There exists a natural transformation
  \begin{equation} \label{eqn:et-comp}
    \Psi(X)\to \widehat\Pi^\et(X)
  \end{equation}
  for every scheme $X$ locally of finite type over $K$, inducing an equivalence on pro-finite completions.
\end{thm}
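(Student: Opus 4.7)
The plan is to leverage the log description of $\Psi$ established in \S\ref{ss:comparison}: if $X$ is the open part $\mathscr{X}\setminus \mathscr{D}$ of a good model $\XX = (\mathscr{X}, \mathscr{D})\in \mathbf{GM}_{\cO_K}$, then $\Psi(X) \simeq (\mathscr{X}, \Mm_{\mathscr{D}})_{0, \log}$ as a space over $\Sone = 0_{\log}$. On such $X$, I would construct \eqref{eqn:et-comp} as the composition
\[
  (\mathscr{X}, \Mm_{\mathscr{D}})_{0,\log} \longrightarrow \widehat{\Pi}^{\ket}\bigl((\mathscr{X}, \Mm_{\mathscr{D}})_0\bigr) \longrightarrow \widehat{\Pi}^{\et}(X),
\]
where the first arrow is the Kato--Nakayama to Kummer étale comparison supplied by the auxiliary Theorem~\ref{thm:kummer-csst} (the variant of \cite{knvsroot,loghomotopy} announced in the introduction), already known to be a profinite equivalence, and the second arrow is the log-to-generic specialization map. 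The latter comes from the fact that for a log smooth log scheme arising as the log special fiber of a good model over $\cO_K$, the Kummer étale site is equivalent, via Nakayama's log nearby cycles theorem, to the classical étale site of the geometric generic fiber $X_{\overline K}$, compatibly with the natural $\hat{\ZZ}(1)$-actions (inertia on one side, $\pi_1$ of the base circle on the other, identified through the chosen uniformizer).

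The composition is a profinite equivalence: the first arrow by Theorem~\ref{thm:kummer-csst}, and the second because, after passing from cohomology to homotopy types, Nakayama's theorem identifies $\widehat{\Pi}^{\ket}((\mathscr{X},\Mm_{\mathscr{D}})_0)$ with the étale homotopy type of $X_{\overline K}$ together with its $\Gal(\overline K/K)\simeq \hat{\ZZ}(1)$-action. Since $k$ is algebraically closed this data recovers $\widehat{\Pi}^{\et}(X)$, and the identification is a fibration over the profinite completion $B\hat{\ZZ}(1)$ of $\Sone$, matching the Betti monodromy on the left with the inertia action on the right.

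To extend the construction to all $X \in \mathbf{Sch}^{\rm lft}_K$, I would invoke $h$-descent on both sides: $\Psi$ is a hypercosheaf by Proposition~\ref{prop:descent}, while the profinite étale homotopy type is an $h$-hypercosheaf as well (via proper and Nisnevich descent for torsion étale cohomology, lifted to pro-spaces). By Nagata compactification and resolution of singularities, every $X$ admits an $h$-hypercovering by smooth separated schemes which are generic fibers of good models; functoriality of the construction with respect to morphisms of good models and with respect to $h$-refinements then both defines \eqref{eqn:et-comp} in general and propagates the profinite equivalence from the case of good models to arbitrary~$X$.

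The main obstacle is realizing the Kummer étale to classical étale comparison not merely as an isomorphism of (pro-)cohomology groups, where it is classical, but as a functorial map of pro-homotopy types over $\Sone$, together with the matching of $\hat{\ZZ}(1)$-actions. Lifting Nakayama's nearby cycles isomorphism to a map at the level of topoi or pro-étale sites, and checking that the resulting map of profinite homotopy types sends the geometric monodromy on the Betti side to the action of a topological generator of inertia on the étale side, is where most of the subtlety lies. The auxiliary Theorem~\ref{thm:kummer-csst} is designed precisely to dispose of the Betti-to-Kummer-étale half of the comparison, so the étale-theoretic half is what remains to be carried out in this section.
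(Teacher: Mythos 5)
Your proposal shares the paper's skeleton --- reduce by $h$-descent to the trivial locus $U=\mathscr X\setminus\mathscr D$ of a good model, start the comparison with the Kato--Nakayama-to-Kummer-\'etale equivalence of Theorem~\ref{thm:kummer-csst}, and extend by hypercosheaf arguments (Propositions~\ref{prop:descent}, \ref{prop:etale.homotopy.descent}, \ref{prop:extend-cosheaf}) --- but the middle step is genuinely different. You propose to pass from $(\mathscr X,\Mm_{\mathscr D})_{0,\ket}$ directly to the geometric generic fiber $X_{\overline K}$ via Nakayama's log nearby cycles, equivariantly for $\widehat\ZZ(1)$, and then recover $\widehat\Pi^\et(X)$ from $\widehat\Pi^\et(X_{\overline K})$ together with the inertia action. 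The paper instead stays over $K$ throughout and chains three maps of sites, $(\mathscr X,\Mm_{\mathscr D})_{0,\ket}\to(\mathscr X,\Mm_{\mathscr D})_{k,\ket}\to(\mathscr X,\Mm_{\mathscr D})_{\ket}\leftarrow U_\et$, verifying that each is a $\natural$-isomorphism (invariance under extension of algebraically closed fields, log proper base change as in Theorem~\ref{thm-logpbc}, and log purity as in Theorem~\ref{thm-ket-log-regular}) and then invoking the Artin--Mazur criterion (Theorem~\ref{thm:artinmazur-coh-comparison}). The payoff of the paper's route is that it needs only cohomological comparisons, including non-abelian $H^1$, and never has to lift a nearby-cycles isomorphism to an equivariant map of pro-homotopy types or perform a profinite homotopy quotient by $\widehat\ZZ(1)$ --- which is exactly the point you flag as ``where most of the subtlety lies,'' and which in your write-up remains a plan rather than an argument. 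Your route, if carried out, would yield the fiberwise statement $\widetilde\Psi(X)\to\widehat\Pi^\et(X_{\overline K})$ as a byproduct; the paper obtains that statement afterwards (Proposition~\ref{prop:et-comp-fiber}) in the opposite and technically safer direction, as a homotopy limit over the finite Kummer levels $K_n=K(\sqrt[n]{t})$, precisely to avoid descending along $B\widehat\ZZ(1)$. So: correct architecture, but to make your version a proof you would still need to either supply the equivariant homotopy-theoretic upgrade of Nakayama's theorem or fall back on the paper's purely cohomological chain.
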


\begin{remark}
  For $X = \Spec K$, the above map is the standard map
  \[
    \Sone \to \widehat\Pi^\et(\Spec K) \simeq B\widehat\ZZ(1). 
  \]
  For $X$ arbitrary, denote by $\widetilde\Psi(X)$ the homotopy fiber of $\Psi(X)\to \Sone$. Then \eqref{eqn:et-comp} induces a map 
  \[ 
    \widetilde\Psi(X) \to \widehat\Pi^\et(X_{\overline K})
  \]
  which again induces isomorphisms on pro-finite completions (Proposition~\ref{prop:et-comp-fiber}). 
\end{remark}

Since one of the constructions of $\Psi(X)$ is via Kato--Nakayama spaces, it is unsurprising that the proof of the above result should rely on a logarithmic version of the Artin--Mazur comparison theorem. That is, that there is a log \'etale homotopy type associated to an fs log scheme whose underlying scheme is locally noetherian, which in the case of fs log schemes locally of finite type over $\CC$ agrees with the homotopy type of the Kato--Nakayama space up to pro-finite completion. Such a result was recently proved by Carchedi, Scherotzke, Sibilla, and the second author \cite{knvsroot}, where the role of the log \'etale homotopy type is played by the \'etale homotopy type of the associated \emph{infinite root stack}, a certain inverse system of algebraic stacks \cite{TV}. 

We give in \S\ref{ss:compare-xlog-xket} a variant of their result (with a rather easy proof), where we use the \emph{Kummer \'etale site} $(X,\Mm_X)_\ket$ of a log scheme in place of the infinite root stack. The precise relationship between the following theorem and the main result of \cite{knvsroot} can be found in \cite{loghomotopy}.

\begin{thm} \label{thm:kummer-csst}
  Let $(X,\Mm_X)$ be an fs log scheme locally of finite type over $\CC$, and let 
  \[ 
    \varepsilon\colon (X,\Mm_X)_\log \to (X,\Mm_X)_\ket
  \]
  be the natural map of sites. Then $\varepsilon$ induces a homotopy equivalence on profinite homotopy types:
  \[ 
   \widehat{\Pi}(\varepsilon)\colon \widehat \Pi((X,\Mm_X)_\log) \isomlong \widehat \Pi((X,\Mm_X)_\ket).
  \]
\end{thm}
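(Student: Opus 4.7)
The plan is to combine the two classical comparison theorems of Kato and Nakayama with a general criterion for when a morphism of sites induces a profinite homotopy equivalence. The criterion I would use is: $\varepsilon$ induces such an equivalence whenever (i) the pullback $\varepsilon^{-1}$ restricts to an equivalence between the categories of locally constant sheaves of finite sets on $(X,\Mm_X)_\ket$ and on $(X,\Mm_X)_\log$, and (ii) for every finite locally constant sheaf of abelian groups $L$ on $(X,\Mm_X)_\ket$, the induced map $H^*((X,\Mm_X)_\ket, L) \to H^*((X,\Mm_X)_\log, \varepsilon^{-1} L)$ is an isomorphism. This is essentially a repackaging of the construction of Artin--Mazur: the pro-finite shape of a topos is determined as a pro-object in finite truncated homotopy types by its finite torsors (giving $\pi_0$ and $\pi_1$) and by twisted cohomology with coefficients in finite local systems (giving the higher $\pi_i$).

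Condition (i) is Nakayama's logarithmic Riemann existence theorem, which produces the equivalence
\[
  Y \longmapsto Y_\log \colon \bigl\{\text{finite Kummer \'etale covers of } (X,\Mm_X)\bigr\} \isomlong \bigl\{\text{finite topological covers of } (X,\Mm_X)_\log\bigr\}.
\]
Condition (ii) is the Kato--Nakayama cohomological comparison theorem, whose sheaf-theoretic form $R\varepsilon_*\varepsilon^{-1}\Lambda \simeq \Lambda$ holds for any finite abelian group $\Lambda$ (all torsion orders are invertible since we work over $\CC$). The general case of a finite locally constant Kummer \'etale coefficient system $L$ reduces to the constant case by trivializing $L$ on a finite Galois Kummer \'etale cover $Y \to X$ and descending via (i), which identifies $Y \to X$ with the corresponding topological cover of $(X,\Mm_X)_\log$ and therefore identifies the two Leray spectral sequences converging to the two cohomologies of $L$.

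The main obstacle will be verifying the criterion itself in sufficient generality. For ordinary (non-logarithmic) \'etale sites this is the classical Artin--Mazur framework, but the Kummer \'etale site has some unfamiliar features, so one must check that its pro-finite shape really is determined by finite locally constant sheaves and their cohomology. A clean way to do this is via shape theory of $\infty$-topoi: one identifies the pro-finite shape of the hypercomplete $\infty$-topos on each side with the pro-finite completion of the shape of its category of locally constant sheaves of finite sets, so that (i) and (ii) formally imply the sought equivalence of shapes. This framework is closely related to, but lighter than, the infinite root stack comparison employed in \cite{knvsroot,loghomotopy}; alternatively one can proceed more concretely by building compatible hypercoverings on both sides out of Galois Kummer \'etale covers and their topological counterparts, and checking at each stage that the cohomology with finite coefficients matches via Kato--Nakayama.
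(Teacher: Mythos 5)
Your strategy is the same as the paper's: show that $\varepsilon$ induces isomorphisms on non-abelian $H^1$ with finite coefficients and on cohomology of finite abelian local systems, then invoke a general criterion (here, Theorem~\ref{thm:artinmazur-coh-comparison}, i.e.\ Artin--Mazur 4.3 combined with 12.5) to conclude that the profinite completions of the shapes agree. The differences are in how the two ingredients and the criterion are handled. For the abelian part you and the paper both use Kato--Nakayama (Theorem~\ref{thm:kn}); note that their theorem already covers constructible, hence finite locally constant, coefficients, so your reduction to constant coefficients via a Galois Kummer cover is unnecessary. For the $\pi_1$/torsor part you cite the logarithmic Riemann existence theorem (equivalence of finite Kummer \'etale covers with finite covers of $X_{\log}$), which is a legitimate but stronger input; the paper instead proves directly (Proposition~\ref{prop}) that $\varepsilon^*\colon H^1((X,\Mm_X)_\ket,G)\to H^1((X,\Mm_X)_\log,G)$ is bijective, via a Giraud-type lemma on $R^1f_*f^*G$ and an explicit Kummer cover built from a chart by multiplication by the exponent of $G$, using properness of $\tau$ to descend torsors to $W_{\an}$. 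Finally, the point you defer as ``the main obstacle'' --- that the profinite shape is really controlled by these invariants --- is exactly where the one hypothesis you do not address enters: Theorem~\ref{thm:artinmazur-coh-comparison} requires both sites to have finite local cohomological dimension with respect to finite groups, and for the Kummer \'etale site this is not automatic; the paper verifies it in Lemma~\ref{lemma:xket-properties}(b) by bounding $\cd (X,\Mm_X)_\ket$ by $\cd X_\et + r$ via log proper base change and the computation for a log point. You should either check this bound or ensure that whatever $\infty$-topos shape criterion you substitute genuinely dispenses with it (and, if you use the Lurie-style profinite shape, that it is compatible with the Artin--Mazur completion used elsewhere in the paper); as written this is the only real gap in the proposal.
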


The proof of Theorem~\ref{thm:etale-comparison} proceeds in two steps. First, as in the construction of $\Psi_\log$, we reduce to the case of $X=U$ being the trivial locus of a good model $\XX/\cO_K$. Second, if $\XX=(\mathscr X,\mathscr D)$ is a good model and $U=\mathscr X\setminus \mathscr D$, we have the following maps of sites
\[ 
  \Psi(U) = (\mathscr X,\Mm_{\mathscr D})_{0,\rm log} \to (\mathscr X,\Mm_{\mathscr D})_{0, \ket}\to  (\mathscr X,\Mm_{\mathscr D})_{k,\ket} \to (\mathscr X,\Mm_{\mathscr D})_\ket  \leftarrow 
  U_\et,
\]
and we show that each map induces an equivalence on pro-finite homotopy types. The first one is handled using Theorem~\ref{thm:kummer-csst}, the second map comes from an extension of algebraically closed fields of characteristic $0$, the third one is a variant of proper base change, and the last one is a form of ``purity''.

\subsection{Review of \'etale homotopy}

We refer to \cite{ArtinMazur,Friedlander, carchedi} for the details on \'etale homotopy, giving here only a brief outline. Consider a site $C$ satisfying the following properties:
\begin{itemize}
  \item $C$ admits  finite products, fiber products, and finite coproducts,
  \item $C$ is \emph{subcanonical}, i.e.\ representable presheaves in $C$ are sheaves,
  \item $C$ is \emph{locally connected}, i.e. every object is a coproduct of objects which do not admit  non-trivial coproduct decompositions.
\end{itemize} 
These properties are satisfied, for example, by the \'etale site of a locally noetherian scheme. For such $C$, every object has a well-defined set of connected components, which gives rise to a connected component functor 
\[ 
  \pi\colon C\to {\rm Set}.
\]
If $U_\bullet\colon \Delta^{\rm op}\to C$ is a hypercovering (of the final object), the simplicial set $\pi(U_\bullet)$ is a ``nerve'' of $U_\bullet$, to be thought of as an approximation to the homotopy type of $C$. Hypercoverings in $C$ can be organized into a cofiltering category ${\rm HR}(C)$ where maps are homotopy classes of morphisms, and therefore 
\[
  \Pi(C) := \{\pi(U_\bullet)\}_{U_\bullet \in {\rm HR}(C)} \in \text{pro-}H
\]
is a well-defined object in the category of pro-homotopy types, functorial in $C$.

\begin{remark}
With a bit more work, in the case of the \'etale site of a locally noetherian scheme, one can upgrade this construction to yield a pro-object in the category of simplicial sets, see \cite{Friedlander}. Moreover, this machinery was recently further refined and generalized to arbitrary higher stacks by Carchedi \cite{carchedi}. We will mostly stick to the ``classical language'' of Artin--Mazur.
\end{remark}

For technical reasons, it is often important to assume that $C$ is connected (the final object does not admit nontrivial coproduct decompositions) and pointed (i.e.\ with a chosen point $p\colon {\rm Set}\to C$). In this case, considering pointed hypercoverings above yields an object $\Pi(C)$ in the category $\text{pro-}H_0$, where $H_0$ is the homotopy category of pointed simplicial sets.

\begin{defin}
  Let $X$ be a locally noetherian pointed scheme. The \emph{\'etale homotopy type} of $X$, denoted $\Pi^\et(X)$, is the pro-object $\Pi(X_\et)$ of constructed above associated to the \'etale site of $X$ with the chosen base point.
\end{defin}

An object in $\text{pro-}H_0$ is called \emph{pro-finite} if all of its homotopy groups are finite. The inclusion of pro-finite objects into $\text{pro-}H_0$ admits a left adjoint $(-)^\wedge$, the \emph{pro-finite completion} functor. If $X$ is a connected pointed geometrically unibranch noetherian  scheme, then $\Pi^\et(X)$ is pro-finite \cite[Theorem 11.1]{ArtinMazur}. An important criterion detecting whether a map of sites induces an equivalence of pro-finite homotopy types is the following.

\begin{thm} \label{thm:artinmazur-coh-comparison}
  Let $f\colon C\to D$ be a morphism of pointed connected sites satisfying the above conditions. Suppose that
  \begin{enumerate}[i.]
    \item both $C$ and $D$ have finite local cohomological dimension with respect to the class of finite groups (cf.\ \cite[Definition~8.17]{ArtinMazur}),
    \item for every finite group $G$, $f$ induces an isomorphism on non-abelian cohomology
    \[  
      f^*\colon H^1(D, G) \isomlong H^1(C, G),
    \]
    \item for every local system $F$ of finite abelian groups on $D$, $f$ induces an isomorphism on cohomology
    \[  
      f^*\colon H^*(D, F) \isomlong H^*(C, f^* F).
    \]
  \end{enumerate}
  Then the induced map of pro-finite completions
  \[
    f\colon \widehat\Pi(C) \to \widehat\Pi(D)
  \]
  is an isomorphism in $\text{pro-}H_0$.
\end{thm}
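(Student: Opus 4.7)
The plan is to apply a Whitehead-type criterion for pro-finite homotopy types, which states that a morphism between pointed connected pro-finite spaces is an equivalence in $\text{pro-}H_0$ if and only if it induces isomorphisms on the profinite fundamental group and on cohomology with all finite abelian local coefficient systems. The three hypotheses of the theorem are set up precisely to match this criterion: (ii) will control the fundamental group, (iii) will control higher cohomology, and (i) ensures that $\widehat{\Pi}(C)$ and $\widehat{\Pi}(D)$ are genuine pro-finite objects for which the relevant Postnikov-tower arguments converge.

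First I would deduce from (ii) that $f$ induces an isomorphism on profinite fundamental groups. Under the standing hypotheses on $C$ and $D$, $H^1(-, G)$ for a finite group $G$ classifies $G$-torsors, equivalently conjugacy classes of continuous surjections of the profinite fundamental group onto subgroups of $G$. Letting $G$ range over all finite groups and using that a profinite group is determined by its system of continuous finite quotients gives $\widehat{\pi_1}(C) \isomlong \widehat{\pi_1}(D)$.

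Next I would run an induction on Postnikov stages. Assuming that $\widehat{\Pi}(f)^{\leq n}$ is an equivalence, the passage from stage $n$ to stage $n+1$ is classified by a $k$-invariant living in a cohomology group of the form $H^{n+2}(-, \pi_{n+1})$ with coefficients in a finite abelian local system, together with the homotopy group $\pi_{n+1}$ itself, reconstructible as $H^{n+1}$ with appropriate local coefficients. Condition (iii) identifies both pieces of data under $f^*$, yielding equivalence at stage $n+1$. The finite local cohomological dimension in (i) ensures that for each finite abelian local system the Postnikov tower has only finitely many non-trivial stages in the relevant pro-sense, so the limit of the stage-wise equivalences is the desired equivalence.

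The main obstacle is executing the Postnikov argument rigorously inside the Artin--Mazur pro-category, where one must carefully interlace the cofiltered system of hypercoverings indexing the pro-object with the inductive construction of the Postnikov tower; this is essentially the content of the comparison theorems in \cite[Sections~4 and 12]{ArtinMazur}. A more conceptual alternative is to lift the construction to the $\infty$-category of profinite spaces, as in \cite{Friedlander} or \cite{carchedi}, in which setting the statement becomes a direct instance of Whitehead's theorem inside a well-behaved $\infty$-topos, so that the verification reduces to checking (ii) and (iii) exactly as above.
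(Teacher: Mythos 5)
Your argument is correct and is essentially the paper's proof unpacked: the paper simply combines \cite[Theorem~4.3]{ArtinMazur} with \cite[Theorem~12.5]{ArtinMazur}, and your two steps (recovering the profinite fundamental group from $H^1(-,G)$, then the Postnikov induction with coefficients controlled by (iii) and convergence guaranteed by (i)) are exactly the content of those two theorems, which you correctly locate in \cite[Sections~4 and 12]{ArtinMazur}.
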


\begin{proof}
Combine \cite[Theorem~4.3]{ArtinMazur} with \cite[Theorem~12.5]{ArtinMazur}.
\end{proof}

For brevity, we will call a map of (arbitrary) sites $f\colon C\to D$ a \emph{$\natural$-isomorphism} if it satisfies conditions $ii.$ and $iii.$ above (this is slightly different from the terminology used in \cite{ArtinMazur}). As an example application, the comparison theorem \cite[Exp.\ XVI]{SGA4} states that the map comparing the classical topology to the \'etale topology of a scheme locally of finite type over $\CC$ is a $\natural$-isomorphism, which by Theorem~\ref{thm:artinmazur-coh-comparison} easily implies the Artin--Mazur comparison theorem:

\begin{thm}[{\cite[Theorem 12.9]{ArtinMazur}}]
  Let $X$ be a connected pointed scheme of finite type over $\CC$. Then the natural map
  \[ 
  X_\an \to \Pi^\et(X)
  \]
  induces an equivalence of pro-finite completions.
\end{thm}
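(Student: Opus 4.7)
The plan is to deduce this statement directly from the machinery just developed, namely Theorem~\ref{thm:artinmazur-coh-comparison} together with the classical comparison theorem of SGA~4, Exp.~XVI. The natural map in question arises from the morphism of sites
\[
  \varepsilon \colon X_\an \longrightarrow X_\et
\]
sending an \'etale map $U\to X$ to the associated map of complex analytic spaces $U_\an\to X_\an$. Both sites are pointed (via the chosen basepoint and its residual images) and connected, and both satisfy the three general properties (admitting finite products/fiber products/coproducts, being subcanonical and locally connected) required to define $\Pi(-)$ via hypercoverings. Applying the connected component functor, one obtains a comparison map $X_\an \simeq \Pi(X_\an)\to \Pi(X_\et)=\Pi^\et(X)$ in $\text{pro-}H_0$, which is the map of the theorem.

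First I would verify the hypotheses of Theorem~\ref{thm:artinmazur-coh-comparison}. Condition (i), the finiteness of the local cohomological dimension with respect to finite coefficients, holds for $X_\et$ by the classical bounds on \'etale cohomological dimension of schemes of finite type over $\CC$, and for $X_\an$ because any point of $X_\an$ admits a fundamental system of contractible (hence cohomologically trivial) neighborhoods. Conditions (ii) and (iii), i.e.\ that $\varepsilon$ be a $\natural$-isomorphism, are precisely the content of the classical comparison theorem of SGA~4, Exp.~XVI: for every finite group $G$ the $G$-torsors on $X_\an$ and $X_\et$ coincide, and for every local system of finite abelian groups $F$ on $X_\et$ the pull-back induces an isomorphism
\[
  H^*(X_\et, F) \isomlong H^*(X_\an, \varepsilon^* F).
\]

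Granted these inputs, Theorem~\ref{thm:artinmazur-coh-comparison} immediately yields that $\varepsilon$ induces an isomorphism
\[
  \widehat{\Pi}(X_\an) \isomlong \widehat{\Pi}^\et(X)
\]
in $\text{pro-}H_0$. Since $X_\an$ is a CW-complex (hence its own Artin--Mazur homotopy type in the classical sense), $\widehat{\Pi}(X_\an)$ agrees with the pro-finite completion of the singular complex of $X_\an$, giving the desired comparison.

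The only step that requires any real input beyond bookkeeping is the verification that $\varepsilon$ is a $\natural$-isomorphism, which is the substantive theorem from SGA~4, Exp.~XVI. Everything else is a formal consequence of the framework set up in the previous subsection; in particular the translation from ``equivalence on cohomology with finite coefficients'' to ``equivalence of pro-finite homotopy types'' is precisely what Theorem~\ref{thm:artinmazur-coh-comparison} was designed to encode, so there is no additional obstacle to overcome.
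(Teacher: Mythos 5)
Your proposal is correct and follows exactly the route the paper itself indicates: the comparison theorem of SGA~4, Exp.~XVI shows that $X_\an \to X_\et$ is a $\natural$-isomorphism, and Theorem~\ref{thm:artinmazur-coh-comparison} then upgrades this to an equivalence of pro-finite completions. Your additional verification of the finite local cohomological dimension hypotheses is a welcome piece of bookkeeping that the paper leaves implicit.
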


To apply the criterion of Theorem~\ref{thm:artinmazur-coh-comparison} in our situation, we will need the following lemma.

\begin{lemma}
  Let $X$ be a scheme of finite type over $K$. Then $X_\et$ has finite local cohomological dimension with respect to the class of finite groups.
\end{lemma}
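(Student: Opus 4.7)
The plan is to bound the cohomological dimension of $X_\et$ with respect to finite abelian sheaves uniformly on a cofinal family of étale neighborhoods, and then to observe that the non-abelian case reduces to the abelian one in the usual way. Recall that by \cite[Definition~8.17]{ArtinMazur}, saying $X_\et$ has finite local cohomological dimension with respect to finite groups means that every étale $U\to X$ admits an étale covering $\{V_i\to U\}$ such that each $V_i$ has cohomological dimension $\leq n$ for some fixed integer $n$, uniformly in $U$.

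First, I would reduce to a quasi-compact situation: any étale $U\to X$ admits a covering by affine opens, each of which is quasi-compact and étale over $X$, hence a scheme of finite type over $K$ of dimension at most $\dim X$. It therefore suffices to establish a bound $\cd_f(V)\leq n_0$ on the cohomological dimension with respect to finite abelian sheaves for every scheme $V$ of finite type over $K$ with $\dim V\leq \dim X$, depending only on $\dim X$.

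Next, I would compute $\cd(K)$. By hypothesis the residue field $k$ is algebraically closed and admits an embedding into $\CC$, so $k$ is algebraically closed of characteristic zero. Since $K$ is complete discretely valued with residue field $k$, the absolute Galois group $\mathrm{Gal}(\overline K/K)$ is (topologically) isomorphic to $\widehat{\ZZ}(1)$, generated by a choice of compatible system of roots of a uniformizer. In particular $\cd(K)=1$ with respect to any finite abelian coefficients. Now I would invoke Artin's bound on étale cohomological dimension of varieties over a field (SGA~4, Exp.~X, Théorème~6.1): for $V$ of finite type over $K$ of dimension $d$, and any finite abelian étale sheaf $F$ on $V$,
\[
  H^i(V_\et,F)=0 \quad \text{for } i > \cd(K)+2d = 1+2d.
\]
Applying this with $d\leq \dim X$ gives the uniform bound $n_0 = 1+2\dim X$.

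Finally, for the non-abelian case of a finite (possibly non-abelian) group $G$, only $H^0$ and $H^1$ are defined, so the bound by $n_0\geq 1$ is automatic, and in the abelian case the bound is exactly what was proved above. Hence $X_\et$ has local cohomological dimension $\leq 1+2\dim X$ with respect to the class of finite groups. The main (and essentially only) subtle point is the computation $\cd(K)=1$, which relies on the characteristic-zero algebraically closed residue field hypothesis; without it, wild ramification would spoil this bound for $p$-groups at the residue characteristic, though Artin's bound still applies to $\ell\neq \mathrm{char}\,k$.
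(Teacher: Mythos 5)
Your argument is correct and is essentially the same as the paper's: both reduce to the bound $\cd(K)=1$ coming from $\mathrm{Gal}(\overline K/K)\simeq\widehat{\ZZ}(1)$ (using that $k$ is algebraically closed of characteristic zero) combined with the bound $2\dim X$ over the algebraic closure. The only cosmetic difference is that the paper runs the Hochschild--Serre spectral sequence for $X_{\overline K}\to X$ explicitly rather than citing the packaged bound $\cd(V)\leq \cd(K)+2\dim V$ from SGA~4.
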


\begin{proof}
Let $d=\dim X$. We will show that if $F$ is a constructible \'etale sheaf on $X$ then $H^n(X, F) = 0$ for $n>2d+1$, which implies the claim. Let $G={\rm Gal}(\overline{K}/K) \simeq \widehat\ZZ(1)$ (recall that we are assuming that $k$ is algebraically closed), let $\overline X = X\otimes \overline{K}$, and let $\overline F$ be the pull-back of $F$ to $\overline X$, endowed with the natural continuous $G$-action. We have the spectral sequence
\[
  E^{p,q}_2 = H^p(G, H^q(\overline X, \overline F)) \quad \Rightarrow \quad H^{p+q}(X, F).
\]
Since $G$ has cohomological dimension $1$ and $\overline X$ has cohomological dimension $\leq 2d$, we have $E^{p,q}_2 = 0$ for $p>1$ or $q>2d$, and we conclude that $H^n(X, F) = 0$ for $n>2d+1$, as desired.
\end{proof}

We will also need the fact that the \'etale homotopy type satisfies a form of simplicial descent in the $h$-topology.

\begin{prop}\label{prop:etale.homotopy.descent}
  Let $V_\bullet\to U$ be an $h$-hypercovering of schemes locally of finite type over $K$. Then the induced map
  \[
    \hocolim \widehat\Pi^\et(V_\bullet) \to \widehat\Pi^\et(U)
  \]
  is an equivalence.
\end{prop}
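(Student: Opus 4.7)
The plan is to reduce the claim to $h$-hyperdescent for étale cohomology with finite (twisted) coefficients, which is a classical result, and then use the cohomological criterion of Theorem~\ref{thm:artinmazur-coh-comparison} to upgrade this to descent of the profinite homotopy type itself.

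First, equivalences in the $\infty$-category of profinite spaces are detected by mapping into Eilenberg--MacLane profinite targets, i.e.\ by non-abelian cohomology with finite group coefficients and by cohomology with values in finite abelian local systems---this is the content of Theorem~\ref{thm:artinmazur-coh-comparison}. Applied to the map $\hocolim_\bullet \widehat\Pi^\et(V_\bullet)\to \widehat\Pi^\et(U)$, and using that mapping a homotopy colimit of spaces into an Eilenberg--MacLane object yields a homotopy limit of cochain complexes, this reduces the proposition to showing that for every local system $F$ of finite abelian groups on $U_\et$ the natural map
\[
R\Gamma(U_\et, F)\to \holim_\bullet R\Gamma(V_{\bullet,\et}, F|_{V_\bullet})
\]
is a quasi-isomorphism, together with an analogous statement for non-abelian $H^1$ with finite coefficients. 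In other words, I must show that étale cohomology with finite coefficients is an $h$-hypersheaf on $\mathbf{Sch}^{\rm lft}_K$.

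Mimicking the argument of Proposition~\ref{prop:descent}, this can be further reduced in two steps. Comparing $V_\bullet$ with its coskeleta $\cosk^U_{k+1}(V_\bullet)$ and letting $k\to\infty$---using the finite cohomological dimension bound from the preceding lemma to control the cofinality of the comparison on $H^n$ for each fixed $n$---it suffices to treat bounded hypercoverings, and any bounded hypercovering is dominated by a Čech nerve, so it suffices to treat Čech nerves of single $h$-coverings $V\to U$. Since the $h$-topology is generated by Zariski coverings and proper surjections, an $h$-sheaf is precisely a presheaf which is both a Zariski sheaf and a proper sheaf. Čech descent of étale cohomology for Zariski coverings is immediate, as the Zariski topology is coarser than the étale one. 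For proper surjections, the required cohomological descent for étale cohomology with finite abelian coefficients is Deligne's classical theorem on proper hypercoverings; the non-abelian $H^1$ case follows by a standard twisting argument, reducing to finite abelian coefficients after passing to torsors.

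The most delicate point, and what I expect to be the main obstacle, is the compatibility between profinite completion and homotopy colimits: one must form the colimit on the left in an appropriate $\infty$-category of profinite spaces, where equivalences are detected by cohomology with finite coefficients. The finite cohomological dimension bound of the preceding lemma is essential here, as it guarantees convergence of the Postnikov towers needed to apply Theorem~\ref{thm:artinmazur-coh-comparison} uniformly over the hypercovering $V_\bullet$.
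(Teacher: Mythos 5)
Your overall strategy---reduce equivalence of profinite homotopy types to cohomology with finite coefficients via Theorem~\ref{thm:artinmazur-coh-comparison}, then establish $h$-hyperdescent for \'etale cohomology with finite coefficients---is essentially a hands-on unpacking of what the paper outsources to a single citation: the paper's proof applies a theorem of Berner, which says precisely that the profinite \'etale shape satisfies $h$-hyperdescent once one knows that the \'etale site and the $h$-site of each $X$ have the same profinite shape; that hypothesis is then verified by the comparison $H^i(X_\et,F)\simeq H^i(X_h,\delta^{-1}F)$ (Stacks Project, Tag 0EWH) for abelian coefficients, and by Rydh's effective descent of finite \'etale covers along universally submersive morphisms for the non-abelian $H^1$. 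Your cohomological inputs (Deligne's proper cohomological descent plus Zariski descent, plus the finite cohomological dimension lemma to pass from \v{C}ech to hyperdescent) would indeed give the abelian part, though the phrase ``any bounded hypercovering is dominated by a \v{C}ech nerve'' is not accurate---the reduction is an induction on the dimension of the hypercovering, as in Blanc's argument cited in the proof of Proposition~\ref{prop:descent}.

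There are two genuine gaps. First, the step you yourself flag as ``the main obstacle'' is not resolved: Theorem~\ref{thm:artinmazur-coh-comparison} is stated for a morphism of pointed connected \emph{sites}, whereas $\hocolim\widehat\Pi^\et(V_\bullet)\to\widehat\Pi^\et(U)$ is a map out of a homotopy colimit formed in an $\infty$-category of profinite spaces. To apply the criterion you must identify $\hocolim\widehat\Pi^\et(V_\bullet)$ with the profinite shape of an actual site (e.g.\ the total \'etale topos of the simplicial scheme $V_\bullet$) and know that the shape functor and profinite completion commute with this colimit; asserting that the finite cohomological dimension bound ``guarantees convergence'' does not establish this. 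This identification is exactly the content of the theorem of Berner that the paper invokes. Second, the non-abelian $H^1$ descent does not follow from the abelian case ``by a standard twisting argument'': what is needed is that $G$-torsors for finite $G$ (equivalently, finite \'etale covers) satisfy \emph{effective} descent along $h$-coverings, and for proper surjections this is a theorem (Rydh's result on universally submersive morphisms, used in the paper's proof), not a formal consequence of abelian cohomological descent---twisting lets you change the structure group or compute fibers of maps between $H^1$'s, but it does not produce effectivity of descent data for torsors. With these two inputs supplied, your argument closes up and becomes an expanded version of the paper's proof.
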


\begin{proof}
We apply \cite[Theorem 1.19]{berner} with the \'etale and the $h$-topology of the category ${\bf Sch}^{\rm lft}_K$. What we have to check in order to conclude the desired hyperdescent property is that for every locally finite type scheme $X$ over $K$, the profinite shape of the \'etale site $X_\et$ is the same as the profinite shape of the $h$-site $X_h$.

Denote by $\delta\colon X_h\to X_\et$ the natural map of sites. The claim then follows from the fact that for an \'etale local system $F$ of finite abelian groups we have $H^i(X_\et,F)\simeq H^i(X_h,\delta^{-1}F)$ for all $i\in \NN$ \cite[Tag 0EWH]{stacks-project}, and that for every finite group $G$ we have $H^1(X_\et,G)\simeq H^1(X_h,G)$. The latter statement about torsors follows from the fact that universally submersive morphisms are of effective descent for finite \'etale morphisms \cite[Corollary 5.18]{rydh}.
\end{proof}

\subsection{Review of the Kummer \'etale topology}

The ``correct'' analogue of the \'etale site $X_\et$ for a fs log scheme $(X,\Mm_X)$ is the Kummer \'etale site $(X,\Mm_X)_\ket$. See \cite{IllusieFKN} for a survey of the Kummer \'etale topology.

\begin{defin} \mbox{}
  \begin{enumerate}[(i)]
    \item A morphism $f\colon (Y,\Mm_Y)\to (X,\Mm_X)$ of fs log schemes is \emph{Kummer \'etale} if it is log \'etale and the cokernel of $f^* \overline \Mm^{\rm gp}_{X}\to \overline\Mm^{\rm gp}_Y$ is torsion.
    \item A family $\{f_i\colon (Y_i,\Mm_{Y_i})\to (X,\Mm_X)\}$ is a Kummer \'etale cover if the maps $f_i$ are Kummer \'etale and jointly surjective.
    \item The \emph{Kummer \'etale site} $(X,\Mm_X)_\ket$ is the category of Kummer \'etale fs log schemes over $(X,\Mm_X)$, endowed with the topology induced by the Kummer \'etale covers.
  \end{enumerate}
\end{defin}

Since strict \'etale maps are Kummer \'etale, there is a natural morphism of  sites 
\[
  \sigma\colon (X,\Mm_X)_\ket\to X_\et.
\]

\begin{lemma}\label{lem:log.properbasechange}
  Let $f\colon (X,\Mm_X)\to (Y,\Mm_Y)$ be a strict map of fs log schemes, and let $F$ be sheaf of torsion abelian groups on $(Y,\Mm_Y)_\ket$. Then the base change morphism
  \[ 
    f^* R\sigma_{Y*} F \to R\sigma_{X*} f^* F
  \]
  is an isomorphism. 
\end{lemma}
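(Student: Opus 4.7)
The plan is to reduce the base change assertion to a proper base change statement for a cofinal tower of finite Kummer \'etale Galois covers, built from a chart.

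First, the statement is strict-\'etale-local on $Y$ (both sides commute with strict \'etale localization), so I may assume $Y$ admits a global fine saturated chart by an fs monoid $P$. By strictness of $f$, the same chart induces a chart for $X$, and in particular $\overline\Mm_{X,\bar x} = \overline\Mm_{Y, f(\bar x)}$ at every geometric point $\bar x$ of $X$. For each integer $n\geq 1$, I would introduce the root cover
\[
  h_{Y,n}\colon Y_n := Y \times_{\Spec \ZZ[P]} \Spec \ZZ[P^{1/n}] \longrightarrow Y,
\]
where $P^{1/n}$ denotes the saturation of $\tfrac{1}{n}P$ inside $P^\gp \otimes \mathbf{Q}$, equipped with its natural log structure; each $h_{Y,n}$ is a finite Galois Kummer \'etale cover with Galois group $G_n := \Hom((P^{1/n})^\gp/P^\gp, \mathbf{Q}/\ZZ)$. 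A standard structure theorem for Kummer \'etale covers (cf.\ \cite{IllusieFKN}) ensures that the family $\{Y_n\}_{n\geq 1}$ is cofinal among Kummer \'etale covers of $Y$ in the strict \'etale topology.

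The key input is the resulting Leray-type spectral sequence
\[
  E_2^{p,q} = \varinjlim_n H^p\bigl(G_n,\, R^q h_{Y,n,*}(F|_{Y_n})\bigr) \;\Longrightarrow\; R^{p+q}\sigma_{Y*} F,
\]
and its analogue for $X$. Because $f$ is strict, the fs base change $X_n := X \times_Y Y_n$ coincides with the $n$-th root cover of $X$ with respect to the induced chart (so no saturation issues arise), and consequently the base change map $f_\et^* R\sigma_{Y*} F \to R\sigma_{X*} f_\ket^* F$ is compatible with both spectral sequences. To conclude, I would check that it induces an isomorphism on $E_2$-terms by invoking proper base change for the finite morphisms $h_{Y,n}$:
\[
  f_\et^* Rh_{Y,n,*}(F|_{Y_n}) \;\simeq\; Rh_{X,n,*}\, f_n^*(F|_{Y_n}),
\]
where $f_n\colon X_n \to Y_n$ is the induced map, and then applying $H^*(G_n, -)$ and passing to the colimit over $n$, both of which preserve isomorphisms.

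The main obstacle will be formulating the Leray-type spectral sequence and its compatibility under strict base change precisely at the level of sites, rather than only stalks; this is a standard but nontrivial ingredient of Kummer \'etale cohomology (treated by Nakayama, cf.\ also \cite{IllusieFKN}) which I would import rather than prove from scratch.
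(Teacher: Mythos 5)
Your argument is essentially correct, but note that the paper's own proof is a one-line citation of Nakayama's log proper base change theorem \cite[Theorem~5.1]{NakayamaMathAnn}; what you have written is, in effect, a reconstruction of the proof of that cited result in the strict case. The ``key input'' you import --- the description of $R\sigma_{Y*}F$ as a colimit over the tower of root covers --- is precisely Nakayama's computation of the higher direct images of $\sigma\colon (Y,\Mm_Y)_\ket\to Y_\et$ (see also \cite[Proposition~6.3]{IllusieFKN}, which the paper invokes elsewhere), so the heavy lifting is still a citation, only a more granular one. Three points to tighten. First, for $h_{Y,n}$ to be log \'etale you must restrict to integers $n$ invertible on $Y$; this is vacuous in the paper's characteristic-zero applications but matters for the lemma as stated for arbitrary fs log schemes. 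Second, since $h_{Y,n}$ is finite, $R^q h_{Y,n,*}=0$ for $q>0$ and $h_{Y,n,*}$ is exact and commutes with \emph{arbitrary} base change, so you need only this elementary fact rather than the proper base change theorem; correspondingly your $E_2$-term should really read $\varinjlim_n\mathcal{H}^p\bigl(G_n,\,h_{Y,n,*}\,\sigma_{Y_n*}(F|_{Y_n})\bigr)$, and the genuine content of the spectral sequence is that the higher Kummer \'etale cohomology of $F|_{Y_n}$ dies in the colimit over $n$. Third, the cleanest packaging of the same idea is stalkwise: $(R^q\sigma_{Y*}F)_{\bar y}\simeq H^q\bigl(\Gamma_{\bar y},F_{\tilde y}\bigr)$ with $\Gamma_{\bar y}=\Hom(\overline{\Mm}_{Y,\bar y}^{\gp},\widehat{\ZZ}'(1))$ and $\tilde y$ a log geometric point over $\bar y$; strictness of $f$ gives $\overline{\Mm}_{X,\bar x}\simeq\overline{\Mm}_{Y,f(\bar x)}$, so the base change map is seen to be an isomorphism on stalks without having to track a morphism of spectral sequences globally.
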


\begin{proof}
This is a special case of Nakayama's log proper base change \cite[Theorem~5.1]{NakayamaMathAnn}.
\end{proof}

\begin{thm}[{Log Purity \cite{FujiwaraGabber}, \cite[2.0.1,2.0.5]{Nakayama}}] \label{thm-ket-log-regular}
Let $(X,\Mm_X)$ be a log regular fs log scheme over $\mathbf{Q}$, and let $U = (X,\Mm_X)_{\rm triv}$ be its trivial locus. Then the natural map
\[  
  U_\et = (U,\Mm_X|_U)_\ket \to (X,\Mm_X)_\ket  
\]
is a $\natural$-isomorphism. \qed
\end{thm}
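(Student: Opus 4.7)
The plan is to follow the strategy of Fujiwara--Gabber \cite{FujiwaraGabber} and Nakayama, proving the two conditions defining a $\natural$-isomorphism by localizing on $X_\et$ and then performing an explicit computation on a strict henselization using the structure theory of log regular schemes. Throughout, let $j\colon U_\et \to (X,\Mm_X)_\ket$ denote the given map of sites.

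First, I would reduce to a strictly local situation. Applying Lemma~\ref{lem:log.properbasechange} to strict \'etale maps, the verification that $j$ induces an isomorphism on $H^*(-,F)$ for a torsion sheaf $F$ on $(X,\Mm_X)_\ket$, as well as the verification for $H^1(-,G)$ for a finite group $G$, can be checked \'etale-locally on $X$, i.e.\ after passing to the strict henselization $\widetilde X$ of $X$ at a geometric point $\bar x$. By Kato's structure theorem for log regular schemes --- which in characteristic zero (the hypothesis ``over $\mathbf{Q}$'') gives an isomorphism of the strict henselization $(\widetilde X,\Mm_{\widetilde X})$ with $\Spec(\widetilde R[P])$ at its closed point, where $\widetilde R$ is a strictly henselian regular local ring and $P = \overline{\Mm}_{X,\bar x}$ is a sharp fs monoid --- the trivial locus becomes $\widetilde U = \Spec \widetilde R[P^{\gp}]$, the complement of the toric boundary divisor.

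Second, I would verify the two conditions in this local model. For the $H^1$ condition, observe that finite connected Kummer \'etale covers of $(\widetilde X,\Mm_{\widetilde X})$ are precisely the maps $\Spec \widetilde R[Q]\to \Spec \widetilde R[P]$ induced by Kummer injections $P\hookrightarrow Q$ with $n\cdot Q\subseteq P$ for some $n$ invertible in $\widetilde R$ (equivalently, by the choice of a finite subgroup of $\Hom(P^{\gp},\mu_n)$). On the other hand, in characteristic zero, \emph{every} finite \'etale cover of $\widetilde U$ is tame along the boundary, so Abhyankar's lemma together with the Zariski--Nagata purity theorem shows that every such cover arises (uniquely) as the trivial locus of a Kummer cover of the kind just described. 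Hence $H^1(-,G)$ agrees. For the cohomological comparison, one reduces by the Kummer sequence and induction on the rank of $P$ to the case $F=\mu_n$, and computes directly using the explicit presentation: both $H^*(\widetilde X_\ket,\mu_n)$ and $H^*(\widetilde U_\et,\mu_n)$ are naturally isomorphic to the exterior algebra $\Lambda^*(P^{\gp}\otimes\mathbf{Z}/n\mathbf{Z})\otimes\mu_n^{\otimes*}$, via classes coming from the Kummer log structure (on $\widetilde X$) respectively the classes of the units cutting out the boundary divisor (on $\widetilde U$).

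The main obstacle I would expect is the purely local cohomology calculation in the previous step: one must carefully keep track of the Galois action of $\mathrm{Gal}(\widetilde R[P^{\gp}][\mu_\infty]/\widetilde R[P^{\gp}])$ and reconcile it with the Kummer \'etale covers of $\widetilde X$. The hypothesis that we work over $\mathbf{Q}$ is essential here, both to ensure all finite \'etale covers of $\widetilde U$ are tame (so that Abhyankar's lemma applies), and to make the relevant group cohomologies compute the exterior algebras above without wild-ramification corrections. Once these local computations are in place, the $\natural$-isomorphism property globalizes via Lemma~\ref{lem:log.properbasechange}.
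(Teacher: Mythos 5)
The paper does not prove this statement at all: the \verb|\qed| is attached directly to the theorem, which is imported from the literature via the citations to Fujiwara--Gabber \cite{FujiwaraGabber} and to \cite[2.0.1, 2.0.5]{Nakayama}. So there is no internal argument to compare against; what you have written is a reconstruction of the proof in those references, and as a sketch it is faithful to their strategy. The reduction to the strictly local case is the right one (the comparison of $H^*(-,F)$ amounts to showing that $R\sigma_*F$ and $Rj'_*(F|_U)$ agree on $X_\et$, which is a statement about stalks, hence about strict henselizations; Lemma~\ref{lem:log.properbasechange} handles the Kummer \'etale side), the use of tameness in characteristic zero together with Abhyankar's lemma and Zariski--Nagata purity is exactly how the torsor/$H^1$ comparison is established, and the identification of both local cohomologies with exterior algebras on $P^{\gp}\otimes\ZZ/n\ZZ$ (with Tate twists) is the correct endpoint of the computation.

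Two points in your outline would need repair in a complete write-up. First, Kato's structure theorem describes the \emph{completed} local ring --- in the equicharacteristic case as $k[\![P]\!][\![t_1,\dots,t_r]\!]$ --- not the strict henselization as $\widetilde R[P]$; one must either work with charts (a strict \'etale map to $\Spec \ZZ[P]$ near $\bar x$) or pass to the completion and invoke a henselization-versus-completion comparison for \'etale cohomology. Second, when $P$ is not free the local model is singular, so computing $H^*(\widetilde U_\et,\mu_n)$ by attaching Kummer classes to the branches of a boundary divisor is not immediately available: one should either first perform a log blowup (which leaves the Kummer \'etale topos unchanged) to reduce to the regular strict normal crossings case, where absolute cohomological purity applies, or compute directly with the torus $\Spec \widetilde R[P^{\gp}]$ over the strictly henselian base. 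With these adjustments your outline matches the arguments in the cited sources.
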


\begin{lemma} \label{lemma:xket-properties}
  Let $(X,\Mm_X)$ be an fs log scheme whose underlying scheme is locally noetherian. 
  \begin{enumerate}[(a)]
    \item The site $(X,\Mm_X)_\ket$ satisfies the conditions necessary for $\Pi((X,\Mm_X)_\ket)$ to be defined, and if the underlying scheme $X$ is connected, then the site $(X,\Mm_X)_\ket$ is connected.
    \item Let $r$ be the supremum of the ranks of the stalks of $\overline\Mm_X^{\rm gp}$, and assume that this is finite. Then
      \[
        \cd (X,\Mm_X)_\ket \leq \cd X_\et + r.
      \]
      where $\cd$ denotes cohomological dimension with respect to finite groups.
  \end{enumerate}
\end{lemma}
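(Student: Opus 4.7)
For part (a), I would dispatch the four requirements one by one, each being essentially a formal consequence of standard facts about fs log schemes and Kummer étale morphisms. Finite products, fiber products, and finite coproducts exist in the ambient category of fs log schemes (the first two requiring fs-ification of the underlying log structure), and since Kummer étaleness is stable under base change and composition, they are inherited by the full subcategory $(X,\Mm_X)_\ket$. Subcanonicity follows from the effective descent theorem for Kummer étale covers due to Kato (see \cite{IllusieFKN}). Local connectedness uses the locally noetherian hypothesis on $X$: any Kummer étale $(Y, \Mm_Y)$ has $Y$ locally noetherian, hence decomposes uniquely as the disjoint union of its connected components (equipped with restricted log structures), each of which is categorically indecomposable because a coproduct decomposition of an fs log scheme is the same as a decomposition of the underlying scheme. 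The same observation shows that the final object $(X, \Mm_X)$ is connected whenever $X$ is connected as a topological space.

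For part (b), my plan is to compare the Kummer étale and strict étale cohomologies via the natural morphism of sites $\sigma\colon (X,\Mm_X)_\ket \to X_\et$, induced by the inclusion of strict étale covers into Kummer étale covers. The Leray spectral sequence
\[
  H^p(X_\et, R^q\sigma_* F) \Rightarrow H^{p+q}((X,\Mm_X)_\ket, F)
\]
for a finite abelian sheaf $F$ reduces the cohomological dimension bound to showing that $R^q\sigma_* F = 0$ for $q > r$.

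To compute the stalks, I would localize at a geometric point $\bar x$ of $X$: the stalk $(R^q\sigma_* F)_{\bar x}$ is the Kummer étale cohomology of the strict henselization $(X_{(\bar x)}^{\rm sh}, \Mm_X|)$. Kato's description of Kummer étale covers of a strictly henselian log scheme (see \cite{IllusieFKN}) identifies this cohomology with the continuous group cohomology of the profinite log inertia $G_{\bar x} \simeq \Hom_{\ZZ}(\overline\Mm^{\rm gp}_{X,\bar x}, \widehat\ZZ(1))$, which, since $\overline\Mm^{\rm gp}_{X,\bar x}$ is free abelian of rank at most $r$, is a free profinite $\widehat\ZZ(1)$-module of rank at most $r$. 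Such a profinite group has cohomological dimension at most $r$ with respect to finite coefficients (for coefficients of order prime to the residue characteristic; this is automatic in the characteristic zero setting relevant to this paper), yielding the desired vanishing.

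The main technical input is Kato's stalk computation, which hinges on his analysis of the structure of Kummer étale covers of a log strictly henselian local scheme; once this is in hand, the rest of the proof is a routine spectral sequence estimate. A minor subtlety I would flag is the behavior for coefficients divisible by the residue characteristic, where Kummer étale covers do not see wild ramification; since the paper works over $\CC$ and complete discretely valued fields with residue field embedded in $\CC$, this does not affect the applications.
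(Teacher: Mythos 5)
Your proposal is correct and follows essentially the same route as the paper: part (a) is a routine verification (the paper dismisses it as immediate), and for part (b) the paper likewise runs the Leray spectral sequence for $\sigma\colon (X,\Mm_X)_\ket\to X_\et$ and reduces the stalk computation to a log point over an algebraically closed field, where the Kummer \'etale topos is the classifying topos of $\widehat\ZZ^{\,s}$ with $s\leq r$. The only cosmetic difference is that the paper justifies the reduction to the log point via its strict-base-change lemma (Lemma~\ref{lem:log.properbasechange}) rather than by directly invoking Kato's structure theory of Kummer \'etale covers of strictly henselian log schemes; these amount to the same thing.
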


\begin{proof}
The first item is immediate. Item $(b)$ reduces to the case of a logarithmic point over an algebraically closed field, thanks to Lemma \ref{lem:log.properbasechange}. The statement for a log point is clear from the fact that in this case the topos associated to $X_\ket$ is equivalent to the classifying topos of the group $\widehat{\ZZ}^r$.
\end{proof}

\begin{thm}[{a variant of Log Proper Base Change}] \label{thm-logpbc}
  Let $X$ be a proper scheme over $\Spec A$ where $A$ is a henselian local ring with residue field $k$. Let $X_0 = X\otimes_A k$ and let $\Mm_X\to \cO_X$ be an fs log structure on $X$. Then the natural morphism
  \[ 
    (X_0,\Mm_X|_{X_0})_{\ket} \to (X,\Mm_X)_\ket
  \]
  is a $\natural$-isomorphism.
\end{thm}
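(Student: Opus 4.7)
The plan is to reduce the statement to the classical \'etale proper base change theorem by factoring through the small \'etale sites and invoking Lemma~\ref{lem:log.properbasechange} (Nakayama's log proper base change for strict morphisms) applied to the strict closed immersion $i\colon X_0\hookrightarrow X$. More precisely, I would work with the commutative square of sites
\[
  \xymatrix{
    (X_0, \Mm_X|_{X_0})_\ket \ar[r]^-{\bar i} \ar[d]_{\sigma_{X_0}} & (X, \Mm_X)_\ket \ar[d]^{\sigma_X} \\
    (X_0)_\et \ar[r]^-i & X_\et,
  }
\]
and check separately conditions $ii$ and $iii$ of Theorem~\ref{thm:artinmazur-coh-comparison}.

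For condition $iii$, fix a local system $F$ of finite abelian groups on $(X,\Mm_X)_\ket$. The Leray spectral sequence for $\sigma_X$ gives $H^n((X,\Mm_X)_\ket, F) \simeq H^n(X_\et, R\sigma_{X*} F)$. Since the sheaves $R^q\sigma_{X*}F$ are torsion and $X/\Spec A$ is proper with $A$ henselian local, the classical \'etale proper base change theorem yields $H^n(X_\et, R\sigma_{X*} F) \simeq H^n((X_0)_\et, i^*R\sigma_{X*} F)$. Because $i$ is strict, Lemma~\ref{lem:log.properbasechange} identifies $i^*R\sigma_{X*} F \simeq R\sigma_{X_0*}\bar i^* F$, and a second application of Leray rewrites the right-hand side as $H^n((X_0, \Mm_X|_{X_0})_\ket, \bar i^* F)$. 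Chaining these equivalences is exactly the pullback along $\bar i$, which proves condition $iii$.

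For condition $ii$, the same square should give the desired bijection on $H^1$ with finite (possibly non-abelian) coefficients $G$. One approach is to argue functorially: $G$-torsors on $(X,\Mm_X)_\ket$ are finite Kummer \'etale covers with free $G$-action, and the \v{C}ech-theoretic description of $H^1$ together with the bijection on $H^0$ of Kummer \'etale sheaves of finite sets (which follows from condition $iii$ applied to constant sheaves, plus the argument in the previous paragraph extended to sheaves of sets) gives both injectivity and surjectivity. A more hands-on route is to choose a composition series of $G$ whose successive quotients are abelian cyclic and use the standard five-lemma dévissage for pointed sets together with Giraud's twisting argument to reduce the non-abelian $H^1$ comparison to the abelian case already handled. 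The main obstacle I anticipate is making this second step rigorous: one must verify both that Nakayama's theorem extends to sheaves of non-abelian groups (or at least to the functor $H^1(-, G)$) and that classical non-abelian \'etale proper base change (as in SGA 4, Exp.\ XII) applies uniformly in the twisted forms arising from the dévissage. Once these two inputs are in place, the same three-step comparison (Leray, classical proper base change, Lemma~\ref{lem:log.properbasechange}, Leray) yields the bijection on $H^1$, completing the verification that $\bar i$ is a $\natural$-isomorphism.
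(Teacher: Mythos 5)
The paper's own proof of this theorem is a two-line citation: condition (iii) of Theorem~\ref{thm:artinmazur-coh-comparison} is obtained from a known Kummer-\'etale proper base change statement (Illusie's survey, Prop.~6.3), and condition (ii) from a result of Olsson (Prop.~5.1 of the cited paper) which is in essence a logarithmic analogue of the SGA~1 specialization theorem for finite covers. Your treatment of condition (iii) is a correct and essentially self-contained substitute for the first citation: the chain Leray for $\sigma_X$, classical proper base change over the henselian local base applied to the torsion complex $R\sigma_{X*}F$, the strict base change isomorphism of Lemma~\ref{lem:log.properbasechange} for the strict closed immersion $i$, and Leray for $\sigma_{X_0}$ is exactly the right reduction, and the compatibility of the composite with pullback along $\bar i$ is a standard (if tedious) check. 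This is arguably more informative than the paper's citation, at the mild cost of having to note that SGA~4 XII applies to arbitrary torsion abelian sheaves so no constructibility of $R^q\sigma_{X*}F$ is needed.

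The genuine gap is in condition (ii), and it is precisely the point you flag yourself: neither of your two routes establishes \emph{surjectivity} of $H^1((X,\Mm_X)_\ket,G)\to H^1((X_0,\Mm_X|_{X_0})_\ket,G)$, i.e.\ that every $G$-torsor (equivalently, every Kummer \'etale $G$-cover) of the special fiber lifts to $X$. Your first route conflates two different things: a bijection on $H^0$ of locally constant finite sheaves of sets gives full faithfulness of the pullback functor on torsors (hence injectivity on $H^1$), but essential surjectivity is a lifting problem that does not follow from any \v{C}ech manipulation of $H^0$'s --- one must actually extend a Kummer \'etale covering of $X_0$ together with its cocycle to $X$. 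Your second route (d\'evissage along a composition series) also does not go through as stated: the terms of a composition series of a general finite group are not normal in $G$, so $G$-torsors do not reduce to successive abelian-coefficient problems without a twisting argument that reintroduces nonconstant coefficient systems and $H^2$ obstructions; and even for $G$ abelian the surjectivity is not supplied by your condition-(iii) argument, since local systems on $(X_0)_\ket$ need not extend to $(X)_\ket$ a priori. This lifting statement is exactly the content of the result the paper cites from Olsson: the restriction functor on finite Kummer \'etale covers from $X$ to $X_0$ is an equivalence of categories. To close the gap you would need either to invoke such a log specialization theorem, or to prove the lifting directly (for instance via the henselian invariance of the \'etale site of the associated root stacks, or via Theorem~\ref{thm-ket-log-regular}-type purity combined with the classical specialization theorem on the trivial locus), neither of which is sketched in your proposal.
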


\begin{proof}
This  follows from Theorem \ref{thm:artinmazur-coh-comparison}, thanks to Lemma \ref{lemma:xket-properties},  \cite[Proposition 6.3]{IllusieFKN} and \cite[Proposition 5.1]{OlssonKummer}.
\end{proof}

\subsection{\texorpdfstring{Comparing $(X,\Mm_X)_\log$ and $(X,\Mm_X)_\ket$}{Comparing Xlog and Xket}}
\label{ss:compare-xlog-xket}

Let us now consider the case of an fs log scheme $(X,\Mm_X)$ with $X$ locally of finite type over $\CC$, which is the situation studied extensively by Kato and Nakayama in \cite{KN}. In this case, if $(U,\Mm_U)\to (X,\Mm_X)$ is Kummer \'etale then the associated $(U,\Mm_U)_\log\to (X,\Mm_X)_\log$ is a local homeomorphism, and consequently there is a natural map of sites
\[ 
  \varepsilon\colon (X,\Mm_X)_\log \to (X,\Mm_X)_\ket.
\]

\begin{thm}[{\cite[Theorem 0.2(I)]{KN}}] \label{thm:kn}
  Let $(X,\Mm_X)$ be an fs log scheme locally of finite type over $\CC$. Then for every constructible (see \cite[Definition~2.5.1]{KN}) sheaf of torsion abelian groups $F$ on $(X,\Mm_X)_\ket$, the map $\varepsilon$ induces isomorphisms
  \[ 
    \varepsilon^* \colon H^q((X,\Mm_X)_\ket, F) \isomlong H^q((X,\Mm_X)_\log, \varepsilon^{-1}F)
  \]
  for all $q\geq 0$. 
\end{thm}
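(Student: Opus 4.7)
My approach would reduce, via Leray for $\varepsilon$ and a d\'evissage on constructible sheaves, to computing $R^q\varepsilon_*\Lambda$ for $\Lambda$ a finite constant abelian sheaf, which I would then handle by a local calculation near a geometric Kummer \'etale point.

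For the first reduction, after pulling $F$ back along a suitable Kummer \'etale cover and stratifying, one may assume $F$ is locally constant on each stratum, and then further assume $F = \Lambda$ is constant. This uses standard d\'evissage via the Leray spectral sequence of strict (and Kummer) inclusions together with proper base change for strict maps (Lemma~\ref{lem:log.properbasechange}). The question is also \'etale-local on $X$, so we may assume $(X, \Mm_X)$ admits a chart $P \to \cO_X(X)$ by a sharp fs monoid $P$. For constant $\Lambda$, the Leray spectral sequence
\[
H^p((X,\Mm_X)_\ket, R^q\varepsilon_*\Lambda) \Rightarrow H^{p+q}((X,\Mm_X)_\log, \Lambda)
\]
reduces the task to showing $\Lambda \isomlong R\varepsilon_*\Lambda$. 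The degree-zero piece $\varepsilon_*\Lambda = \Lambda$ follows from connectedness of the fibers of $\tau\colon (X,\Mm_X)_\log \to X$, so the real work is vanishing of $R^q \varepsilon_*\Lambda$ for $q \geq 1$.

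The main geometric input is the computation of these stalks at a Kummer \'etale geometric point $\bar x$. Setting $P := \overline{\Mm}_{X, \bar x}$, the stalk is the filtered colimit of the $H^q(U_\log, \Lambda)$ over Kummer \'etale neighborhoods $U \to X$ of $\bar x$. Locally in the chart model, $U_\log$ restricted to a small neighborhood of $\tau^{-1}(\bar x)$ is a product (up to homotopy) of the compact torus $T_P = \Hom(P^{\rm gp}, \Sone)$ with a contractible base (cf.\ Proposition~\ref{prop:top.properties} and Example~\ref{example:kn}). A Kummer cover of index $n$ corresponds to passing from $P$ to $\tfrac{1}{n} P$, and pulls back to the standard $n$-fold cover of $T_P$. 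In the colimit over all $n$ one obtains the universal cover $\Hom(P^{\rm gp}, \mathbf{R})$, which is contractible, so the cohomology classes are killed in the limit and $R^q \varepsilon_* \Lambda = 0$ for $q \geq 1$.

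The delicate point, which I expect to be the main obstacle, is making the previous paragraph rigorous in the topos-theoretic sense: passing from ``colimit of cohomology of $U_\log$'' to ``cohomology of the inverse limit of torus fibers'' requires one to know that the local structure of $\varepsilon$ around $\bar x$ really is controlled by the Kato--Nakayama fibres, and that Kummer pullbacks on Kato--Nakayama spaces realize the expected finite covers canonically and functorially. This is precisely the content of the local analysis carried out in \cite{KN}, where a neighborhood of $\tau^{-1}(\bar x)$ in $(X,\Mm_X)_\log$ is identified (in the chart model) with a product involving $\Hom(P, \overline{\CC})$, and the behavior of Kummer covers on this product is worked out explicitly; combining this with the contractibility of $\Hom(P^{\rm gp}, \mathbf{R})$ yields the desired vanishing.
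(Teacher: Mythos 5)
The paper does not prove this statement at all: it is quoted verbatim as \cite[Theorem 0.2(I)]{KN}, and the citation in the theorem header is the entire ``proof.'' So there is no in-paper argument to compare against; the relevant comparison is with Kato--Nakayama's original proof, and your outline is a faithful reconstruction of it. The d\'evissage to constant coefficients, the Leray spectral sequence for $\varepsilon$ reducing everything to $\Lambda \isomlong R\varepsilon_*\Lambda$, and the stalkwise computation at a log geometric point via the torus fibers $T_P=\Hom(P^{\gp},\Sone)$ of $\tau$ are exactly the steps in \cite{KN}. One small refinement to the last step: rather than appealing to contractibility of the inverse limit $\Hom(P^{\gp},\RR)$ (which requires a continuity-of-cohomology argument for the cofiltered limit of compact spaces), the cleaner and standard mechanism is that the degree-$n$ Kummer cover pulls back to the isogeny $[n]$ on $T_P$, which acts by $n^q$ on $H^q(T_P,\Lambda)\simeq \wedge^q(P^{\gp})^\vee\otimes\Lambda$; taking $n$ divisible by the exponent of $\Lambda$ kills every class with $q\geq 1$ already at a finite stage of the colimit. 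This is where the torsion hypothesis on $F$ enters, and it is worth making explicit. With that caveat, your sketch is correct as an outline, and you have correctly isolated the genuinely technical point (identifying the local structure of $\varepsilon$ with the torus fibration and the Kummer covers with the isogenies), which is precisely the content of the local analysis in \cite{KN}.
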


To show that $\varepsilon$ is a $\natural$-isomorphism, we need to compare non-abelian $H^1$ as well.

\begin{prop} \label{prop}
  Let $(X,\Mm_X)$ be an fs log scheme locally of finite type over $\CC$. Then for every finite group $G$, the pull-back map
  \[
    \varepsilon^* \colon H^1((X,\Mm_X)_\ket, G)\to H^1((X,\Mm_X)_\log, G)
  \]
  is an isomorphism.
\end{prop}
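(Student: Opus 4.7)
The plan is to reduce the comparison of non-abelian $H^1$ with coefficients in a finite group $G$ to a Riemann existence-type equivalence between the categories of finite Kummer \'etale covers of $(X,\Mm_X)$ and finite topological covers of $(X,\Mm_X)_\log$.

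First I would reduce to the case where the underlying scheme $X$, and hence (by Proposition~\ref{prop:top.properties}(a) applied componentwise) also $(X,\Mm_X)_\log$, is connected, using that non-abelian $H^1$ commutes with disjoint unions on both sides. Next, recall that for any site $C$ and any (not necessarily abelian) group $G$, there is a canonical bijection between $H^1(C, G)$ and the pointed set of isomorphism classes of $G$-torsors on $C$, functorial in morphisms of sites. A $G$-torsor on $(X,\Mm_X)_\ket$ is representable (as $G$ is finite) by a finite Kummer \'etale cover $f\colon Y \to X$ equipped with a free right $G$-action such that $Y/G \xrightarrow{\sim} X$; a $G$-torsor on $(X,\Mm_X)_\log$ is a finite topological covering space of $(X,\Mm_X)_\log$ with a free and transitive $G$-action on the fibers. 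The functor $\varepsilon^*$ sends the former to $f_\log\colon Y_\log \to (X,\Mm_X)_\log$ with the induced $G$-action; this is well-defined because $(-)_\log$ carries Kummer \'etale morphisms to local homeomorphisms.

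The essential input is the Kato--Nakayama Riemann existence theorem \cite[Theorem~0.2(II)]{KN}, asserting that $Y \mapsto Y_\log$ is an equivalence of categories between finite Kummer \'etale covers of $(X,\Mm_X)$ and finite topological covering spaces of $(X,\Mm_X)_\log$. This equivalence preserves finite limits and is compatible with group actions, so it carries $G$-equivariant objects to $G$-equivariant objects. The $G$-torsor condition can be detected on fibers: $Y \to X$ is a $G$-torsor if and only if its fiber over every geometric point $\bar x$ is a free transitive $G$-set, and under the equivalence this fiber is identified with the fiber of $Y_\log \to (X,\Mm_X)_\log$ over each point $(\bar x, \phi)$ in $\tau^{-1}(\bar x)$. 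Consequently the equivalence restricts to one between the categories of $G$-torsors on the two sites, yielding the desired bijection $H^1((X,\Mm_X)_\ket, G) \simeq H^1((X,\Mm_X)_\log, G)$.

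The main obstacle is the Riemann existence statement, which is not included in the excerpt. Its proof in \cite{KN} proceeds by \'etale-local reduction (via the local structure theorem for log smooth morphisms) to a split toric setting, where finite Kummer \'etale covers and finite covers of the Kato--Nakayama space are both explicitly classified in terms of finite quotients of $\overline\Mm_X^\gp$ at each geometric point.
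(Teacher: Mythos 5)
Your argument is correct in outline, but it takes a genuinely different route from the paper's. You black-box the log Riemann existence theorem of \cite{KN} (the equivalence between finite Kummer \'etale covers of $(X,\Mm_X)$ and finite topological covers of $(X,\Mm_X)_\log$) and combine it with the identification of non-abelian $H^1$ with isomorphism classes of $G$-torsors on both sides. The paper avoids Riemann existence altogether: it quotes only the cohomological comparison \cite[Theorem 0.2(I)]{KN} (Theorem~\ref{thm:kn}) and deduces bijectivity on $H^1$ from a Giraud-type lemma for the map of sites $\varepsilon$, which requires exactly two inputs: that $G\to\varepsilon_*\varepsilon^*G$ is an isomorphism (the $q=0$ case of Theorem~\ref{thm:kn}) and that $R^1\varepsilon_*(\varepsilon^*G)$ is trivial. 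The latter is proved by hand: given a $G$-torsor on $(U,\Mm_U)_\log$, one pulls back along the Kummer cover obtained by raising a chart monoid $P$ to the $N$-th power ($N$ the exponent of $G$); the resulting torsor is constant on the fibers of $\tau$, hence by properness of $\tau$ descends to a torsor on the underlying analytic space, where the classical comparison theorem of \cite{SGA4} applies. Your route is shorter but leans on two nontrivial external facts that you assert rather than justify: besides the Riemann existence equivalence itself, you need that every $G$-torsor on $(X,\Mm_X)_\ket$ is \emph{representable} by a finite Kummer \'etale cover, i.e.\ effective descent of finite Kummer \'etale morphisms along Kummer \'etale covers (a theorem in the foundational literature, cf.\ \cite{IllusieFKN}, not a formality). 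Granting those, the remaining steps (compatibility of the equivalence with $G$-actions and fiber products, detecting the torsor condition fiberwise, and connectedness of $(X,\Mm_X)_\log$ for connected $X$ --- which itself needs the $q=0$ comparison rather than Proposition~\ref{prop:top.properties}) do go through; what the paper's proof buys is a self-contained argument resting only on the abelian-coefficient comparison.
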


\begin{proof}
We need an easy abstract lemma, which follows e.g.\ by applying \cite[Proposition~III~3.1.3, p.~323]{giraud} to $A = f^* G$.

\begin{lemma}
  Let $f\colon X\to Y$ be a map of sites, and let $G$ be a sheaf of groups on $Y$. For a~sheaf of groups $F$ on $X$, denote by $R^1 f_*  F$ the sheaf of pointed sets associated to the presheaf $U \mapsto H^1(f^{-1}(U), F)$ (isomorphism classes of $F$-torsors) on $Y$. Suppose that 
  \begin{enumerate}[(a)]
    \item $R^1 f_* (f^* G) = \star_Y$, the trivial sheaf of pointed sets on $Y$,
    \item $G\to f_* f^* G$ is an isomorphism.
  \end{enumerate}
  Then the pull-back map (induced by pull-back of torsors)
  \[ 
    f^* \colon H^1(Y, G) \to H^1(X, f^* G)
  \]
  is bijective.\qed 
\end{lemma}

To apply the lemma to the situation of the proposition (and the constant sheaf $G$), we first check $(b)$. This is in fact a statement about sheaves of sets, and amounts to checking that $(U,\Mm_U)_\log$ is connected whenever $U$ is connected as an object of $(X,\Mm_X)_\ket$, and it follows directly from Theorem \ref{thm:kn} with $F=\ZZ/n\ZZ$ and $q=0$.

Now we check $(a)$; we need to show that the presheaf of pointed sets on the Kummer \'etale site associating to $(U,\Mm_U)\to (X,\Mm_X)$ the set $H^1((U,\Mm_U)_\log, G)$ of isomorphism classes of $G$-torsors on $(U,\Mm_U)_\log$ has trivial sheafification. In other words: given a $G$-torsor $T$ on $(U,\Mm_U)_\log$, there exists a Kummer \'etale and surjective $(V,\Mm_V)\to (U,\Mm_U)$ such that the pull-back of $T$ to $V$ is constant. (This can be seen as a variant of \cite[Lemma~2.5.2]{KN}.) To this end, we can assume that there is a chart $U\to \Spec \ZZ[P]$. Let $N$ be the exponent of $G$, and consider the following pull-back
\[ 
  \xymatrix{
    W\ar[r]\ar[d] & \Spec \ZZ[P]\ar[d]^{N\colon P\to P} \\
    U \ar[r] & \Spec \ZZ[P]. 
  }
\]
Then $(W,\Mm_W)\to (U,\Mm_U)$ is a Kummer \'etale covering ($W$ is equipped with the log structure induced by the upper horizontal morphism), and the pull-back of $T$ to $(W,\Mm_W)_\log$ is constant on the fibers of $\tau\colon (W,\Mm_W)_\log\to W_\an$, therefore (by properness of $\tau$!) $T \simeq \tau^* S$ for some $G$-torsor $S$ on $W_\an$ (in fact $S = \tau_* T$). By the comparison theorem \cite[Exp.\ XVI]{SGA4}, this is identified with an \'etale $G$-torsor on $W_\et$. Passing to a (strict) \'etale covering $(V,\Mm_V)\to (W,\Mm_W)$, we can thus make the pull-back of $S$ to $V_\an$ trivial, and then $T$ becomes trivial on $(V,\Mm_V)_\log$, as desired.
\end{proof}

We are now ready to prove Theorem~\ref{thm:kummer-csst}.

\begin{proof}[Proof of Theorem~\ref{thm:kummer-csst}]
Combining Theorem~\ref{thm:kn} with Proposition~\ref{prop}, we see that the map $\varepsilon\colon (X,\Mm_X)_\log\to (X,\Mm_X)_\ket$ is a $\natural$-isomorphism. Moreover, by Lemma \ref{lemma:xket-properties}(b) we see that $(X,\Mm_X)_\ket$ has finite local cohomological dimension with respect to the class of finite groups. We conclude by Theorem~\ref{thm:artinmazur-coh-comparison}.
\end{proof}

\subsection{\texorpdfstring{Comparing $\Psi(X)$ and $\Pi^\et(X)$}{Comparing Psi(X) and Pi et (X)}}

We can now put every piece of the argument together. We first prove our comparison result on good models.

\begin{prop} \label{prop:etale-comparison-good-model}
  Let $\XX=(\mathscr X,\mathscr D)$ be a good model over $\cO_K$ and let $U = \mathscr X
 \setminus \mathscr D$. Then the maps
  \[ 
    \Psi(U) =(\mathscr X,\Mm_{\mathscr D})_{0,\rm log} \to (\mathscr X,\Mm_{\mathscr D})_{0,\ket}\to (\mathscr X,\Mm_{\mathscr D})_{k,\ket} \to (\mathscr X,\Mm_{\mathscr D})_\ket \leftarrow 
    U_\et,
  \]  
  induce isomorphism on the associated pro-finite homotopy types. Therefore we obtain a morphism
  \[ 
    \Psi(U) \to \widehat{\Pi}^\et(U)
  \]
  which induces an isomorphism upon pro-finite completion.
\end{prop}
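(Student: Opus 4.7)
The plan is to show each of the four arrows in the zigzag is a $\natural$-isomorphism of sites and then invoke Theorem~\ref{thm:artinmazur-coh-comparison}. The finite local cohomological dimension hypothesis is supplied by Lemma~\ref{lemma:xket-properties} for the Kummer \'etale sites, by finite type over a field for $U_\et$, and by local triangulability (Proposition~\ref{prop:top.properties}) for $(\mathscr X,\Mm_{\mathscr D})_{0,\log}$. Once each arrow is established as a profinite equivalence, the desired morphism $\Psi(U) \to \widehat\Pi^\et(U)$ is obtained by composing through the zigzag and inverting the rightmost arrow in the pro-finite category; since $U$ is smooth (and in particular geometrically unibranch), $\Pi^\et(U)$ is already pro-finite, so this composite induces an equivalence on pro-finite completions.

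The first and last arrows are almost immediate. The leftmost map $(\mathscr X, \Mm_{\mathscr D})_{0,\log} \to (\mathscr X, \Mm_{\mathscr D})_{0,\ket}$ is a profinite equivalence by Theorem~\ref{thm:kummer-csst} applied to $(\mathscr X, \Mm_{\mathscr D})_0$, which is locally of finite type over $\CC$. For the rightmost map $U_\et \to (\mathscr X, \Mm_{\mathscr D})_\ket$, I would apply log purity (Theorem~\ref{thm-ket-log-regular}): by construction $U = (\mathscr X, \Mm_{\mathscr D})_\triv$; $(\mathscr X, \Mm_{\mathscr D})$ is log regular because $\mathscr X$ is regular and $\mathscr D$ has simple normal crossings; and $\mathscr X$ lies over $\mathbf{Q}$ since the residue field $k$ embeds into $\CC$ and so has characteristic zero. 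For the third arrow $(\mathscr X, \Mm_{\mathscr D})_{k,\ket} \to (\mathscr X, \Mm_{\mathscr D})_\ket$, Theorem~\ref{thm-logpbc} applies directly: $\mathscr X$ is proper over $\Spec \cO_K$ by the definition of a good model, and $\cO_K$ is a complete discrete valuation ring, hence henselian, with residue field $k$.

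The main work lies in the second arrow $(\mathscr X, \Mm_{\mathscr D})_{0,\ket} \to (\mathscr X, \Mm_{\mathscr D})_{k,\ket}$, induced by the strict base change $\pi\colon \mathscr X_0 = \mathscr X_k \otimes_k \CC \to \mathscr X_k$; this is where I expect the real obstacle. For the abelian part, the plan is to apply Lemma~\ref{lem:log.properbasechange} to the strict morphism $\pi$, which reduces the comparison of Kummer \'etale cohomology with constructible torsion coefficients $F$ to a comparison of ordinary \'etale cohomology on $\mathscr X_{k,\et}$ and $\mathscr X_{0,\et}$ with coefficients in $R\sigma_*F$ and $\pi^* R\sigma_*F$ respectively. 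This is then a consequence of the classical invariance of \'etale cohomology of finite-type schemes under extensions of algebraically closed fields of characteristic zero (SGA~4, Exp.~XVI). For non-abelian $H^1$ with coefficients in a finite group $G$, I would adapt the torsor argument in the proof of Proposition~\ref{prop}: after pullback along a suitable Kummer cover of $(\mathscr X_k, \Mm_{\mathscr D})$, a Kummer \'etale $G$-torsor becomes a strict \'etale torsor, and strict \'etale $G$-torsors on $\mathscr X_k$ and $\mathscr X_0$ are compared by the same classical invariance result. With all four arrows shown to be profinite equivalences, composing them and applying Theorem~\ref{thm:artinmazur-coh-comparison} yields the claimed morphism $\Psi(U) \to \widehat\Pi^\et(U)$ and the fact that it induces an isomorphism on profinite completions.
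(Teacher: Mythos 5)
Your proposal takes essentially the same route as the paper: its entire proof reads ``Combine Theorem~\ref{thm:kummer-csst}, Theorem~\ref{thm-logpbc}, and Theorem~\ref{thm-ket-log-regular} with Theorem~\ref{thm:artinmazur-coh-comparison},'' which is exactly your assignment of theorems to the first, third, and last arrows together with the Artin--Mazur criterion. The only difference is that you spell out the second arrow (invariance under the extension $k\hookrightarrow\CC$ of algebraically closed fields, via Lemma~\ref{lem:log.properbasechange} and the classical comparison), a step the paper's one-line proof leaves implicit but whose surrounding discussion attributes to precisely the same invariance result; your fleshed-out version of that step is correct.
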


\begin{proof}
Combine Theorem~\ref{thm:kummer-csst}, Theorem~\ref{thm-logpbc}, and Theorem~\ref{thm-ket-log-regular} with Theorem~\ref{thm:artinmazur-coh-comparison}. 
\end{proof}

Finally, we use a descent argument to extend the comparison to arbitrary locally finite type schemes over $K$, finishing the proof of the comparison result.

\begin{proof}[Proof of Theorem~\ref{thm:etale-comparison}]
Using Proposition~\ref{prop:extend-cosheaf}, since both $\Psi$ and $\widehat{\Pi}^\et$ are $h$-hypercosheaves (Propositions \ref{prop:descent} and \ref{prop:etale.homotopy.descent}), the natural transformation of Proposition~\ref{prop:etale-comparison-good-model} extends uniquely to all schemes locally of finite type over $K$, and gives an equivalence up to profinite completion.
\end{proof}

We finish by deducing a similar comparison between the fiber $\widetilde \Psi(X)$ of $\Psi(X)\to \Sone$ and the \'etale homotopy type of $X_{\overline K}$.

\begin{prop} \label{prop:et-comp-fiber}
There exists a natural transformation $\widetilde \Psi(X) \to \widehat\Pi^\et(X_{\overline K})$, fitting inside a~commutative square
\[
  \xymatrix{
    \widetilde\Psi(X) \ar[r] \ar[d] & \widehat\Pi^\et(X_{\overline K}) \ar[d] \\
    \Psi(X) \ar[r] &  \widehat\Pi^\et(X)
  }
\]
and inducing an equivalence on pro-finite completions.
\end{prop}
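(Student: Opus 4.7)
The plan has two parts: first construct the natural transformation, then establish the profinite equivalence.

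For the construction, I would use the naturality of Theorem~\ref{thm:etale-comparison} applied to the structure morphism $X \to \Spec K$. The composition $\Psi(X) \to \widehat\Pi^\et(X) \to \widehat\Pi^\et(\Spec K) \simeq B\widehat\ZZ(1)$ agrees with the composition of the structure map $\Psi(X) \to \Psi(\Spec K) = \Sone$ with the profinite completion $\Sone \to B\widehat\ZZ(1)$. This gives a commutative square whose vertical homotopy fibers are $\widetilde \Psi(X)$ on the left and, on the right, $\widehat\Pi^\et(X_{\overline K})$, using the standard fiber sequence $\widehat\Pi^\et(X_{\overline K}) \to \widehat\Pi^\et(X) \to B\widehat\ZZ(1)$ coming from the extension of pro-\'etale fundamental groupoids (where we use that $k$ is algebraically closed of characteristic zero). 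Taking homotopy fibers yields the desired natural transformation and commutative square.

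For the equivalence on profinite completions, I would pull back the right-hand column along $\Sone \to B\widehat\ZZ(1)$ to get a fibration $E \to \Sone$ with fiber $\widehat\Pi^\et(X_{\overline K})$. The construction produces a map $\Psi(X) \to E$ of fibrations over $\Sone$ which, by Theorem~\ref{thm:etale-comparison} and the fact that $\Sone \to B\widehat\ZZ(1)$ is a profinite equivalence, is itself an equivalence on profinite completions of total spaces. To transfer this to fibers, I would apply the criterion of Theorem~\ref{thm:artinmazur-coh-comparison} to $\widehat{\widetilde \Psi(X)} \to \widehat\Pi^\et(X_{\overline K})$; the required cohomological dimension bounds are inherited from those on the total spaces. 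The cohomological comparison reduces, via the five-lemma, to a comparison of Wang-type exact sequences
\[
  \cdots \to H^i(\Psi(X), F) \to H^i(\widetilde\Psi(X), F) \xrightarrow{T-1} H^i(\widetilde\Psi(X), F) \to H^{i+1}(\Psi(X), F) \to \cdots
\]
(coming from the Serre spectral sequence for the fibration over $\Sone$, which collapses since $\Sone$ has cohomological dimension $1$) with the analogous Hochschild--Serre sequence for $\Gal(\overline K/K) \simeq \widehat\ZZ(1)$ acting on $H^*_\et(X_{\overline K}, F)$. The isomorphism of middle terms is Theorem~\ref{thm:etale-comparison}. The bijection on non-abelian $H^1$ with finite coefficients is analogous, using the corresponding five-term exact sequence of groups.

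The main obstacle is the compatibility of the Wang monodromy operator $T$ on $H^*(\widetilde\Psi(X), F)$ with the Galois action on $H^*_\et(X_{\overline K}, F)$. This is equivalent to checking that a generator of $\pi_1(\Sone)$, viewed under $\Sone \to B\widehat\ZZ(1)$ as a topological generator of $\widehat\ZZ(1) = \Gal(\overline K/K)$, induces the correct automorphism of $X_{\overline K}$ under the comparison of Theorem~\ref{thm:etale-comparison}. I expect this to follow from the functoriality of the entire package combined with the log geometric description of Proposition~\ref{prop:comparison}: on a good model $(\mathscr X, \mathscr D)$, the fibration $\Psi(U) = (\mathscr X, \Mm_{\mathscr D})_{0, \log} \to \Sone$ is precisely the Kato--Nakayama realization of $(\mathscr X, \Mm_{\mathscr D})_0 \to 0_{\log}$, and Proposition~\ref{prop:etale-comparison-good-model} identifies this with the corresponding Kummer \'etale construction, where the identification of monodromy with Galois action is built into the map $\varepsilon\colon(-)_{\log}\to(-)_{\ket}$ of Theorem~\ref{thm:kummer-csst}.
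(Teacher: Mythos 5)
Your route (realize both sides as fibrations over $\Sone$, resp.\ $B\widehat\ZZ(1)$, and pass from a total-space comparison to a fiber comparison via Wang sequences) is genuinely different from the paper's, which fixes a uniformizer, forms the tower $K_n=K(\sqrt[n]{t})$ with $\overline K=\bigcup_n K_n$, and compares $\widetilde\Psi(X)\to\holim_n\Psi(X_n)$ and $\widehat\Pi^\et(X_{\overline K})\to\holim_n\widehat\Pi^\et(X_n)$ using Theorem~\ref{thm:etale-comparison} applied to \emph{every} $X_n$. Unfortunately your reduction step fails. In the ladder of Wang sequences the two maps you do not control (namely $H^i$ of the fibers, which appears twice in a row as source and target of $T-1$) are adjacent, so the five lemma is not applicable; and the implication ``profinite equivalence of total spaces over $\Sone$ implies profinite equivalence of fibers'' is false in general. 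The Wang sequence only exhibits $H^i(\Psi(X),F)$ as an extension of the $T$-invariants of $H^i(\widetilde\Psi(X),F)$ by the $T$-coinvariants of $H^{i-1}(\widetilde\Psi(X),F)$, so any part of the fiber cohomology on which $T-1$ is invertible is invisible in the total space. Concretely, the Klein bottle, viewed as an $\Sone$-bundle over $\Sone$ with monodromy of degree $-1$, maps to the identity fibration $\Sone\to\Sone$ inducing an isomorphism on $H^*(-,\ZZ/p)$ for all odd $p$ (including for local systems), while the map of fibers $\Sone\to\mathrm{pt}$ is not a mod-$p$ cohomology isomorphism. An induction up the Wang sequence only recovers $\ker(T-1)$ in each degree, not the full fiber cohomology, and Zeeman-type comparison theorems are unavailable because the monodromy action is nontrivial.

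The missing input is precisely the comparison at all finite levels of the Kummer tower: one needs $H^*(\widetilde\Psi(X),F)\simeq\colim_n H^*(\Psi(X_n),F)$ (a finiteness statement about infinite cyclic covers, which is why the paper first reduces to separated finite type schemes) together with the continuity isomorphism $H^*_\et(X_{\overline K},F)\simeq\colim_n H^*_\et(X_n,F)$, after which Theorem~\ref{thm:etale-comparison} applied to each $X_n$ closes the argument. Relatedly, the fiber sequence $\widehat\Pi^\et(X_{\overline K})\to\widehat\Pi^\et(X)\to B\widehat\ZZ(1)$ that you assert at the outset is not formal (profinite completion does not commute with homotopy fibers) and is essentially equivalent to this continuity statement, so your construction already presupposes an input of the same depth as the one the Wang-sequence argument was meant to supply. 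Your closing discussion of matching the topological monodromy with the Galois action is pertinent, and the proposed justification via Proposition~\ref{prop:etale-comparison-good-model} is reasonable, but it does not repair the five-lemma step.
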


\begin{proof}
Since both source and target of the required natural transformations satisfy Zariski descent, we can restrict our attention to separated schemes of finite type over $K$; this will be needed for some finiteness properties at the end of the argument below.

Fix a uniformizer $t$ of $K$ and let $K_n = K(\sqrt[n]{t})$, so that $\overline K = \bigcup_{n\geq 1} K_n$. For a scheme $X$ over $K$, we denote by $X_n$ its base change to $K_n$. The construction in Theorem~\ref{thm:etale-comparison} is compatible with base change in the sense that for $n$ dividing $m$ we have a commutative square
\[ 
  \xymatrix{
    \Psi(X_m) \ar[r] \ar[d] & \widehat{\Pi}^\et(X_m) \ar[d] \\
    \Psi(X_n) \ar[r] & \widehat{\Pi}^\et(X_n).
  }
\]
The space $\widetilde\Psi(X)$ maps compatibly to all $\Psi(X_n)$, and the same holds for $\widehat\Pi^\et(X_{\overline K})$ and the $\widehat\Pi^\et(X_n)$. Passing to the homotopy limit over $n$, we obtain a diagram
\[ 
  \xymatrix{
    \widetilde\Psi(X) \ar[d]_{(1)} & \widehat\Pi^\et(X_{\overline K}) \ar[d]^{(3)} \\
    \holim \Psi(X_n) \ar[r]_{(2)} & \holim \widehat\Pi^\et(X_n).
  }
\]

We claim that the maps (1), (2) and (3) above induce isomorphisms on the cohomology groups of local systems of finite groups, including non-abelian $H^1$. This will imply that the induced maps on pro-finite completions are equivalences, and the required assertion will follow. Note that for the two homotopy limits above, the cohomology groups in question are the direct limits of the cohomology groups at finite levels. For (1), see \cite[Lemma~4.3.3]{PiotrThesis}. For (2), this follows from Theorem~\ref{thm:etale-comparison}. The assertion for (3) is a standard fact in \'etale cohomology \cite[Exp.\ VII, Th\'eor\`eme~5.7]{SGA4_2}.
\end{proof}

\section{Rigid geometry}
\label{s:rigid}

Let $K = {\rm Frac}\, \cO_K$ be a complete discretely valued field whose residue field $k$ is endowed with an embedding $\iota\colon k\hookrightarrow \CC$. In this section, we shall construct a topological realization functor for rigid analytic spaces over $K$. 

As the spreading out techniques of \S\ref{sec:spreading} are not available in this context, our course of action is to use the log geometry construction of \S\ref{sec:log.geometry} by finding suitable good formal models. This works only in the smooth case, and a different idea is needed to prove independence of the model. Such a method is supplied by the Weak Factorization Theorem in the form obtained recently by Abramovich and Temkin \cite{weakfactorization} combined with a key computation of the fibers of the map of Kato--Nakayama spaces induced by a simple blowup (see \S\ref{sec:key.calculation}). 

As we were unable to show that the functor thus obtained on smooth rigid analytic spaces (or, more generally, simple normal crossings pairs) satisfies $h$-descent, we do not know that it extends to the singular case.

\subsection{Generalities on localizations of categories} 
\label{ss:localiz}

Before we start discussing rigid analytic spaces, we need to gather some standard facts about ways of formally inverting morphisms in a~category.

Let $(\Cc, W)$ be a category with equivalences, i.e.\ a category $\Cc$ with a subcategory $W$ containing all isomorphisms and satisfying the two-out-of-three property: if $f$ and $g$ are composable arrows in $\Cc$ and two of $f$, $g$, $gf$ are in $W$ then so is the third. There exists a functor 
\[
  \Cc\to \Cc[W^{-1}]
\]
which is initial among all functors $\Cc\to \mathscr{D}$ sending morphisms in $W$ to isomorphisms; the category $\Cc[W^{-1}]$ is called the \emph{localization} of $\Cc$ in $W$. 

Similarly, there exists a functor to an $\infty$-category 
\[ 
  \Cc\to L\Cc
\] 
which is initial among the functors $\Cc\to \mathscr{D}$ to $\infty$-categories $\mathscr{D}$ sending morphisms $W$ to equivalences in $\mathscr{D}$; the $\infty$-category $L\Cc$ is called the \emph{$\infty$-categorical localization of $(\Cc, W)$}, and can be realized concretely as the simplicial nerve of the simplicial localization constructed by Dwyer and Kan \cite{DwyerKan}. By the universal property, there are induced functors $L\Cc\to \Cc[W^{-1}]$ and $\Cc[W^{-1}]\to {\rm Ho}(L\Cc)$; the latter functor is an equivalence.

We say that $(\Cc, W)$ admits a \emph{calculus of right fractions} \cite{GabrielZisman} \cite[7.1]{DwyerKan} \cite[Tag 04VC]{stacks-project} if:
\begin{enumerate}[a)]
  \item every pair of solid arrows as below with $u\in W$ can be completed to a commutative square
  \[
    \xymatrix @C=1em @R=1em{
      & \bullet \ar@{.>}[dl]_{v\in W} \ar@{.>}[dr] &  \\
      \bullet \ar[dr] & & \bullet \ar[dl]^{u\in W} \\
      & \bullet &
    }
  \]
  with $v\in W$, and
  \item for every pair of parallel morphisms $f,g\colon X\to Y$ in $\Cc$ and every map $u\colon Y\to Z$ in $W$ such that $uf=ug$ there exists a map $v\colon W\to X$ in $W$ such that $fv=gv$.
\end{enumerate}
If $W$ admits a calculus of right fractions, by \cite[Tag 04VH]{stacks-project} the localization $\Cc[W^{-1}]$ admits a useful explicit description; the objects of $\Cc[W^{-1}]$ are the objects of $\Cc$, and the  morphisms $c\to c'$ in $\Cc[W^{-1}]$ are equivalence classes of ``roofs'' $c\leftarrow c_0 \to c'$ with the backwards map in $W$, where $c\leftarrow c_0\to c'$ and $c\leftarrow c_1 \to c'$ are equivalent if there exists a third $c\leftarrow c_2\to c'$ and maps $c_0\leftarrow c_2\to c_1$ making the resulting diagram commute. Given $c\leftarrow c_0 \to c'$ and $c'\leftarrow c_1\to c''$, applying axiom a) to $c_0\to c' \leftarrow c_1$ gives $c_0\leftarrow c_2\rightarrow c_1$, and the composition is $c \leftarrow c_0\leftarrow c_2 \rightarrow c_1 \rightarrow c''$.  

\begin{thm}[{Dwyer--Kan, \cite[\S 7]{DwyerKan}}] \label{thm:dwyer-kan}
  Suppose that $(\Cc, W)$ admits a calculus of right fractions. Then
  \[ 
    L\Cc \to \Cc[W^{-1}]
  \]
  is an equivalence. In other words, $L\Cc$ is a $1$-category. \qed
\end{thm}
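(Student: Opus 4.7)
The plan is to identify $L\Cc$ with the Dwyer--Kan hammock localization $L^H\Cc$, and then show that under a calculus of right fractions the mapping simplicial sets of $L^H\Cc$ are homotopy discrete. Once this is established, $L\Cc$ is automatically equivalent to its homotopy category, which by the explicit description recalled above is $\Cc[W^{-1}]$.

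In more detail, first I would recall that $L\Cc$, as defined via simplicial localization, has mapping spaces computed by the reduced hammock construction: for objects $X,Y\in \Cc$, the space $\mathrm{Map}_{L\Cc}(X,Y)$ is the nerve of the category of reduced zigzags $X\leftarrow \cdot \to \cdot \leftarrow \cdots \to Y$ in $\Cc$ with all backwards arrows in $W$, with morphisms given by commuting refinements. Next, I would use axiom (a) of the calculus of right fractions to show that the full subcategory consisting of single roofs $X\leftarrow X_0\to Y$ (i.e.\ zigzags of length one) is \emph{cofinal}: given any longer zigzag, one iteratively fills in squares using (a) to replace adjacent spans $\cdot \to \cdot \leftarrow \cdot$ (with the right arrow in $W$) by spans $\cdot \leftarrow \cdot \to \cdot$ (with the left arrow in $W$), thereby collapsing the zigzag to a single roof. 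Axiom (b) enters in verifying that the comparison morphisms between such cofinal reductions are well-defined up to the zigzag equivalence.

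Then I would show that the nerve of the category of roofs representing a fixed morphism in $\Cc[W^{-1}]$ is contractible. The essential input is again axiom (a): given two roofs $X\leftarrow X_0\to Y$ and $X\leftarrow X_1\to Y$ which are equivalent in $\Cc[W^{-1}]$, one can repeatedly find common refinements by the square-completion property, and these refinements organize into a filtered diagram whose nerve is contractible. Combining this with the cofinality of single roofs, the mapping spaces $\mathrm{Map}_{L\Cc}(X,Y)$ are homotopically discrete, with $\pi_0$ exactly the set of roofs modulo the equivalence relation described in the excerpt. This identifies $\pi_0$ with $\Hom_{\Cc[W^{-1}]}(X,Y)$, and the composition induced on $\pi_0$ matches the one defined via axiom (a).

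The main obstacle I anticipate is the cofinality/contractibility argument in the second and third steps: one must choose the indexing categories of hammocks and of refinements carefully enough that the calculus of right fractions genuinely forces filteredness, and check that the required $2$-of-$3$ and compatibility verifications can be made in the presence of both axioms (a) and (b). Once these combinatorial facts are in place, the conclusion that $L\Cc \simeq \mathrm{Ho}(L\Cc) \simeq \Cc[W^{-1}]$ is formal.
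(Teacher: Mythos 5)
The paper gives no proof of this statement---it is quoted from Dwyer--Kan and closed immediately with a citation---so there is nothing for your argument to diverge from: your outline is precisely the argument of the cited reference, namely identify $L\Cc$ with the hammock localization, use axiom (a) of the calculus of right fractions to reduce arbitrary hammocks to single roofs $X\leftarrow X_0\to Y$, and use the (co)filteredness of the refinement categories guaranteed by axioms (a) and (b) to conclude that the mapping spaces are homotopy discrete with $\pi_0=\Hom_{\Cc[W^{-1}]}(X,Y)$. The step you flag as the main obstacle---organizing the reduction over the indexing category of zigzag types and checking the refinement maps are well defined---is exactly where the substance of the Dwyer--Kan proof lives, so your sketch is faithful to the source but would need that combinatorial bookkeeping carried out to count as a complete proof.
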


The following definition expresses the abstract properties of the statement of the Weak Factorization Theorem in birational geometry. We will apply it later on to $\Cc$ consisting of good formal models, $W$ admissible blowups, $W'$ simple admissible blowups.

\begin{defin} \label{def:weak.fact}
  Let $(\Cc, W)$ be a category with equivalences, and let $W'\subseteq W$ be a class of morphisms. We say that $W'$ has the \emph{weak factorization property} if for every map $\pi\colon X'\to X$ in $W$ there exists a zigzag
  \begin{equation} \label{eqn:zigzag}
   \xymatrix{
      & X_1  \ar[dr]^{\beta_1} \ar[dl]_{\alpha_1} &  & \ar[dl] & \ldots \ar@{}[d]|-{\displaystyle \ldots}  &  \ar[dr]  & &  X_m \ar[dr]^{\beta_m} \ar[dl]_{\alpha_m} \\
      X'=X_0 & & X_2 &  & \ldots &  & X_{m-1} & & X
    }
  \end{equation}
  with $\alpha_1, \beta_1, \ldots, \alpha_m, \beta_m$ in $W'$, and maps $\pi_i\colon X_i\to X$ ($i=0, \ldots, m$) with $\pi_0 = \pi$, such that ``the resulting diagram commutes in $\Cc[W^{-1}]$ after reversing the maps $\alpha_i$'', i.e.\ for every $i=0, \ldots, m$, the following equality holds in $\Hom_{\Cc[W^{-1}]}(X_i, X)$:
  \begin{equation} \label{eqn:pi-zigzag}
    \pi_i = 
    \begin{cases}
      \beta_m \circ \alpha_m^{-1} \circ \cdots \circ \beta_{i+1} \circ \alpha_{i+1}^{-1} & \text{for }i\text{ even} \\
      \beta_m \circ \alpha_m^{-1} \circ \cdots \circ \beta_{i+1} \circ \alpha_{i+1}^{-1}\circ \beta_{i} & \text{for }i\text{ odd}.
    \end{cases}
  \end{equation}
\end{defin}
If $\Cc\to \Cc[W^{-1}]$ is faithful, \eqref{eqn:pi-zigzag} is equivalent to the equalities in $\Hom_\Cc(X_i, X)$
\begin{equation} \label{eqn:pi-zigzag2}
  \pi_{i-1}\circ\alpha_{i-1} = \pi_{i} = \pi_{i+1}\circ\beta_{i} \quad \quad \text{ for }i\text{ odd}.
\end{equation}

\begin{lemma} \label{lemma:weak.fact}
  Let $(\Cc, W)$ be a category with equivalences and let $W'\subseteq W$ be a class of morphisms with the weak factorization property. Suppose that $\Cc\to \Cc[W^{-1}]$ is faithful. Then the induced map 
  \[
    \Cc[(W')^{-1}]\to \Cc[W^{-1}]
  \] 
  is an equivalence, i.e.\ every functor $\Cc\to \mathscr{D}$ sending all morphisms in $W'$ to isomorphisms in $\mathscr{D}$ sends all morphisms in $W$ to isomorphisms in $\mathscr{D}$.
\end{lemma}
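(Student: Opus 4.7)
The plan is to prove the second formulation directly: any functor $F\colon \Cc\to\mathscr{D}$ inverting morphisms in $W'$ automatically inverts morphisms in $W$. By the universal property of the localization, this is equivalent to the assertion that $\Cc[(W')^{-1}]\to \Cc[W^{-1}]$ is an equivalence.

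Fix $\pi\colon X'\to X$ in $W$ and apply the weak factorization property to produce a zigzag as in \eqref{eqn:zigzag} with all $\alpha_j,\beta_j\in W'$, together with intermediate morphisms $\pi_i\colon X_i\to X$ and $\pi_0=\pi$. The relations \eqref{eqn:pi-zigzag} hold a priori only in $\Cc[W^{-1}]$, since they involve the formal inverses $\alpha_j^{-1}$. However, rearranging each such relation by composing with the appropriate $\alpha_j$'s rewrites it as an equality between two honest compositions of morphisms from $\Cc$. By the faithfulness of $\Cc\to\Cc[W^{-1}]$, these equalities already hold in $\Cc$; this is precisely the reformulation \eqref{eqn:pi-zigzag2} noted immediately after the definition.

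Now apply $F$. At every ``cap top'' $X_i$ the relation \eqref{eqn:pi-zigzag2} becomes
\[
  F(\pi_{i-1})\circ F(\alpha_{i-1})=F(\pi_i)=F(\pi_{i+1})\circ F(\beta_i)
\]
in which $F(\alpha_{i-1})$ and $F(\beta_i)$ are isomorphisms, because $\alpha_{i-1},\beta_i\in W'$. Consequently the invertibility of $F(\pi_j)$ at any one object of the zigzag is equivalent to its invertibility at the two neighbouring objects. The rightmost bottom of the zigzag is $X$ itself, and the corresponding $\pi_m$, read off from \eqref{eqn:pi-zigzag} with $i=m$ in the even case as the empty composition, is $\mathrm{id}_X$; so $F(\pi_m)=\mathrm{id}_{F(X)}$ is an isomorphism. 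A finite induction along the zigzag from right to left then yields that $F(\pi_i)$ is an isomorphism for every $i$, and in particular $F(\pi)=F(\pi_0)$ is invertible, as desired.

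The only non-formal ingredient in this argument is the faithfulness hypothesis, which is what allows us to pass from relations in $\Cc[W^{-1}]$ to honest relations in $\Cc$ to which $F$ may be applied; without it one could not even write down the equations to which $F$ is applied. Everything else is routine bookkeeping along the zigzag, and the argument goes through regardless of the precise indexing conventions of \eqref{eqn:zigzag}.
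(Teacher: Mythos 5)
Your proposal is correct and follows essentially the same route as the paper: use faithfulness to turn the localized relations \eqref{eqn:pi-zigzag} into the honest relations \eqref{eqn:pi-zigzag2} in $\Cc$, apply $F$, and exploit that $F(\alpha_i)$ and $F(\beta_i)$ are isomorphisms. The paper simply writes out the resulting identity $F(\pi)=F(\beta_m)\circ F(\alpha_m)^{-1}\circ\cdots\circ F(\beta_1)\circ F(\alpha_1)^{-1}$ explicitly, whereas you propagate invertibility of $F(\pi_i)$ along the zigzag by induction from the right-hand end; these are the same computation.
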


\begin{proof}
Let $F\colon \Cc\to \mathscr{D}$ be a functor sending all maps in $W'$ to isomorphisms, and let $\pi:X'\to X$ be a morphism in $W$. We have to show that $F(\pi)$ is an isomorphism. Repeatedly applying \eqref{eqn:pi-zigzag2} and using the assumption that $F(\alpha_i)$ are isomorphisms, we obtain
\[ 
  F(\pi) = F(\pi_1)\circ F(\alpha_1)^{-1} = F(\pi_2)\circ F(\beta_1) \circ F(\alpha_1)^{-1} = \cdots 
  = F(\beta_m)\circ F(\alpha_m)^{-1}\circ\ldots\circ F(\beta_1) \circ F(\alpha_1)^{-1}, 
\]
which is invertible since the $F(\beta_i)$ are.
\end{proof}

\begin{defin} \label{def:localization}
  We call a functor $F:\Cc\to \mathscr{D}$ a \emph{localization} if $F$ is faithful and if we denote by $W_F\subseteq \Cc$ the class of all morphisms in $\Cc$ which $F$ sends to isomorphisms in $\mathscr{D}$, then the induced functor $\Cc[W_F^{-1}] \to \mathscr{D}$ is an equivalence.
\end{defin}

We omit the standard but somewhat lengthy proof of the following criterion for being a localization.

\begin{lemma}[{cf.\ \cite[Proof of Theorem~4.1]{FormalRigidI}}] \label{lemma:detect-localiz}
Let $F:\Cc\to \mathscr{D}$ be a functor, and let $W_F$ be the class of all morphisms in $\Cc$ which $F$ sends to isomorphisms in $\mathscr{D}$. Suppose that 
\begin{enumerate}[i.]
  \item $F$ is faithful and essentially surjective, 
  \item $F$ is ``weakly full with fixed target'' i.e. for $c'\in \Cc$ and $f\colon d\to F(c')\in \mathscr{D}$ there exists a~$g:c\to c'\in \Cc$ and an isomorphism $u:F(c)\simeq d$ such that $F(g)=f$ under this isomorphism,
  \item the fiber categories $F^{-1}(d)$ (which are posets because $F$ is faithful) are cofiltering (any two objects are dominated by a third one).
\end{enumerate}
Then $W_F$ admits a calculus of right fractions and $F$ is a localization, i.e.\ the induced functor $\Cc[W_F^{-1}]\to \mathscr{D}$ is an equivalence.  \qed
\end{lemma}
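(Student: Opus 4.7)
The plan is to verify that $W_F$ admits a right calculus of fractions, and then show that the induced functor $\Cc[W_F^{-1}]\to\mathscr{D}$ is an equivalence by checking it is fully faithful and essentially surjective.

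Axiom (b) of the calculus of right fractions is immediate: given parallel $f,g\colon X\to Y$ and $u\colon Y\to Z$ in $W_F$ with $uf=ug$, applying $F$ gives $F(u)F(f)=F(u)F(g)$; since $F(u)$ is invertible in $\mathscr{D}$ and $F$ is faithful, $f=g$, so one can take $v=\mathrm{id}_X$. For axiom (a), given $\alpha\colon X\to Z$ in $\Cc$ and $u\colon Y\to Z$ in $W_F$, I would form $h := F(u)^{-1}\circ F(\alpha)\colon F(X)\to F(Y)$ in $\mathscr{D}$. Applying (ii) to $c'=Y$ and $h$ yields a morphism $g_0\colon V_0\to Y$ in $\Cc$ with an isomorphism $\phi_0\colon F(V_0)\simeq F(X)$. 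Applying (ii) again to $c'=X$ and $\phi_0$ yields $s_0\colon V_1\to X$ in $\Cc$ with $F(s_0)$ an isomorphism, so $s_0\in W_F$. It remains to produce a common source with matching images to $V_0$ and $V_1$.

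The common source is built using (iii) after a normalization step. Given any $V\in\Cc$ with a chosen isomorphism $\psi\colon F(V)\simeq d$ in $\mathscr{D}$, I apply (ii) with $c'=V$ and $\psi^{-1}\colon d\to F(V)$ to obtain $V'\to V$ in $W_F$ realizing $\psi^{-1}$, so that $V'$ can now be regarded as lying in the literal fiber $F^{-1}(d)$. After normalizing both $V_0$ and $V_1$ into $F^{-1}(F(X))$ this way, cofilteredness (iii) produces a common lower bound $V$ together with $W_F$-maps to each of them. Composing these gives the desired span $X\xleftarrow{} V\to Y$ with the left leg in $W_F$, and its commutativity as a square in $\Cc$ (i.e.\ equality of the two composites $V\to Z$) follows from faithfulness (i) after checking commutativity of the corresponding $F$-images in $\mathscr{D}$.

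For the equivalence $\Cc[W_F^{-1}]\to\mathscr{D}$, essential surjectivity is exactly (i). Fullness follows by the same construction as axiom (a) applied to an arbitrary morphism $\phi\colon F(X)\to F(Y)$ in $\mathscr{D}$: one produces a roof $X\xleftarrow{s}V\to Y$ in $\Cc$ with $s\in W_F$ whose image under $F$ is $\phi$. Faithfulness amounts to showing that two roofs with the same image under $F$ must become equivalent in $\Cc[W_F^{-1}]$, which follows by dominating them with a common third roof built using (ii) and (iii) and then applying faithfulness of $F$. The main technical obstacle throughout is that (ii) produces lifts only up to a chosen isomorphism of $F$-images, whereas (iii) requires literal equality in the fiber $F^{-1}(d)$; the normalization trick described above is what bridges this gap, and its coherent and repeated application is the source of the length of the proof.
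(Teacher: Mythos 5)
The paper does not actually prove this lemma---it explicitly ``omit[s] the standard but somewhat lengthy proof,'' pointing to Bosch--L\"utkebohmert---so your proposal can only be compared with what that omitted proof must look like. Your overall architecture (verify the two axioms of a calculus of right fractions, then check that the induced functor $\Cc[W_F^{-1}]\to\mathscr{D}$ is essentially surjective, full and faithful) is the standard one and is certainly the intended route; your verification of axiom (b), the identification of essential surjectivity with hypothesis (i), and the reduction of fullness and faithfulness to the same roof-building mechanism as axiom (a) are all fine.

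The gap is in the ``normalization step,'' which you yourself single out as the crux. Applying (ii) to $c'=V$ and $\psi^{-1}\colon d\to F(V)$ produces $g\colon V'\to V$ together with an isomorphism $u\colon F(V')\simeq d$ satisfying $F(g)=\psi^{-1}\circ u$; it does \emph{not} produce an object with $F(V')=d$ on the nose, so $V'$ cannot ``be regarded as lying in the literal fiber $F^{-1}(d)$''---you are in exactly the same situation as before (an object plus a chosen isomorphism of its image with $d$), and iterating gains nothing. Moreover, no argument can close this gap under the literal-fiber reading of (iii), because the lemma is then false: take $\mathscr{D}$ to be the contractible groupoid on two objects $d_1\cong d_2$ and $\Cc$ the discrete category on $c_1,c_2$ with $F(c_i)=d_i$; conditions (i) and (ii) hold, every literal fiber is a singleton, $W_F$ consists of identities only, yet $\Cc\to\mathscr{D}$ is not an equivalence. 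Hypothesis (iii) must therefore be read as cofilteredness of the \emph{essential} fibers, i.e.\ of the posets of objects $c$ equipped with an isomorphism $F(c)\simeq d$ (this is also how the paper uses the notion later, e.g.\ in the proof of Theorem~\ref{thm:purity}, where models come ``equipped with an isomorphism''). With that reading your normalization is unnecessary: $X$ (with the identity) and $V_0$ (with $\phi_0$) already lie in the essential fiber over $F(X)$, cofilteredness yields $V\to X$ and $V\to V_0$ with the former in $W_F$ and with prescribed images under $F$, and $V\to V_0\to Y$ completes the square, commutativity being checked on $F$-images and transported back by faithfulness. So the proof is repairable, but the one step you present as the bridge between (ii) and (iii) is precisely the step that fails as written.
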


\subsection{Review of rigid analytic spaces and formal schemes}
\label{ss:raynaud}

Let $K$ be a complete discretely valued field with valuation ring $\cO_K$ and residue field $k$. By a \emph{rigid analytic space} over $K$ we shall mean a rigid analytic variety in the sense of \cite[Definition~9.3.1/4]{BGR} or \cite[Definition~5.3/1]{Bosch}, i.e.\ a locally ringed $G$-space admitting an admissible covering by affinoid rigid analytic spaces of the form ${\rm Sp}\, K\langle x_1, \ldots, x_n\rangle/(f_1, \ldots, f_r)$. We denote the category of rigid analytic spaces over $K$ by $\mathbf{Rig}_K$.   

If $X$ is a scheme locally of finite type over $K$, one can construct a rigid analytic space $X_{\rm an}$, called its \emph{analytification}. If $X=\Spec A$ is affine, then $X_{\rm an}$ represents the functor $\mathcal{Y}\mapsto \Hom_K(A, \Gamma(\mathcal{Y}, \cO_{\mathcal{Y}}))$; the general case follows by gluing. This defines the analytification functor
\[ 
  (-)_{\rm an} \colon \mathbf{Sch}^{\rm lft}_K \to \mathbf{Rig}_K.
\]

Let $\mathfrak{X}$ be a formal scheme locally of topologically finite type over $\Spf \cO_K$, that is, locally of the form $\Spf \cO_K\langle x_1, \ldots, x_n\rangle /(f_1, \ldots, f_r)$. We denote the category of such formal schemes by $\mathbf{FSch}_{\cO_K}$. If $\mathfrak{X}= \Spf A$ for a topologically of finite type $\cO_K$-algebra $A$, then $A\otimes K$ is an affinoid $K$-algebra; the associated rigid analytic space is the affinoid space $\mathfrak{X}_{\rm rig} = {\rm Sp}\, (A\otimes K)$. Again, for a general $\mathfrak{X}$, one can construct $\mathfrak{X}_{\rm rig}$ by gluing, obtaining a functor
\[ 
  (-)_{\rm rig} \colon \mathbf{FSch}_{\cO_K} \to \mathbf{Rig}_K.
\]

A formal scheme locally of topologically finite type is called \emph{admissible} if it is flat over $\cO_K$; the functor $(-)_{\rm rig}$ restricted to the full subcategory $\mathbf{FSch}^{\rm adm}_{\cO_K}\subseteq \mathbf{FSch}_{\cO_K}$ of admissible formal schemes is faithful. A morphism $\mathfrak{X}'\to \mathfrak{X}$ between admissible formal schemes is called an \emph{admissible blowup} if it is isomorphic to the formal blowing up of $\mathfrak{X}$ in an open coherent ideal; this is equivalent to $\mathfrak{X}'_{\rm rig}\to \mathfrak{X}_{\rm rig}$ being an isomorphism. We denote by $W\subseteq \mathbf{FSch}^{\rm adm}_{\cO_K}$ the category of admissible blowups.

Let $\mathbf{FSch}^{\rm adm, qc}_{\cO_K}$ (resp.\ $\mathbf{Rig}^{\rm qcqs}_K$) denote the full subcategory of $\mathbf{FSch}^{\rm adm}_{\cO_K}$ (resp.\ $\mathbf{Rig}_K$) consisting of quasi-compact (resp.\ and quasi-separated) objects.

\begin{thm}[{Raynaud \cite{Raynaud,FormalRigidI}}] \label{thm:raynaud}
  The functor $(-)_{\rm rig}\colon \mathbf{FSch}^{\rm adm, qc}_{\cO_K} \to \mathbf{Rig}^{\rm qcqs}_K$  satisfies the conditions of Lemma~\ref{lemma:detect-localiz}, and hence induces an equivalence
  \[ 
    \mathbf{FSch}^{\rm adm, qc}_{\cO_K}[W^{-1}] \isomlong \mathbf{Rig}^{\rm qcqs}_K.
  \]
\end{thm}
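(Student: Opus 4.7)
The plan is to verify the three hypotheses of Lemma~\ref{lemma:detect-localiz} for $F = (-)_{\rm rig}$, each being a classical theorem of Raynaud whose proof I will only outline; the content of the theorem is essentially a repackaging of these results. Throughout, the key technical input is that quasi-compact admissible formal schemes over $\cO_K$ behave like proper models: they are determined by their generic fibers up to admissible blowup, and enough of them exist.

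Faithfulness of $(-)_{\rm rig}$ on admissible formal schemes is already asserted in the paragraph preceding the theorem (it follows from $\cO_K$-flatness and the fact that $A \otimes K$ determines $A$ up to $\cO_K$-torsion, hence up to admissible blowup). For essential surjectivity, I would take a quasi-compact and quasi-separated rigid space $\mathcal{X}$, choose a finite admissible covering by affinoid opens $\mathcal{U}_i = \Sp(A_i)$, pick for each $i$ a topologically finitely presented $\cO_K$-subalgebra $A_i^\circ \subseteq A_i$ whose generic fiber equals $A_i$ (and which is $\cO_K$-flat after taking the quotient by its $\cO_K$-torsion, i.e.\ admissible), and then glue the $\Spf A_i^\circ$ along formal models of the pairwise intersections. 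The gluing data exist (and are unique up to admissible blowup) by the extension result invoked below, applied to the inclusions $\mathcal{U}_i \cap \mathcal{U}_j \hookrightarrow \mathcal{U}_i$.

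For condition (ii), the weak fullness with fixed target, one needs the central theorem: given any morphism $f\colon \mathcal{Y} \to \mathfrak{X}_{\rm rig}$ of quasi-compact quasi-separated rigid spaces and any admissible formal model $\mathfrak{Y}$ of $\mathcal{Y}$, there exists an admissible blowup $\mathfrak{Y}' \to \mathfrak{Y}$ and a morphism $g\colon \mathfrak{Y}' \to \mathfrak{X}$ of admissible formal schemes with $g_{\rm rig} = f$ (up to the chosen identification $\mathfrak{Y}'_{\rm rig} \simeq \mathcal{Y}$). This is the formal-geometric analogue of Nagata-type extension and is proved by considering the scheme-theoretic closure of the graph of $f$ in $\mathfrak{Y} \times_{\cO_K} \mathfrak{X}$ and applying Raynaud--Gruson flattening to make the projection to $\mathfrak{Y}$ into an admissible blowup. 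Granted this, condition (ii) is immediate by taking $\mathfrak{Y}$ to be any model of $\mathcal{Y}$ obtained from essential surjectivity.

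For condition (iii), given two admissible formal models $\mathfrak{X}_1, \mathfrak{X}_2$ of the same rigid space $\mathcal{X}$, I would apply the extension result of the previous paragraph to the identity $\mathrm{id}\colon \mathcal{X} \to \mathfrak{X}_{2,\rm rig}$, starting from the model $\mathfrak{X}_1$: this yields an admissible blowup $\mathfrak{X}_3 \to \mathfrak{X}_1$ together with a morphism $\mathfrak{X}_3 \to \mathfrak{X}_2$ inducing the identity on generic fibers; then $\mathfrak{X}_3 \to \mathfrak{X}_2$ is necessarily itself an admissible blowup because it induces an isomorphism on rig-generic fibers. Thus any two objects of the fiber $F^{-1}(\mathcal{X})$ are dominated by a third, so the fiber is cofiltering. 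The main obstacle in the whole argument is the extension-after-blowup statement used in (ii) and (iii); everything else is formal bookkeeping or a direct application of Lemma~\ref{lemma:detect-localiz} once this input is in hand.
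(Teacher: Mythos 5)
Your proposal is correct and follows essentially the same route as the paper, which gives no proof of its own but defers entirely to Raynaud and Bosch--L\"utkebohmert: the theorem's phrasing (``satisfies the conditions of Lemma~\ref{lemma:detect-localiz}'') already prescribes exactly the decomposition you carry out, and your verification of faithfulness, essential surjectivity, weak fullness via graph closure plus Raynaud--Gruson flattening, and cofiltering fibers is the standard argument of \cite{FormalRigidI}. The only point glossed over is that gluing in the essential surjectivity step needs intersections realized as \emph{open formal subschemes} of suitable blowups (the content of \cite[Lemma~4.4]{FormalRigidI}), which is slightly more than the bare extension-after-blowup statement, but this is part of the same circle of results you invoke.
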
 

\subsection{Good formal models}
\label{ss:good-models-rigid}

Recall from Definition~\ref{def:good.model} that good models of $K$-schemes over $\cO_K$ are assumed to be proper; otherwise one would allow e.g.\ models with empty special fiber. We omit this assumption in the case of rigid analytic spaces, for two reasons: first, different models are related by admissible blowups, which makes comparing them easier. Second, compactifications rarely exist. 

\begin{defin}\label{def:good.model-rig}\mbox{ }
\begin{enumerate}[(a)]
  \item A \emph{rigid snc pair} is a pair $(\mathcal{Y}, \mathcal{D})$ consisting of a smooth rigid analytic space $\mathcal{Y}$ and a divisor $\mathcal{D}\subseteq \mathcal{Y}$ with simple normal crossings. A morphism of snc pairs $\pi\colon (\mathcal{Y}', \mathcal{D}')\to (\mathcal{Y}, \mathcal{D})$ is a map $\pi\colon \mathcal{Y}'\to \mathcal{Y}$ such that $\pi^{-1}(\mathcal{D})\subseteq \mathcal{D}'$. This defines the category $\mathbf{RigPairs}_K$ of rigid snc pairs over $K$. It contains the category $\mathbf{SmRig}_K$ as a full subcategory (by taking $\mathcal{D}=\emptyset$).

  \item A \emph{good formal model} is a pair $(\mathfrak{Y}, \mathfrak{D})$ consisting of a separated flat regular formal scheme $\mathfrak{Y}$ of topologically finite type over $\cO_K$ and a divisor $\mathfrak{D}\subseteq \mathfrak{Y}$ with simple normal crossings such that $(\mathfrak{Y}_0)_{\rm red}\subseteq \mathfrak{D}$. A morphism of good formal models $\pi\colon (\mathfrak{Y}', \mathfrak{D}')\to (\mathfrak{Y}, \mathfrak{D})$ is a map $\pi\colon \mathfrak{Y}'\to \mathfrak{Y}$ of formal schemes over $\cO_K$ such that $\pi^{-1}(\mathfrak{D})\subseteq \mathfrak{D}'$. This defines the category $\widehat{\mathbf{GM}}_{\cO_K}$ of good formal models. 

  \item The \emph{generic fiber} of a good formal model $(\mathfrak{Y}, \mathfrak{D})$ is the snc pair $(\mathfrak{Y}, \mathfrak{D})_{\rm rig}= (\mathfrak{Y}_{\rm rig}, \mathfrak{D}_{\rm rig})$. The \emph{trivial locus} of a good formal model $(\mathfrak{Y}, \mathfrak{D})$ is the rigid analytic space $(\mathfrak{Y}, \mathfrak{D})_{\rm triv} = \mathfrak{Y}_{\rm rig}\setminus \mathfrak{D}_{\rm rig}$. 

  \item We say that a good formal model $(\mathfrak{Y}, \mathfrak{D})$ is \emph{vertical} if the support of $\mathfrak{D}$ is equal to $\mathfrak{Y}_0$, or equivalently if $(\mathfrak{Y}, \mathfrak{D})_{\rm rig}\in \mathbf{SmRig}_K$. This defines a full subcategory $\widehat{\mathbf{GM}}^{\rm vert}_{\cO_K}$ of $\widehat{\mathbf{GM}}_{\cO_K}$.

  \item A morphism $\pi\colon (\mathfrak{Y}', \mathfrak{D}')\to (\mathfrak{Y}, \mathfrak{D})$ is called an \emph{admissible blowup} if the induced map of snc pairs
  \[ 
    \pi_{\rm rig}\colon (\mathfrak{Y}', \mathfrak{D}')_{\rm rig}\to (\mathfrak{Y}, \mathfrak{D})_{\rm rig}
  \]
  is an isomorphism. We denote the subcategory of $\widehat{\mathbf{GM}}_{\cO_K}$ consisting of admissible blowups again by $W$.
\end{enumerate}
\end{defin}

For rigid snc pairs and smooth rigid analytic spaces, we have the following version of Theorem~\ref{thm:raynaud}. 

\begin{prop} \label{prop:rig-localiz} \mbox{ }
\begin{enumerate}[(a)]
  \item The functor
  \[ 
    (-)_{\rm rig} \colon \widehat{\mathbf{GM}}_{\cO_K}\to \mathbf{RigPairs}_K
  \]
  is faithful and induces an equivalence between the localization $\widehat{\mathbf{GM}}_{\cO_K}[W^{-1}]$ and the full subcategory $\mathbf{RigPairs}^{\rm qcs}_K$ of $\mathbf{RigPairs}_K$ consisting of snc pairs $(\mathcal Y,  \mathcal D)$ such that $\mathcal Y$ is quasi-compact and separated.
  \item The restriction of the above functor
  \[ 
    (-)_{\rm rig} \colon \widehat{\mathbf{GM}}^{\rm vert}_{\cO_K}\to \mathbf{SmRig}_K
  \]
  is faithful and induces an equivalence between the localization $\widehat{\mathbf{GM}}^{\rm vert}_{\cO_K}[W^{-1}]$ and the full subcategory $\mathbf{SmRig}^{\rm qcs}_K$ of $\mathbf{SmRig}_K$ consisting of quasi-compact and separated smooth rigid analytic spaces.
  \item In both cases above, admissible blowups in the respective categories admit a calculus of right fractions. Consequently, $\widehat{\mathbf{GM}}_{\cO_K}[W^{-1}]$ and $\widehat{\mathbf{GM}}^{\rm vert}_{\cO_K}[W^{-1}]$ agree with the respective $\infty$-categorical localizations.
\end{enumerate}
\end{prop}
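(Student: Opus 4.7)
The plan is to apply the criterion of Lemma~\ref{lemma:detect-localiz} to the functor $F=(-)_{\rm rig}$ in both (a) and (b), so that the $1$-categorical localizations coincide with $\mathbf{RigPairs}^{\rm qcs}_K$ and $\mathbf{SmRig}^{\rm qcs}_K$ respectively. The three conditions to verify are (i) faithfulness and essential surjectivity, (ii) ``weak fullness with fixed target,'' and (iii) cofilteredness of fiber posets. The upgrade to the $\infty$-categorical localization in (c) then follows from Theorem~\ref{thm:dwyer-kan}, once calculus of right fractions is established.

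For faithfulness, two morphisms $\pi,\pi'\colon (\mathfrak{Y}',\mathfrak{D}')\to (\mathfrak{Y},\mathfrak{D})$ of good formal models which coincide on the rigid generic fibers agree as maps of formal schemes by Raynaud (Theorem~\ref{thm:raynaud}), and the divisor data imposes no further information beyond the underlying map. For essential surjectivity, I start with a qcs rigid snc pair $(\mathcal{Y},\mathcal{D})$ and pick a quasi-compact admissible formal model $\mathfrak{Y}_0$ of $\mathcal{Y}$ (taken separated, which is possible since $\mathcal{Y}$ is); then I invoke embedded resolution of singularities for admissible formal schemes, in the form worked out by Temkin (\cite{temkin} and its formal analogue, cf.~also the weak factorization reference \cite{weakfactorization}), to produce an admissible blowup $\mathfrak{Y}\to \mathfrak{Y}_0$ with $\mathfrak{Y}$ regular and such that the union of the strict transform of the closure of $\mathcal{D}$ with $(\mathfrak{Y}_0)_{\rm red}$ becomes an snc divisor $\mathfrak{D}$. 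In the vertical setting of (b) one takes $\mathcal{D}=\emptyset$ and only needs $(\mathfrak{Y}_0)_{\rm red}$ to be an snc divisor in a regular formal scheme.

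For condition (ii), given $f\colon (\mathcal{Y}',\mathcal{D}')\to (\mathfrak{Y},\mathfrak{D})_{\rm rig}$ I first lift $f$ to a morphism of admissible formal schemes $\mathfrak{Y}'_0\to \mathfrak{Y}$, after possibly replacing $\mathfrak{Y}'_0$ by an admissible blowup (Raynaud's lifting of morphisms, Theorem~\ref{thm:raynaud}). Embedded resolution applied to $\mathfrak{Y}'_0$ together with the preimage of $\mathfrak{D}$ and (in case (a)) the closure of $\mathcal{D}'$ yields a good formal model $(\mathfrak{Y}',\mathfrak{D}')$ together with the required morphism of good models. Condition (iii) is analogous: two good formal models $(\mathfrak{Y}_1,\mathfrak{D}_1)$, $(\mathfrak{Y}_2,\mathfrak{D}_2)$ of the same rigid snc pair are dominated by a common admissible blowup of formal schemes by Raynaud, which I then resolve as above to produce a good formal model lying above both. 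This also gives axiom (a) of the calculus of right fractions; axiom (b) is automatic because $(-)_{\rm rig}$ is faithful (if $uf=ug$ with $u\in W$, then $f_{\rm rig}=g_{\rm rig}$ and hence $f=g$, so $v=\mathrm{id}$ works).

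The main obstacle is the repeated invocation of embedded resolution of singularities for admissible formal schemes over $\cO_K$ along with an snc boundary, in a form sufficiently functorial that it respects given morphisms and extends the existing divisor. In the algebraic setting over $\cO_K$ this is supplied by Temkin's work; in the formal case one uses the admissibility/flatness and the ``topological'' local structure of such formal schemes (they are $\mathfrak{m}$-adic completions of schemes of finite type over $\cO_K$) to reduce to the algebraic statement on a presenting model and then pass back to the completion. Once this is taken for granted, essential surjectivity, weak fullness and cofilteredness all follow by direct applications as sketched, and part (c) is then a formal consequence of Lemma~\ref{lemma:detect-localiz} and Theorem~\ref{thm:dwyer-kan}.
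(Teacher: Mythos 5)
Your proposal is correct and follows essentially the same route as the paper, whose entire proof is the one-line citation of Lemma~\ref{lemma:detect-localiz}, embedded resolution of singularities, and Raynaud's theorem (\cite[Theorem~4.1]{FormalRigidI}), with part (c) from Theorem~\ref{thm:dwyer-kan}. You have simply filled in the standard verifications of the three hypotheses of the criterion that the paper leaves implicit.
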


\begin{proof}
This follows in a standard manner from Lemma~\ref{lemma:detect-localiz}, embedded resolution of singularities, and \cite[Theorem~4.1]{FormalRigidI}. The final assertion follows from Theorem~\ref{thm:dwyer-kan}.
\end{proof}

We can summarize the above discussion with the following commutative diagram of categories and functors.
\[ 
  \xymatrix{
  \mathbf{GM}_{\cO_K} \ar[rr]_{\hphantom{A}L}^-{(-)\otimes K} \ar@{^{(}->}[dd]_{\widehat{(-)}} & & \mathbf{Pairs}^{\rm proper}_K \ar[rr]_-{L}^{(-)_{\rm triv}} \ar@{^{(}->}[dd]_{(-)_\an} & & \mathbf{Sm}_K \ar[dd]^{!}_{(-)_\an}   \\
 \\
  \widehat{\mathbf{GM}}_{\cO_K} \ar[rr]_{\hphantom{A}L}^{(-)_{\rm rig}} & & \mathbf{RigPairs}^{\rm qcs}_K \ar[rr]_{!}^{(-)_{\rm triv}} & & \mathbf{SmRig}_K \\
  \widehat{\mathbf{GM}}_{\cO_K}^{\rm vert} \ar@{^{(}->}[u] \ar[rr]_{\hphantom{A}L}^{(-)_{\rm rig}} & & \mathbf{SmRig}^{\rm qcs}_K. \ar@{^{(}->}[u] \ar@{^{(}->}[urr] & & \\
  }
\]
Here, the horizontal arrows labeled $L$ are localizations in the sense of Definition~\ref{def:localization} and the vertical maps labeled $\hookrightarrow$ are fully faithful functors. The vertical arrow labeled $!$ is not fully faithful, and the horizontal arrow labeled $!$ is not a localization (due to essential singularities). By resolution of singularities, the essential image of the latter functor consists of smooth rigid analytic spaces of the form $\mathcal{X}=\mathcal{Y}\setminus \mathcal{Z}$ where $\mathcal{Y}$ is quasi-compact and separated and $\mathcal{Z}\subseteq \mathcal{Y}$ is a closed analytic subset.

\subsection{Key calculation}\label{sec:key.calculation}

In this section, we analyze the fibers of the map on Kato--Nakayama spaces associated to a simple blowup of complex snc pairs. This will be a key input for showing that the Betti realization will depend only on the generic fiber (Theorem~\ref{thm:simple-blowup-isom}).

Before stating and proving the result, let us take a look at two simple examples. In both, we identify the Kato--Nakayama space $(\mathbf{P}^1_\CC, \infty)_{\rm log}$ with the closed disc $D$ (of ``infinite radius'').

\begin{example} \label{ex:ex1}
  Let $\mathcal{X} = (\mathbf{P}^1_{\CCt})_{\rm an}$ be the rigid projective line over $\CCt$. We choose two good models for $\mathcal{X}$:
  \[
    \mathfrak{X} = (\mathbf{P}^1_{\CCs})^\wedge, \quad
    \mathfrak{X}' = {\rm Bl}_\infty \mathfrak{X},
  \]
  related by an admissible blowup $\pi\colon \mathfrak{X}' \to \mathfrak{X}$ in the point $\infty \in \mathbf{P}^1_\CC$. We equip each of them with the divisor given by the special fiber (suppressed from the notation). We want to analyze the map
  \[ 
    \pi_{0,\rm log}\colon \mathfrak{X}'_{0, \rm log} \to \mathfrak{X}_{0, \rm log}
  \]
  and show that it is a homotopy equivalence.

  In the first case, since $\mathfrak{X}\to {\rm Spf}(\CCs)$ is strict, the space $\mathfrak{X}_{0, \rm log}$ is simply 
  \[
    \mathfrak{X}_{0, \rm log} \simeq \mathbf{P}^1(\mathbf{C}) \times 0_{\rm log} \simeq \mathbf{S}^2 \times \Sone.
  \]
  The special fiber $\mathfrak{X}'_0$ has two components, the strict transform $P\simeq \mathbf{P}^1_\CC$ of $\mathfrak{X}_0$ and the exceptional line $E \simeq \mathbf{P}^1_\CC$, with the $\infty \in P$ identified with $0\in E$. The second space, $\mathfrak{X}'_{0, \rm log}$, is homeomorphic to the space obtained by gluing $P_{\rm log} = (\mathbf{P}^1_\CC, \infty)_{\rm log} \times 0_{\rm log} \simeq D \times \Sone$ and $E_{\rm log} = (\mathbf{P}^1_\CC, 0)_{\rm log} \times 0_{\rm log} \simeq D\times \Sone$  along an identification of the boundary $\partial D$. This again gives $\mathbf{S}^2\times \Sone$, and moreover the map $\pi_{0,\rm log}$ collapses the hemispheres $E_{\rm log} \times \phi$ ($\phi\in \Sone$) into the points $(\infty, \phi) \in \mathbf{P}^1(\mathbf{C})\times \Sone$ (see Figure~\ref{fig:ex12}(a)). Note that this collapsing map is indeed a homotopy equivalence.
\end{example}

\begin{example} \label{ex:ex2}
  We enrich the previous example by adding horizontal boundary. Let $\mathcal{X}, \mathfrak{X}$, and $\mathfrak{X}'$ be as before, and let $\mathcal{D} = \infty \in \mathcal{X}$. We equip $\mathfrak{X}$ and $\mathfrak{X}'$ with the divisors being the union of the special fiber and the closure of $\mathcal{D}$. We again take a closer look at the map
  \[ 
    \pi_{0, \rm log} \colon (\mathfrak{X}', \mathfrak{D}')_{0, \rm log} \to (\mathfrak{X}, \mathfrak{D})_{0, \rm log}. 
  \]
  The space $(\mathfrak{X}, \mathfrak{D})_{0, \rm log}$ is identified with $(\mathbf{P}^1_\CC, \infty)_{\rm log} \times 0_{\rm log} \simeq D \times \Sone$. The component $P$ of $\mathfrak{X}'_0$ has log structure as in the previous example, while the other component $E$ has now non-trivial log structure both at $0$ and $\infty$, and $E_{\rm log} \simeq ([0, \infty] \times \Sone)\times \Sone$. Thus the space $(\mathfrak{X}', \mathfrak{D}')_{0, \rm log}$ is obtained by gluing $D\times \Sone$ with $([0, \infty]\times \Sone)\times \Sone$ along the identification $\partial D \simeq 0\times \Sone$. The map to $D\times \Sone$ is obtained by contracting the interval $[0,\infty]$ to a point (see Figure~\ref{fig:ex12}(b)), and is a homotopy equivalence.
\end{example}

\begin{figure}
    \centering
    \begin{minipage}{0.45\textwidth}
        \centering
        \includegraphics[width=0.9\textwidth]{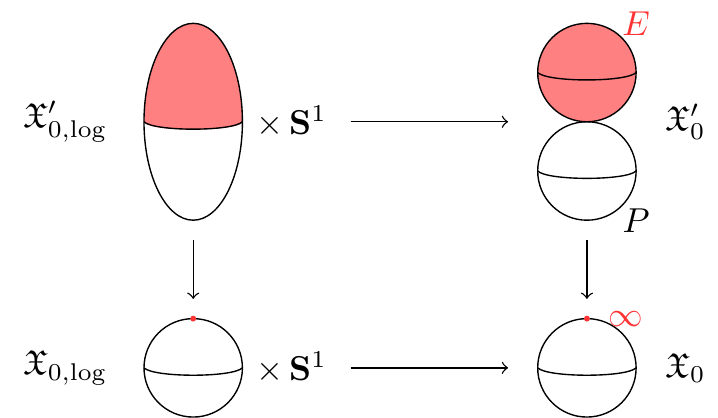} 

        (a) Example~\ref{ex:ex1}
    \end{minipage}\hfill
    \begin{minipage}{0.45\textwidth}
        \centering
        \includegraphics[width=0.9\textwidth]{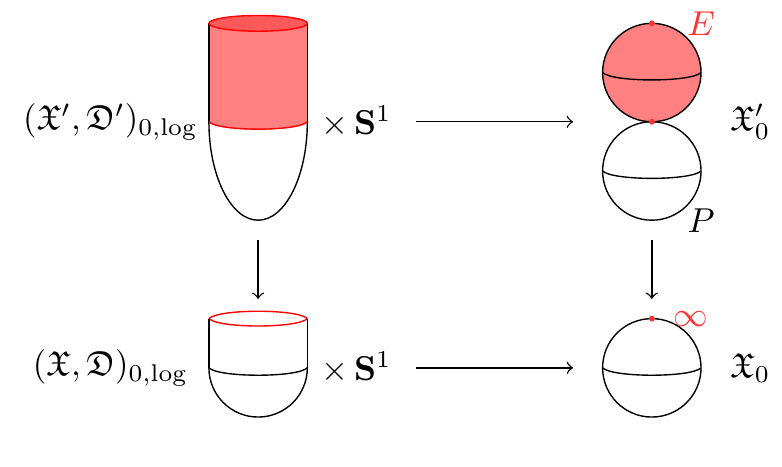} 
        
        (a) Example~\ref{ex:ex2}
    \end{minipage}
    \caption{Maps on Kato--Nakayama spaces of special fibers induced by some simple blowups.}
    \label{fig:ex12}
\end{figure}

\begin{prop}\label{prop:key-calculation}
Let $X$ be a complex manifold, $D\subseteq X$ a divisor with normal crossings, ${Z\subseteq X}$ a closed smooth submanifold of $X$ which has normal crossings with $D$. Let $\pi\colon X'\to X$ be the blowing-up of $X$ along $Z$. Endow $X$ (resp. $X'$) with the log structure induced by the open subset $U=X\setminus D$ (resp. $U'=\pi^{-1}(U)$). Then $\pi$ extends uniquely to a map of log schemes $\pi\colon (X', \Mm_{X'\setminus U'})\to (X, \Mm_{X\setminus U})$ and the fibers of the induced map of Kato--Nakayama spaces
\[ 
  \pi_{\rm log} \colon  (X', \Mm_{X'\setminus U'})_{\rm log} \to (X, \Mm_{X\setminus U})_{\rm log}
\]
are contractible and locally contractible.
\end{prop}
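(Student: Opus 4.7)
The plan is to reduce to a local toric model and compute the fibers of $\pi_{\log}$ directly, chart by chart. Since the question is local on $X$, the normal crossings hypothesis allows us to fix coordinates $x_1, \ldots, x_n$ with $D = \{x_1 \cdots x_a = 0\}$ and $Z = \{x_j = 0 : j \in I\}$ for some $I \subseteq \{1, \ldots, n\}$; set $I_1 = I \cap \{1, \ldots, a\}$ and $I_2 = I \setminus I_1$. Away from $Z$ the map $\pi$ is an isomorphism, so $\pi_{\log}$ has singleton fibers there; it remains to study the fiber over a basepoint $(0, \phi)$ with $0 \in Z$, where $\phi$ is determined by the angular values $\phi_j := \phi(x_j) \in \Sone$ for $j = 1, \ldots, a$. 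The case where the statement has content (and the one needed in the sequel) is $Z \subseteq D$, i.e.\ $I_1 \neq \emptyset$, so that the exceptional divisor becomes part of the log structure on $X'$; I would focus on this case.

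Next, I would work in the standard affine charts $U_i \cong \CC^n$ ($i \in I$) on $X' = \mathrm{Bl}_Z X$, with coordinates $(x_i, (y_j)_{j \in I\setminus\{i\}}, (x_\ell)_{\ell \notin I})$ and $\pi^\sharp(x_j) = y_j x_i$ for $j \in I \setminus \{i\}$. A direct monomial calculation identifies the components of $\pi^{-1}(D)$ and hence the log structure of $X'$ in each chart. A point of $\pi_{\log}^{-1}(0,\phi) \cap U_i$ is a point $(x_i = 0, (y_j), x_\ell = 0 \text{ for } \ell \notin I)$ equipped with $\phi'$ on the local log structure, subject to the compatibilities $\phi_j = \phi'(\pi^\sharp x_j) \in \Sone$ for $j = 1, \ldots, a$. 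Unwinding these one finds: for $i \in I_1$, the constraints fix $\phi'(x_i) = \phi_i$, pin $\arg(y_j) = \arg(\phi_j/\phi_i)$ for $j \in I_1 \setminus \{i\}$ when $y_j \neq 0$ and determine $\phi'(y_j)$ when $y_j = 0$, so the fiber inside $U_i$ is homeomorphic to $\RR_{\geq 0}^{|I_1|-1} \times \CC^{|I_2|}$; for $i \in I_2$, the value $\phi'(x_i) \in \Sone$ is a free parameter to which the arguments of the $y_j$ ($j \in I_1$) are then anchored, and the fiber inside $U_i$ is $\Sone \times \RR_{\geq 0}^{|I_1|} \times \CC^{|I_2|-1}$.

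These chart-wise pieces glue along the reciprocal transitions $y_j \leftrightarrow 1/y_{j'}$ on overlaps, exactly as in the toric fan of the exceptional divisor $E \cong \PP^{|I|-1}$. The key point, illustrated by Examples~\ref{ex:ex1} and~\ref{ex:ex2}, is that the $\Sone$-factor appearing in an $i \in I_2$ chart is ``capped off'' by complex discs supplied by neighboring $i \in I_1$ charts (where no such factor is present). The resulting global fiber is identified with the nonnegative real locus of a projective toric variety: a $(|I|-1)$-simplex when $I_2 = \emptyset$ and a related convex polytope in general. In either case it is a convex cell, hence contractible. Local contractibility follows from Proposition~\ref{prop:top.properties}(b): the map $\pi_{\log}$ is semi-analytic, so its fibers are semi-analytic and therefore locally contractible.

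The main obstacle will be the combinatorial bookkeeping in the mixed case $I_1, I_2 \neq \emptyset$, where the chart-wise fibers have inhomogeneous structure (orthants in some charts, $\Sone$-bundles over orthants in others) and one needs to verify that the gluing assembles them into a single convex polytope. A potentially slicker alternative, should the direct approach become unwieldy, is to interpret $\pi$ as a toroidal modification: by Example~\ref{example:kn}(c), both $(X,\Mm_{X\setminus U})_{\log}$ and $(X',\Mm_{X'\setminus U'})_{\log}$ are manifolds with corners when $Z \subseteq D$, and $\pi_{\log}$ becomes a suitable real-oriented blow-up, for which contractibility of fibers is a standard fact.
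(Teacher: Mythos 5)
Your chart computations are correct and consistent with the paper's answer, but your route is genuinely different from the one taken in the proof, and the step you yourself flag as the main obstacle is exactly where the two diverge. The paper also reduces to the local model with $Z\subseteq D$ (in its notation $B\cap L\neq\emptyset$; like you, it silently builds this into the local setup, and it is genuinely needed -- without it the fibers over the exceptional locus are projective spaces). But instead of working in the affine charts of ${\rm Bl}_Z X$ and gluing, it writes the exceptional divisor as a free quotient $\PP^I=(\CC^I\setminus\{0\})/\CC^\times$ \emph{of log spaces}, uses the compatibility of the Kato--Nakayama construction with free quotients (from \cite{knroot}) to get $Y_{\log}$ globally in one formula, and then observes that the residual $\Sone\subseteq\CC^\times$ acts freely on the extra phase coming from the exceptional generator of the chart. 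Dividing out, the fiber over a fixed angle is identified in one stroke with $\bigl(\RR_{\geq 0}^{L}\times\CC^{I\setminus L}\setminus\{0\}\bigr)/\RR_{>0}$, i.e.\ the intersection of the unit sphere with a proper closed convex cone, which is visibly contractible and locally contractible. This completely sidesteps the gluing bookkeeping that your plan defers.

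On that deferred step: your chart-wise fibers ($\RR_{\geq0}^{|I_1|-1}\times\CC^{|I_2|}$ for $i\in I_1$, $\Sone\times\RR_{\geq0}^{|I_1|}\times\CC^{|I_2|-1}$ for $i\in I_2$) are exactly the loci $\{r_i>0\}$ resp.\ $\{z_i\neq0\}$ of the paper's global fiber, so the gluing does assemble correctly. However, your proposed identification of the glued object is wrong when $I_2\neq\emptyset$: the fiber is not the nonnegative locus of a projective toric variety nor a convex polytope, but the join of a $(|I_1|-1)$-simplex with the sphere $\mathbf{S}^{2|I_2|-1}$ (equivalently, a ``hemisphere'' $\{\sum_{i\in I_1}r_i^2+\sum_{i\in I_2}|z_i|^2=1\}$ in $\RR_{\geq0}^{I_1}\times\CC^{I_2}$). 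It is still a cell, hence contractible, so the conclusion survives, but to finish your argument you would need to prove the gluing yields this space -- at which point you have essentially reconstructed the paper's quotient description, and it is cleaner to start from it. Your fallback via Proposition~\ref{prop:top.properties}(b) for local contractibility is fine and matches the paper's toolkit; your second fallback (real oriented blowups of manifolds with corners) only covers the case $I_2=\emptyset$, since for $I_2\neq\emptyset$ the center $Z$ is not a stratum of $D$ and $\pi_{\log}$ is not a toroidal/real-oriented modification in the naive sense.
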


\begin{proof}
The question is local on $X$, and hence we might assume that we are in the following situation. Fix a finite set $I$, and let $B, L\subseteq I$ be two subsets with $B\cap L\neq \emptyset$. Let $X$ be an open neighborhood of the origin $0\in \CC^I$, let $U\subseteq X$ be the open subset defined by the non-vanishing of the coordinates $x_i$ ($i\in L$), and let $Z\subseteq X$ be the closed subspace defined by the vanishing of the coordinates $x_i$ ($i\in B$). 

We can assume that $B=I$: otherwise, by shrinking $X$ we can assume that it is of the form $X' \times X''$ where $X', X''$ are open neighbourhoods of the origin in $\CC^B$ and $\CC^{I\setminus B}$ respectively. The blowing-up leaves the second factor untouched, so we are reduced to analyzing what happens in the first factor.

We have $X' = V(x_i Y_j - x_j Y_i)\subseteq X \times \PP^I$ and $D' = X'\setminus U' = \bigcup_{i\in L} D_i \cup Y$ where $Y=\pi^{-1}(0)$ and $D_i = V(Y_i)$. Here $\PP^I$ denotes the projective space with homogeneous coordinates $Y_i$ ($i\in I$). This gives a chart $\NN^L\times \NN\cdot e \to {\rm Div}({X'})$ for the log structure of $X'$, sending $\ell$ to $(\cO(1), Y_\ell)$ and $e$ to $(\cO_{X'}(Y), 1_Y)$. Here we are using the language of Deligne--Faltings structures (introduced in \cite{borne-vistoli}), according to which a log structure can be seen as a symmetric monoidal functor $L\colon A \to \Div_X$ from a sheaf of monoids $A$ to the symmetric monoidal stack of pairs $(L,s)$, where $L$ is a line bundle and $s$ is a section of $L$. A chart is then a symmetric monoidal functor $P\to \Div(X)$ where $P$ is a monoid, inducing the functor $L$ via a sheafification procedure. For simplicity, we will use the same notation for schemes and log schemes below.

Now note that $\cO_{X'}(Y) \simeq \cO(-1)$. If we restrict this chart to the exceptional divisor $Y\simeq \PP^I$, we obtain a symmetric monoidal functor $\NN^L\times \NN\cdot e \to {\rm Div}({Y})$, sending $\ell\mapsto (\cO_Y(1), Y_\ell)$ and $e\mapsto (\cO_Y(-1), 0)$. We have a chart for the map $\pi\colon Y\to 0=\Spec (\NN^L\to \CC)$:
\[ 
  \xymatrix{
    \NN^L\times \NN\cdot e \ar[r] & {\rm Div}(Y) \\
    \NN^L \ar[u]^{\ell \mapsto \ell + e} \ar[r] & {\rm Div(0)}. \ar[u]
  }
\]
Write $Y = \PP^I = (\CC^I \setminus\{0\})/\CC^\times$. Note that this is a quotient presentation of $Y$ as a log scheme, and the action of $\CC^\times$ is free. From this, and the discussion of how the Kato--Nakayama construction behaves with respect to quotients of \cite[Section 4]{knroot}, we obtain an isomorphism
\[
Y_{\rm log}= (\CC^I \setminus\{0\})_\log/\CC^\times\simeq  \left(\left(\left(\overline \CC^L \times \CC^{I\setminus L}\right) \setminus \left( (\mathbf{S}^1\times 0)^L \times 0^{I\setminus L}\right)\right)\times \mathbf{S}^1\right) / \CC^\times.
\]
Here $\overline{\CC}=\RR_{\geq 0}\times \Sone$, as in Section \ref{sec:knreview}, and $\overline \CC^L \times \CC^{I\setminus L}$ is the Kato--Nakayama space of $\CC^I$ equipped with the log structure given by the coordinates with indices in $L$, from which we have to subtract the preimage of $\{0\}_\log$, i.e. the subset $(\mathbf{S}^1\times 0)^L \times 0^{I\setminus L}$. The last $\Sone$ factor corresponds to the generator $e$ of the chart    $\NN^L\times \NN\cdot e$.

Now if the coordinates are $(r_i, \phi_i)$ on $\overline\CC^L$, $z_i$ on $\CC^{I\setminus L}$, and $\phi$ on the $\mathbf{S}^1$, the action of $\lambda\in \CC^\times$ is given by the formula
\[ 
  \lambda((r_i, \phi_i)_{i\in L}, (z_i)_{i\notin L}, \phi)
   = \left(\left(|\lambda|r_i, \frac{\lambda}{|\lambda|}\phi_i\right), \lambda z_i, \left(\frac{\lambda}{|\lambda|}\right)^{-1}\phi\right).
\]
With this description of $Y_\log$, the map $\pi_{0, {\rm log}} \colon  Y_{\rm log}\to 0_{\rm log}$ takes the form of the map
\[
\left(\left(\overline \CC^L \times \CC^{I\setminus L} \setminus (\mathbf{S}^1\times 0)^L \times 0^{I\setminus L}\right)\times \mathbf{S}^1\right) / \CC^\times\to (\Sone)^L
\]
given by the formula
\[
 [((r_i,\phi_i)_{i\in L}, (z_i)_{i\notin L},\phi)]\mapsto \phi\cdot \phi_i.
\]
Note that this is well-defined because of the way $\lambda$ acts on $\phi_i$ and $\phi$.

Now let us finally analyze one fiber of this map, i.e.\ fix the values of $(\psi_i)_{i\in L}\in (\Sone)^L$. The circle $\mathbf{S}^1\subseteq\CC^\times$ acts freely on the coordinate $\phi$, and
\begin{align*}
  \pi_{0, {\rm log}}^{-1}((\psi_i)_{i\in L})
  &\simeq \left(\RR^L_{\geq 0} \times \CC^{I\setminus L} \setminus \{0\}\right) / \RR_{>0} \\
  &\simeq \left\{ \sum_{i\in L} r_i^2 + \sum_{i\notin L} |z_i|^2 = 1\right\} \subseteq \RR_{\geq 0}^L \times \CC^{I\setminus L} \\
  &\simeq (\text{a convex cone }\neq\text{ whole of }\RR^L\times \CC^{I\setminus L})\cap (\text{unit sphere}),
\end{align*}
which is contractible (and locally contractible). Note that the final step used the fact that $L$ (that is, $L\cap B$) is nonempty, which corresponds to the fact that $Z\subseteq D$.
\end{proof}

\subsection{Construction of the functor (I): Weak factorization}
\label{sec:construction-rigid}

\begin{defin}\label{def:simple.bl}
  A \emph{simple blowup} is a map $\pi\colon (\mathfrak{Y}', \mathfrak{D}')\to (\mathfrak{Y}, \mathfrak{D})$ in $\widehat{\mathbf{GM}}_{\cO_K}$ such that $\pi\colon \mathfrak{Y}'\to \mathfrak{Y}$ is the blowup of $\mathfrak{Y}$ along a closed regular formal subscheme $\mathfrak{Z}\subseteq \mathfrak{X}$ which has \emph{normal crossings} with $\mathfrak{D}$, i.e. in formal local coordinates $\mathfrak{D}=\{x_1\cdots x_r = 0\}$ and $\mathfrak{Z} = \{x_i = 0 \,|\, i\in I\}$ for some index set $I$. An \emph{admissible simple blowup} is a simple blowup which is also admissible, i.e.\ if $\mathfrak{Z}$ is supported in $\mathfrak{Y}_0$.
\end{defin}

\begin{thm}[{\cite[Theorem  6.4.5]{weakfactorization}}] \label{thm:wft}
  The class of admissible simple blowups has the weak factorization property (Definition~\ref{def:weak.fact}) in the category with equivalences $(\widehat{\mathbf{GM}}_{\cO_K}, W)$.
\end{thm}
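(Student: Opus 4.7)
The plan is to reduce the statement to the Weak Factorization Theorem of Abramovich--Temkin \cite{weakfactorization} by translating its conclusion into the format of Definition~\ref{def:weak.fact}. First, I would unpack what the cited theorem gives: for any admissible blowup $\pi\colon \mathfrak{Y}'\to \mathfrak{Y}$ of regular admissible formal $\cO_K$-schemes together with a compatible snc divisor (here $\mathfrak{D}$), one obtains a zigzag of admissible simple blowups $\mathfrak{Y}'=\mathfrak{Y}_0 \xleftarrow{\alpha_1} \mathfrak{Y}_1 \xrightarrow{\beta_1} \mathfrak{Y}_2 \leftarrow \cdots \to \mathfrak{Y}_{m-1}\leftarrow \mathfrak{Y}_m\rightarrow \mathfrak{Y}$, where each $\alpha_i, \beta_i$ is the blowup along a regular formal subscheme contained in the special fiber and meeting the accumulated snc divisor transversally. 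Each $\mathfrak{Y}_i$ carries a natural snc divisor $\mathfrak{D}_i$ containing the total transform of $\mathfrak{D}$ together with all intermediate exceptional components, and the maps are automatically morphisms of good formal models in the sense of Definition~\ref{def:good.model-rig}.

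Next I would produce the required maps $\pi_i\colon \mathfrak{Y}_i \to \mathfrak{Y}$ by composing the ``downward'' legs of the zigzag: for $i$ odd, set $\pi_i = \beta_m \circ \alpha_m^{-1} \circ \cdots \circ \beta_i$ read as a composition of scheme morphisms (the $\alpha_j^{-1}$ make sense after composition because each $\beta_j \circ \alpha_j^{-1}$ is the common target of both blowups and descends to a morphism in $\widehat{\mathbf{GM}}_{\cO_K}[W^{-1}]$); for $i$ even, analogously. Since the forgetful functor to formal schemes is faithful and admissible simple blowups are isomorphisms after applying $(-)_{\rm rig}$, the passage to $\widehat{\mathbf{GM}}_{\cO_K}[W^{-1}]$ is faithful as well by Proposition~\ref{prop:rig-localiz}, so by the remark following Definition~\ref{def:weak.fact} it suffices to check \eqref{eqn:pi-zigzag2}, namely $\pi_{i-1}\circ \alpha_{i-1} = \pi_i = \pi_{i+1}\circ \beta_i$ for odd $i$. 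Both equalities are immediate from how the $\pi_i$ were defined as composites along the zigzag.

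The main obstacle is to confirm that the output of Abramovich--Temkin really consists of \emph{admissible} simple blowups (centers supported in $\mathfrak{Y}_0$) \emph{and} that at each intermediate step the center has normal crossings with the divisor $\mathfrak{D}_i$ one is tracking, not merely with $(\mathfrak{Y}_{i,0})_{\rm red}$. This is precisely the content of the ``functorial'' or ``logarithmic'' refinement of weak factorization in \cite{weakfactorization}, which allows one to carry along an auxiliary boundary and keep it snc throughout the algorithm; I would cite the relevant version explicitly and check that ``regular formal subscheme with normal crossings with $\mathfrak{D}$'' in Definition~\ref{def:simple.bl} matches their notion of an admissible center.

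A minor subtlety worth addressing is that the theorem in \cite{weakfactorization} is stated between two given birational models, whereas we start with a single map $\pi\colon \mathfrak{Y}'\to\mathfrak{Y}$. This is automatic: any admissible blowup is a birational morphism of regular formal schemes to which the theorem applies directly. Once these identifications are made, the verification of Definition~\ref{def:weak.fact} is essentially bookkeeping, and the theorem follows.
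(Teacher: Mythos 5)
The paper gives no proof of this theorem at all: it is imported verbatim as \cite[Theorem 6.4.5]{weakfactorization}, so your proposal is supplying the translation work that the authors leave implicit, and your overall strategy — unpack the Abramovich--Temkin zigzag, check that its legs are admissible simple blowups in the sense of Definition~\ref{def:simple.bl}, and verify the compatibility \eqref{eqn:pi-zigzag2} using faithfulness of $\widehat{\mathbf{GM}}_{\cO_K}\to\widehat{\mathbf{GM}}_{\cO_K}[W^{-1}]$ via Proposition~\ref{prop:rig-localiz} — is exactly the intended reading. One step needs tightening: Definition~\ref{def:weak.fact} requires the $\pi_i\colon X_i\to X$ to be honest morphisms in $\Cc$ (here, of good formal models), with \eqref{eqn:pi-zigzag} holding only after localization; your recipe $\pi_i=\beta_m\circ\alpha_m^{-1}\circ\cdots$ does not produce such a morphism, since $\alpha_j^{-1}$ is not a map of formal schemes, and ``descending to $\widehat{\mathbf{GM}}_{\cO_K}[W^{-1}]$'' only yields a morphism in the localization. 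The correct fix is that the cited factorization is a factorization \emph{over} the target $\mathfrak{Y}$: each intermediate model comes equipped with a projection to $\mathfrak{Y}$ compatible with the zigzag, so the $\pi_i$ are part of the output of the theorem and \eqref{eqn:pi-zigzag2} is then immediate. Your remaining caveats — that the centers must be supported in the special fiber and must have normal crossings with the \emph{accumulated} boundary $\mathfrak{D}_i$ at every stage, which is what the logarithmic/boundary-preserving form of weak factorization guarantees — are precisely the points one must check against the statement in \cite{weakfactorization}, and flagging them explicitly is appropriate.
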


Combining this with Lemma~\ref{lemma:weak.fact}, we obtain:

\begin{cor}
  Every functor $\widehat{\mathbf{GM}}_{\cO_K}\to \mathscr{D}$ to an $\infty$-category $\mathscr{D}$ sending simple blowups to equivalences factors uniquely through the category $\mathbf{RigPairs}^{\rm qcs}_K$.\qed
\end{cor}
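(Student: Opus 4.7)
The plan is to chain together the Weak Factorization Theorem (Theorem~\ref{thm:wft}), the abstract Lemma~\ref{lemma:weak.fact}, and the Raynaud-style localization result Proposition~\ref{prop:rig-localiz}(a), (c). Let $F\colon \widehat{\mathbf{GM}}_{\cO_K}\to \mathscr{D}$ be a functor sending simple blowups to equivalences.

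First I would observe that admissible simple blowups form a subclass of simple blowups, so $F$ sends every admissible simple blowup to an equivalence in $\mathscr{D}$. Next, I want to promote this to all admissible blowups. To do this, I invoke Lemma~\ref{lemma:weak.fact} with $\Cc=\widehat{\mathbf{GM}}_{\cO_K}$, $W$ the class of admissible blowups, and $W'$ the class of admissible simple blowups. The weak factorization hypothesis on $W'$ is exactly the content of Theorem~\ref{thm:wft}. The faithfulness hypothesis, namely that $\widehat{\mathbf{GM}}_{\cO_K}\to \widehat{\mathbf{GM}}_{\cO_K}[W^{-1}]$ is faithful, follows from Proposition~\ref{prop:rig-localiz}(a): that localization is equivalent to $\mathbf{RigPairs}^{\rm qcs}_K$ via $(-)_{\rm rig}$, a functor which is asserted to be faithful. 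Having verified the hypotheses of Lemma~\ref{lemma:weak.fact}, I conclude that $F$ inverts every admissible blowup.

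With this in hand, the universal property of localization gives a factorization through $\widehat{\mathbf{GM}}_{\cO_K}[W^{-1}]$, and Proposition~\ref{prop:rig-localiz}(a) identifies this localization with $\mathbf{RigPairs}^{\rm qcs}_K$, yielding the desired factorization. For uniqueness, I would appeal to Proposition~\ref{prop:rig-localiz}(c), which says that admissible blowups admit a calculus of right fractions and hence the ordinary $1$-categorical localization $\widehat{\mathbf{GM}}_{\cO_K}[W^{-1}]$ coincides with the $\infty$-categorical localization $L\widehat{\mathbf{GM}}_{\cO_K}$ (Theorem~\ref{thm:dwyer-kan}); this ensures that the universal property holds with respect to functors valued in arbitrary $\infty$-categories $\mathscr{D}$, not merely ordinary categories.

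In terms of difficulty, there is essentially no obstacle: all the substantive work has already been done in the preceding subsections, namely the Abramovich--Temkin weak factorization input, Raynaud's theorem, and the Dwyer--Kan comparison. The only care required is tracking that ``factors uniquely'' is meant in the $\infty$-categorical sense, which is exactly what Proposition~\ref{prop:rig-localiz}(c) is there to guarantee.
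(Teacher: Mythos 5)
Your proposal is correct and is exactly the argument the paper intends: the corollary is stated as an immediate consequence of combining Theorem~\ref{thm:wft} with Lemma~\ref{lemma:weak.fact}, using Proposition~\ref{prop:rig-localiz}(a) for the faithfulness hypothesis and identification of the localization, and Proposition~\ref{prop:rig-localiz}(c) (via Theorem~\ref{thm:dwyer-kan}) to upgrade the universal property to $\infty$-categorical targets. You have simply spelled out the same chain of references the authors compress into the phrase ``Combining this with Lemma~\ref{lemma:weak.fact}.''
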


Consider the functor
\[ 
  \Psi_{\rm log} \colon \widehat{\mathbf{GM}}_{\cO_K} \to  \SpSone, 
  \quad
  \Psi_{\rm log}(\mathfrak{Y}, \mathfrak{D}) = \mathfrak{Y}_{0, \rm log}.
\]

\begin{thm} \label{thm:simple-blowup-isom}
  The functor $\Psi_{\rm log}$ sends admissible simple blowups to equivalences.
\end{thm}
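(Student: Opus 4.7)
The plan is to show that $\pi_{0,\log}\colon \mathfrak{Y}'_{0,\log}\to \mathfrak{Y}_{0,\log}$ is a proper surjection with contractible fibers between well-behaved spaces, and to conclude by a Smale-type theorem that it is a homotopy equivalence over $0_{\log}=\Sone$.

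Observe first that the hypotheses of an admissible simple blowup place us directly in the setting of Proposition~\ref{prop:key-calculation}. Admissibility gives $\mathfrak{Z}\subseteq\mathfrak{Y}_0$, and since $(\mathfrak{Y}_0)_{\rm red}\subseteq\mathfrak{D}$ by definition of a good formal model, we have $\mathfrak{Z}\subseteq\mathfrak{D}$; normal crossings of $\mathfrak{Z}$ with $\mathfrak{D}$ is part of Definition~\ref{def:simple.bl}. In the index notation used in the proof of Proposition~\ref{prop:key-calculation}, this is the condition $B\cap L\ne\emptyset$ that forces the fiber there to be a truncated open convex cone intersected with a sphere, and hence contractible.

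Next I would reduce the computation of the fibers of $\pi_{0,\log}$ to that complex-analytic local model. Fibers of Kato--Nakayama maps are local in the strict étale topology, so by Proposition~\ref{prop:gmlogsmooth} I may assume that, étale-locally, $(\mathfrak{Y},\Mm_\mathfrak{D})$ admits a strict chart into the algebraic $\cO_K$-model $X=\Spec\cO_K[x_1,\dots,x_n]/(u-\prod_{i\in J}x_i^{a_i})$ with its compactifying log structure from $D=\{x_1\cdots x_r=0\}$, in which $\mathfrak{Z}$ is cut out by the vanishing of a subset of the coordinates and $\pi$ is pulled back from the corresponding algebraic blowup $X'\to X$. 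After base change along $\iota$ and analytification, Proposition~\ref{prop:key-calculation} applies to this local model and, by strict base-change compatibility of the Kato--Nakayama functor, identifies the fibers of $\pi_{0,\log}$ with those computed there, which are contractible.

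Finally, to invoke Smale, observe that by Proposition~\ref{prop:top.properties}(a) both $\mathfrak{Y}_{0,\log}$ and $\mathfrak{Y}'_{0,\log}$ are locally compact, locally contractible, separable metric spaces, and by (b) the fibers of $\pi_{0,\log}$ are locally contractible. The map is proper, since $\pi_0$ is proper (as a base change of the proper blowup $\pi$) and the Kato--Nakayama projection $\tau$ is itself proper. A proper surjection with contractible, locally contractible fibers between such spaces is a weak homotopy equivalence by the theorem of Smale invoked in \S\ref{sec:construction-rigid}; since both spaces have the homotopy type of CW complexes, this upgrades to an honest homotopy equivalence over $\Sone$, and hence an equivalence in $\SpSone$. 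The main obstacle is the local-model reduction in the middle paragraph: carefully justifying that strict étale charts allow one to transport the fiber computation of Proposition~\ref{prop:key-calculation} from the complex-manifold setting to our formal $\cO_K$-setting.
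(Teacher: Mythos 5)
Your proposal is correct and follows essentially the same route as the paper: reduce to the strict étale-local model of the blowup of $\mathbf{A}^n_\CC$ along a coordinate subspace, use the fact that the Kato--Nakayama construction preserves cartesian squares along strict maps to identify the fibers of $\pi_{0,\log}$ with those computed in Proposition~\ref{prop:key-calculation} (where admissibility, i.e.\ $\mathfrak{Z}\subseteq\mathfrak{D}$, gives the nonemptiness of $B\cap L$ needed for contractibility), and conclude by Smale's theorem. Your extra care with the topological hypotheses (properness, local contractibility via Proposition~\ref{prop:top.properties}) is a welcome elaboration of points the paper leaves implicit, but not a different argument.
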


In the proof of this result we will make use of the following theorem:

\begin{thm}[\cite{smale}] \label{thm:smale}
  Let $S$ and $T$ be connected and locally compact separable metric spaces, with $S$ locally contractible. Let $f\colon S\to T$ be a proper surjective continuous map whose fibers $f^{-1}(t)$ are contractible and locally contractible. Then $f$ is a weak homotopy equivalence. \qed
\end{thm}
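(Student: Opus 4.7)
The plan is to apply Smale's theorem (Theorem~\ref{thm:smale}) to the map
\[
  \pi_{0,\log}\colon \mathfrak{Y}'_{0,\log} \to \mathfrak{Y}_{0,\log}.
\]
Since this is a map over $\Sone = 0_{\log}$, producing a weak homotopy equivalence here will yield an equivalence in $\SpSone$. The bulk of the argument lies in computing its fibers.

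The standard topological hypotheses of Smale's theorem follow from general properties of Kato--Nakayama spaces. By Proposition~\ref{prop:top.properties}(a), since $(\mathfrak{Y},\Mm_\mathfrak{D})_0$ and $(\mathfrak{Y}',\Mm_{\mathfrak{D}'})_0$ are separated, fs, and separable log analytic spaces, their Kato--Nakayama spaces are locally compact, locally contractible, separable metric; and Proposition~\ref{prop:top.properties}(b) gives local contractibility of fibers. Properness of $\pi_{0,\log}$ follows from projectivity of the blowup $\pi$, since both base change and the Kato--Nakayama functor preserve proper maps; surjectivity of $\pi_{0,\log}$ is inherited from surjectivity of $\pi$.

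The main step is verifying that the fibers of $\pi_{0,\log}$ are contractible, which I reduce to Proposition~\ref{prop:key-calculation}. Fix a closed point $\bar y\in \mathfrak{Y}_0\otimes_k\CC$. Using log smoothness of $(\mathfrak{Y},\Mm_\mathfrak{D})/(\Spf\cO_K,\Mm_\mathfrak{m})$ (the formal analogue of Proposition~\ref{prop:gmlogsmooth}), there are formal coordinates $x_1,\dots,x_n$ near $\bar y$ in which $\mathfrak{D}=\{x_1\cdots x_r = 0\}$ and the uniformizer of $\cO_K$ pulls back to $u\cdot x_1^{a_1}\cdots x_r^{a_r}$ for some unit $u$ and exponents $a_i\geq 0$ not all zero. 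After passing to an \'etale-algebraizable neighborhood and analytifying over $\CC$ via $\iota$, this presents a neighborhood of $\bar y$ as an open subset $X\subseteq \CC^n$ with snc divisor $D=\{x_1\cdots x_r=0\}$ whose support contains the analytified special fiber. The blowup center $\mathfrak{Z}$, being regular with normal crossings against $\mathfrak{D}$ and contained in $(\mathfrak{Y}_0)_{\rm red}\subseteq \mathfrak{D}$, corresponds in these coordinates to a smooth submanifold $Z\subseteq X$ cut out by the vanishing of a subset of the $x_i$, and satisfying $Z\subseteq D$. This is exactly the setting of Proposition~\ref{prop:key-calculation}, so the fibers of the local map $\pi_\log\colon (X',\Mm_{X'\setminus U'})_\log\to (X,\Mm_{X\setminus U})_\log$ are contractible. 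Since the closed immersions $\mathfrak{Y}_0\hookrightarrow\mathfrak{Y}$ and $\mathfrak{Y}'_0\hookrightarrow\mathfrak{Y}'$ are strict and $\pi^{-1}(\mathfrak{Y}_0)=\mathfrak{Y}'_0$ (the center of the blowup lies in $\mathfrak{Y}_0$), each fiber of $\pi_{0,\log}$ locally agrees with a fiber of $\pi_\log$ on the chart, and is therefore contractible.

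The main obstacle I anticipate is the clean execution of the chart argument: producing an \'etale-local complex analytic model in which Proposition~\ref{prop:key-calculation} (stated for honest complex manifolds) can be applied to the Kato--Nakayama space of the special fiber of a formal scheme over $\cO_K$. This requires unpacking how regular coordinates on $\mathfrak{Y}$, together with the embedding $\iota\colon k\hookrightarrow \CC$, descend to local complex analytic data compatible with both the log structure and the blowup. Once this is in place, Smale's theorem delivers the desired weak homotopy equivalence, hence an equivalence in $\SpSone$.
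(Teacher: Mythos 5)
There is a genuine gap, and it is structural: your proposal does not prove the statement at all. The statement in question is Smale's theorem itself (Theorem~\ref{thm:smale}) --- a general result in point-set homotopy theory asserting that a proper surjective continuous map between connected, locally compact, separable metric spaces, with locally contractible source and with contractible, locally contractible fibers, is a weak homotopy equivalence. Your text opens with ``the plan is to apply Smale's theorem'' and then uses Theorem~\ref{thm:smale} as a black box to deduce Theorem~\ref{thm:simple-blowup-isom}. Applying a theorem is not a proof of that theorem; as an argument for the assigned statement this is circular. A proof of Theorem~\ref{thm:smale} would have to proceed by entirely different means: Smale's original argument establishes, via the local contractibility and properness hypotheses, a local homotopy lifting property for such maps, and then shows by induction that $f$ induces isomorphisms $\pi_n(S, f^{-1}(t), s) \to \pi_n(T, t)$ for all $n$, whence isomorphisms on all homotopy groups by the contractibility of the fibers. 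None of that machinery appears in your write-up. (For the record, the paper itself gives no proof either --- the theorem is quoted from the literature with a \verb|\qed| --- so the expected answer is either a citation or a sketch of Smale's Vietoris-type argument, not a computation with Kato--Nakayama spaces.)

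What you have actually written is a sketch of the proof of Theorem~\ref{thm:simple-blowup-isom}, and as such it is close to the paper's own argument for \emph{that} result: reduce to a local chart where the blowup is modeled on $B \to \mathbf{A}^n$, invoke Proposition~\ref{prop:key-calculation} for contractibility and local contractibility of the fibers, check the point-set hypotheses via Proposition~\ref{prop:top.properties}, and conclude with Theorem~\ref{thm:smale}. That is fine material, but it answers a different question from the one posed.
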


\begin{proof}[Proof of Theorem~\ref{thm:simple-blowup-isom}]
Let $\pi\colon (\mathfrak{Y}', \mathfrak{D}') \to (\mathfrak{Y}, \mathfrak{D})$ be an admissible simple blowup between good formal models, the blowup at a $\mathfrak{Z}\subseteq\mathfrak{Y}$ as in Definition~\ref{def:simple.bl}. We are going to prove that the fibers of the induced morphism 
\[ 
  \Psi_{\rm log}(\pi) = \pi_{0, \rm log}\colon (\mathfrak{Y}', \mathfrak{D}')_{0, \rm log} \to (\mathfrak{Y}, \mathfrak{D})_{0, \rm log}
\] 
are contractible and locally contractible. We can assume that $K=\CCt$.

We fix a closed point $y\in \mathfrak{Y}_0$ on the central fiber $\mathfrak{Y}_0$ and local coordinates $x_1, \ldots, x_n$ of $\mathfrak{Y}$ at $y$ in which $t = \prod x_i^{a_i}$ for some integers $a_i$, the divisor $\mathfrak{D}$ is defined by $x_1\cdots x_r$, and the center $\mathfrak{Z}$ is defined by the vanishing of $x_i$ for $i$ in some subset $I\subseteq \{1, \ldots, n\}$. Let $\mathbf{A}^n = \Spec \CC[z_1, \ldots, z_n]$ with the log structure given by the divisor $D = \{z_1\cdots z_r = 0\}$, and let $B$ be its blowup at the ideal $(z_i\,|\,i\in I)$, with the log structure given by the reduced preimage of $D$. After passing to an \'etale neighborhood of $y$, the local coordinates produce a cartesian diagram of fs log schemes of finite type over $\CC$
\[ 
  \xymatrix{
    (\mathfrak{Y}'_0, \Mm_{\mathfrak{Y}'_0}) \ar[r] \ar[d]_{\pi_0} & B \ar[d] \\
    (\mathfrak{Y}_0, \Mm_{\mathfrak{Y}_0}) \ar[r] & \mathbf{A}^n
  }
\]
where the horizontal maps are strict. Passing to Kato--Nakayama spaces preserves the cartesian property of the last diagram, and we conclude that every fiber of $\pi_{0, \rm log}$ is also a fiber of $B_\log\to \AA^n_\log$. All these fibers are contractible and locally contractible by Proposition \ref{prop:key-calculation}, and using Smale's theorem (Theorem~\ref{thm:smale}) we can conclude that $\Psi_{\rm log}(\pi)$ is an equivalence (note that the map $\pi_{0, \rm log}$ is clearly compatible with the maps to $\Sone$).
\end{proof}

This allows us to define the desired functor $\Psi_{\rm rig}$.

\begin{cor}
  The functor $\Psi_{\rm log}$ extends uniquely to a functor
  \[ 
    \Psi_{\rm rig} \colon \mathbf{RigPairs}_K^{\rm qcs} \to \SpSone.  \vspace{-.57cm}
      \]  \qed  \vspace{.1cm}
\end{cor}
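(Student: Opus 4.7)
The statement is essentially a formal consequence of what has already been established, so the plan is simply to assemble the pieces. First I would observe that by Theorem~\ref{thm:simple-blowup-isom} the functor $\Psi_{\rm log}\colon \widehat{\mathbf{GM}}_{\cO_K} \to \SpSone$ sends every admissible simple blowup to an equivalence in $\SpSone$. This is the only geometric input; the rest is category theory.

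Next I would invoke the corollary stated immediately after Theorem~\ref{thm:wft}: every functor out of $\widehat{\mathbf{GM}}_{\cO_K}$ into an $\infty$-category that inverts admissible simple blowups factors uniquely through $\mathbf{RigPairs}_K^{\rm qcs}$. Internally, this corollary combines (i) the weak factorization property of admissible simple blowups (Theorem~\ref{thm:wft}), (ii) Lemma~\ref{lemma:weak.fact}, applicable because the functor $(-)_{\rm rig}\colon \widehat{\mathbf{GM}}_{\cO_K} \to \mathbf{RigPairs}_K^{\rm qcs}$ is faithful by Proposition~\ref{prop:rig-localiz}(a), so $\widehat{\mathbf{GM}}_{\cO_K} \to \widehat{\mathbf{GM}}_{\cO_K}[W^{-1}]$ is faithful as well, to upgrade ``inverts simple blowups'' to ``inverts all admissible blowups'', and (iii) Proposition~\ref{prop:rig-localiz}(a), which identifies the localization $\widehat{\mathbf{GM}}_{\cO_K}[W^{-1}]$ with the $\infty$-categorical localization and with $\mathbf{RigPairs}_K^{\rm qcs}$ (Proposition~\ref{prop:rig-localiz}(c) is what allows the universal property to be applied to an $\infty$-categorical target).

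Applying this corollary to $\Psi_{\rm log}$ produces the desired functor $\Psi_{\rm rig}\colon \mathbf{RigPairs}_K^{\rm qcs} \to \SpSone$, together with a natural equivalence $\Psi_{\rm rig}\circ (-)_{\rm rig} \simeq \Psi_{\rm log}$. Uniqueness is automatic from the universal property of the localization. There is no real obstacle here; the only step with genuine content is Theorem~\ref{thm:simple-blowup-isom}, which has already been proved using the key calculation of Proposition~\ref{prop:key-calculation} and Smale's theorem.
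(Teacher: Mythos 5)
Your proposal is correct and follows exactly the route the paper intends: Theorem~\ref{thm:simple-blowup-isom} shows $\Psi_{\rm log}$ inverts admissible simple blowups, and the corollary following Theorem~\ref{thm:wft} (built from weak factorization, Lemma~\ref{lemma:weak.fact}, and Proposition~\ref{prop:rig-localiz}) then gives the unique factorization through $\mathbf{RigPairs}_K^{\rm qcs}$. The paper treats this as immediate (hence the bare \qed), and your unpacking of the intermediate steps is accurate.
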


For analytifications of smooth schemes over $K$, this new functor agrees with the functor $\Psi$ constructed in \S\ref{sec:spreading}--\ref{sec:log.geometry} in the following sense.

\begin{prop} \label{prop:rig-comp-1}
    The following diagram commutes
    \[ 
      \xymatrix{
        \mathbf{Sch}^{\rm ft}_K  \ar[dr]^{\hphantom{aaaa}\Psi \text{ constructed in \S\ref{sec:spreading}--\ref{sec:log.geometry}}} \\
        \mathbf{Pairs}^{\rm proper}_K \ar[u]^{(-)_{\rm triv}} \ar[d]_{(-)_\an} & \SpSone \\ 
        \mathbf{RigPairs}^{\rm qc}_K . \ar[ur]_{\hphantom{aaaa}\Psi_{\rm rig}\text{ defined above}}
      }
    \]
\end{prop}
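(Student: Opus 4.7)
The plan is to compare the two functors through a bridge between algebraic and formal good models, reducing commutativity to the fact that formal completion preserves the central fiber together with its log structure. Given a proper snc pair $(X, D)$ over $K$ with trivial locus $U = X\setminus D$, I would first choose a good algebraic model $(\mathscr{X}, \mathscr{D}) \in \mathbf{GM}_{\cO_K}$ with generic fiber $(X, D)$; such a model exists, as noted after Definition~\ref{def:good.model}. Its formal $\mathfrak{m}$-adic completion $(\widehat{\mathscr{X}}, \widehat{\mathscr{D}})$ lies in $\widehat{\mathbf{GM}}_{\cO_K}$: properness of $\mathscr{X}/\cO_K$ guarantees topologically finite type, while flatness, regularity, separatedness, and the snc condition on $\widehat{\mathscr{D}}$ with $(\widehat{\mathscr{X}}_0)_{\rm red} \subseteq \widehat{\mathscr{D}}$ are inherited from $(\mathscr{X}, \mathscr{D})$. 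By the standard comparison between algebraic and analytic generic fibers of proper formal completions, $(\widehat{\mathscr{X}}, \widehat{\mathscr{D}})_{\rm rig}$ is canonically isomorphic to the analytification $(X, D)_{\rm an}$ as an snc pair.

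I would then unwind both sides through $\Psi_{\log}$. By Proposition~\ref{prop:comparison} (extended to arbitrary $K$ as in \S\ref{sec:arbitrary.K}), the upper composition $\Psi \circ (-)_{\rm triv}$ applied to $(X, D)$ is identified with $(\mathscr{X}, \Mm_{\mathscr{D}})_{0, \log}$, while by the construction of $\Psi_{\rm rig}$ in \S\ref{sec:construction-rigid}, the lower composition $\Psi_{\rm rig} \circ (-)_{\rm an}$ applied to $(X, D)$ is identified with $(\widehat{\mathscr{X}}, \Mm_{\widehat{\mathscr{D}}})_{0, \log}$. The central fibers of $(\mathscr{X}, \mathscr{D})$ and of its formal completion coincide as fs log schemes over $(\Spec k, \Mm_\mathfrak{m})$: the underlying schemes are the same, and the compactifying log structures are determined locally around the closed fiber by the divisor, which is unchanged by $\widehat{(-)}$. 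Base change along $\iota\colon k \hookrightarrow \CC$ and the $(-)_{\log}$ functor therefore produce a canonical homeomorphism
\[
(\mathscr{X}, \Mm_{\mathscr{D}})_{0, \log} \simeq (\widehat{\mathscr{X}}, \Mm_{\widehat{\mathscr{D}}})_{0, \log}
\]
over $\Sone$, giving pointwise agreement of the two compositions on every object of $\mathbf{Pairs}^{\rm proper}_K$.

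The main obstacle is promoting this pointwise identification to a natural equivalence of functors, since the choice of a good algebraic model is not itself functorial in $(X, D)$. To overcome this, I would exploit that both $\Psi$ and $\Psi_{\rm rig}$ are determined by $\Psi_{\log}$ through universal properties on their respective categories of models: $\Psi$ descends from $\Psi_{\log}$ on $\mathbf{GM}_{\cO_K}$ via the spreading-out identification of Proposition~\ref{prop:comparison}, and $\Psi_{\rm rig}$ descends from $\Psi_{\log}$ on $\widehat{\mathbf{GM}}_{\cO_K}$ via Proposition~\ref{prop:rig-localiz} together with the weak factorization input of Theorem~\ref{thm:simple-blowup-isom}. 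The formal completion defines a functor $\widehat{(-)}\colon \mathbf{GM}_{\cO_K} \to \widehat{\mathbf{GM}}_{\cO_K}$ sending admissible blowups to admissible blowups and fitting into a commutative square with $(-)_{\rm triv}$ and $(-)_{\rm an}$, and the central-fiber comparison of the preceding paragraph upgrades to a natural isomorphism $\Psi_{\log} \simeq \Psi_{\log} \circ \widehat{(-)}$ on $\mathbf{GM}_{\cO_K}$. Passing to the localized categories and invoking the universal properties then yields the desired natural equivalence between the two compositions in the triangle.
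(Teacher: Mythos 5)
Your proposal is correct and follows essentially the same route as the paper: the paper's proof is the one-line observation that both compositions are induced from $\Psi_{\rm log}$ on $\mathbf{GM}_{\cO_K}$ via the commutative square of localizations displayed after Proposition~\ref{prop:rig-localiz}, with formal completion as the bridge. Your argument simply spells out the details (central fibers of a good model and its completion agree as log schemes, and the universal property of localization promotes the objectwise identification to a natural equivalence) that the paper leaves implicit.
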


\begin{proof}
This is clear since the functors coincide on $\mathbf{GM}_{\cO_K}$, see the discussion following Proposition~\ref{prop:rig-localiz}.
\end{proof}

\subsection{Construction of the functor (II): Descent and purity}
\label{sec:descent-rigid}

In order to extend the functor $\Psi_{\rm rig}$ defined above to all rigid snc pairs, we will check that it has the homotopy cosheaf property with respect to the strict admissible topology, i.e.\ the topology on $\mathbf{RigPairs}_K$ given by families $\{(\mathcal{Y}_i, \mathcal{D}_i)\to (\mathcal{Y}, \mathcal{D})\}$ where $\{\mathcal{Y}_i\to \mathcal{Y}\}$ is an admissible covering and where $\mathcal{D}_i = \mathcal{D}\times_{\mathcal{Y}} \mathcal{Y}_i$. 

\begin{prop}
  The functor $\Psi_{\rm rig}\colon \mathbf{RigPairs}_K^{\rm qcs}\to \SpSone$ satisfies descent with respect to strict admissible coverings.
\end{prop}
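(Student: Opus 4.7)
My plan is to mirror the strategy of Proposition~\ref{prop:descent}: first reduce to \v{C}ech nerves, then compare the admissible topology to the Zariski topology of good formal models via Raynaud's theorem, and finally invoke classical hyperdescent for open covers of topological spaces. Throughout, I would work with models to lift the relevant simplicial objects to honest simplicial topological spaces fibered over $\Sone$.

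First I would reduce the statement to the \v{C}ech nerve of a single strict admissible covering, by an argument identical to Steps~(1) and~(2) in the proof of Proposition~\ref{prop:descent}. The reduction goes through because a finite truncation of a simplicial object in $\mathbf{RigPairs}_K^{\rm qcs}$ can be lifted, via good formal models and Kato--Nakayama spaces of central fibers, to an honest simplicial topological space fibered over $\Sone$, so the coskeleton and truncation manipulations used in \cite[Proposition~3.24]{Blanc} apply verbatim. Since every object of $\mathbf{RigPairs}^{\rm qcs}_K$ is quasi-compact, I may further assume the covering is finite.

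Next, given such a finite strict admissible cover $\{(\mathcal{Y}_i, \mathcal{D}_i)\to (\mathcal{Y}, \mathcal{D})\}_{i\in I}$ of a rigid snc pair, I would choose a good formal model $(\mathfrak{Y}, \mathfrak{D})$ of $(\mathcal{Y}, \mathcal{D})$, and use Raynaud's theorem (cf.\ \cite[Theorem~4.1]{FormalRigidI}) to obtain an admissible blowup $\mathfrak{Y}'\to \mathfrak{Y}$ such that the covering is induced by a Zariski open cover $\{\mathfrak{U}_i\hookrightarrow \mathfrak{Y}'\}_{i\in I}$ of formal schemes. After further admissible blowups (embedded resolution of singularities), I may assume that $(\mathfrak{Y}', \mathfrak{D}')$ is again a good formal model of $(\mathcal{Y}, \mathcal{D})$ and that each $(\mathfrak{U}_i, \mathfrak{D}'\cap \mathfrak{U}_i)$, together with the fiber products $(\mathfrak{U}_{i_0}\cap\cdots\cap \mathfrak{U}_{i_n}, \mathfrak{D}'\cap \mathfrak{U}_{i_0}\cap\cdots\cap \mathfrak{U}_{i_n})$, are good formal models of the corresponding members of the \v{C}ech nerve on the rigid side. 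By Theorem~\ref{thm:simple-blowup-isom}, I may compute $\Psi_{\rm rig}$ on this new model, reducing the problem to the computation of the \v{C}ech complex of the Zariski cover $\{\mathfrak{U}_i\}$ of $\mathfrak{Y}'$.

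For the last step, I would use that the functor $(-)_{0, \rm log}$ is compatible with strict open immersions and with fiber products along them, so that the \v{C}ech nerve $\{(\mathfrak{U}_{i_0}\cap\cdots\cap \mathfrak{U}_{i_n})_{0, \rm log}\}$ coincides with the \v{C}ech nerve of the open topological cover $\{\mathfrak{U}_{i, 0, \rm log}\hookrightarrow \mathfrak{Y}'_{0, \rm log}\}$. Hyperdescent then follows from the classical result for open covers of topological spaces \cite[Theorem~1.3]{DuggerIsaksen}, exactly as in the first case of Step~(3) in the proof of Proposition~\ref{prop:descent}. The hard part will be the middle step: ensuring that after the admissible blowup produced by Raynaud's theorem, further blowups can be performed to obtain a pair $(\mathfrak{Y}', \mathfrak{D}')\in \widehat{\mathbf{GM}}_{\cO_K}$ that refines the original cover in the required sense. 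This is a matter of interleaving embedded resolution with Raynaud's construction, and verifying that the good-model property is preserved for every member of the resulting \v{C}ech diagram.
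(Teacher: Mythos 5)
Your proposal is correct and follows essentially the same route as the paper: model a finite strict admissible covering by a strict Zariski covering of good formal models (via \cite[Lemma~4.4]{FormalRigidI} together with resolution of singularities, the intersections being automatically good models since they are open in $\mathfrak{Y}'$), and then apply the Dugger--Isaksen hyperdescent result to the resulting open cover of Kato--Nakayama spaces of special fibers. The only real divergence is your initial reduction to \v{C}ech nerves via the coskeleton argument of Proposition~\ref{prop:descent}, which is superfluous here since the statement concerns \v{C}ech descent for coverings rather than hyperdescent.
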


\begin{proof}
We begin by checking that the functor associating to a good formal model $(\mathfrak{Y}, \mathfrak{D}) \in \widehat{\bf GM}_{\cO_K}$ the Kato--Nakayama space of its special fiber $(\mathfrak{Y}, \mathfrak{D})_{0, \rm log}$ satisfies hyperdescent with respect to the strict Zariski topology (where coverings are given by Zariski coverings of $\mathfrak{Y}$ equipped with the restriction of $\mathfrak{D}$). Indeed, if 
\[
  (\mathfrak{Y'}_\bullet, \mathfrak{D'}_\bullet) \to (\mathfrak{Y}, \mathfrak{D})
\]
is a strict Zariski hypercovering, then 
\[
  (\mathfrak{Y}'_\bullet, \mathfrak{D}'_\bullet)_{0, \rm log} \to (\mathfrak{Y}, \mathfrak{D})_{0, \rm log}
\]
is a hypercovering of topological spaces. By \cite{DuggerIsaksen}, the induced map 
\[
  \hocolim \; (\mathfrak{Y}'_\bullet, \mathfrak{D}'_\bullet)_{0, \rm log} \to (\mathfrak{Y}, \mathfrak{D})_{0, \rm log}
\]
is an equivalence.

Given the above Zariski sheaf property, it is enough to note the following: every \emph{finite} strict admissible covering 
\[
  \{(\mathcal{Y}_i, \mathcal{D}_i) \to (\mathcal{Y}, \mathcal{D}) \}
\]
in $\mathbf{RigPairs}_K^{\rm qcs}$ admits a model
\[
  \{(\mathfrak{Y}_i, \mathfrak{D}_i) \to (\mathfrak{Y}, \mathfrak{D}) \}
\]
which is a strict Zariski covering in $\widehat{\bf GM}_{\cO_K}$. This follows from \cite[Lemma~4.4]{FormalRigidI} and resolution of singularities.
\end{proof}

\begin{cor} \label{cor:rigdescent}
  There exists a unique extension 
  \[
    \Psi_{\rm rig}\colon \mathbf{RigPairs}_K \to \SpSone
  \]
  of the functor $\Psi_{\rm rig}\colon \mathbf{RigPairs}_K^{\rm qcs}\to \SpSone$ which satisfies descent with respect to admissible coverings. \qed
\end{cor}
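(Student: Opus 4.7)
The plan is to apply Proposition~\ref{prop:extend-cosheaf} directly to the fully faithful inclusion
\[
  \iota\colon \mathbf{RigPairs}_K^{\rm qcs}\hookrightarrow \mathbf{RigPairs}_K,
\]
where both categories carry the strict admissible topology. This requires two verifications: that $\iota$ is continuous (which is immediate, since a strict admissible covering of a qcs pair is also a covering in the larger category), and that $\iota$ induces an equivalence on the associated topoi of sheaves of sets.

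For the topoi equivalence, I would invoke the comparison lemma for dense subcategories of sites. The hypothesis to check is that every object $(\mathcal{Y}, \mathcal{D}) \in \mathbf{RigPairs}_K$ admits a strict admissible covering by qcs objects. This is straightforward: take an admissible covering of $\mathcal{Y}$ by affinoid (hence quasi-compact and separated) open subspaces $\mathcal{Y}_i$, and equip each with the restricted divisor $\mathcal{D}_i = \mathcal{D}\times_\mathcal{Y} \mathcal{Y}_i$, which is automatically snc since restriction to an open preserves the snc property. One further needs to observe that arbitrary finite fiber products of such $\mathcal{Y}_i$ over $\mathcal{Y}$ remain qcs (intersections of qc opens in a separated rigid space are again qc), so the entire Čech diagram lives in $\mathbf{RigPairs}_K^{\rm qcs}$.

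With the topoi equivalence in hand, Proposition~\ref{prop:extend-cosheaf} yields an equivalence of $\infty$-categories
\[
  \mathbf{Hyp}^{\rm co}_\infty(\mathbf{RigPairs}_K, \SpSone) \isomlong \mathbf{Hyp}^{\rm co}_\infty(\mathbf{RigPairs}_K^{\rm qcs}, \SpSone).
\]
Since the preceding proposition exhibits the functor $\Psi_{\rm rig}$ on the qcs side as a hypercosheaf for the strict admissible topology, it extends uniquely across this equivalence to a hypercosheaf on all of $\mathbf{RigPairs}_K$, which gives the desired functor.

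I expect no serious obstacle here; the content of the argument is entirely packaged into Proposition~\ref{prop:extend-cosheaf} together with the previous proposition. The only point requiring a moment's care is the verification of the hypotheses of the comparison lemma, i.e.\ that qcs pairs form a dense topologically generating subcategory — and this reduces, as above, to the standard fact that rigid analytic spaces are admissibly covered by affinoids and that separatedness is inherited by intersections.
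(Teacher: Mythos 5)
Your proposal is correct and is exactly the paper's (unwritten) argument: the corollary carries a \qed because it follows immediately from Proposition~\ref{prop:extend-cosheaf} applied to the inclusion $\mathbf{RigPairs}_K^{\rm qcs}\hookrightarrow\mathbf{RigPairs}_K$ together with the preceding descent proposition, precisely as in the scheme case of \S\ref{sec:descent}. The only quibble is that your closure-under-fiber-products check is not needed for the comparison lemma (topological generation by qcs objects suffices), which is just as well since a general $\mathcal{Y}\in\mathbf{RigPairs}_K$ is not assumed quasi-separated, so intersections of affinoids need not be quasi-compact.
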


Our fundamental result below states that $\Psi_{\rm rig}((\mathcal{Y}, \mathcal{D})) = \Psi_{\rm rig}(\mathcal{Y}\setminus \mathcal{D})$. Note that $\mathcal{Y}\setminus\mathcal{D}$ will rarely be quasi-compact, so Corollary~\ref{cor:rigdescent} is used in the statement. 

\begin{thm}[Purity] \label{thm:purity}
  Let $(\mathcal{Y}, \mathcal{D})$ be an object of $\mathbf{RigPairs}^{\rm qcs}_K$ and let $\mathcal{X} = \mathcal{Y}\setminus \mathcal{D}$. Then the inclusion $j\colon \mathcal{X}\to (\mathcal{Y}, \mathcal{D})$ in $\mathbf{RigPairs}_K$ induces an equivalence
  \[ 
    \Psi_{\rm rig}(\mathcal{X}) \isomlong  \Psi_{\rm rig}(\mathcal{Y}, \mathcal{D}).
  \]
\end{thm}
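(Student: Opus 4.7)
The strategy is to admissibly cover $\mathcal{X}$ by quasi-compact ``tubes'' $\mathcal{X}_n$ built as formal-model charts of admissible blowups of a fixed good formal model $(\mathfrak{Y},\mathfrak{D})$ of $(\mathcal{Y},\mathcal{D})$, and to show by an explicit fiber calculation (in the spirit of Proposition~\ref{prop:key-calculation}) together with Smale's theorem (Theorem~\ref{thm:smale}) that the natural maps $\Psi_{\rm rig}(\mathcal{X}_n)\to \Psi_{\rm rig}(\mathcal{Y},\mathcal{D})$ are equivalences. Combined with admissible descent (Corollary~\ref{cor:rigdescent}), this yields the theorem.

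First, split $\mathfrak{D}=\mathfrak{D}^{\rm vert}\cup\mathfrak{D}^{\rm hor}$, so that $\mathfrak{D}^{\rm hor}_{\rm rig}=\mathcal{D}$ and $\Psi_{\rm rig}(\mathcal{Y},\mathcal{D})=(\mathfrak{Y},\mathfrak{D})_{0,\log}$. Choose a finite affine cover $\{\mathfrak{U}_\alpha\}$ of $\mathfrak{Y}$ together with global equations $f_i^{(\alpha)}$ for the horizontal components meeting $\mathfrak{U}_\alpha$, and for each $n\geq 1$ let $\mathcal{X}_n\subseteq\mathcal{Y}$ be the admissible open obtained by gluing the tubes $\{|f_i^{(\alpha)}|\geq |\pi|^n\}$. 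Each $\mathcal{X}_n$ is quasi-compact and separated, and $\{\mathcal{X}_n\}_{n\geq 1}$ is an admissible covering of $\mathcal{X}$: any affinoid $V\subseteq \mathcal{X}$ satisfies $V\subseteq \mathcal{X}_n$ for $n$ large. A good formal model $\mathfrak{X}_n$ of $\mathcal{X}_n$ is obtained as an open formal subscheme of an admissible blowup $\widetilde{\mathfrak{Y}}_n\to \mathfrak{Y}$ along a local ideal of the shape $(\pi^n,\, f_i^{(\alpha)})$ (followed by resolution of singularities, which does not affect $\Psi_{\rm rig}$ by the results of \S\ref{sec:construction-rigid}). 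Because in this chart the strict transform of $\mathfrak{D}^{\rm hor}$ is empty, $\mathfrak{X}_n$ carries only a vertical divisor and $\Psi_{\rm rig}(\mathcal{X}_n)\simeq (\mathfrak{X}_n)_{0,\log}$; the composite $\mathfrak{X}_n\hookrightarrow \widetilde{\mathfrak{Y}}_n\to \mathfrak{Y}$ is a morphism of good formal models, inducing compatible maps
\[
  \pi_n\colon (\mathfrak{X}_n)_{0,\log}\to (\mathfrak{Y},\mathfrak{D})_{0,\log}
\]
over $\Sone$, whose colimit in $\SpSone$ models the natural arrow $\Psi_{\rm rig}(\mathcal{X})\to \Psi_{\rm rig}(\mathcal{Y},\mathcal{D})$.

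The crucial step is to prove that each $\pi_n$ is a weak homotopy equivalence, which by Smale's theorem reduces to showing that $\pi_n$ is a proper surjection whose fibers are contractible and locally contractible. The computation is local on $\mathfrak{Y}$: after \'etale localization and base change along $\iota$, we may work in coordinates $x_1,\ldots,x_n$ where $\pi=x_1^{e_1}\cdots x_a^{e_a}$ and $\mathfrak{D}^{\rm hor}=\{x_{a+1}\cdots x_r=0\}$, so that the tube chart takes the explicit form
\[
  \CC\langle x_1,\ldots,x_n,w\rangle/(x_1^{ne_1}\cdots x_a^{ne_a}-w\, x_{a+1}\cdots x_r).
\]
A direct analysis of the resulting map on Kato--Nakayama spaces, paralleling the computation in Examples~\ref{ex:ex1}--\ref{ex:ex2} and in the proof of Proposition~\ref{prop:key-calculation}, identifies each fiber of $\pi_n$ with the intersection of a proper convex cone in some $\RR^L\times \CC^{I\setminus L}$ with a unit sphere, hence with a contractible set. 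Granting this, Smale's theorem gives that every $\pi_n$ is an equivalence in $\SpSone$; by two-out-of-three the transition maps $\Psi_{\rm rig}(\mathcal{X}_n)\to \Psi_{\rm rig}(\mathcal{X}_{n+1})$ are equivalences as well, and admissible descent yields
\[
  \Psi_{\rm rig}(\mathcal{X})\simeq \hocolim_n\Psi_{\rm rig}(\mathcal{X}_n)\simeq \Psi_{\rm rig}(\mathcal{Y},\mathcal{D}).
\]
The main obstacle is this uniform fiber computation: the ideal $(\pi^n)+I_{\mathfrak{D}^{\rm hor}}$ is not the ideal of a regular subscheme for $n>1$, so Proposition~\ref{prop:key-calculation} does not apply to the admissible blowup directly; one must instead verify, after passing to the regular chart $\mathfrak{X}_n$ with the monomial equation displayed above, that the fibers of $\pi_n$ stratum by stratum still admit the same ``convex-cone-intersected-with-sphere'' description as in \emph{loc.\ cit.}
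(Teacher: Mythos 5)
Your overall architecture --- reduce via admissible descent to a filtered exhaustion of $\mathcal{X}$ by quasi-compact tubes around the vertical locus, and show that each $\Psi_{\rm rig}(\mathcal{X}_n)\to\Psi_{\rm rig}(\mathcal{Y},\mathcal{D})$ is an equivalence --- is exactly the paper's: its tubes are ${\rm sp}_i^{-1}\bigl(\mathfrak{Y}_{i,0}\setminus{\rm sp}_i(\mathcal{D})\bigr)$ for a cofinal family of good formal models $(\mathfrak{Y}_i,\mathfrak{D}_i)$, which for your blowup models are precisely your $\mathcal{X}_n$. The gap is in the key step: the map $\pi_n\colon(\mathfrak{X}_n)_{0,\log}\to(\mathfrak{Y},\mathfrak{D})_{0,\log}$ is \emph{not proper}, so Smale's theorem (Theorem~\ref{thm:smale}) does not apply. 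Any formal model of the open immersion $\mathcal{X}_n\hookrightarrow\mathcal{Y}$ is an open formal subscheme of an admissible blowup of $\mathfrak{Y}$ (cf.\ \cite[Lemma~4.4]{FormalRigidI}), and the induced map of special fibers has non-compact fibers over the horizontal boundary. Concretely, take $\mathfrak{Y}=\Spf \cO_K\langle x,y\rangle$ with uniformizer $t$, $\mathfrak{D}=\{ty=0\}$, and $n=1$: your chart is $\Spf \cO_K\langle x,y,w\rangle/(t-yw)$, whose special fiber maps to $\mathfrak{Y}_0$ with fiber $\mathbf{A}^1_w$ over each point of $\{y=0\}$; on Kato--Nakayama spaces the fiber of $\pi_{1}$ over a point of $(\mathfrak{Y},\mathfrak{D})_{0,\log}$ lying above $\{y=0\}$ is a closed ray $[0,\infty)$ --- contractible and locally contractible, but not compact. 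A non-proper surjection with contractible fibers need not be a weak equivalence (e.g.\ $[0,1)\to\Sone$), so contractibility of fibers alone proves nothing here, and this non-proper step is exactly where the content of purity sits.

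The paper's way around this is to factor $\pi_n$ as $(\mathfrak{X}_n)_{0,\log}\hookrightarrow(\widetilde{\mathfrak{Y}}_n,\widetilde{\mathfrak{D}}_n)_{0,\log}\to(\mathfrak{Y},\mathfrak{D})_{0,\log}$. The second map is proper and is an equivalence by the already-established blowup invariance (Theorem~\ref{thm:simple-blowup-isom} plus weak factorization) --- that is where Smale is legitimately used. The first map is the open inclusion of the Kato--Nakayama space of the vertical locus, and is handled by a soft argument (Lemma~\ref{lemma:kn-vert}): by Nakayama--Ogus, $(\widetilde{\mathfrak{Y}}_n,\widetilde{\mathfrak{D}}_n)_{0,\log}\to\Sone$ is a topological submersion whose fibers are manifolds with boundary, the boundary being the preimage of the horizontal divisor, and the inclusion of the interior of a manifold with boundary is a homotopy equivalence. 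Your self-identified obstacle (non-regularity of the $(t^n,f)$-blowup for $n>1$) is real but secondary --- the paper avoids it entirely by indexing over \emph{all} good formal models rather than an explicit monomial family --- whereas the failure of properness is intrinsic to your formulation and forces you to replace Smale, on the horizontal-boundary-removal step, by an argument of the manifold-with-boundary type.
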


\begin{proof}
By admissible descent (Corollary~\ref{cor:rigdescent}), it is enough to construct a covering family $\{\mathcal{X}_i\}_{i\in I}$ of quasi-compact opens of $\mathcal{X}$ indexed by a filtering poset $I$ with the property that 
\[ 
  \Psi_{\rm rig}(\mathcal{X}_i) \to  \Psi_{\rm rig}(\mathcal{Y}, \mathcal{D}).
\]
is an equivalence for each $i\in I$. To this end, let $I$ be \emph{opposite} of the cofiltering poset of models of $(\mathcal{Y}, \mathcal{D})$, i.e.\ the category of good formal models $(\mathfrak{Y}, \mathfrak{D})$ equipped with an isomorphism $(\mathfrak{Y}, \mathfrak{D})_{\rm rig}\simeq (\mathcal{Y}, \mathcal{D})$. 

For each $i\in I$ corresponding to a good formal model $(\mathfrak{Y}_i, \mathfrak{D}_i)$, let 
\[ 
  {\rm sp}_i \colon \langle\mathcal{Y} \rangle  \to |\mathfrak{Y}_{i,0}|
\]
denote the specialization map. Here $\langle\mathcal{Y} \rangle = \varprojlim |\mathfrak{Y}_{i,0}|$ is the Riemann--Zariski space of $\mathcal{Y}$, see \cite[II \S 3]{FujiwaraKato}. We let
\[
  U_i = \mathfrak{Y}_{i,0}  \setminus {\rm sp}_i(\mathcal{D})  
  \quad\text{and} \quad \mathcal{X}_i = {\rm sp}_i^{-1}(U_i) \subseteq \mathcal{X}
\]
be the set of points specializing to the ``vertical part'' of $\mathfrak{Y}_{i,0}$. This is a quasi-compact open with a vertical good formal model $(\mathfrak{Y}_i, \mathfrak{D}_i)|_{U_i}$, and the inclusion $(\mathfrak{Y}_i, \mathfrak{D}_i)|_{U_i} \to (\mathfrak{Y}_i, \mathfrak{D}_i)$ is a model of the open immersion $\mathcal{X}_i\hookrightarrow (\mathcal{Y}, \mathcal{D})$ in $\mathbf{RigPairs}_K$. 

If $i\leq j$ in $I$, in other words if there exists a morphism $\pi\colon (\mathfrak{Y}_j, \mathfrak{D}_j) \to (\mathfrak{Y}_i, \mathfrak{D}_i)$, we have a~commutative 
\[
  \xymatrix{
    \langle\mathcal{Y}\rangle \ar[r]^{{\rm sp}_j} \ar[dr]_{{\rm sp}_i} & |\mathfrak{Y}_{j,0}| \ar[d]^\pi \\  
    & |\mathfrak{Y}_{i,0}|,
  }
\]
and therefore get an inclusion $\mathcal{X}_i\subseteq \mathcal{X}_j$ (note the opposite direction). 

Clearly, $\mathcal{X}_i \subseteq\mathcal{X}$ for all $i$. We check that the $\mathcal{X}_i$ cover $\mathcal{X}$. To this end, let $\mathcal{W}\subseteq \mathcal{X}$ be any quasi-compact open. By \cite[Lemma~4.4]{FormalRigidI}, there exists a model of the inclusion $\mathcal{W}\subseteq \mathcal{X}$  as an open immersion of models $j\colon \mathfrak{W} \hookrightarrow \mathfrak{Y}$. Since good models form a cofinal system of models of $\mathcal{Y}$, we can assume that $\mathfrak{Y} = \mathfrak{Y}_i$ for some $i\in I$, so $\mathcal{W} = {\rm sp}_i^{-1}(\mathfrak{W}_0)$. But then $\mathcal{W}$ is contained in $\mathcal{X}_i$, for otherwise $\mathfrak{W}_0$ would contain a point in ${\rm sp}_i(\mathcal{D})$, and so $\mathcal{W}$ would intersect $\mathcal{D}$.

Finally, to check that $\Psi_{\rm rig}(\mathcal{X}_i)\to \Psi_{\rm rig}(\mathcal{Y}, \mathcal{D})$ is an equivalence, we need to show that 
\[ 
  U_{i,{\rm log}} \to \mathfrak{Y}_{i,0, {\rm log}},
\]
is an equivalence, where we endow $U_i$ with the log structure induced from $\mathfrak{Y}_{i,0}$. This follows from Lemma~\ref{lemma:kn-vert} below.
\end{proof}

\begin{lemma} \label{lemma:kn-vert}
  Let $(X, \Mm_X)$ be a smooth log scheme over the standard complex log point $0$, where $X$ is separated and of finite type over $\CC$. Denote by $X^{\rm vert}\subseteq X$ the dense open subset where $(X, \Mm_X)$ is vertical (i.e., where $\overline{\Mm}_0\simeq \mathbf{N}$ generates $\overline{\Mm}_X$ as a face). Then the induced map
  \[ 
    (X^{\rm vert}, \Mm_X|_{X^{\rm vert}})_{\rm log}
    \to (X, \Mm_X)_{\rm log}
  \]
  is a homotopy equivalence.
\end{lemma}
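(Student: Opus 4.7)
First, note that $j\colon X^{\rm vert}\hookrightarrow X$ is a \emph{strict} open immersion of fs log schemes (the log structure on $X^{\rm vert}$ is just the restriction of $\Mm_X$), so $X^{\rm vert}_{\rm log}=\tau^{-1}(X^{\rm vert})$ is the open subspace of $X_{\rm log}$ above $X^{\rm vert}$, where $\tau\colon X_{\rm log}\to X$ is the projection. The plan is to exhibit this open inclusion as a homotopy equivalence by producing a global deformation retract built from the local product structure furnished by log smoothness of $X\to 0$.

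\emph{Step 1 (local reduction).} I would apply Kato's structure theorem for log smoothness: étale locally around any $x\in X$, there is a fine sharp chart $P\to \overline\Mm_{X,x}$ and a strict smooth morphism
\[
  X \;\longrightarrow\; Y := \Spec\bigl(\CC[P]/(\chi^e)\bigr),
\]
where $e\in P$ is the image of the generator of $\mathbf{N}=\overline\Mm_{0,0}$ under the base chart. Since strict smooth morphisms induce local homeomorphisms of Kato--Nakayama spaces and preserve both verticality and the inclusion of the vertical locus, it suffices to analyze the local model $Y$.

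\emph{Step 2 (local calculation).} The Kato--Nakayama space $Y_{\rm log}$ sits inside $\Hom(P,\overline\CC)=\Hom(P,\RR_{\geq 0})\times\Hom(P^{\rm gp},\Sone)$ as the closed locus where the $\RR_{\geq 0}$-component $\rho$ satisfies $\rho(e)=0$. A point is vertical precisely when $\rho$ is strictly positive on every monoid generator of $P$ lying outside the face $F_e\subseteq P$ spanned by $e$. Passing to coordinates $P=\mathbf{N}^n$ with $e=(a_1,\ldots,a_n)$ and splitting indices into ``hot'' $V^+=\{i:a_i>0\}$ and ``cold'' $V^0=\{i:a_i=0\}$, one obtains a compatible product decomposition
\[
  Y_{\rm log} \;\cong\; A\times(\overline\CC)^{V^0},
  \qquad
  Y^{\rm vert}_{\rm log} \;\cong\; A\times(\CC^*)^{V^0},
\]
where $A\subseteq(\overline\CC)^{V^+}$ is the hot locus $\{\rho(e)=0\}$ and $\CC^*=(0,\infty)\times\Sone$ sits inside $\overline\CC=\RR_{\geq 0}\times\Sone$. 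Each cold factor inclusion $\CC^*\hookrightarrow\overline\CC$ is a homotopy equivalence, witnessed by the explicit deformation $(r,\theta)\mapsto(r+s,\theta)$ for $s\in[0,1]$, so the product inclusion is a homotopy equivalence.

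\emph{Step 3 (globalization), and the main obstacle.} By Proposition \ref{prop:top.properties}, $X_{\rm log}$ is paracompact Hausdorff (indeed, separable metric and locally contractible), so admits partitions of unity subordinate to the cover by the local charts of Step~1. On overlaps between charts the cold directions are identified by a change of variables that preserves the positive direction of each $r_i$, so the local cold-shift deformations $r\mapsto r+s$ are compatible up to convex combination. Patching via a partition of unity then yields a global homotopy
\[
  H\colon X_{\rm log}\times[0,1]\longrightarrow X_{\rm log}
\]
satisfying $H_0=\mathrm{id}$, $H_s(X^{\rm vert}_{\rm log})\subseteq X^{\rm vert}_{\rm log}$ for every $s\in[0,1]$, and $H_1(X_{\rm log})\subseteq X^{\rm vert}_{\rm log}$; then $H_1$ is a homotopy inverse to $j_{\rm log}$. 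The main difficulty lies in this patching, since the cold coordinates are only canonical within a chart and require a careful compatibility argument along stratum closures of the horizontal locus $X\setminus X^{\rm vert}$. An alternative route, in the spirit of the proof of Theorem~\ref{thm:simple-blowup-isom}, is to construct by the same cold-shift a continuous surjection $X_{\rm log}\to X^{\rm vert}_{\rm log}$ whose fibers are contractible and locally contractible, and then invoke Smale's theorem (Theorem~\ref{thm:smale}) to conclude that the inclusion $j_{\rm log}$ is a weak equivalence.
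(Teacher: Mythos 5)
Your Steps 1--2 capture the correct local picture: in a chart, $(X,\Mm_X)_{\rm log}$ is a product of the ``hot'' locus with copies of $\overline{\CC}=\RR_{\geq 0}\times\Sone$ in the horizontal directions, and the vertical locus replaces each such factor by $\RR_{>0}\times\Sone$, an inclusion that is locally a homotopy equivalence. (Two caveats even here: Kato's structure theorem does not let you assume $P=\NN^n$ for a general log smooth $X$ over the log point, and the chart maps are \'etale rather than open, though both issues are repairable.) The genuine gap is Step 3, which you yourself flag: the partition-of-unity patching is not carried out, and it is not routine --- the ``cold'' coordinates $r_i$ are chart-dependent (well-defined only up to multiplication by positive functions and up to which branches of the horizontal divisor are visible), and there is no evident convex structure on the space of ``push off the boundary'' homotopies that would let you average them. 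Worse, the proposed fallback via Smale's theorem cannot work: Theorem~\ref{thm:smale} requires a \emph{proper} surjection, and no proper surjection $X_{\rm log}\to X^{\rm vert}_{\rm log}$ with contractible fibers can exist, already in the model case $\overline{\CC}\to\RR_{>0}\times\Sone$ (a proper map $[0,\infty)\to(0,\infty)$ with compact connected fibers is monotone, so its image is a half-open interval, never all of $(0,\infty)$). Retractions onto open dense subsets are essentially never proper, so Smale's theorem is the wrong tool here.

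The paper closes exactly this global step by citing Nakayama--Ogus: since the base is the standard log point the morphism is automatically exact, and by \cite[Theorem~3.7]{nakayama-ogus} the map $(X,\Mm_X)_{\rm log}\to 0_{\rm log}=\Sone$ is a topological submersion whose fibers are topological manifolds with boundary, the boundary being precisely the preimage of $X\setminus X^{\rm vert}$. The inclusion of the interior of a manifold with boundary is a homotopy equivalence (topological collar neighborhood theorem), which globalizes your local product computation in one stroke. If you want to keep your hands-on approach, you should either quote that theorem or prove a collar statement for the horizontal boundary of $X_{\rm log}$; as written, the argument does not go through.
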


\begin{proof}
  Since the base is the standard log point, $(X, \Mm_X)\to 0$ is automatically exact. By \cite[Theorem~3.7]{nakayama-ogus}, the map $(X, \Mm_X)_{\rm log} \to 0_{\rm log} = \Sone$ is a topological submersion whose fibers are manifolds with boundary equal to the preimage of $X\setminus X^{\rm vert}$. The inclusion of the complement of the boundary in a manifold with boundary is a homotopy equivalence.
\end{proof}

We can now improve our comparison result Proposition~\ref{prop:rig-comp-1}.

\begin{cor} \label{cor:rig-comp-2}
  The functors $\Psi$ and $\Psi_{\rm rig}$ are compatible in the sense that the following diagram commutes
  \[ 
    \xymatrix{
      \mathbf{Sm}^{\rm lft}_K \ar[d]_{(-)_\an} \ar[dr]^{\Psi} \\
      \mathbf{SmRig}_K \ar[r]_-{\Psi_{\rm rig}} & \SpSone. 
    } \vspace{-.7cm}
      \] \qed \vspace{.1cm}
\end{cor}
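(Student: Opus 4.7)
The plan is to reduce to smooth separated schemes of finite type over $K$ (which admit good compactifications), and there combine the already established Proposition~\ref{prop:rig-comp-1} with the purity theorem (Theorem~\ref{thm:purity}).

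Both $\Psi$ and $\Psi_{\rm rig}\circ(-)_\an$ are Zariski (hyper)cosheaves on $\mathbf{Sm}^{\rm lft}_K$: for $\Psi$ this follows from the $h$-hypercosheaf property established in Proposition~\ref{prop:descent}, while for $\Psi_{\rm rig}\circ(-)_\an$ it follows from Corollary~\ref{cor:rigdescent} together with the fact that analytification converts Zariski (hyper)coverings into admissible ones. It therefore suffices to construct a natural equivalence on the full subcategory $\mathbf{Sm}^{\rm sft}_K$.

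For $X\in \mathbf{Sm}^{\rm sft}_K$ pick a smooth compactification $(\bar X, D)\in \mathbf{Pairs}^{\rm proper}_K$ with $\bar X\setminus D = X$, which exists by Nagata compactification and Hironaka's resolution of singularities. Proposition~\ref{prop:rig-comp-1} provides a natural equivalence
\[
  \Psi(X) \simeq \Psi_{\rm rig}(\bar X_\an, D_\an),
\]
and since $(\bar X_\an, D_\an)\in \mathbf{RigPairs}^{\rm qcs}_K$ with complement $X_\an$, Theorem~\ref{thm:purity} yields a natural equivalence
\[
  \Psi_{\rm rig}(\bar X_\an, D_\an) \simeq \Psi_{\rm rig}(X_\an).
\]
Composing produces, for each choice of compactification, the desired equivalence.

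The main obstacle is to upgrade this pointwise recipe to a natural transformation of functors in $X$ rather than in the auxiliary datum $(\bar X, D)$. The plan is to observe that the ``trivial locus'' functor $(-)_{\rm triv}\colon \mathbf{Pairs}^{\rm proper, sm}_K \to \mathbf{Sm}^{\rm sft}_K$ is a localization in the sense of Definition~\ref{def:localization}: essential surjectivity comes from Nagata plus Hironaka, and a calculus of right fractions for the class of pair-blowups with centers inside $D$ follows because any two smooth compactifications of $X$ are dominated by a common third one via resolution of singularities away from $X$. Both composites $\Psi\circ(-)_{\rm triv}$ and $\Psi_{\rm rig}\circ(-)_\an$ invert this class (the former because $(-)_{\rm triv}$ itself does, the latter by a variant of Theorem~\ref{thm:simple-blowup-isom} on the rigid analytic side), and Proposition~\ref{prop:rig-comp-1} combined with purity identifies the two functors naturally on $\mathbf{Pairs}^{\rm proper, sm}_K$. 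The universal property of the localization then promotes this to the required natural equivalence $\Psi \simeq \Psi_{\rm rig}\circ(-)_\an$ on $\mathbf{Sm}^{\rm sft}_K$, and Zariski descent extends it to all of $\mathbf{Sm}^{\rm lft}_K$.
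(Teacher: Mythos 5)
Your proposal is correct and follows essentially the route the paper intends: the corollary is stated as an immediate consequence of Proposition~\ref{prop:rig-comp-1} together with the Purity Theorem~\ref{thm:purity}, with naturality and the extension from $\mathbf{Sm}^{\rm sft}_K$ to $\mathbf{Sm}^{\rm lft}_K$ handled by the localization property of $(-)_{\rm triv}\colon \mathbf{Pairs}^{\rm proper}_K\to \mathbf{Sm}_K$ (already recorded in the diagram following Proposition~\ref{prop:rig-localiz}) and by the descent results. Your extra bookkeeping (Zariski descent to reduce to the separated finite type case, and descending the natural equivalence along the localization $(-)_{\rm triv}$) makes explicit what the paper leaves implicit, and the only cosmetic quibble is that no new ``rigid variant'' of Theorem~\ref{thm:simple-blowup-isom} is needed in the last step, since purity already shows $\Psi_{\rm rig}\circ(-)_\an$ on pairs depends only on the trivial locus.
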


\subsection{Examples and open questions} 
\label{ss:examples}

We will now discuss two quite classical examples: the Tate curve and the non-archimedean Hopf surface.

\begin{example}[Tate curve]
  The Tate curve is the elliptic curve
  \[ 
    \mathcal{X} = \mathcal{Y}/q^\ZZ, \quad \mathcal{Y} = (\mathbf{G}_m)_\an \text{ over }\CCq.
  \]
  Recall the construction of a model of $\mathcal{X}$ (e.g.\ \cite[\S 9.2]{Bosch}). We start with a model for $\mathcal{Y}$. For $n\in\ZZ$, set 
  \[
    \mathfrak{Y}(n) = \Spf \CCsq\{x_n, y_n\}/(x_n y_n - q).
  \]
  
  Let $\mathfrak{Y}^+(n) = \Spf \CCsq\{x_n,x_n^{-1}\}$ and $\mathfrak{Y}^-(n) = \Spf \CCsq\{y_n, y_n^{-1}\}$, which are open formal subschemes of $\mathfrak{Y}(n)$. We form the formal scheme $\mathfrak{Y}$ by taking the disjoint union of all $\mathfrak{Y}(n)$ and identifying $\mathfrak{Y}^+(n)$ with $\mathfrak{Y}^-(n+1)$ via $y_{n+1}\mapsto x_n^{-1}$. The formal scheme $\mathfrak{Y}$ is a model for $\mathcal{Y} = (\mathbf{G}_m)_\an$. The map $q\colon \mathcal{Y}\to \mathcal{Y}$ has a model $q\colon \mathfrak{Y}\to \mathfrak{Y}$ sending $\mathfrak{Y}(n)$ to $\mathfrak{Y}(n+1)$ via $x_{n+1}\mapsto x_n$, $y_{n+1}\mapsto y_n$. Let $\Xx$ be the quotient of $\mathfrak{Y}$ by the action of $q^\ZZ$. Its generic fiber 
  \[
    \mathcal{X} = \Xx_{\rm rig} = (\mathbf{G}_m)_\an/q^\ZZ
  \]
  is the analytification of an elliptic curve $X$ over $\CCq$ called the Tate curve. 

  We will now investigate the log structures of $\mathfrak{Y}$ and $\mathfrak{X}$. Let $\NN \langle X, Y\rangle$ be the free commutative monoid with generators $X$ and $Y$. A chart for $\mathfrak{Y}(n)$ over the standard chart $\NN\langle Q \rangle\to \CCsq$, $Q\mapsto q$ is given by 
  \[ 
    \NN \langle X, Y\rangle\to \CCsq\{x_n, y_n\}/(x_n y_n - q), \quad Q \mapsto XY, \quad X\mapsto x_n,\quad Y\mapsto y_n.
  \]
  Thus $\mathfrak{Y}(n)$ is a good model. Following \cite[\S 4.1]{achinger-ogus}, we can identify $\mathfrak{Y}(n)_{0,\rm log}$ with the union of two copies of $\Sone \times \overline \CC$ (coordinates $(\tau, \overline x_n)$ and $(\tau, \overline y_n)$ where $\tau = \arg q$) where the two copies of $\Sone\times\Sone$ are identified via the map $(\tau, \alpha) \mapsto (\tau, \tau \alpha^{-1})$. See Figure~\ref{fig:tate} for an illustration of $\mathfrak{Y}_{0, \rm log}$.

  \begin{figure}[h]
    \centering
    \includegraphics[width=.5\textwidth]{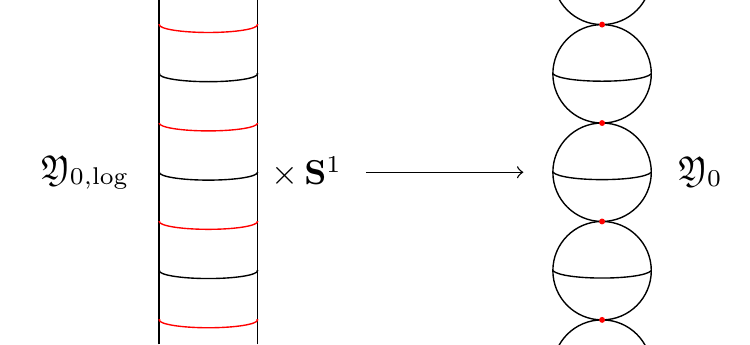} 
    \caption{Tate curve: the Kato--Nakayama space of the special fiber of $\mathfrak{Y}$.} 
    \label{fig:tate}
  \end{figure}

  Consequently, we see that $\mathfrak{X}_{0,\rm log} = \mathfrak{Y}_{0,\rm log}/q^\ZZ$ can be described as the quotient of 
  \[ 
    \Sone \times (\PP^1, \{0,\infty\})_{\rm log} = \Sone\times ([0,\infty]\times \Sone)
  \]
  by the relation
  \[ 
    (\tau, 0, \alpha) \sim (\tau, \infty, \tau\alpha).
  \]
  Thus $\Psi_{\rm rig}(X)$ is the family of real tori $\Sone \times \Sone$ over $\Sone$ with monodromy a single Dehn twist. 
\end{example}

\begin{example}[Nonarchimedean Hopf surface]
  The classical Hopf surface is the compact complex manifold obtained as a quotient $(\mathbf{C}^2 \setminus 0)/q^{\ZZ}$ where $|q|>1$ is a complex number acting by scalar multiplication. It is diffeomorphic to $\Sone\times \mathbf{S}^3$, and therefore does not admit a symplectic structure, nor does it admit a Hodge decomposition.

  Nonarchimedean analogues of Hopf surfaces were studied by several authors; we will use the description given by Voskuil \cite{voskuil}. For simplicity, let us consider the case where $q=t$ is the uniformizer, i.e.
  \[ 
    \mathcal{X} = \mathcal{Y} / q^\ZZ, \quad \mathcal{Y} = (\mathbf{A}^2_\CCq \setminus 0)_\an.
  \]
  We claim that the fiber $\widetilde\Psi_{\rm rig}(\mathcal{X})$ of $\Psi_{\rm rig}(\mathcal{X})/\Sone$ is homotopy equivalent to $\Sone\times \mathbf{S}^3$ as well, in fact we will find that the Kato--Nakayama space of a certain model of $\mathcal{X}$ is a fibration over $\Sone$ with fiber homeomorphic to $\Sone\times \mathbf{S}^3$.

  The rigid space $\mathcal{Y}$ admits a good formal model $\mathfrak{Y}$ whose special fiber can be described as follows. Let $P\in \mathbf{P}^2_\CC$ be a point and let $Z = \mathrm{Bl}_P \mathbf{P}^2_\CC$ be the corresponding blowup; we denote by $\mathbf{P}^1_\CC \simeq E\subseteq Z$ the exceptional divisor. Let $\mathbf{P}^1_\CC\simeq L\subseteq Z$ be the strict transform of a line in $\mathbf{P}^2_\CC$ not passing through $P$. There is a natural identification $\iota\colon E\simeq L$, sending a point $Q\in L$ to the point of intersection of the strict transform of the line $PQ$ with $E$. Then $\mathfrak{Y}_0$ is the union of isomorphic components $Z_n \simeq Z$ indexed by $n\in\ZZ$, where $E_n\subseteq Z_n$ is identified with $L_{n+1}\subseteq Z_{n+1}$ using $\iota$. The action of $t^\ZZ$ extends to an action on $\mathfrak{Y}$, and the action on the special fiber is just the translation $t\cdot Z_n = Z_{n+1}$. 

  It follows that $\mathcal{X}$ admits a good formal model $\mathfrak{X}$ whose special fiber is the non-projective surface $Z / (E\sim L)$ with normal crossings (\cite[Proposition~2.5]{voskuil} with $m=1$). Its Kato--Nakayama space for the natural log structure can be described as the quotient of 
  \[ 
    \Sone \times (Z, E+L)_{\rm log} \simeq \Sone \times ([0, \infty]\times \mathbf{S}^3)
  \]
  by the relation
  \[ 
    (\tau, 0, x) \sim (\tau, \infty, \tau\cdot x).
  \]
  Thus the fiber of $\mathfrak{X}_{0,\log}\to \Sone$ over $\tau = 1$ is $[0, \infty]/(0\sim \infty) \times \mathbf{S}^3\simeq \Sone\times \mathbf{S}^3$.
\end{example}

We finish with two natural questions which we were unable to answer.

\begin{question} \label{question:descent}
  Does $\Psi_{\rm rig}$ satisfy descent with respect to proper hypercoverings?
\end{question}

If this would be the case, then using resolution of singularities one could extend the definition of $\Psi_{\rm rig}$ to rigid spaces which are not necessarily smooth.

\begin{question}
  Is $\Psi_{\rm rig}(X)$ well-defined up to homeomorphism?
\end{question}

This is the case for schemes, as seen using the spreading out construction of \S\ref{sec:spreading}. More precisely, this question is asking whether for two good formal models $(\mathfrak{Y}_i, \mathfrak{D}_i)$ ($i=1,2$) with the same generic fiber, there exists a (non-canonical) homeomorphism
\[
  (\mathfrak{Y}_1, \mathfrak{D}_1)_{0, \rm log} \simeq (\mathfrak{Y}_2, \mathfrak{D}_2)_{0, \rm log}
\]  
over $\Sone$, whose homotopy class is the homotopy equivalence used in the construction of $\Psi_{\rm rig}$. In Examples~\ref{ex:ex1} and \ref{ex:ex2}, we saw that in simple cases, maps induced by simple blowups of good models are in some sense ``trivial surgeries,'' and while they are not homeomorphisms themselves, they are homotopic to homeomorphisms.

\bibliographystyle{amsalpha} 
\bibliography{bib.bib}

\end{document}